\documentclass[11pt,reqno,oneside,a4paper]{article}
\usepackage[a4paper,includeheadfoot,left=25mm,right=25mm,top=00mm,bottom=20mm,headheight=20mm]{geometry}

\usepackage{amssymb,amsmath,amsthm}
\usepackage{xcolor,graphicx}
\usepackage{verbatim}
\usepackage{mathtools}
\usepackage{hyperref}
\usepackage{titling}
\usepackage{fancyhdr}
\newcommand{\runningtitle}{Running Title}
\pagestyle{fancy} \lhead{{\theauthorshort}} \chead{} \rhead{{\runningtitle}} \lfoot{} \cfoot{\thepage} \rfoot{}

\hyphenation{non-zero}

\newtheorem{thm}{Theorem}
\newtheorem{lem}[thm]{Lemma}

\newtheorem{prop}[thm]{Proposition}
\newtheorem{cor}[thm]{Corollary}

\theoremstyle{definition}
\newtheorem{defn}[thm]{Definition}

\newtheorem{asm}[thm]{Assumption}

\theoremstyle{remark}
\newtheorem{rmk}[thm]{Remark}
\newtheorem{eg}[thm]{Example}

\newcommand{\NN}{\mathbb N}              
\newcommand{\ZZ}{\mathbb Z}              
\newcommand{\RR}{\mathbb R}              
\newcommand{\CC}{\mathbb C}              
\renewcommand{\Re}{\operatorname*{Re}} \renewcommand{\Im}{\operatorname*{Im}}
\newcommand{\D}{\ensuremath{\,\mathrm{d}}}
\newcommand{\ri}{\ensuremath{\mathrm{i}}}
\newcommand{\re}{\ensuremath{\mathrm{e}}}
\newcommand{\la}{\ensuremath{\lambda}}
\renewcommand{\epsilon}{\varepsilon}
\renewcommand{\geq}{\geqslant}
\renewcommand{\leq}{\leqslant}

\providecommand{\BVec}[1]{\mathbf{#1}}


\providecommand{\clos}{\operatorname{clos}}
\newcommand{\abs}[1]{\left\lvert#1\right\rvert}

\newcommand{\Mspacer}{\;} 
\newcommand{\M}[3]{#1_{#2\Mspacer#3}} 
\newcommand{\Msup}[4]{#1_{#2\Mspacer#3}^{#4}} 
\newcommand{\Msups}[5]{#1_{#2\Mspacer#3}^{#4\Mspacer#5}} 

\usepackage{scalerel}
\usepackage{stackengine}
\stackMath
\newcommand\reallywidecheck[1]{%
\savestack{\tmpbox}{\stretchto{%
  \scaleto{%
    \scalerel*[\widthof{\ensuremath{#1}}]{\kern-.6pt\bigwedge\kern-.6pt}%
    {\rule[-\textheight/2]{1ex}{\textheight}}
  }{\textheight}%
}{0.5ex}}%
\stackon[1pt]{#1}{\scalebox{-1}{\tmpbox}}%
}

\newcommand\reallywidehat[1]{%
\savestack{\tmpbox}{\stretchto{%
  \scaleto{%
    \scalerel*[\widthof{\ensuremath{#1}}]{\kern-.6pt\bigwedge\kern-.6pt}%
    {\rule[-\textheight/2]{1ex}{\textheight}}
  }{\textheight}%
}{0.5ex}}%
\stackon[1pt]{#1}{\tmpbox}%
}

\usepackage{esint}

\usepackage{multirow}

\numberwithin{equation}{section}

\providecommand{\bigoh}[1]{\mathcal{O}\left(#1\right)}

\newcommand{\AC}{\operatorname{AC}} 

\providecommand{\argdot}{{}\cdot{}}

\newsavebox{\accentbox}



\usepackage{mathrsfs} 

\providecommand{\FourRealTransNoArg}{\mathrm{F}_\RR} 
\providecommand{\FourRealTrans}[1]{\FourRealTransNoArg\left[#1\right]} 
\providecommand{\FourHalfTransNoArg}{\mathrm{F}_{[0,\infty)}} 
\providecommand{\FourHalfTrans}[1]{\FourHalfTransNoArg\left[#1\right]} 
\providecommand{\FourSineTransNoArg}{\mathrm{F}_{\mathrm{s}}} 
\providecommand{\FourSineTrans}[1]{\FourSineTransNoArg\left[#1\right]} 
\providecommand{\FourUnitTransNoArg}{\mathrm{F}_{[0,1]}} 
\providecommand{\FourUnitTrans}[1]{\FourUnitTransNoArg\left[#1\right]} 

\providecommand{\FourSineSeriesNoArg}{\mathrm{F}_{\mathrm{s}\Mspacer\mathrm{ser}}} 
\providecommand{\FourSineSeries}[1]{\FourSineSeriesNoArg\left[#1\right]} 
\providecommand{\FourInvSineSeriesNoArg}{\FourSineSeriesNoArg^{-1}} 
\providecommand{\FourInvSineSeries}[1]{\FourInvSineSeriesNoArg\left[#1\right]} 

\providecommand{\GenericForTransNoArg}{\BVec{F}} 
\providecommand{\GenericForTrans}[1]{\GenericForTransNoArg\left[#1\right]} 
\providecommand{\GenericInvTransNoArg}{\GenericForTransNoArg^{-1}} 
\providecommand{\GenericInvTrans}[1]{\GenericInvTransNoArg\left[#1\right]} 
\providecommand{\GenericRemTransNoArg}{\BVec{R}} 
\providecommand{\GenericRemTrans}[1]{\GenericRemTransNoArg\left[#1\right]} 


\providecommand{\FormalForTransNoArg}{\mathscr{F}} 
\providecommand{\FormalForTrans}[1]{\FormalForTransNoArg\left[#1\right]} 
\providecommand{\FormalInvTransNoArg}{\FormalForTransNoArg^{-1}} 
\providecommand{\FormalInvTrans}[1]{\FormalInvTransNoArg\left[#1\right]} 

\providecommand{\FokForTransNoArg}{\mathcal{F}} 
\providecommand{\FokForTrans}[1]{\FokForTransNoArg\left[#1\right]} 
\providecommand{\FokForTransPNoArg}{\FokForTransNoArg^+} 
\providecommand{\FokForTransMNoArg}{\FokForTransNoArg^-} 
\providecommand{\FokForTransPMNoArg}{\FokForTransNoArg^\pm} 
\providecommand{\FokForTransP}[1]{\FokForTransPNoArg\left[#1\right]} 
\providecommand{\FokForTransM}[1]{\FokForTransMNoArg\left[#1\right]} 
\providecommand{\FokForTransPM}[1]{\FokForTransPMNoArg\left[#1\right]} 
\providecommand{\FokInvTransNoArg}{\FokForTransNoArg^{-1}} 
\providecommand{\FokInvTrans}[1]{\FokInvTransNoArg\left[#1\right]} 

\providecommand{\FokRemTransNoArg}{\mathcal{R}}
\providecommand{\FokRemTrans}[1]{\FokRemTransNoArg\left[#1\right]}



\providecommand{\rFormalForTransNoArg}[1]{\mathscr{F}_{#1}} 
\providecommand{\rFormalForTrans}[2]{\rFormalForTransNoArg{#1}\left[#2\right]} 
\providecommand{\rFormalInvTransNoArg}[1]{\rFormalForTransNoArg{#1}^{-1}} 
\providecommand{\rFormalInvTrans}[2]{\rFormalInvTransNoArg{#1}\left[#2\right]} 

\providecommand{\rFokForTransNoArg}[1]{\mathcal{F}_{#1}} 
\providecommand{\rFokForTransPNoArg}[1]{\rFokForTransNoArg{#1}^+} 
\providecommand{\rFokForTransMNoArg}[1]{\rFokForTransNoArg{#1}^-} 
\providecommand{\rFokForTransPMNoArg}[1]{\rFokForTransNoArg{#1}^\pm} 
\providecommand{\rFokForTransP}[2]{\rFokForTransPNoArg{#1}\left[#2\right]} 
\providecommand{\rFokForTransM}[2]{\rFokForTransMNoArg{#1}\left[#2\right]} 
\providecommand{\rFokForTransPM}[2]{\rFokForTransPMNoArg{#1}\left[#2\right]} 
\providecommand{\rFokInvTransNoArg}[1]{\rFokForTransNoArg{#1}^{-1}} 
\providecommand{\rFokInvTrans}[2]{\rFokInvTransNoArg{#1}\left[#2\right]} 

\providecommand{\rFokRemTransNoArg}[1]{\mathcal{R}_{#1}} 
\providecommand{\rFokRemTrans}[2]{\rFokRemTransNoArg{#1}\left[#2\right]} 


\providecommand{\propertiesnuradius}{\mathscr{R}} 
\providecommand{\MMinors}{\BVec{M}} 
\providecommand{\CPVwithoutintegral}{\mathrm{p.v.}} 
\providecommand{\CPV}{\CPVwithoutintegral\hspace{-0.3em}\int} 

\providecommand{\complementary}{\ensuremath\mathrm{c}} 
\providecommand{\cuts}{\ensuremath\mathrm{cuts}} 
\providecommand{\principalpart}{\ensuremath\mathrm{pp}} 

\providecommand{\nall}{mn} 
\providecommand{\nallgeneral}{\ensuremath{n_{\mathrm{all}}}} 
\providecommand{\bccount}{\ensuremath{N}} 
\providecommand{\transpose}{\ensuremath{\top}} 
\providecommand{\conjtrans}{\ensuremath{\dag}} 

\definecolor{colorGAMMAcutsDARK}{rgb}{1,0.65,0}
\colorlet{colorGAMMAcuts}{colorGAMMAcutsDARK!25}
\definecolor{colorGAMMAaP}{rgb}{1,0.9,0.9}
\definecolor{colorGAMMAaM}{rgb}{0.9,1,0.9}
\definecolor{colorGAMMAzP}{rgb}{0.9,0.9,1}
\colorlet{colorGAMMAzM}{black!05}
\usepackage[most]{tcolorbox}
\newtcbox{\boxGAMMAc}[1][]{enhanced, colback=colorGAMMAcuts, frame style={opacity=0}, interior style={opacity=1}, nobeforeafter, tcbox raise base, shrink tight, extrude by=1mm, #1}
\newtcbox{\boxGAMMAaP}[1][]{enhanced, colback=colorGAMMAaP, frame style={opacity=0}, interior style={opacity=1}, nobeforeafter, tcbox raise base, shrink tight, extrude by=1mm, #1}
\newtcbox{\boxGAMMAaM}[1][]{enhanced, colback=colorGAMMAaM, frame style={opacity=0}, interior style={opacity=1}, nobeforeafter, tcbox raise base, shrink tight, extrude by=1mm, #1}
\newtcbox{\boxGAMMAzP}[1][]{enhanced, colback=colorGAMMAzP, frame style={opacity=0}, interior style={opacity=1}, nobeforeafter, tcbox raise base, shrink tight, extrude by=1mm, #1}
\newtcbox{\boxGAMMAzM}[1][]{enhanced, colback=colorGAMMAzM, frame style={opacity=0}, interior style={opacity=1}, nobeforeafter, tcbox raise base, shrink tight, extrude by=1mm, #1}

\newcommand{\AckYNCSRP}[2]{#1 gratefully acknowledges support from Yale-NUS College summer research programme #2.}
\newcommand{\AckYNCProj}[1]{#1 gratefully acknowledges support from Yale-NUS College project B grant IG18-PRB102.}

\author{
    S. Aitzhan\textsuperscript{\textasteriskcentered}, S. Bhandari\textsuperscript{\textdagger}, and D. A. Smith\textsuperscript{\textdagger\textdaggerdbl} \\
    \footnotesize\textsuperscript{\textasteriskcentered} Department of Mathematics, Drexel University, Philadelphia, PA, \\
    \footnotesize\textsuperscript{\textdagger} Division of Science, Yale-NUS College, Singapore, \\
    \footnotesize\textsuperscript{\textdaggerdbl} Corresponding author: \href{mailto:dave.smith@yale-nus.edu.sg}{\texttt{dave.smith@yale-nus.edu.sg}}
}
\newcommand\theauthorshort{S. Aitzhan, S. Bhandari, and D. A. Smith}
\title{Fokas diagonalization of piecewise constant coefficient linear differential operators on finite intervals and networks}
\renewcommand{\runningtitle}{Fokas diagonalization}
\date{\today}

\begin{document}
\maketitle
\thispagestyle{fancy}

\begin{abstract}
    We describe a new form of diagonalization for linear two point constant coefficient differential operators with arbitrary linear boundary conditions.
    Although the diagonalization is in a weaker sense than that usually employed to solve initial boundary value problems (IBVP), we show that it is sufficient to solve IBVP whose spatial parts are described by such operators.
    We argue that the method described may be viewed as a reimplementation of the Fokas transform method for linear evolution equations on the finite interval.
    The results are extended to multipoint and interface operators, including operators defined on networks of finite intervals, in which the coefficients of the differential operator may vary between subintervals, and arbitrary interface and boundary conditions may be imposed; differential operators with piecewise constant coefficients are thus included.
    Both homogeneous and inhomogeneous problems are solved.
\end{abstract}

\subsection*{MSC2010 classification}
35P10 (primary).
35C15, 35G16, 47A70 (secondary).

\subsection*{Keywords}
Spectral method for PDE,
Fourier transform,
Unified transform method,
Initial boundary value problem,
Interface problem.

\tableofcontents

\section{Introduction} \label{sec:Introduction}

Spectral methods for two point initial boundary value problems for linear evolution equations have been studied since Fourier introduced his eponymous transforms and series~\cite{Fou1822a}.
The classical Sturm-Liouville theory extends Fourier's results to selfadjoint second order differential operators with variable coefficients~\cite{SL1837a}.
The extensions necessary for nonselfadjoint differential operators and operators of higher spatial order fill many volumes (see, for example,~\cite{CL1955a,Dav2007a,DS1963a,GS1967a,GV1964a,RS1975a}) and have inspired a great deal of 20\textsuperscript{th} century mathematical physics, which we will not attempt to survey here.
Suffice it to note that Birkhoff's 1908 work~\cite{Bir1908b} identified certain ``regularity'' criteria on arbitrary order nonselfadjoint differential operators under which solution representations similar to the Sturm-Liouville generalised Fourier series exist, and the 21\textsuperscript{th} century monographs of Locker~\cite{Loc2000a,Loc2008a} and the many works cited in the survey of Freiling~\cite{Fre2012a} provide extensions of such results beyond the classes Birkhoff identified.

In advances essentially independent of the rich post Sturm-Liouville theory, Fokas and collaborators developed the Fokas transform method (or unified transform method) for solving such problems~\cite{Fok2008a}.
Although it arose initially in the setting of completely integrable nonlinear equations such as the nonlinear Schr\"{o}dinger and Korteweg-de Vries equations, it provides novel solutions even for linear evolution equations with constant coefficients.
This method is unusual among spectral methods for two point boundary value problems in that it provides a solution representation as an integral along a complex contour in spectral space, where other methods customarily yield a series in the spectral variable.
We provide below a survey of the development of the Fokas transform method or rather, as we shall discuss, the family of Fokas transform methods, but we begin by specifying the class of problems of primary interest in the present paper.

\subsection{Two point constant coefficient linear differential operators} \label{ssec:Introduction.Operator}

We study the general constant coefficient linear differential operators defined formally by
\begin{equation} \label{eqn:Lformal}
    \mathcal{L}\phi = \omega\left(-\ri\partial_x\right)\phi = (-\ri)^n\phi^{(n)} + \sum_{j=0}^{n-2} (-\ri)^j c_j \phi^{(j)},
\end{equation}
for the complex coefficient polynomial $\omega$ of degree $n\geq2$.
Without loss of generality, we have assumed that $c_n=1$ and $c_{n-1}=0$.

Let $\Phi=\AC^{n-1}[0,1]$, the space of complex valued functions on $[0,1]$ which are, along with their first $n-1$ derivatives, absolutely continuous.
For $k\in\{1,2\ldots,n\}$, we define two point \emph{boundary forms} $B_k:\Phi\to\CC$ by
\begin{equation} \label{eqn:BoundaryForms}
    B_k\phi = \sum_{j=1}^{n} \M{b}{k}{j}\phi^{(j-1)}(0) + \M{\beta}{k}{j}\phi^{(j-1)}(1),
\end{equation}
for $b,\beta$ two square matrices of complex \emph{boundary coefficients} such that the concatenated matrix $(b:\beta)$ has rank $n$.
We denote the vector of boundary forms $\BVec{B}=(B_1,B_2,\ldots,B_n)$.
The precise differential operator of interest is $L:\Phi_\BVec{B}\to \mathrm{L}^1[0,1]$ given by
\begin{equation} \label{eqn:L}
    L\phi = \mathcal{L}\phi,
\end{equation}
where
\begin{equation} \label{eqn:PhiB}
    \Phi_\BVec{B} = \{\phi\in\Phi: \BVec{B}\phi=\BVec0\}.
\end{equation}


\subsection{Two point initial boundary value problems} \label{ssec:Introduction.IBVP}

The operator $L$ represents the spatial part of the initial boundary value problem (IBVP)
\begin{subequations} \label{eqn:IBVP}
\begin{align}
    \label{eqn:IBVP.PDE} \tag{\theparentequation.PDE}
    \partial_t q(x,t) + a Lq(\argdot,t) &= 0 & (x,t) &\in (0,1) \times (0,T), \\
    \label{eqn:IBVP.IC} \tag{\theparentequation.IC}
    q(x,0) &= Q(x) & x &\in[0,1], \\
    \label{eqn:IBVP.BC} \tag{\theparentequation.BC}
    \BVec{B}q(\argdot,t) &= \BVec{0} & t &\in[0,T],
\end{align}
\end{subequations}
in which we assume $Q\in\Phi_\BVec{B}$.
For the problem to be wellposed, we must require at least that $\arg(a)\in[-\pi/2,\pi/2]$ and, if $n$ is odd, then $\arg(a)\in\{-\pi/2,\pi/2\}$.
If it happens that $\arg(a)\in\{-\pi/2,\pi/2\}$ (that is all odd order equations we consider and some even order equations such as the time dependent linear Schr\"{o}dinger equation), then we make the further restriction that all coefficients of $\omega$ are real.

If the operator $L$ is self adjoint or Birkhoff regular, then the approaches of~\cite{Bir1908b} are sufficient for the solution of IBVP~\eqref{eqn:IBVP}, yielding a solution representation as a series in the eigenfunctions of the differential operator $L$, with coefficients obtained via inner products with the eigenfunctions of its classical adjoint.
If the operator is Locker simply irregular, then the problem is solvable via~\cite{Loc2000a,Loc2008a} and the solution may still be represented as a series, albeit a series in not only the eigenfunctions but also the associated functions of $L$.
However, there exist natural and simply formulated examples of IBVP~\eqref{eqn:IBVP} for which no such approach is possible.
Indeed, it was known as early as 1915~\cite{Jac1915a,Hop1919a} that the operator specified by
\begin{equation} \label{eqn:IBVPJacksonExample}
    \omega(\la)=\la^3, \qquad B_1\phi=\phi(0), \qquad B_2\phi=\phi(1), \qquad B_3(\phi)=\phi'(0)
\end{equation}
has the inconvenient property that it is easy to construct polynomials whose eigenfunction expansions diverge everywhere in $(0,1)$.
Papanicolaou~\cite{Pap2011a} provides a more modern treatment of this divergence phenomenon.
With such a divergent series, it is clear that associated functions cannot be employed to save a series representation in terms of spectral functions.
Nevertheless, selecting $a=-\ri$, IBVP~\eqref{eqn:IBVP} is well posed and solvable using the Fokas transform method~\cite{Pel2004a,Pel2005a}.

\subsection[Survey of Fokas transform methods for IBVP~(\ref*{eqn:IBVP})]{Survey of Fokas transform methods for IBVP~\eqref{eqn:IBVP}} \label{ssec:Introduction.FokasMethods}
\subsubsection{Original version}
Although~\cite{FG1994a} contains many of the ingredients of the method, it is widely accepted that the Fokas transform method for linear evolution equations was first described in~\cite{Fok1997a}.
Both papers, like many of Fokas's early works on the method, also concern integrable nonlinear equations and equations in more than one spatial dimension.
Those results will not be discussed here.
In~\cite{Fok1997a}, the method is outlined for problems on the spatial half line $x\in(0,\infty)$.
The more detailed~\cite{Fok2000a} presents similar results as pertain to half line analogues of IBVP~\eqref{eqn:IBVP}, and announces the two point method that would be detailed in~\cite{FP2001a} a year later.
Fokas and Pelloni~\cite{FP2001a,Pel2002a,FP2005a} studied the two point IBVP~\eqref{eqn:IBVP} for simple separated boundary conditions: in each row of the concatenated matrix $(b:\beta)$ only one entry is nonzero.
The generalisation to the full class of two point boundary forms~\eqref{eqn:BoundaryForms} was begun in~\cite{Chi2006a} and completed by Smith~\cite{Smi2011a,Smi2012a,Smi2012b}, but only for PDE in which $\omega$ is monomial.
That the method has never been implemented in general for the full IBVP~\eqref{eqn:IBVP} may be due to the complexity of the construction of the linear system central to stage~2 (see below) in~\cite{Smi2012a}.

All of these works use essentially the same ``three stage method'' implementation of the Fokas transform method:
\begin{enumerate}
    \item[1.]{
        Under the assumption of existence of a solution to IBVP~\eqref{eqn:IBVP}, derive two equations satisfied by the solution: the \emph{global relation}, which relates time transforms of the $2n$ boundary values $\partial_x^jq(0,\argdot),\partial_x^jq(1,\argdot)$ to the finite interval Fourier transforms of the initial datum and the solution at final time $T$,
        and
        the a complex contour integral representation (or \emph{Ehrenpreis form}), which provides a representation of the solution $q(x,t)$ in terms of the initial datum and all $2n$ of the time transformed boundary values.

        The global relation is obtained via an application of Green's theorem to the rectangular spacetime domain.
        The derivation of the Ehrenpreis form is considerably more involved.
        It begins with the derivation of a scalar first order Lax pair formulation of~\eqref{eqn:IBVP.PDE} to introduce a spectral parameter whose domain is naturally complex.
        Because the temporal and spatial Lax equations are both first order (regardless of $n$), the system is straightforward to integrate, yielding particular solutions independent of the paths of their spacetime integrals.
        Using four such solutions, with path integrals originating at each of the four spacetime corners, a scalar additive Riemann-Hilbert problem is posed and solved (readily, via the Plemelj formulae) to provide a useful complex contour integral representation of the solution to the Lax pair.
        By comparison with the spatial Lax ODE, the Ehrenpreis solution representation is recovered.
        The complex contour integrals in the Ehrenpreis form are inherited from the jump contours of the Riemann-Hilbert problem.
        In the Riemann-Hilbert problem setting, the contours arise naturally as the asymptotic boundaries of the $\mathcal{O}(\abs{\la}^{-1})$ decay condition.

        The ``Lax pair to Riemann-Hilbert problem'' part of this derivation is often described as the ``simultaneous spectral analysis of the Lax pair'', to distinguish it from the purely spatial integration typical of inverse scattering methods.
        Indeed, the four particular integral solutions of the Lax system would be identified as Jost functions in the inverse scattering parlance, but are unusual in that they represent spacetime path integrals rather than just spatial integrals.

        This stage does not use the boundary conditions at all, and therefore need not be reimplemented for subsequent problems in which only the boundary conditions are changed.
    }
    \item[2.]{
        Retaining the assumption of existence, and observing that the Ehrenpreis form relies on boundary values which are not data of the problem, one derives from the global relation a system of $n$ equations in the $2n$ time transformed boundary values.
        Supplemented by time transforms of the $n$ boundary conditions~\eqref{eqn:IBVP.BC}, this provides a full rank system for all the unknown quantities in the Ehrenpreis form.
        There remains the issue that the expressions thus derived feature terms dependent on the Fourier transform of the solution at final time.
        Fortunately, it turns out that, once substituted into the Ehrenpreis form, the specific contour integrals of those problematic terms evaluate to zero or, more generally, may be replaced by other terms whose evaluation is possible using only data of the problem.
        The latter is justified using a Cauchy's theorem and Jordan's lemma argument to deform infinite contours and, if necessary, the global relation to validate a substitution of residues.

        For problems with monomial spatial differential operator or simple separated boundary conditions, it is known that the success of removing the problematic terms is equivalent to wellposedness of the IBVP~\cite{Pel2004a,Smi2012a,Smi2012b}.
        In general, this is open.

        At the conclusion of this stage, one has, under the assumption of existence, derived a representation of the solution.
        As this representation must be satisfied by all solutions of the problem, one has also proved unicity of the solution.
    }
    \item[3.]{
        Beginning with the ``solution representation'' obtained in the previous stage, but regarding it as an ansatz of irrelevant provenance, one directly proves that the function $q(x,t)$ defined by this formula satisfies IBVP~\eqref{eqn:IBVP}.
        This establishes existence of a solution and bootstraps the method.

        Because the solution representation is spectral, it has particularly simple $(x,t)$ dependence.
        Moreover, the contour integrals converge uniformly on arbitrarily large closed subsets of the spacetime domain (in some cases on the full domain), justifying spatial and temporal differentiation under the integrals.
        Readily,~\eqref{eqn:IBVP.PDE} follows.
        The Fourier inversion theorem may be used to justify~\eqref{eqn:IBVP.IC}.
        The boundary conditions are more complicated to justify, but essentially rely on the same kind of Jordan's lemma arguments that were used at the end of stage~2.
    }
\end{enumerate}

\subsubsection{Simplified version}
As hinted at above, the original version of the Fokas transform method is grounded in its origins as an inverse scattering transform method for completely integrable nonlinear evolution equations on domains with boundaries.
That the Riemann-Hilbert implementation of the first stage is nothing more than an artifact of this heritage was identified by Fokas in~\cite{Fok2002a} and used again in~\cite{FP2005a}.
The simplified approach is favoured in particular by Deconinck's group.
Among many generalisations of the method, to evolution equations with mixed spacetime derivatives~\cite{DV2013a}, to systems of evolution equations~\cite{DGSV2018a}, and especially to interface problems~\cite{DPS2014a,DS2014b,SS2015a,DSS2016a,DS2020a,STV2019a}, they provided the most accessible introductions to the method for half line and two point IBVP~\cite{DT2012a,DTV2014a}.

The simplified reimplementation of the Fokas transform method still follows the same three stage method but, in the first stage, the Ehrenpreis form is instead derived as follows.
Beginning with the global relation, the Fourier inversion theorem is applied to derive an equation similar to the Ehrenpreis form but with integrals over $(-\infty,\infty)$.
These real integrals are understood as contour integrals and, via Jordan's lemma, deformed away from the real line.

The major drawback of this approach is that it is rather less obvious than in the Riemann-Hilbert method where the complex contours should lie.
With the new implementation, one must simply deform ``as far Jordan's lemma will allow'' to derive the Ehrenpreis form; that this was the right deformation is only justified in retrospect, when it turns out that the removal of problem terms later in stage~2 requires that precisely that deformation had been performed in stage~1.
A secondary drawback is that the lack of Riemann-Hilbert formalism obfuscates the parallels with the integrable nonlinear version of the Fokas transform method, but that only matters for PDE which are linearizations of integrable nonlinear models.
However, the significant advantage that it requires of the reader much less advanced complex analysis has made the simplified version popular.
Indeed, it has been generalized by others to solve problems with multipoint conditions~\cite{PS2018a}, nonlocal conditions~\cite{MS2018a}, and conditions involving time dependent boundary forms~\cite{GPV2019a,ST2021a}.
It has also been used to analyse the effects of discontinuous data~\cite{BT2019a} and corner singularities~\cite{BT2019b}.

\subsubsection{True transform version}
Yet a third, and more substantially different, implementation of the Fokas transform method was introduced in~\cite{FS2016a} by Fokas and Smith for IBVP~\eqref{eqn:IBVP} with monomial $\omega$, and adapted to half line problems~\cite{Smi2015a,PS2016a}.
These papers attempted to answer the question ``in what sense does the Fokas transform method provide the spectral representation of the differential operator $L$?''.
The answer to this question was already well understood in the case that $L$ belongs to one of the classes of operators for which series spectral representations exist~\cite{Chi2006a,PS2013a}.
But, despite special cases such as~\eqref{eqn:IBVPJacksonExample} having received some attention~\cite{Pel2005a}, there was no general analysis applicable to the more interesting two point operators for which a series expansion in the eigenfunctions diverges.
It was determined that such operators have a spectral representation via a continuously parametrized family of spectral functions, up to a remainder functional, and that the remainder functional divided by the eigenvalue is in the kernel of the inverse Fokas transform.
That the forward Fokas transform coincides with this continuous spectral representation explains its success where spectral series methods failed.

On the way to answering the spectral representation question, it was convenient to view the Fokas transform method as something much more like the classical Fourier sine transform and Fourier cosine transform methods for solving the Dirichlet and Neumann heat problems on the half line.
It had been understood previously that, having gone to the trouble of implementing the Fokas transform method (via the original or simplified approach), one may extract an integral transform pair.
Indeed, although it was derived by very different means, one may, at least formally, interpret the solution representation as the compounded effect of applying the forward transform to the initial datum, evolving in time, and applying the inverse transform to the result.
However, with no way to derive this transform pair other than the full implementation of the Fokas transform method, this observation served little purpose.
In~\cite{FS2016a}, we were able to write down the definition of the transform pair in advance of its justification.
The method then takes a very different form.
With the transform pair already validated via the formulation of an inversion theorem, it only remains to follow the classical Fourier sine transform method as closely as possible (see~\S\ref{sec:Classical} for details).

The greatest advantage of the true transform version of the method is that it is fully algorithmic.
The formulae for the transform pair are complicated, requiring construction of the classical adjoint of $L$, but are explicit, so may easily be implemented programatically~\cite{Xia2019a}.
Although the application of the transform and its inverse is more complicated than are the equivalent steps for the classical Fourier sine transform, this process is still orders of magnitude quicker and easier than the implementation of stages~1 and~2 per the original or simplified versions of the Fokas transform method.
One disadvantage of this approach is that it is less instructive; there is no indication in~\cite{FS2016a} how the general transform pair was derived.
Indeed, at the time this work was done, the formulae for the transform pair with $\omega$ monomial were already known~\cite{Smi2012a}, so~\cite{FS2016a} needed but to reexpress the existing formulae in spectral language and offer new spectral theoretic proofs.
Significant as this disadvantage may seem to mathematicians, it will concern engineers and applied scientists less.
Most of those who would use the Fourier sine transform desire only to know the validity of its inversion theorem; the details of the proof of that validity, much more how one might derive the transform pair from first principles, are irrelevant.
The other problem with the true transform version of the Fokas transform method is that it has not yet been implemented for any IBVP in which $\omega$ is not monomial.

\subsection{Aims~\&~layout of this work} \label{ssec:Introduction.Aims}
The present work aims to solve the latter problem by implementing the true transform version of the Fokas transform method for IBVP~\eqref{eqn:IBVP} in general.
Moreover, the Fokas transform method is also implemented for a broad class of problems~\eqref{eqn:Interface.IBVP}, which includes all initial interface boundary value problems and initial multipoint boundary value problems.
In doing so, we aim also to hint at how the formulae for the transform pair might be arrived at without the benefit of having already completed the implementation of either earlier version of the Fokas transform method.
Indeed, still no such original or simplified formulation exists in general, so this is the first version of the Fokas transform method for general problems~\eqref{eqn:Interface.IBVP} and, for many such problems in which eigenfunction expansions diverge, the first spectral solution method.
In the process, we formulate and prove precise diagonalization spectral representation theorems for operators $L$ defined by equation~\eqref{eqn:L} and interface generalizations thereof.
For two point operators with monomial $\omega$, the diagonalization theorem reduces to that of~\cite{FS2016a}, but here it is presented in a way that more clearly highlights its utility for IBVP and transform methods.

We begin by reviewing in~\S\ref{sec:Classical} the classical Fourier series and transform methods for the heat equation, paying close attention to aspects of the argument that admit generalization and criteria that might be relaxed.
We also give a sketch of the desired new implementation of the Fokas transform method.
In~\S\ref{sec:TransformPair}, we define the Fokas transform pair in general, and prove its validity.
The diagonalization results are formulated and proved in~\S\ref{sec:Diagonalization}, realising the formal conjectures of~\S\ref{sec:Classical}.
Using the theorems of the previous sections, the Fokas transform method itself is implemented in~\S\ref{sec:Method}.
The necessary generalizations for the method to admit inhomogeneities in both the PDE and the boundary conditions are presented in~\S\ref{sec:Inhomogeneous}.
Each step is repeated in the more complicated setting of IIVP in~\S\ref{sec:Interface}.
The method relies on the construction of the classical adjoint of an ordinary differential operator with arbitrary interface and boundary conditions, which appears to be novel in general, so is presented in appendix~\S\ref{sec:AdjointOperator}.
Concluding remarks and some open problems appear in~\S\ref{sec:Conclusion}.

Throughout~\S\ref{sec:TransformPair}--\ref{sec:Method}, we provide a worked example to illustrate the general results in a particular case.
The various parts of the example appear as examples~\ref{eg:Separated3Ord.part1},~\ref{eg:Separated3Ord.part2},~\ref{eg:Separated3Ord.part3},~\ref{eg:Separated3Ord.part4},~\ref{eg:Separated3Ord.part5},~\ref{eg:Separated3Ord.part6}, and~\ref{eg:Separated3Ord.part7}.

\subsection{Informal summary of results} \label{ssec:Introduction.ResultsSummary}
The results presented in this paper are rigourously obtained under precise technical assumptions, but we present here an informal summary of the results without the technical requirements.
\begin{description}
    \item[Definition of transform pair.]{
        In~\S\ref{sec:TransformPair}, for each operator $L$, we define the Fokas transform pair for unit interval functions $\phi$ by
        \[
            \phi \mapsto \FokForTrans{\phi} \qquad\mbox{and}\qquad F \mapsto \FokInvTrans{F}.
        \]
        Under a (not very restrictive) assumption on $L$, we prove the validity of the Fokas transform pair theorem~\ref{thm:FokasTransformValid}:
        \[
            \FokInvTrans{\FokForTrans{\phi}}(x) = \phi(x).
        \]
    }
    \item[Diagonalization.]{
        In~\S\ref{sec:Diagonalization}, we prove a new kind of diagonalization result that describes how the Fokas transform pair interacts with the operator $L$.
        We show that
        \[
            \FokForTrans{L\phi}(\la) = \la^n \FokForTrans{\phi}(\la) + \FokRemTrans{\phi}(\la),
        \]
        and the remainder transform $\FokRemTransNoArg$ is, in a certain sense, in the kernel of the inverse Fokas transform.
        The precise statement is in theorem~\ref{thm:Diag}.
    }
    \item[Solution methods.]{
        In~\S\ref{sec:Method}, we use the diagonalization result to justify that the Fokas transform pair can be used to solve IBVP~\eqref{eqn:IBVP}.
        In the proof of theorem~\ref{thm:FokasMethodSolRep}, we implement a recognizable transform method to show that the solution $q$ of IBVP~\eqref{eqn:IBVP} satisfies
        \[
            q(x,t) = \FokInvTrans{\re^{-at\argdot^n}\FokForTrans{Q}}(x).
        \]
        In~\S\ref{sec:Inhomogeneous}, the extensions to the diagonalization and solution method results are implemented so that inhomogeneous problems may also be solved.
        The effect is to insert into the above solution equation certain additional terms which are defined by appropriate applications of the Fokas transform pair to the inhomogeneities.
        The main result for inhomogeneous IBVP is theorem~\ref{thm:Inhomogeneous.FokasMethodSolRep}.
    }
\end{description}
The results for IIVP appear formally similar to those described above but with appropriate vectorization of the operators, transforms, etc.

\subsubsection*{Notational conventions}
We use notation $X^\star$ to denote an object that is, in some sense made explicit at the point of definition, adjoint to the object $X$.
The complex conjugate of $X\in\CC$ is denoted $\overline{X}$.
The closure of a set $X$ is denoted $\clos(X)$.
Sign superscripts $X^+$ highlight contrast with $X^-$, and usually relate, in some sense, to the upper and lower complex half planes respectively, as in $\CC^\pm$.
The function $\sqrt[n]{\argdot}$ is the principle branch of the $n$\textsuperscript{th} root function, having branch cut along the negative real axis.
We denote the circle and open disc, centered at $\la\in\CC$ with radii $\epsilon>0$, by $C(\la,\epsilon)$ and $D(\la,\epsilon)$, respectively.
The open annulus centred at $\la\in\CC$ with inner radius $\epsilon>0$ and outer radius $\delta>\epsilon$ is denoted $A(\lambda,\epsilon,\delta)$.
We use $\cdot$ for the anonymous argument of a function, $\odot$ for the ordinary unweighted sesquilinear dot product on the vector space $\CC^n$, and $\circ$ for entrywise multiplication of finite lists, and entrywise action of lists of operators on lists of functions.

We use a broad array of Fourier type transforms in this work, summarized in table~\ref{tbl:Transforms}.
The notation $\GenericForTrans{\phi}(\la)$ represents the transform $\GenericForTransNoArg$ of the function $\phi$ evaluated at spectral point $\la$.
When applying one dimensional transforms in the first argument of functions of more than one variable, and provided the meaning is clear, we use the notational shorthand $\GenericForTrans{q}(\la;t)$ to represent $\GenericForTrans{q(\argdot,t)}(\la)$.

\begin{table}[ht]
    \centering
    \footnotesize
    \begin{tabular}{l|lll}
        Class & Notation & Description & Definition \\
        \hline
        \multirow{3}{*}{Generic} & $\GenericForTransNoArg$ & generic Fourier type transform & \S\ref{ssec:Classical.Sketch} \\
         & $\GenericInvTransNoArg$ & generic inverse Fourier type transform & \S\ref{ssec:Classical.Sketch} \\
         & $\GenericRemTransNoArg$ & generic remainder type transform & \S\ref{ssec:Classical.Sketch} \\
        \hline
        \multirow{4}{*}{Fourier} & $\FourRealTransNoArg$ & complex Fourier transform & \eqref{eqn:ClassicalFourierTransform.Defn} \\
         & $\FourHalfTransNoArg$ & complex Fourier transform of function supported on $[0,\infty)$ & \S\ref{ssec:Classical.FourierTransform} \\
         & $\FourSineTransNoArg$ & Fourier sine transform & \eqref{eqn:FourierSineTransformDefn} \\
         & $\FourUnitTransNoArg$ & complex Fourier transform of function supported on $[0,1]$ & \eqref{eqn:FourUnitTrans.Defn} \\
        \hline
        \multirow{6}{*}{Fokas $[0,1]$} & $\FormalForTransNoArg$ & formal Fokas transform & \eqref{eqn:TransformPair.Formal.Defn.Forward} \\
         & $\FormalInvTransNoArg$ & inverse formal Fokas transform & \eqref{eqn:TransformPair.Formal.Defn.Inverse} \\
         & $\FokForTransNoArg$ & Fokas transform & \eqref{eqn:FokasForwardTransformDefinition} \\
         & $\FokForTransPMNoArg$ & left/right Fokas transform & \eqref{eqn:FokasForwardTransformDefinition.LeftRight} \\
         & $\FokInvTransNoArg$ & inverse Fokas transform & \eqref{eqn:FokasInverseTransformDefinition} \\
         & $\FokRemTransNoArg$ & Fokas remainder transform & thm~\ref{thm:Diag} \\
        \hline
        \multirow{8}{*}{Fokas interface} & $\rFormalForTransNoArg{r}$ & formal Fokas transform for operator $\mathcal{L}_r$ & \eqref{eqn:Interface.TransformPair.Formal.Defn.Forward} \\
         & $\rFormalInvTransNoArg{r}$ & inverse formal Fokas transform for operator $\mathcal{L}_r$ & \eqref{eqn:Interface.TransformPair.Formal.Defn.Inverse} \\
         & $\FokForTransNoArg$ & Fokas transform & \eqref{eqn:Interface.FokasForwardTransformDefinition} \\
         & $\rFokForTransPMNoArg{r}$ & $r$\textsuperscript{th} component of the left/right Fokas transform & \eqref{eqn:Interface.FokasForwardTransformDefinition.LeftRight} \\
         & $\FokInvTransNoArg$ & inverse Fokas transform & \eqref{eqn:Interface.FokasInverseTransformDefinition} \\
         & $\rFokInvTransNoArg{r}$ & $r$\textsuperscript{th} component of the inverse Fokas transform & \eqref{eqn:Interface.FokasInverseTransformDefinition} \\
         & $\FokRemTransNoArg$ & Fokas remainder transform & thm~\ref{thm:Interface.Diag} \\
         & $\rFokRemTransNoArg{r}$ & $r$\textsuperscript{th} component of the Fokas remainder transform & proof of thm~\ref{thm:Interface.Diag}
    \end{tabular}
    \caption{
        Notation for Fourier type transforms
    } \label{tbl:Transforms}
\end{table}

\section{Inspiration from classical Fourier methods} \label{sec:Classical}

Problems similar to problem~\eqref{eqn:IBVP} have been studied using a variety of classical methods.
To introduce the new implementation of the Fokas transform method, it is valuable to review the classical methods to which it is most closely related: the Fourier transform and Fourier series methods.
In this section, we shall critically review those methods, drawing attention to both the desirable features and the inconvenient traits of each classical method.
To simplify the presentation, avoiding unnecessarily technical arguments and complicated expressions, we will focus on specific elementary examples involving smooth functions, rather than attempting to state any theorems on the general applicability of these methods.

\subsection{Fourier transform method} \label{ssec:Classical.FourierTransform}

Consider the full line heat problem
\begin{subequations} \label{eqn:ClassicalFourierTransform.IBVP}
\begin{align}
    \label{eqn:ClassicalFourierTransform.IBVP.PDE} \tag{\theparentequation.PDE}
    \partial_t q(x,t) - \partial_{xx} q(x,t) &= 0 & (x,t) &\in \RR \times (0,T), \\
    \label{eqn:ClassicalFourierTransform.IBVP.IC} \tag{\theparentequation.IC}
    q(x,0) &= Q(x) & x &\in\RR, \\
    \label{eqn:ClassicalFourierTransform.IBVP.BC} \tag{\theparentequation.BC}
    q(\argdot,t) &\in \mathcal{S}(\RR) & t &\in[0,T],
\end{align}
\end{subequations}
in which $\mathcal{S}(\RR)$ represents the Schwartz space of smooth rapidly decaying functions.
Taking a Fourier transform in space,
\begin{equation} \label{eqn:ClassicalFourierTransform.1}
    \FourRealTrans{\partial_tq}(\la;t) - \FourRealTrans{\partial_{xx}q}(\la;t) = 0,
\end{equation}
where $\FourRealTransNoArg$ is the usual complex Fourier transform
\begin{equation} \label{eqn:ClassicalFourierTransform.Defn}
    \FourRealTrans{\phi}(\la) = \int_{-\infty}^\infty \re^{-\ri\la x} \phi(x)\D x.
\end{equation}

As any student can recite, ``the Fourier transform turns differentiation into multiplication'' or, phrased in terms of linear algebra, ``the Fourier transform diagonalizes the derivative operator''.
Precisely, integration by parts twice yields
\begin{align*}
    \FourRealTrans{\frac{\D^2}{\D x^2}\phi}(\la) &= \int_{-\infty}^\infty \re^{-\ri\la x} \phi''(x) \D x \\
    &= \left[ \re^{-\ri\la x} \left( \phi'(x) + \ri\la\phi(x) \right) \right]_{x=-\infty}^{x=\infty} - \la^2 \int_{-\infty}^\infty \re^{-\ri\la x} \phi(x) \D x \\
    &= \lim_{x\to\infty}\left[ \re^{-\ri\la x}\left(\phi'(x)+\ri\la\phi(x)\right) \right] - \lim_{y\to-\infty}\left[ \re^{-\ri\la y}\left(\phi'(y)+\ri\la\phi(y)\right) \right] - \la^2 \FourRealTrans{\phi}(\la).
\end{align*}
The exponentials are oscillatory and, for $\phi\in\mathcal{S}(\RR)$, the functions $\phi$ and $\phi'$ are decaying, so the boundary terms all evaluate to $0$.
We conclude that
\[
    \FourRealTrans{\frac{\D^2}{\D x^2}\phi}(\la) = - \la^2 \FourRealTrans{\phi}(\la).
\]
The differential operator has been ``diagonalized'' in the sense that it has been reduced to a multiplication operator in the spectral space.

Applying this to equation~\eqref{eqn:ClassicalFourierTransform.1}, and assuming sufficient smoothness in $t$, we find
\[
    \frac{\D}{\D t} \FourRealTrans{q}(\la;t) + \la^2 \FourRealTrans{q}(\la;t) = 0.
\]
For each $\la\in\RR$, this is an ODE for $\FourRealTrans{q}(\la;\argdot)$.
Applying the Fourier transform to the initial condition~\eqref{eqn:ClassicalFourierTransform.IBVP.IC} of the IBVP provides, for each $\la\in\RR$, an initial condition for the corresponding ODE.
Solving the ODE initial value problem, we find that
\[
    \FourRealTrans{q}(\la;t) = \re^{-\la^2t} \FourRealTrans{Q}(\la).
\]
The Fourier inversion theorem now implies that
\[
    q(x,t) = \frac{1}{2\pi} \int_{-\infty}^\infty \re^{\ri\la x-\la^2t}\FourRealTrans{Q}(\la) \D\la.
\]

This method relied crucially on the diagonalization property of the Fourier transform.
Now consider a slightly different problem, the half line homogeneous Dirichlet heat problem
\begin{subequations} \label{eqn:ClassicalFourierTransform.IBVP2}
\begin{align}
    \label{eqn:ClassicalFourierTransform.IBVP2.PDE} \tag{\theparentequation.PDE}
    \partial_t q(x,t) - \partial_{xx} q(x,t) &= 0 & (x,t) &\in (0,\infty) \times (0,T), \\
    \label{eqn:ClassicalFourierTransform.IBVP2.IC} \tag{\theparentequation.IC}
    q(x,0) &= Q(x) & x &\in[0,\infty), \\
    \label{eqn:ClassicalFourierTransform.IBVP2.BC} \tag{\theparentequation.BC}
    q(0,t) &= 0, \qquad q(\argdot,t) \in \mathcal{S}[0,\infty) & t &\in[0,T],
\end{align}
\end{subequations}
where $Q$ is known and it is assumed that $Q\in\mathcal{S}[0,\infty)$, $Q(0)=0$.
Here,
\[
    \mathcal{S}[0,\infty) = \left\{ \phi = \psi\big\rvert_{[0,\infty)} : \psi\in\mathcal{S}(\RR) \right\}.
\]

Na\"{\i}vely attempting to apply the same method as above, one might begin by deriving the interaction of the half line Fourier transform with the half line homogeneous Dirichlet heat operator:
\begin{align*}
    \FourHalfTrans{\frac{\D^2}{\D x^2}\phi}(\la) &= \int_{0}^\infty \re^{-\ri\la x} \phi''(x) \D x \\
    &= \lim_{x\to\infty}\left[ \re^{-\ri\la x}\left(\phi'(x)+\ri\la\phi(x)\right) \right] - \left( \phi'(0) + \ri\la\phi(0) \right) - \la^2 \FourHalfTrans{\phi}(\la) \\
    &= 0 -\phi'(0)-\ri\la0 - \la^2 \FourHalfTrans{\phi}(\la);
\end{align*}
the rapid decay and homogeneous Dirichlet boundary condition $\phi(0)=0$ ensure that three boundary terms evaluate to $0$, but one boundary term remains.
So the ODE from applying the spatial Fourier transform to equation~\eqref{eqn:ClassicalFourierTransform.IBVP2.PDE} is not so simple:
\[
    \frac{\D}{\D t} \FourHalfTrans{\phi}(\la) + \la^2 \FourHalfTrans{\phi}(\la) + q_x(0,t) = 0.
\]

At this point, one reaches for a new approach.
The Fourier sine transform $\FourSineTransNoArg$, defined by
\begin{equation} \label{eqn:FourierSineTransformDefn}
    \FourSineTrans{\phi}(\la) = \int_0^\infty \sin(\la x) \phi(x) \D x,
\end{equation}
for $\phi\in \mathcal{S}([0,\infty))$ with $\phi(0)=0$,
has the property
\begin{align*}
    \FourSineTrans{\frac{\D^2}{\D x^2}\phi}(\la) &= \lim_{x\to\infty}\left[ \sin(\la x)\phi'(x)-\la\cos(\la x)\phi(x) \right] \\
    &\hspace{10em} - \left( \sin(0)\phi'(0) - \la\cos(0)\phi(0) \right) - \la^2 \FourSineTrans{\phi}(\la) \\
    &= - \la^2 \FourSineTrans{\phi}(\la),
\end{align*}
so we get a simple ODE in $t$ for $\FourSineTrans{\phi}(\la)$ and proceed as before.

In this case, the half line Fourier transform was the wrong tool to solve the problem, because it failed to diagonalize the differential operator; the Fourier sine transform successfully diagonalized the differential operator, so it was perfect for solving the given problem.
Similarly, for the half line homogeneous Neumann heat problem, we use the Fourier cosine transform, and $h$ transforms can be used for Robin problems~\cite{Pin2011b}.
There appears to be a unique transform (pair) suited to each half line homogeneous problem.
Choosing the right one relies on analysis of how the transform interacts with the boundary conditions to diagonalize the differential operator.
Moreover, for half line problems of third order, none of the classical half line Fourier transforms mentioned above have the desired property, and it is not easy, a priori, to derive the ``right'' transform.

\subsection{Fourier series method} \label{ssec:Classical.FourierSeries}

Consider the finite interval homogeneous Dirichlet heat problem
\begin{subequations} \label{eqn:ClassicalFourierTransform.IBVP3}
\begin{align}
    \label{eqn:ClassicalFourierTransform.IBVP3.PDE} \tag{\theparentequation.PDE}
    \partial_t q(x,t) - \partial_{xx} q(x,t) &= 0 & (x,t) &\in (0,1) \times (0,T), \\
    \label{eqn:ClassicalFourierTransform.IBVP3.IC} \tag{\theparentequation.IC}
    q(x,0) &= Q(x) & x &\in[0,1], \\
    \label{eqn:ClassicalFourierTransform.IBVP3.BC} \tag{\theparentequation.BC}
    q(0,t) &= 0, \qquad q(1,t) = 0 & t &\in[0,T],
\end{align}
\end{subequations}
where $Q\in\mathrm{C}^\infty[0,1]$ and $Q(0)=Q(1)=0$.
Using separation of variables, one begins by seeking solutions of equations~\eqref{eqn:ClassicalFourierTransform.IBVP3.PDE} and~\eqref{eqn:ClassicalFourierTransform.IBVP3.BC} of the form $q(x,t)=X(x)\tau(t)$.
One arrives at a temporal ODE and the Sturm Liouville problem
\begin{gather*}
    -X''(x) = \la^2 X(x), \\
    X(0) = 0, \qquad X(1) = 0,
\end{gather*}
which is the eigenvalue problem for the finite interval Dirichlet heat operator.
This problem is readily solved, with eigenvalues $\la_j^2=(j\pi)^2$, for positive integers $j$, and corresponding eigenfunctions $X_j(x) = \sin(\la_jx)$.
The temporal ODE has matching solutions $T_j(x) = \re^{-\ri\la_j^2t}$.
Hoping to find a solution of the full problem~\eqref{eqn:ClassicalFourierTransform.IBVP3}, one appeals to the principle of linear superposition and makes the ansatz
\[
    q(x,t) = \sum_{j=1}^\infty C_j X_j(x) T_j(x),
\]
for some coefficients $C_j$.

The validity of this series expansion ansatz (and the fact the $C_j$ may be calculated via inner products of the initial datum with the corresponding $X_j$) follows from the fact that the eigenfunctions of the finite interval Dirichlet heat operator form a basis.
The Sturm-Liouville theory guarantees this basis property for a wide variety of second order differential operators, and the theory of Birkhoff~\cite{Bir1908b} provides similar theorems about expansion in complete biorthogonal systems for certain classes of nonselfadjoint differential operators of higher order.
But completeness of eigenfunctions (and associated functions; see, for example,~\cite{Loc2000a}) is a crucial feature if one hopes to achieve a spectral series representation.
One need not reach for a complicated example to construct an operator whose eigenfunctions do not form a complete system, the heat operator with boundary conditions $X(0)=X'(0)=0$ will do, but such second order operators correspond to illposed IBVP.
However, it has been known since 1915~\cite{Jac1915a,Hop1919a} and rediscovered several times~\cite{Pel2005a,Pap2011a} (see~\cite{Smi2012a,Smi2012b} for some other examples), that there exist well posed IBVP of the form~\eqref{eqn:IBVP} whose eigenfunctions and associated functions do not form a complete system.
Although such degeneracies occur only for odd order, they need not be especially obscure examples; the third order IBVP
\begin{subequations} \label{eqn:3ordIBVP}
\begin{align}
    \tag{\theparentequation.PDE} \partial_t q(x,t) + \partial_{xxx} q(x,t) &= 0 & (x,t) &\in (0,1) \times (0,T), \\
    \tag{\theparentequation.IC} q(x,0) &= Q(x) & x &\in[0,1], \\
    \tag{\theparentequation.BC} q(0,t) &= 0, \qquad q(1,t) = 0, \qquad q_x(1,t) = 0 & t &\in[0,T]
\end{align}
\end{subequations}
has this property.

\subsection{Fourier series method as a transform method} \label{ssec:Classical.FourierSeriesAsTransform}

We can view the Fourier series method as a transform method too.
From this perspective, for IBVP~\eqref{eqn:ClassicalFourierTransform.IBVP3} the forward transform is the map
\[
    \phi \mapsto (C_j)_{j\in\NN}, \qquad C_j = 2\int_0^1 \sin(j\pi x)\phi(x) \D x,
\]
and the inverse transform is the map
\[
    (C_j)_{j\in\NN} \mapsto \phi, \qquad \phi(x) = \sum_{j=1}^\infty C_j \sin(j\pi x).
\]
In more ``transform'' notation, the forward transform $\FourSineSeriesNoArg$ and inverse transform $\FourInvSineSeriesNoArg$ are
\begin{equation*}
    \FourSineSeries{\phi}(j) = 2\int_0^1 \sin(j\pi x)\phi(x) \D x, \qquad\qquad
    \FourInvSineSeries{(C_j)_{j\in\NN}}(x) = \sum_{j=1}^\infty C_j \sin(j\pi x).
\end{equation*}
Because, for smooth $\phi$ with $\phi(0)=0=\phi(1)$,
\begin{equation} \label{eqn:ClassicalFourierTransform.FourSineSeriesDiagonalization}
    \FourSineSeries{\frac{\D^2}{\D x^2}\phi} = -(j\pi)^2 \FourSineSeries{\phi},
\end{equation}
applying the Fourier sine series transform to~\eqref{eqn:ClassicalFourierTransform.IBVP3.PDE} we find that
\[
    \frac{\D}{\D t} \FourSineSeries{q}(\la;t) + (j\pi)^2 \FourSineSeries{q}(\la;t)=0.
\]
Solving the ODE and applying~\eqref{eqn:ClassicalFourierTransform.IBVP3.IC}, we find
\(
    \FourSineSeries{q}(\la;t) = \re^{-(j\pi)^2t}\FourSineSeries{Q}(\la;t),
\)
and an application of the inverse transform $\FourInvSineSeriesNoArg$ completes the solution of IBVP~\eqref{eqn:ClassicalFourierTransform.IBVP3}.

With the transform method view of the Fourier series method, separation of variables becomes a convenient technique for the derivation of the transform pair.
Validity of the inverse transform relies on the basis properties of the eigenfunctions.
As discussed above, this is a strong criterion for the inversion theorem.
Other than the invertibility of the transform, the essential prerequisite for the above method was diagonalization equation~\eqref{eqn:ClassicalFourierTransform.FourSineSeriesDiagonalization}.
Let us attempt to rederive the forward transform using this as the starting point, but without (or without such obvious a priori expectation of) the strong ``completeness of eigenfunctions'' ansatz implicit in separation of variables.

Assuming the forward transform represents a sesquilinear inner product with some function $X(\argdot,j)$, equation~\eqref{eqn:ClassicalFourierTransform.FourSineSeriesDiagonalization} implies
\begin{equation} \label{eqn:ClassicalFourierTransform.SeriesAsTransform.AdjointEquation}
    -(j\pi)^2 \left\langle \phi , X(\argdot,j) \right\rangle = \left\langle \mathcal{L}\phi , X(\argdot,j) \right\rangle = \left[ \text{boundary terms} \right] + \left\langle \phi , \mathcal{L}^\star X(\argdot,j) \right\rangle,
\end{equation}
in which $\mathcal{L}$ is the formal differential operator $\D^2/\D x^2$.
(Writing the $\mathrm{L}^2$ inner products explicitly, the same equation and an expression for boundary terms may be derived via integration by parts, but we choose this presentation to emphasize that it is the adjoint formal differential operator appearing on the right.)
We begin by ignoring the boundary terms or, equivalently, pretending $\phi(0)=\phi(1)=\phi'(0)=\phi'
(1)=0$, so that the boundary terms evaluate to $0$.
This suggests $-\overline{(j\pi)^2}X(x,j) = \mathcal{L}^\star X(x,j)$; each $X$ is an eigenfunction of the adjoint formal differential operator $\mathcal{L}^\star$.
For any nonzero eigenvalue $-\overline{(j\pi)^2}\in\CC$, there is an eigenspace of $\mathcal{L}^\star$
\begin{equation} \label{eqn:ClassicalFourierTransform.SeriesAsTransform.Eigenspace}
    \operatorname{span}\left\{\overline{\re^{-\ri j\pi x}}, \; \overline{\re^{+\ri j\pi x}}\right\}.
\end{equation}
Using integration by parts (in general this can be done using Green's formula~\cite[corollary to theorem~3.6.3]{CL1955a}), the boundary terms in equation~\eqref{eqn:ClassicalFourierTransform.SeriesAsTransform.AdjointEquation} are
\[
    \phi'(1)X(1,j) - \phi(1)\partial_xX(1,j) - \phi'(0)X(0,j) + \phi(0)\partial_xX(0,j).
\]
Applying only the true boundary conditions $\phi(1)=0=\phi(0)$, suggested by~\eqref{eqn:ClassicalFourierTransform.IBVP3.BC}, we can find the specific eigenfunction $X(\argdot,j)$ in eigenspace~\eqref{eqn:ClassicalFourierTransform.SeriesAsTransform.Eigenspace}, and specify to only certain values: $X(x,j)=\overline{\sin(j\pi x)}$ and $j\in\NN$.

The above approach to deriving the forward transform suggests that the forward transform should be expressible as the inner product with functions $X(\argdot,j)$ having the properties:
\begin{enumerate}
    \item{
        $X(\argdot,j)$ is an eigenfunction of the adjoint formal differential operator $\mathcal{L}^\star$, belonging to eigenspace~\eqref{eqn:ClassicalFourierTransform.SeriesAsTransform.Eigenspace}; the eigenspace is fixed by the diagonalization ansatz~\eqref{eqn:ClassicalFourierTransform.FourSineSeriesDiagonalization}.
        Note that the boundary conditions do not affect this, as $\mathcal{L}^\star$ is a formal differential operator.
    }
    \item{
        The specific choice of eigenfunctions $X(\argdot,j)$ within eigenspace~\eqref{eqn:ClassicalFourierTransform.SeriesAsTransform.Eigenspace} is the one for which the boundary terms in equation~\eqref{eqn:ClassicalFourierTransform.SeriesAsTransform.AdjointEquation} evaluate to zero.
    }
\end{enumerate}
This heuristic view of ``finite interval Fourier transform derivation'' will guide~\S\ref{sec:TransformPair}.

\subsection{Desirable features in a transform method} \label{ssec:Classical.DesirableFeatures}

It is not really fair to directly contrast the Fourier transform and series methods, as they are applied to solve different classes of problems: the Fourier transform method for problems on infinite and semiinfinite domains, and the Fourier series method for problems on finite domains.
But it is instructive to study their distinguishing features, so that one might design a better method incorporating the advantageous characteristics of each.

The Fourier transform method requires, as a first step, knowledge or derivation of the ``right'' transform pair, one which diagonalizes the spatial differential operator.
When faced with an IBVP not previously encountered, it is not obvious how one might invent such a transform pair.
This is a distinct disadvantage of the Fourier transform method contrasted with the Fourier series method which is fully algorithmic in its derivation of the right transform pair.

However, the Fourier series method requires an additional ansatz to be effective: completeness of the eigenfunctions (and, where appropriate, associated functions) of the spatial differential operator.
As discussed in~\S\ref{ssec:Classical.FourierSeries}, this ansatz is false even for some wellposed IBVP, but the method permits no escape from this requirement without imposing unnatural restrictions on the space of admissible initial data.
This represents a drawback of the Fourier series method compared with the Fourier transform method.

In this paper, we seek a method which synthesizes the attractive features of both the classical Fourier methods.
\begin{enumerate}
    \item{
        It should be fully algorithmic, requiring no further invention once the transform pair is known.
    }
    \item{
        The new method should also work without need for completeness of the eigenfunctions and associated functions.
    }
\end{enumerate}

It would also be desirable that our method be applicable equally to problems posed on the finite interval and the semiinfinite interval.
We omit the latter class of problems from consideration in this work only for reason of space.
The Fokas transform method is applicable to such problems, and~\cite{PS2016a} describes a true transform version of the method, albeit only for $\omega$ monomial.
The extension to general $\omega$ is left for future work.

\subsection{Sketch of a transform method} \label{ssec:Classical.Sketch}

The various Fourier transforms studied in~\S\ref{ssec:Classical.FourierTransform} and~\S\ref{ssec:Classical.FourierSeriesAsTransform} have the property that, for corresponding differential operators $L$ (or, rather, by choosing the appropriate transform for a given operator $L$), the operator is diagonalized by the transform in the sense that
\begin{equation} \label{eqn:Classical.Sketch.Diagonalization}
    \GenericForTrans{L\phi}(\la) = z(\la)\GenericForTrans{\phi}(\la),
\end{equation}
for some polynomial $z$.
Suppose that, for a given two point differential operator $L:\Phi\to \mathrm{L}^1[0,1]$ as described in~\S\ref{ssec:Introduction.Operator}, we already know a linear transform $\GenericForTransNoArg$ with this property, and we also know its inverse transform $\GenericInvTransNoArg$ has that for all $\phi\in\Phi$ and all $x\in(0,1)$
\begin{equation} \label{eqn:Classical.Sketch.InverseTransform}
    \GenericInvTrans{\GenericForTrans{\phi}}(x) = \phi(x).
\end{equation}
Then the transform method can proceed as follows.
First, the transform is applied to equation~\eqref{eqn:IBVP.PDE} in the spatial variable, and the diagonalization property is used to reduce the result to an ODE in $t$,
\[
    \frac{\D}{\D t}\GenericForTrans{q}(\la;t) + a z(\la) \GenericForTrans{q}(\la;t) = 0.
\]
This ODE is easily solved as $\GenericForTrans{q}(\la;t) = \re^{-az(\la)t}\GenericForTrans{q}(\la;0)$ and the same transform is applied to the initial condition~\eqref{eqn:IBVP.IC} to obtain an initial value.
Therefore
\[
    \GenericForTrans{q}(\la;t) = \re^{-az(\la)t}\GenericForTrans{Q}(\la),
\]
and application of the inverse transform yields
\[
    q(x,t) = \GenericInvTrans{ \re^{-az(\argdot)t}\GenericForTrans{Q} }(x),
\]
the solution of the IBVP.

This approach is fine as long as one can find, \emph{as long as there exists}, a transform pair that truly diagonalizes $L$ in the sense of equation~\eqref{eqn:Classical.Sketch.Diagonalization}.
But it is possible to specify a weaker ``diagonalization'' type criterion and still have a valid method.
To wit, suppose that instead of diagonalization equation~\eqref{eqn:Classical.Sketch.Diagonalization} it holds that
\begin{subequations} \label{eqn:Classical.Sketch.WeakDiagonalization}
\begin{equation} \label{eqn:Classical.Sketch.WeakDiagonalization.Diagonalization}
    \GenericForTrans{L\phi}(\la) = z(\la)\GenericForTrans{\phi}(\la) + \GenericRemTrans{\phi}(\la),
\end{equation}
where $\GenericRemTransNoArg$ has the property that, for appropriate functions $q(x,t)$ such that $q(\argdot,t)\in\Phi$, and for all $x\in(0,1)$,
\begin{equation} \label{eqn:Classical.Sketch.WeakDiagonalization.RemainderControl}
    \GenericInvTrans{\int_0^t \re^{az(\argdot)(s-t)} \GenericRemTrans{q}(\argdot;s) \D s}(x) = 0,
\end{equation}
\end{subequations}
and still $\GenericInvTransNoArg$ is linear and the inverse of $\GenericForTransNoArg$ in the sense of equation~\eqref{eqn:Classical.Sketch.InverseTransform}.
Attempting to apply this transform to~\eqref{eqn:IBVP.PDE}, one arrives at the only slightly more complicated ODE
\[
    \frac{\D}{\D t}\GenericForTrans{q}(\la;t) + a z(\la) \GenericForTrans{q}(\la;t) + \GenericRemTrans{q}(\la;t) = 0.
\]
The solution of its initial value problem is
\[
    \GenericForTrans{q}(\la;t) = \re^{-az(\la)t}\GenericForTrans{Q}(\la) - \re^{-az(\la)t} \int_0^t \re^{az(\la)s} \GenericRemTrans{q}(\la;s) \D s.
\]
Applying the inverse transform and equation~\eqref{eqn:Classical.Sketch.WeakDiagonalization.RemainderControl} yields solution representation
\[
    q(x,t) = \GenericInvTrans{ \re^{-az(\argdot)t}\GenericForTrans{Q} }(x).
\]

That the type of diagonalization formally conjectured in equations~\eqref{eqn:Classical.Sketch.WeakDiagonalization} is weaker than the classical sense of equation~\eqref{eqn:Classical.Sketch.Diagonalization} follows from the restriction $\GenericRemTransNoArg=0$.
It is less obvious that it is strictly weaker in a useful way.
Nevertheless, in theorem~\ref{thm:Diag}, we will show that this is truly the sense in which the Fokas transform diagonalizes $L$; the nonzero remainder transform $\GenericRemTransNoArg$ is explicitly constructed in~\S\ref{sec:Diagonalization}.
Because we have chosen to specialise the arguments of this work to operators on finite domains, the classical Fourier transform methods are not truly included in the above class of transforms.
However, it is clear that they are closely related to the case $\GenericRemTransNoArg=0$.
When the Fourier series is viewed as a transform as in~\S\ref{ssec:Classical.FourierSeriesAsTransform}, the Fourier series fits diagonalization~\eqref{eqn:Classical.Sketch.WeakDiagonalization} with $\GenericRemTransNoArg=0$.

\section{Fokas transform pair} \label{sec:TransformPair}

As discussed in~\S\ref{ssec:Classical.FourierSeriesAsTransform}, the forward transform in the Fourier series is an inner product with certain eigenfunctions of the adjoint formal differential operator $\mathcal{L}^\star$.
Inspired by this observation, we will construct in~\S\ref{ssec:TransformPair.Formal} a ``formal transform'' from eigenfunctions of $\mathcal{L}^\star$.
Specifically, we will derive a formal transform pair tailored to the overspecified formal IBVP~\eqref{eqn:IBVP} but with boundary conditions~\eqref{eqn:IBVP.BC} replaced by
\[
    \partial_x^jq(0,t) = 0 = \partial_x^jq(1,t) \qquad \qquad \mbox{for all } j\in\{0,1,\ldots,n-1\}.
\]
As we are aiming for weaker diagonalization~\eqref{eqn:Classical.Sketch.WeakDiagonalization} instead of~\eqref{eqn:ClassicalFourierTransform.FourSineSeriesDiagonalization}, the definition of the full forward Fokas transform in~\S\ref{ssec:TransformPair.BC} accounts for the boundary conditions in a different way from what was done in~\S\ref{ssec:Classical.FourierSeriesAsTransform}.
Before we can begin, because we wish to treat differential operators with nonzero lower order terms, we require the results detailed in~\S\ref{ssec:TransformPair.nu} on a class of biholomorphic functions.

\subsection{A biholomorphic function} \label{ssec:TransformPair.nu}

Let $\nu(\la)$ be such that
\begin{equation} \label{eqn:defnnu}
    \omega(\nu(\la)) = \la^n, \qquad \lim_{\la\to\infty}\nu(\la)/\la=1.
\end{equation}
The effect of the limit is to specify a particular branch of this inverse polynomial type function, at least for $\la$ sufficiently large.
The following proposition establishes that such a function exists, and describes some properties of $\nu$.

\begin{prop} \label{prop:propertiesnu}
    For sufficiently large $\propertiesnuradius>0$, there is a function $\nu$ which satisfies equations~\eqref{eqn:defnnu}, is biholomorphic outside the disc centered at zero with radius $\propertiesnuradius$, and whose derivative approaches $1$ as $\la\to\infty$, uniformly in the argument of $\la$.
\end{prop}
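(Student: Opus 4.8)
The plan is to realise $\nu$ as the inverse, near infinity, of an $n$th-root modification of $\omega$. Since $c_n=1$ and $c_{n-1}=0$ we have $\omega(\zeta)/\zeta^n = 1 + c_{n-2}\zeta^{-2} + \dots + c_0\zeta^{-n}$, so there is $R_0>0$ with $\omega(\zeta)/\zeta^n \in D(1,\tfrac12)$ for all $\abs\zeta>R_0$; in particular this quotient stays off the cut $(-\infty,0]$ of $\sqrt[n]{\argdot}$. Hence
\[
    Q(\zeta) := \zeta\,\sqrt[n]{\omega(\zeta)/\zeta^n}
\]
is holomorphic on $\{\abs\zeta>R_0\}$ and satisfies $\bigl(Q(\zeta)\bigr)^n=\omega(\zeta)$ there. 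Writing $Q(\zeta)=\zeta\,h(1/\zeta)$ with $h(\mu)=\sqrt[n]{1+c_{n-2}\mu^2+\dots+c_0\mu^n}$ holomorphic near $\mu=0$ and $h(0)=1$, one reads off $Q(\zeta)/\zeta=h(1/\zeta)\to 1$ and $Q'(\zeta)=h(1/\zeta)-\zeta^{-1}h'(1/\zeta)\to 1$ as $\zeta\to\infty$, both uniformly in $\arg\zeta$ since the limits depend on $\zeta$ only through $\abs{1/\zeta}\to 0$ and $h,h'$ are continuous at $0$. In particular $\tfrac12\abs\zeta \leq \abs{Q(\zeta)} \leq \tfrac32\abs\zeta$ once $\abs\zeta$ is large enough.

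Next I would invert $Q$ by passing to the coordinate $\mu=1/\zeta$ at infinity: the function $\widetilde Q(\mu):=1/Q(1/\mu)=\mu/h(\mu)$ is holomorphic near $\mu=0$ with $\widetilde Q(0)=0$ and $\widetilde Q'(0)=1/h(0)=1\neq 0$, so the holomorphic inverse function theorem makes $\widetilde Q$ biholomorphic on some $D(0,\rho)$; translating back through $\zeta\leftrightarrow 1/\zeta$ and $\la\leftrightarrow 1/\la$ shows $Q$ maps $\{\abs\zeta>1/\rho\}$ biholomorphically onto an open set containing some exterior $\{\abs\la>\propertiesnuradius\}$. Set $\nu:=Q^{-1}$ on $\{\abs\la>\propertiesnuradius\}$ (enlarging $\propertiesnuradius$ if needed so this is well defined and so the bounds of the previous paragraph apply on the range of $\nu$). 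Then $\omega(\nu(\la))=\bigl(Q(\nu(\la))\bigr)^n=\la^n$; $\nu$ is biholomorphic outside $D(0,\propertiesnuradius)$ as the inverse of a biholomorphism; and $\abs\la=\abs{Q(\nu(\la))}\leq\tfrac32\abs{\nu(\la)}$ forces $\abs{\nu(\la)}\geq\tfrac23\abs\la\to\infty$, whence $\nu(\la)/\la=\bigl(Q(\nu(\la))/\nu(\la)\bigr)^{-1}\to 1$ and, by the inverse function rule, $\nu'(\la)=1/Q'(\nu(\la))\to 1$.

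There is no serious obstacle here: the whole argument is the holomorphic inverse function theorem transplanted to the point at infinity. The two places that repay a little care are (i) the legitimacy of the principal branch $\sqrt[n]{\argdot}$, i.e. that $\omega(\zeta)/\zeta^n$ is bounded away from $(-\infty,0]$ for $\abs\zeta$ large --- this is exactly what $c_{n-1}=0$ buys, making the quotient $1+\bigoh{\zeta^{-2}}$; and (ii) the uniformity in $\arg\la$ of $\nu'(\la)\to 1$, which is why the lower bound $\abs{\nu(\la)}\geq\tfrac23\abs\la$ is recorded, so that the uniform convergence $Q'\to 1$ at infinity transfers to $\nu'$. For completeness I note two alternative routes to existence that give the same $\nu$: a Rouch\'e count of the zeros of $\zeta\mapsto\omega(\zeta)-\la^n$ in a small disc about $\zeta=\la$ for $\abs\la$ large; or the holomorphic implicit function theorem applied to $F(w,\mu)=w^n-1+\sum_{j=0}^{n-2}c_j\mu^{n-j}w^j$ at $(w,\mu)=(1,0)$, followed by $\nu(\la):=\la\,w(1/\la)$.
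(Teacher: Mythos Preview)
Your proof is correct and constructs the same function as the paper, but the mechanism for establishing invertibility differs. Both approaches define the same map $Q(\zeta)=\zeta\sqrt[n]{\omega(\zeta)/\zeta^n}$ (the paper writes it as $\la(\nu)$) and take $\nu$ to be its inverse near infinity. Where you compactify at infinity via $\mu=1/\zeta$ and invoke the holomorphic inverse function theorem at the origin, the paper instead proves univalence of $Q$ directly on the exterior domain: it shows $\abs{Q'(\zeta)-1}<1/\pi$ for $\abs\zeta$ large, then argues that if $Q(\zeta_1)=Q(\zeta_2)$ one may integrate $Q'$ along a semicircular arc from $\zeta_1$ to $\zeta_2$ (the chord possibly passing through the excluded disc) of length $\tfrac{\pi}{2}\abs{\zeta_2-\zeta_1}$, and the bound $1/\pi$ forces $\zeta_1=\zeta_2$. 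Your route is conceptually cleaner and immediately yields the asymptotics; the paper's hands-on estimate has the minor advantage of making the admissible radius $\propertiesnuradius$ more explicit. One small aside: the legitimacy of the principal branch requires only $\omega(\zeta)/\zeta^n\to 1$, which holds regardless of $c_{n-1}$, so the normalisation $c_{n-1}=0$ is a convenience rather than something this step genuinely needs.
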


\begin{proof}
    For $\nu\in\CC$, consider the function $\la:\CC\to\CC$ defined by
    \begin{equation} \label{eqn:propertiesnu.proof.0}
        \la(\nu) = \nu \sqrt[n]{1+\sum_{j=0}^{n-2}c_j\left(\frac{1}{\nu}\right)^{n-j}}.
    \end{equation}
    Raising both sides to the power $n$, we see that
    \begin{equation} \label{eqn:propertiesnu.proof.1}
        \la(\nu)^n = \omega(\nu).
    \end{equation}
    We aim to define $\nu(\la)$ as an appropriate inverse of $\la(\nu)$.

    The function $1+\sum_{j=0}^{n-2}c_j\left(\frac{1}{\nu}\right)^{n-j}$ is a polynomial in $\left(\frac{1}{\nu}\right)$, so it is holomorphic except at zero.
    If
    \[
        \abs{\nu} > \max\left\{1,\sum_{j=0}^{n-2}\abs{c_j}\right\}, \qquad \mbox{then} \qquad \abs{\sum_{j=0}^{n-2}c_j\left(\frac{1}{\nu}\right)^{n-j}} < 1,
    \]
    so, for all $\nu$ sufficiently large,
    \[
        \Re\left(1+\sum_{j=0}^{n-2}c_j\left(\frac{1}{\nu}\right)^{n-j}\right) > 0.
    \]
    On the right half plane, $\sqrt[n]{\argdot}$ is holomorphic.
    Therefore, outside a sufficiently large disc, $\la$ is a holomorphic function of $\nu$.
    Moreover, implicit differentiation applied to equation~\eqref{eqn:propertiesnu.proof.1} implies that, for any choice of $\epsilon\in(0,1)$, outside a (possibly larger) disc, $\la'(\nu)$ lies in a disc of radius $\epsilon$ centred at $1$.
    In particular, we choose $\epsilon=1/\pi$, and let $\propertiesnuradius'$ be the radius of a disc sufficiently large that on its complement $\la$ is holomorphic and its derivative lies within $\epsilon$ of $1$.

    Next, we show that, on the domain $\abs{\nu}>\propertiesnuradius'$, $\la$ is univalent.
    Suppose there exist $\nu_1,\nu_2$ in the domain such that $\la(\nu_1)=\la(\nu_2)$.
    Then, for any simple contour $\gamma$ extending from $\nu_1$ to $\nu_2$ without exiting the domain,
    \[
        0 = \la(\nu_2)-\la(\nu_1) = \int_\gamma \la'(\nu) \D\nu.
    \]
    Defining for notational convenience $\mu(\nu) = \la'(\nu)-1$, it follows that $\abs{\mu(\nu)}\leq1/\pi$ and
    \[
        0 = \int_\gamma 1+\mu(\nu) \D\nu = \nu_2-\nu_1 + \int_\gamma \mu(\nu) \D\nu.
    \]
    It is not necessarily possible to select $\gamma$ to be the path following the straight line segment connecting $\nu_1$ to $\nu_2$, as this contour may pass outside the selected domain of $\la$.
    However, there are two semicircles having that straight line segment as their diameter, and (at least) one of them must lie wholly within the domain of $\la$. Selecting $\gamma$ to follow that semicircle from $\nu_1$ to $\nu_2$, we observe that $\mbox{length}(\gamma) = \pi\abs{\nu_2-\nu_1}/2$.
    Then
    \[
        \abs{\int_\gamma \mu(\nu) \D\nu} \leq \frac{\mbox{length}(\gamma)}{\pi} = \frac{\abs{\nu_2-\nu_1}}{2}.
    \]
    But then $\abs{\nu_2-\nu_1}\leq\frac{\abs{\nu_2-\nu_1}}{2}$, so $\nu_2=\nu_1$.
    We have shown that $\la$ is a univalent function.

    Because $\la$ is a univalent holomorphic function outside the disc of radius $\propertiesnuradius'$, it is biholomorphic.
    We define $\nu$ to be the inverse of $\la$ on this domain.
    Then $\nu$ is also a biholomorphic function, on the image of the complement of the disc of radius $\propertiesnuradius'$, a subset of which is the complex plane outside another sufficiently large disc.
    It follows from equation~\eqref{eqn:propertiesnu.proof.1} that the first of equations~\eqref{eqn:defnnu} holds.
    The limit in equation~\eqref{eqn:defnnu} follows from equation~\eqref{eqn:propertiesnu.proof.0}.
    Differentiating implicitly the first of equations~\eqref{eqn:defnnu} implies the claimed limit of $\nu'$.
\end{proof}

This choice of domain of $\nu$ is unnecessarily restrictive; one could expand the domain of $\nu$ up to its branch cuts.
However, the choice of the exterior of a disc as the domain simplifies certain statements in the proceeding, because it ensures that $\nu$ is a biholomorphism and the domains of $\nu$ and its rotations $\nu(\re^{\ri\theta}\argdot)$ are the same.

If $\omega$ is the particularly simple polynomial $\omega(\nu) = \nu^n + c_0$, which is always the case when $n=2$, then an explicit formula for $\nu$ is readily derived.
Indeed, for such $\omega$, the function
\[
    \nu(\la) = \lambda \sqrt[n]{1-\frac{c_0}{\la^n}}
\]
agrees with the branch of $\nu$ defined in proposition~\ref{prop:propertiesnu} and is analytically continued to the whole complex plane except for the star of branch cuts composed of $n$ straight line segments
\[
    \la\in\CC\mbox{ such that }\abs{\la}<\sqrt[n]{\abs{c_0}}\mbox{ and }\arg(\la)=(\arg(c_0)+2k\pi)/n\mbox{ for some }k\in\ZZ.
\]
Note that every zero of $\omega$ corresponds to a branch point of $\nu$ so, unless $\omega$ is monomial, $\nu$ must have some branch cuts; it is impossible to extend $\nu$ to an entire function except when $\omega$ is monomial, in which case $\nu$ is the identity function.

\begin{eg} \label{eg:Separated3Ord.part1}
    Consider the problem
    \begin{subequations} \label{eqn:eg.IBVP}
    \begin{align}
        \label{eqn:eg.IBVP.PDE} \tag{\theparentequation.PDE}
        \partial_t q(x,t) + (\partial_x^3 - \partial_x)q(x,t) &= 0 & (x,t) &\in (0,1) \times (0,T), \\
        \label{eqn:eg.IBVP.IC} \tag{\theparentequation.IC}
        q(x,0) &= Q(x) & x &\in[0,1], \\
        \label{eqn:eg.IBVP.BC} \tag{\theparentequation.BC}
        q(0,t)=0, \quad q(1,t)=0, \quad q_x(1,t) &= 0 & t &\in[0,T].
    \end{align}
    \end{subequations}
    This is IBVP~\eqref{eqn:IBVP} in which
    \begin{gather*}
        \mathcal{L}\phi = \omega(-\ri\partial_x)\phi, \qquad \omega(k)=k^3+k, \qquad a=-\ri, \\
        B_1\phi = \phi(0), \qquad B_2\phi = \phi(1), \qquad B_3\phi=\phi'(1).
    \end{gather*}
    Therefore the biholomorphic function of proposition~\ref{prop:propertiesnu} satisifes
    \(
        \nu(\la)^3+\nu(\la)=\la^3
    \)
    and is the inverse of
    \(
        \la(\nu) = \nu\sqrt[3]{1+1/\nu^2}
    \)
    outside an a disc of radius $\propertiesnuradius$.
    Following the unoptimized arguments presented in the proof of the proposition, one may select $\propertiesnuradius'=11\pi/3$, which guarantees that $3.5$ is a sufficently large value for $\propertiesnuradius$.
    Numerical evaluation suggests that $\nu$ is biholomorphic outside a disc of radius $0.8$.
    
    This example is continued in example~\ref{eg:Separated3Ord.part2}.
\end{eg}

\subsection{A formal transform pair} \label{ssec:TransformPair.Formal}

The ordinary finite interval Fourier transform $\FourUnitTransNoArg$, defined by
\begin{equation} \label{eqn:FourUnitTrans.Defn}
    \FourUnitTrans{\phi}(\la):= \int_0^1 \re^{-\ri\la x} \phi(x) \D x,
\end{equation}
may be understood as a complex inner product with the function $\re^{\ri\bar\la x}$.
For each $\la\in\CC$,
\begin{equation} \label{eqn:FourierEigenfunctions}
    \mathcal{L}^\star_0 \, \re^{\ri\bar\la x} = (-\ri)^n\frac{\D^n}{\D x^n} \re^{\ri\bar\la x} = \bar\la^n \re^{\ri\bar\la x};
\end{equation}
the exponential Fourier kernel is an eigenfunction of the monomial $n$\textsuperscript{th} order formal differential operator $\mathcal{L}^\star_0$, and the conjugate of every complex $\la$ is the $n$\textsuperscript{th} root of an eigenvalue.

Aiming to construct the Fokas transform as a generalization and continuous counterpart of classical nonselfadjoint Fourier series, we must incorporate the eigenfunctions of adjoint formal differential operator $\mathcal{L}^\star$.
In analogy with equation~\eqref{eqn:FourierEigenfunctions}, the eigenfunctions of $\mathcal{L}^\star$ are $\re^{\ri\overline{\nu(\la)} x}$, with eigenvalues $\bar{\la}^n$.
We use this observation to construct a transform pair of relevance to the formal differential operator $\mathcal{L}$.
The forward transform will be a finite interval Fourier transform, except with the exponential Fourier kernel replaced by a continuum of eigenfunctions of $\mathcal{L}^\star$.

Let $\propertiesnuradius$ be as defined in proposition~\ref{prop:propertiesnu}, and pick any $\propertiesnuradius_1,\propertiesnuradius_2>\propertiesnuradius$.
Let $\gamma$ be the contour that follows $\nu^{-1}((-\infty,-\propertiesnuradius_1])$, then a simple path wholly in $\nu^{-1}(\CC^+)$ or wholly in $\nu^{-1}(\CC^-)$ from $\nu^{-1}(-\propertiesnuradius_1)$ to $\nu^{-1}(\propertiesnuradius_2)$, then follows $\nu^{-1}([\propertiesnuradius_2,\infty))$.
It is possible to define a contour this way because $\nu$ is a bijection.
The properties of $\nu$ imply that $\gamma$ approaches the negative and positive real axes at the beginning and end.
If the coefficients of $\omega$ are real, then $\gamma$ is a finite deformation of the real line.
For $\rho>\propertiesnuradius$, define also $\gamma_\rho$ to be the restriction of $\gamma$ to $\nu(D(0,\rho))$.
We define the forward formal transform $\FormalForTransNoArg$ and inverse formal transform $\FormalInvTransNoArg$ by
\begin{subequations} \label{eqn:TransformPair.Formal.Defn}
\begin{align}
    \label{eqn:TransformPair.Formal.Defn.Forward}
    \FormalForTrans{\phi}(\la) &:=
    \left\langle \phi , \re^{\ri\overline{\nu(\la)}\argdot} \right\rangle =
    \int_0^1 \phi(x) \re^{-\ri\nu(\la)x} \D x, & \abs{\la} > \propertiesnuradius, \\
    \label{eqn:TransformPair.Formal.Defn.Inverse}
    \FormalInvTrans{F}(x) &:= \frac{1}{2\pi} \lim_{\rho\to\infty}\int_{\gamma_\rho} \re^{\ri\nu(\la)x} \nu'(\la)F(\la) \D\la.
\end{align}
\end{subequations}

The formal transform pair is valid in the sense of theorem~\ref{thm:FormalTransformValid}.

\begin{thm} \label{thm:FormalTransformValid}
    Suppose that $\phi \in \mathrm{C}^1[0,1]$.
    Then, for all $x\in(0,1)$,
    \[
        \FormalInvTrans{\FormalForTrans{\phi}}(x) = \phi(x).
    \]
\end{thm}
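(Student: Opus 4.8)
The plan is to reduce the claimed inversion formula to the ordinary finite interval Fourier inversion theorem via the change of variables $k = \nu(\la)$, together with a contour deformation argument. First I would substitute the definition~\eqref{eqn:TransformPair.Formal.Defn.Forward} of $\FormalForTransNoArg$ into~\eqref{eqn:TransformPair.Formal.Defn.Inverse} to write
\[
    \FormalInvTrans{\FormalForTrans{\phi}}(x) = \frac{1}{2\pi} \lim_{\rho\to\infty}\int_{\gamma_\rho} \re^{\ri\nu(\la)x} \nu'(\la) \left( \int_0^1 \phi(y) \re^{-\ri\nu(\la)y} \D y \right) \D\la.
\]
Then, applying the substitution $k=\nu(\la)$, $\D k = \nu'(\la)\D\la$, the contour $\gamma_\rho$ is mapped to (a truncation of) the real line traversed from $-\propertiesnuradius_1$ through a bounded detour to $\propertiesnuradius_2$ and then out along the reals, because $\gamma$ was constructed precisely as $\nu^{-1}$ of such a path. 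This turns the expression into
\[
    \frac{1}{2\pi} \lim_{\rho\to\infty}\int_{\nu(\gamma_\rho)} \re^{\ri k x} \left( \int_0^1 \phi(y) \re^{-\ri k y} \D y \right) \D k,
\]
which, modulo the bounded detour near the origin, is exactly the truncated Fourier inversion integral $\frac{1}{2\pi}\int_{-R}^{R} \re^{\ri kx}\FourUnitTrans{\phi}(k)\D k$ for the unit interval function $\phi$.

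The second step is to handle the bounded detour. Since $\phi\in\mathrm{C}^1[0,1]$, its Fourier transform $\FourUnitTrans{\phi}(k)$ is entire in $k$ (it is an integral over a compact interval), so the integrand $\re^{\ri kx}\FourUnitTrans{\phi}(k)$ is entire; by Cauchy's theorem the integral over the compact detour equals the integral over the corresponding real segment $[-\propertiesnuradius_1,\propertiesnuradius_2]$. Hence $\FormalInvTrans{\FormalForTrans{\phi}}(x)$ equals $\frac{1}{2\pi}\lim_{R\to\infty}\int_{-R}^{R}\re^{\ri kx}\FourUnitTrans{\phi}(k)\D k$, where the limit is taken along the images under $\nu$ of circles of radius $\rho$; one must check that these images are symmetric enough (or can be replaced by genuine symmetric intervals $[-R,R]$ up to a vanishing error) to match the principal value form of Fourier inversion. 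The final step invokes the classical Fourier inversion theorem for $\mathrm{C}^1$ functions on a finite interval: extending $\phi$ by zero to $\RR$, at each interior point $x\in(0,1)$ the symmetric partial Fourier integral converges to $\phi(x)$ (the one-sided limits agree and equal $\phi(x)$ since $\phi$ is continuous there), giving $\FormalInvTrans{\FormalForTrans{\phi}}(x)=\phi(x)$.

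The main obstacle I anticipate is controlling the endpoints of the truncated contour $\nu(\gamma_\rho)$. The contour $\gamma_\rho$ is $\gamma$ restricted to $\nu(D(0,\rho))$, so $\nu(\gamma_\rho)$ is $\gamma$'s image restricted to $D(0,\rho)$, i.e. essentially the real segment from some point near $-\rho$ to some point near $+\rho$ — but the two endpoints need not be exactly $\pm\rho$, and the relevant portion of $\gamma$ near $\pm\infty$ only approaches the real axis asymptotically (it lies on $\nu^{-1}((-\infty,-\propertiesnuradius_1])$ and $\nu^{-1}([\propertiesnuradius_2,\infty))$, which are genuinely real after applying $\nu$). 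So in fact after the substitution the tails are exactly real, and the only issue is whether the truncation at $\abs{\la}=\rho$ produces a symmetric-enough cutoff in the $k$ variable; since $\nu(\la)/\la\to1$, the cutoff $\abs{\la}=\rho$ corresponds to $\abs{k}\approx\rho$ with a multiplicative error tending to $1$, and a short estimate using $\FourUnitTrans{\phi}(k)=\bigoh{\abs{k}^{-1}}$ for $\phi\in\mathrm{C}^1$ (integration by parts, using that $\phi$ need not vanish at the endpoints, so there is an $\bigoh{\abs{k}^{-1}}$ boundary term) shows the discrepancy between the asymmetric cutoff and a symmetric one contributes $\bigoh{\rho^{-1}}\to0$. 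Assembling these pieces yields the theorem; everything is routine once the change of variables and the entirety of $\FourUnitTransNoArg[\phi]$ are in place.
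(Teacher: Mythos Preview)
Your approach is correct and matches the paper's proof: change variables $k=\nu(\la)$, use entirety of $\FourUnitTrans{\phi}$ to deform the bounded detour onto the real axis via Cauchy's theorem, and invoke ordinary Fourier inversion for the zero extension of $\phi$.

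Your worry about asymmetric truncation is unnecessary, and stems from misreading the definition of $\gamma_\rho$. The contour $\gamma_\rho$ is the restriction of $\gamma$ to the set where $\nu(\la)\in D(0,\rho)$, not to where $\abs{\la}<\rho$. Since the tails of $\nu(\gamma)$ lie exactly on the real axis by construction, after the substitution the image contour has endpoints at precisely $\pm\rho$, so you land directly on the symmetric partial integral $\frac{1}{2\pi}\int_{-\rho}^{\rho}\re^{\ri kx}\FourUnitTrans{\phi}(k)\D k$ with no discrepancy to estimate. The argument you sketch for handling an asymmetric cutoff (using $\nu(\la)/\la\to1$ and a bound on $\FourUnitTrans{\phi}$) is in fact the content of the paper's subsequent corollary, which treats the alternative principal value truncated at $\abs{\la}=\rho$; there the paper simply uses boundedness of $\FourUnitTrans{\phi}$ together with $\nu(\sigma_\rho^\pm)\mp\rho\to0$, which is a little cleaner than your $\bigoh{\abs{k}^{-1}}$ route but achieves the same end.
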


\begin{proof}
    Changing variables $\mu=\nu(\la)$, we observe
    \begin{align*}
        \FormalInvTrans{\FormalForTrans{\phi}}(x) &= \frac{1}{2\pi} \lim_{\rho\to\infty}\int_{\gamma_\rho} \re^{\ri\nu(\la)x} \nu'(\la)
        \int_0^1 \phi(y) \re^{-\ri\nu(\la)y} \D y
        \D\la \\
        &= \frac{1}{2\pi} \lim_{\rho\to\infty}\int_{\gamma'_\rho} \re^{\ri\mu x}
        \int_0^1 \phi(y) \re^{-\ri\mu y} \D y
        \D\mu \\
        &= \frac{1}{2\pi} \lim_{\rho\to\infty}\int_{\gamma'_\rho} \re^{\ri\mu x}
        \FourUnitTrans{\phi}(\mu)
        \D\mu,
    \end{align*}
    in which $\gamma'_\rho=\nu^{-1}(\gamma_\rho)$ is the restriction to $D(0,\rho)$ of a contour that follows $(-\infty,-\propertiesnuradius_1]$, then a simple path in $\CC^+$ or $\CC^-$ from $-\propertiesnuradius_1$ to $\propertiesnuradius_2$, then follows $[\propertiesnuradius_2,\infty)$.
    The integrand of the outer integral is entire.
    Therefore, assuming $\rho$ is sufficiently large as to ensure $D(0,\rho)$ contains the contour linking $-\propertiesnuradius_1$ to $\propertiesnuradius_2$, and using Cauchy's theorem, the contour $\gamma'_\rho$ may be replaced by the contour $[-\rho,\rho]$, so that
    \[
        \FormalInvTrans{\FormalForTrans{\phi}}(x)
        = \frac{1}{2\pi} \lim_{\rho\to\infty}\int_{-\rho}^\rho \re^{\ri\mu x} \FourUnitTrans{\phi}(\mu) \D\mu,
    \]
    which is an ordinary real Cauchy principal value integral.

    Denoting by $\phi_0$ the zero extension of $\phi$ to the full real line, the integral
    \[
        \FourUnitTrans{\phi}(\mu) = \int_0^1 \phi(y) \re^{-\ri\mu y} \D y = \int_{-\infty}^\infty \phi_0(y) \re^{-\ri\mu y} \D y = \FourRealTrans{\phi_0}(\mu)
    \]
    is an ordinary full line Fourier transform of the function $\phi_0$, which is absolutely integrable and piecewise $\mathrm{C}^1$.
    The result follows by the usual Fourier inversion theorem.
\end{proof}

The above proof demonstrates significant freedom to choose the integration contour for inverse Fourier transforms, when the function originally transformed had compact support.
This same same freedom will also be exploited in the validity theorem~\ref{thm:FokasTransformValid} for the transform pair respecting the boundary conditions.

Some kind of principal value is necessary in the Fourier inversion theorem for absolutely integrable piecewise $\mathrm{C}^1$ functions $\phi$.
However, because $\FourUnitTrans{\phi}(\la)$ is bounded for $\la\in\RR$, it need not be exactly the Cauchy principal value.
The limit in equation~\eqref{eqn:TransformPair.Formal.Defn.Inverse} represents a contour integral version of a Cauchy principal value, in which the bounding region of the contour expands so that its image in $\nu$ is a disc.
Despite being the most natural extension of the Cauchy principal value in the ordinary Fourier inversion theorem, hence a convenient setting for the proof of theorem~\ref{thm:FormalTransformValid}, this is not always the most conducive framing for the application of the formal transform pair.
Therefore, in corollary~\ref{cor:AlternativeFormalTransformValid}, we present a slightly adjusted inverse transform and inversion theorem, wherein the Cauchy principal value bounding region of the contour is a disc in the $\la$ plane, as described in definition~\ref{defn:ComplexPrincipleValue}.

\begin{defn} \label{defn:ComplexPrincipleValue}
    Suppose $C$ is a contour in $\CC$, which extends to infinity, and $f$ is locally absolutely integrable on $C$.
    Then, wherever the limit exists, the \emph{principal value contour integral} of $f$ along $C$ is
    \[
        \CPV_{C} f(\la) := \lim_{\rho\to\infty} \int_C f(\la) \chi_{D(0,\rho)}(\la) \D\la,
    \]
    in which $\chi_{D(0,\rho)}$ represents the indicator function of the open disc centred at $0$ with radius $\rho$.

    By
    \[
        \CPVwithoutintegral \left( \int_{C_1}f_1(\la)\D\la + \int_{C_2}f_2(\la)\D\la \right),
    \]
    we shall mean the joint principal value
    \[
        \lim_{\rho\to\infty} \left( \int_{C_1} f_1(\la) \chi_{D(0,\rho)}(\la) \D\la + \int_{C_2} f_2(\la) \chi_{D(0,\rho)}(\la) \D\la \right).
    \]
\end{defn}

The above definition agrees with the Cauchy principal value for real Lebesgue integrals with singularities at infinity.

\begin{cor} \label{cor:AlternativeFormalTransformValid}
    Suppose $\phi \in \mathrm{C}^1[0,1]$.
    Then, for all $x\in(0,1)$,
    \[
        \frac{1}{2\pi} \;\CPV_{\gamma} \re^{\ri\nu(\la)x} \nu'(\la)F(\la) \D\la = \phi(x).
    \]
\end{cor}

\begin{proof}
    Let $\hat\gamma_\rho$ be the restriction of $\gamma$ to $D(0,\rho)$, and assume $\rho$ is sufficiently large that $\hat\gamma_\rho$ has only one connected component.
    Define $\sigma_\rho^+$ to be the startpoint and $\sigma_\rho^-$ to be the endpoint of the contour $\hat\gamma_\rho$.
    Then $\sigma_\rho^\pm$ are the only elements of the singletons
    \[
        C(0,\rho) \cap \gamma \cap \{ \la\in\CC: \pm\Re(\la)>0 \}.
    \]
    Therefore, exploiting the definition of $\gamma$ to justify the equality, $\nu(\sigma_\rho^\pm)$ are the only elements of the singletons
    \[
        \nu(C(0,\rho)) \cap \nu(\gamma) \cap \{ \nu(\la)\in\CC: \pm\Re(\la)>0 \}
        = \nu(C(0,\rho)) \cap \RR^\pm.
    \]
    By the limit in equations~\eqref{eqn:defnnu}, $\lim_{\rho\to\infty}\left(\nu(\sigma_\rho^\pm)\mp\rho\right)=0$.

    Using the same arguments as in the proof of theorem~\ref{thm:FormalTransformValid},
    \begin{multline*}
        \frac{1}{2\pi} \;\CPV_{\gamma} \re^{\ri\nu(\la)x} \nu'(\la)F(\la) \D\la
        = \frac{1}{2\pi} \lim_{\rho\to\infty}\int_{\hat\gamma_\rho} \re^{\ri\nu(\la)x} \nu'(\la)F(\la) \D\la \\
        = \frac{1}{2\pi} \;\CPV_{\RR} \re^{\ri\mu x} \int_0^1 \phi(y) \re^{-\ri\mu y} \D y \D\mu
        + \lim_{\rho\to\infty} \left(\left\{ \int_{\nu(\sigma_\rho^-)}^{-\rho} + \int_{\rho}^{\nu(\sigma_\rho^+)} \right\} \re^{\ri\mu x} \int_0^1 \phi(y) \re^{-\ri\mu y} \D y \D\mu\right),
    \end{multline*}
    and the first term on the right evaluates to $\phi(x)$.
    But
    \begin{align*}
        \abs{ \int_{\rho}^{\nu(\sigma_\rho^+)} \re^{\ri\mu x} \int_0^1 \phi(y) \re^{-\ri\mu y} \D y \D\mu }
        &\leq \abs{\nu(\sigma_\rho^+)-\rho} \max_{\substack{\mu\in[\nu(\sigma_\rho^+),\rho]\\\text{or}\\\mu\in[\rho,\nu(\sigma_\rho^+)]}}\abs{ \int_0^1 \phi(y) \re^{-\ri\mu y} \D y } \\
        &\leq \abs{\nu(\sigma_\rho^+)-\rho} \max_{y\in[0,1]} \abs{\phi(y)},
    \end{align*}
    and a similar estimate holds for the other integral.
    The result follows by proposition~\ref{prop:propertiesnu}.
\end{proof}

\subsection{A transform pair respecting the boundary conditions} \label{ssec:TransformPair.BC}

We denote by $L^\star$ the adjoint of $L$, as constructed in~\cite[chapter~11]{CL1955a}.
Precisely,
\begin{equation*}
    \mathcal{L}^\star \psi = \overline\omega(-\ri\partial_x)\psi = (-\ri)^n\phi^{(n)} + \sum_{j=0}^{n-2} (-\ri)^j \overline{a_j} \psi^{(j)},
\end{equation*}
for $\overline\omega$ the Schwarz conjugate of $\omega$, and, if $\BVec{B}^\star$ is a vector of boundary forms adjoint to $\BVec{B}$, with boundary coefficients
\begin{equation}
    B_k^\star \psi = \sum_{j=1}^n \left( \Msup{b}{k}{j}{\star}\psi^{(j-1)}(0) + \Msup{\beta}{k}{j}{\star} \psi^{(j-1)}(1) \right), \qquad k\in\{1,2,\ldots,n\},
\end{equation}
then $L^\star:\Phi_{\BVec{B}^\star}\to\mathrm{L}^1[0,1]$ is defined by $L^\star\psi = \mathcal{L}^\star\psi$.
For convenience, we separate the effects of the adjoint boundary forms at each boundary as
\begin{align*}
    \Msups{B}{k}{}{\star}{+} \psi &= \sum_{j=1}^n \Msup{b}{k}{j}{\star} \psi^{(j-1)}(0), & k &\in \{1,2,\ldots,n\}, \\
    \Msups{B}{k}{}{\star}{-} \psi &= \sum_{j=1}^n\Msup{\beta}{k}{j}{\star} \psi^{(j-1)}(1), & k &\in \{1,2,\ldots,n\}.
\end{align*}

\begin{eg} \label{eg:Separated3Ord.part2}
    We continue the analysis of example~\ref{eg:Separated3Ord.part1}.
    
    Because all coefficients of $\omega$ are real, $\omega$ is its own Schwarz conjugate.
    Therefore $\mathcal{L}^\star=\mathcal{L}$; the operator $L$ is formally selfadjoint.
    To determine adjoint boundary conditions, we integrate by parts in the inner product:
    \[
        \int_0^1\mathcal{L}\phi\bar \psi \D x = \int_0^1 \ri \phi'''\bar \psi \D x = \int_0^1 \phi \overline{\ri \psi'''} \D x + \ri \bigg[\phi'' \bar \psi - \phi'\overline{\psi'} + \phi\overline{\psi''}\bigg]_0^1 = \int_0^1\phi\overline{\mathcal{L}^\star\psi} \D x,
    \]
    with the last equality holding if and only if
    \[
        0 = \phi''(1) \overline{\psi(1)} - \phi'(1)\overline{\psi'(1)} + \phi(1)\overline{\psi''(1)} - \phi''(0) \overline{\psi(0)} + \phi'(0)\overline{\psi'(0)} - \phi(0)\overline{\psi''(0)}
    \]
    If $\phi$ is free in $\Phi_\BVec{B}$, then this is equivalent to $\psi\in\Phi_\BVec{B^\star}$ for
    \[
        B_1^\star\psi = \psi(0), \qquad B_2^\star\psi = \psi(1), \qquad B_3^\star\psi = \psi'(0).
    \]
    Therefore, $\Phi_\BVec{B^\star}\neq\Phi_\BVec{B}$; $L$ is nonselfadjoint despite being formally selfadjoint.
    
    This example is continued in example~\ref{eg:Separated3Ord.part3}.
\end{eg}

\subsubsection{The forward transform}

Let $\alpha=\re^{2\pi\ri/n}$, a primitive $n$\textsuperscript{th} root of unity.
For $j\in\{1,2,\ldots,n\}$, we define
\begin{equation} \label{eqn:defn.yj}
    y_j(x;\la) := \re^{\ri\overline{\nu(\alpha^{j-1}\la)}x}.
\end{equation}
Fixing $\la\in\CC\setminus D(0,\propertiesnuradius)$ and leaving $j$ free, each $y_j(\argdot;\la)$ is an eigenfunction of the formal differential operator $\mathcal{L}^\star$, with eigenvalue $\bar{\la}^n$, and together they span the eigenspace for that eigenvalue.
We use these solutions to construct the \emph{characteristic matrix} of the adjoint operator $M(\la)$ having entries
\begin{equation} \label{eqn:defn.Mjk}
    \M{M}{j}{k}(\la) := \overline{B_k^\star y_j(\argdot;\la)},
\end{equation}
and the \emph{characteristic determinant} of the adjoint operator
\begin{equation} \label{eqn:defn.Delta}
    \Delta(\la) := \det M(\la).
\end{equation}
Note the double complex conjugate of $\la$ in the definitions of $M$ and $\Delta$, implying that $M$ and $\Delta$ are both analytic functions of $\la$ for $\abs{\la}>\propertiesnuradius$.
Although these are objects defined in terms of the adjoint operator $L^\star$, we omit the superscript $^\star$ from the notation, because we have no need for the analogous objects one might construct from the original operator $L$.
For notational convenience, we define $\M{\MMinors}{j}{k}$ as the $(n-1)\times(n-1)$ submatrix of $\begin{bmatrix}M&M\\M&M\end{bmatrix}$ whose $(1,1)$ entry is the $(j+1,k+1)$ entry of its parent, and we also define the \emph{left characteristic matrix} $M^+$ and \emph{right characteristic matrix} $M^-$ to be those with entries
\begin{equation} \label{eqn:defn.Mpmjk}
    \Msup{M}{j}{k}{\pm}(\la) := \overline{\Msups{B}{k}{}{\star}{\pm} y_j(\argdot;\la)}
\end{equation}

Using the characteristic matrix, and the left and right characteristic matrices, we define
\begin{subequations} \label{eqn:FokasForwardTransformDefinition.LeftRight}
\begin{align}
    \FokForTransP{\phi}(\la) &= \frac{\nu'(\la)}{2\pi\Delta(\la)} \sum_{j=1}^n \sum_{k=1}^n (-1)^{(n-1)(j+k)} \det \M{\MMinors}{j}{k}(\la) \Msup{M}{1}{k}{+}(\la) \int_0^1 \re^{-\ri\nu(\alpha^{j-1}\la) x} \phi(x) \D x, \\
    \FokForTransM{\phi}(\la) &= \frac{-\nu'(\la)}{2\pi\Delta(\la)} \sum_{j=1}^n \sum_{k=1}^n (-1)^{(n-1)(j+k)} \det \M{\MMinors}{j}{k}(\la) \Msup{M}{1}{k}{-}(\la) \int_0^1 \re^{-\ri\nu(\alpha^{j-1}\la) x} \phi(x) \D x.
\end{align}
\end{subequations}
The transform $\FokForTransPMNoArg$ appears to be two forward transforms.
In practical application of the map, one always selects just one of $\FokForTransPNoArg$ or $\FokForTransMNoArg$ for each particular $\la$, so $\phi\mapsto \FokForTransPMNoArg$ may be considered as a single forward transform;
\begin{equation} \label{eqn:FokasForwardTransformDefinition}
    \phi(x) \mapsto \FokForTrans{\phi}(\la) := \begin{cases} \FokForTransP{\phi}(\la) & \mbox{certain } \la, \\ \FokForTransM{\phi}(\la) & \mbox{certain other } \la. \end{cases}
\end{equation}
Owing essentially to Cauchy's theorem and Jordan's lemma, there is significant freedom to choose which $\la$ correspond to which case, so we do not provide a fixed definition here, preferring to defer the choice until the definition of the inverse transform below.
Moreover, in establishing the validity of the transform pair, it will be useful to consider both formulae at certain values of $\la$, so it is important to note that both $\FokForTransP{\phi}(\la)$ and $\FokForTransM{\phi}(\la)$ are defined wherever $\abs{\la}>\propertiesnuradius$.

The integrals in equations~\eqref{eqn:FokasForwardTransformDefinition.LeftRight} are sesquilinear inner products $\langle \phi , y_j(\argdot; \la) \rangle$, so $\FokForTransPMNoArg$ are inner products of $\phi$ with particular eigenfunctions of $\mathcal{L}^\star$.
This is comparable with the forward transform for the Fourier sine series discussed in~\S\ref{ssec:Classical.FourierSeriesAsTransform}.
In contrast, because we target a diagonalization result of the form~\eqref{eqn:Classical.Sketch.WeakDiagonalization} instead of~\eqref{eqn:Classical.Sketch.Diagonalization}, we have taken a different particular member of each eigenspace and used a different, broader selection of eigenvalues.
The fact that this is the right definition of the forward transform to produce diagonalization~\eqref{eqn:Classical.Sketch.WeakDiagonalization} will be established in~\S\ref{sec:Diagonalization}.

\begin{eg} \label{eg:Separated3Ord.part3}
    We continue the analysis of example~\ref{eg:Separated3Ord.part2}.
    
    Following the definitions, $\alpha = \re^{2\pi\ri/3}=\frac{1}{2}(-1+\ri\sqrt{3})$.
    \[
        M^+(\la) = \begin{bmatrix} 1 & 0 & -\ri\nu(\la) \\ 1 & 0 & -\ri\nu(\alpha\la) \\ 1 & 0 & -\ri\nu(\alpha^2\la) \end{bmatrix},
        \qquad\qquad
        M^-(\la) = \begin{bmatrix} 0 & \re^{-\ri\nu(\la)} & 0 \\ 0 & \re^{-\ri\nu(\alpha\la)} & 0 \\ 0 & \re^{-\ri\nu(\alpha^2\la)} & 0 \end{bmatrix}.
    \]
    Therefore,
    \begin{equation} \label{eqn:Separated3Ord.Delta}
        \Delta(\la) = -\ri \sum_{j=1}^3 \re^{-\ri\nu(\alpha^{j-1}\la)} \left(\nu(\alpha^j\la)-\nu(\alpha^{j+1}\la)\right)
    \end{equation}
    and
    \begin{subequations} \label{eqn:Separated3Ord.FokForTransPM}
    \begin{align}
        \FokForTransP{\phi}(\la) &= \frac{-\ri\nu'(\la)}{2\pi\Delta(\la)} \sum_{j=1}^3 \left[ \left( \nu(\alpha^{j+1}\la)\re^{-\ri\nu(\alpha^j\la)} - \nu(\alpha^{j}\la)\re^{-\ri\nu(\alpha^{j+1}\la)} \right) \right. \notag \\ &\hspace{12em} + \left.\left( \re^{-\ri\nu(\alpha^{j+1}\la)} - \re^{-\ri\nu(\alpha^j\la)} \right) \nu(\la) \right] \FormalForTrans{\phi}(\alpha^{j-1}\la), \\
        \FokForTransM{\phi}(\la) &= \frac{\ri\nu'(\la)}{2\pi\Delta(\la)} \,\re^{-\ri\nu(\la)} \sum_{j=1}^3 \left( \nu(\alpha^{j}\la) - \nu(\alpha^{j+1}\la) \right) \FormalForTrans{\phi}(\alpha^{j-1}\la).
    \end{align}
    \end{subequations}
    
    This example is continued in example~\ref{eg:Separated3Ord.part4}.
\end{eg}

\subsubsection{The inverse transform}

The inverse transform must be an integral that can be applied to the result of the forward transform.
But the forward transform has denominator $\Delta$, whose zeros must be navigated by any prospective integration contour.
Let $y_j^\principalpart(x;\la) = \re^{\ri\overline{\alpha^{j-1}\la} x}$ and define $\Delta^\principalpart$ in terms of $y_j^\principalpart$ as was $\Delta$ defined in terms of $y_j$.
Then $\Delta^\principalpart$ is the chracteristic determinant of the principal part of the adjoint operator, and it follows from the definition that $\Delta^\principalpart$ is an exponential polynomial.
It may be that $\Delta^\principalpart$ is identically $0$ or another constant or polynomial multiplied by an explonential, but those cases correspond to illposed IBVP, so we shall assume $\Delta^\principalpart$ is an exponential polynomial with at least two terms, having different complex exponents.
It follows from the results of~\cite{Lan1931a} that the zeros of $\Delta$ have finite infimal separation (though any may be of finite order higher than $1$) and are asymptotically distributed along certain rays or logarithmic strips.
Moreover, in successively larger annuli whose inner and outer radii differ by the constant $\delta>0$, the number of zeros of $\Delta^\principalpart$ approaches some number $k(\delta)$.
Therefore, there is some $\epsilon>0$ such that $5\epsilon$ is less than the infimal separation of the zeros of $\Delta^\principalpart$, ensuring that the discs $D(\sigma,2\epsilon)$ for $\Delta^\principalpart(\sigma)=0$ have pairwise separation of at least $\epsilon$.

Select any $R$ at least $\epsilon$ greater than the $\propertiesnuradius$ of proposition~\ref{prop:propertiesnu}.
The arguments of~\cite{Lan1931a} may be adapted to show that all sufficiently large zeros of $\Delta$ can be made arbitrarily close to those of $\Delta^\principalpart$.
We assume that a sufficiently large $R$ is chosen that
\[
    D(0,R)\cup\bigcup_{\substack{\sigma\in\CC\setminus D(0,R):\\\Delta^\principalpart(\sigma)=0}} D(\sigma,\epsilon)
\]
contains all zeros of $\Delta$.
It may be necessary to further increase $R$ in order to ensure that $C(0,R)$ does not intersect any circles of radius $3\epsilon$ about zeros of $\Delta^\principalpart$.
We shall assume this is done, so that the contours defined below do not self intersect, and do not intersect one another.

We define the sets
\begin{subequations}
\begin{align}
    \CC_\nu^\pm &= \{\la\in\CC:\abs\la>\propertiesnuradius,\pm\Im(\nu(\la))>0\}, \\
    Z^+ &= \{\sigma\in\clos\CC_\nu^+:\Delta^\principalpart(\sigma)=0,\abs\sigma>R\}, \\
    Z^- &= \{\sigma\in\CC_\nu^-:\Delta^\principalpart(\sigma)=0,\abs\sigma>R\}, \\
    Z_\cuts &= \{\sigma\in\CC:\Delta^\principalpart(\sigma)=0,\abs\sigma<R\}.
\end{align}
We define the contour $\Gamma$ as follows.
\begin{align}
    \Gamma &= \Gamma_0 \cup \Gamma_a \cup \Gamma_\cuts, \\
    \Gamma_0 &= \Gamma_0^+ \cup \Gamma_0^-, \\
    \Gamma_0^\pm &= \bigcup_{\sigma\in Z^\pm} C(\sigma,\epsilon), \\
    \Gamma_a &= \Gamma_a^+ \cup \Gamma_a^-, \\
    \Gamma_a^\pm &= \partial \left( \{\la\in\CC_\nu^\pm:\Re(a\la^n)>0,\,\abs\la>R\} \setminus \bigcup_{\sigma\in Z^+\cup Z^-} D(\sigma,2\epsilon) \right), \\
    \Gamma_\cuts &= C(0,R) \\
    \Gamma^\pm &= \Gamma_0^\pm \cup \Gamma_a^\pm.
\end{align}
\end{subequations}
The contour $\Gamma$ is displayed in figure~\ref{fig:Gamma}.
\begin{figure}
    \centering
    \includegraphics{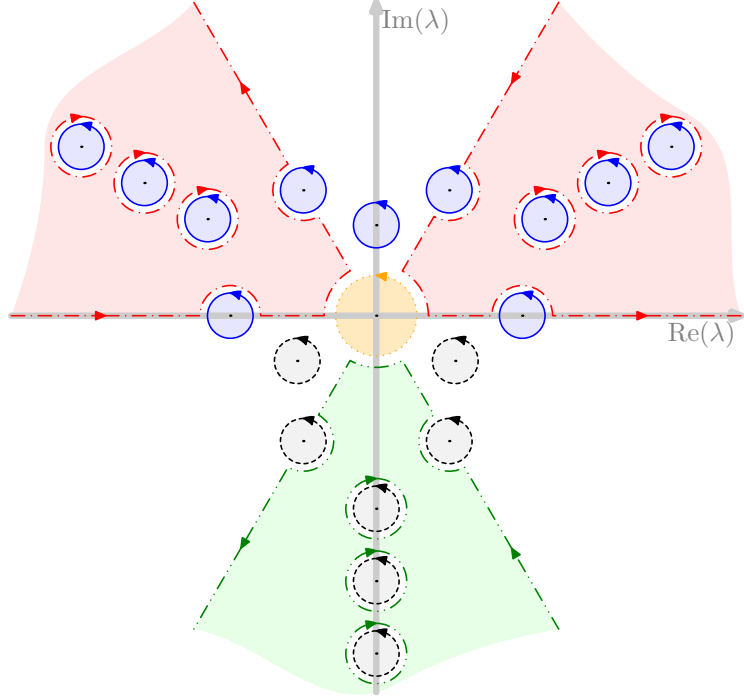}
    \caption{
        The components of contour $\Gamma$ are displayed:
            \boxGAMMAc{$\Gamma_\cuts$ dotted orange};
            \boxGAMMAaP{$\Gamma_a^+$ dot-dashed red};
            \boxGAMMAaM{$\Gamma_a^-$ dot-dot-dashed green};
            \boxGAMMAzP{$\Gamma_0^+$ solid blue};
            \boxGAMMAzM{$\Gamma_0^-$ dashed black}.
        The regions to the left of each component are shaded.
        The zeros of $\Delta^\principalpart$ are shown as black dots.
        In this example, $n=3$ and $a=-\ri$.
    }
    \label{fig:Gamma}
\end{figure}
If $\arg(a)$ is close to $\pm\pi/2$, then $\Gamma_a^\pm$ will include rays making small angles with the positive and negative real axes, as well as components lying on the boundaries of $\nu^{-1}(\RR)$, which approach $\RR$ itself.
If there are zeros of $\Delta^\principalpart$ closeby, then the removed discs may separate these components of $\Gamma_a^\pm$ into a number of simple loops.
Provided $\arg(a)\neq\pm\pi/2$, there are only finitely many such separated components, before the separation between the boundaries of $\nu^{-1}(\RR)$ and the rays exceeds $4\epsilon$; past that point $\Gamma_a^\pm$ has infinite components.
If $\arg(a)=\pm\pi/2$, then we have required the coefficients of $\omega$ be real, so $\nu^{-1}(\RR)\subseteq\RR$, and this difficulty is avoided.

For $F$, a bivalued function $F^\pm$ of a single complex variable, the inverse Fokas transform $\FokInvTransNoArg$ is defined by
\begin{equation} \label{eqn:FokasInverseTransformDefinition}
    F(\la) \mapsto \FokInvTrans{F}(x) := \CPVwithoutintegral \left( \int_{\Gamma^+\cup\Gamma_\cuts} \re^{\ri\nu(\la)x}F^+(\la) \D\la + \int_{\Gamma^-} \re^{\ri\nu(\la)x}F^-(\la) \D\la \right).
\end{equation}

\begin{eg} \label{eg:Separated3Ord.part4}
    We continue the analysis of example~\ref{eg:Separated3Ord.part3}.
    
    The characteristic determinant of the principal part, $\Delta^\principalpart$, is defined by the same formula as $\Delta$,~\eqref{eqn:Separated3Ord.Delta}, but with $\nu$ replaced by the identity function so that $\Delta^\principalpart$ is an exponential sum.
    By~\cite[proposition~A.1]{Pel2005a}, apart from the third order zero at $0$, $\Delta^\principalpart$ has simple zeros on the rays $-\ri\alpha^j\RR^+$, for $j\in\{0,1,2\}$, and no other zeros.
    Applying the arguments of~\cite{Lan1931a}, we find that the zeros of $\Delta^\principalpart$ are asymptotically at $-\ri\alpha^j\rho_k$, for $j\in\{0,1,2\}$ and positive integers $k$, where
    \[
        \rho_k = \frac{\pi}{\sqrt{3}}\left(\frac{1}{3}+2k\right).
    \]
    A numerical root finder based on the argument principle suggests that the zeros of $\Delta$ (and those of $\Delta^\principalpart$) all lie within discs of radius $0.1$ about the points $-\ri\alpha^j\rho_k$.
    Each circular component of $\Gamma_0^\pm$ corresponds to exactly one concentric circular component of $\Gamma_a^\pm$ of double the radius.
    Therefore, we cancel all the circular integrals (except that around $\Gamma_\cuts$) to simplify the expression.
    The fact that $\omega$ is Schwartz symmetric implies that $\CC^\pm_\nu=\CC^\pm$.
    
    From the above analysis, the contours can be simplified to
    \begin{subequations} \label{eqn:Separated3Ord.FokInvTransContours}
    \begin{align}
        \Gamma_\cuts &= C(0,0.8), \\
        \Gamma^+ &= \partial \left\{\la\in\CC:0<\arg(\la)<\frac\pi3 \mbox{ or } \frac\pi3<\arg(\la)<\frac{2\pi}3, \mbox{ and } \abs\la>1\right\}, \\
        \Gamma^- &= \partial \left\{\la\in\CC: \frac{-2\pi}3<\arg(\la)<\frac{-\pi}3, \mbox{ and } \abs\la>1\right\}.
    \end{align}
    \end{subequations}
    
    This example is continued in example~\ref{eg:Separated3Ord.part5}.
\end{eg}

\subsubsection{Validity theorem} \label{sssec:Validity}

We also define
\begin{equation} \label{eqn:defn.Gammaapmhat}
    \hat\Gamma_a^\pm = \partial \left( \{\la\in\CC_\nu^\pm:\Re(a\la^n)<0,\,\abs\la>R\} \setminus \bigcup_{\sigma\in Z^+\cup Z^-} D(\sigma,2\epsilon) \right).
\end{equation}

We claim that, under certain conditions, the transform~\eqref{eqn:FokasInverseTransformDefinition} is truly the inverse of transform~\eqref{eqn:FokasForwardTransformDefinition}.

\begin{asm} \label{asm:MainAssumption}
    The operator $L$, temporal coefficient $a$ and function $\phi$ are such that, for all $x\in(0,1)$,
    \begin{equation*}
        \CPVwithoutintegral\left( \int_{\hat\Gamma_a^+} \re^{\ri\nu(\la)x} \FokForTransP{\phi}(\la)\D\la + \int_{\hat\Gamma_a^-} \re^{\ri\nu(\la)x} \FokForTransM{\phi}(\la)\D\la \right) = 0.
    \end{equation*}
\end{asm}

\begin{thm} \label{thm:FokasTransformValid}
    Suppose $\phi\in \mathrm{C}^1[0,1]$ and assumption~\ref{asm:MainAssumption} holds.
    Then, for all $x\in(0,1)$,
    \begin{equation*}
        \FokInvTrans{\FokForTrans{\phi}}(x) = \phi(x).
    \end{equation*}
\end{thm}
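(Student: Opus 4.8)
The plan is to unravel the definition of $\FokInvTrans{\FokForTrans{\phi}}$ by substituting the forward transform formulas~\eqref{eqn:FokasForwardTransformDefinition.LeftRight} into the inverse transform~\eqref{eqn:FokasInverseTransformDefinition}, then to deform the contours appearing there to reduce everything, via corollary~\ref{cor:AlternativeFormalTransformValid}, to a principal-value integral of $\re^{\ri\nu(\la)x}\nu'(\la)\FormalForTrans{\phi}(\la)$ along $\gamma$. The first observation is that $\FokForTransPMNoArg$ is built from the formal transform evaluated at the rotated points $\alpha^{j-1}\la$: each $\FokForTransPM{\phi}(\la)$ is a sum over $j$ of a ratio of characteristic-matrix minors times $\FormalForTrans{\phi}(\alpha^{j-1}\la)$, as seen explicitly in example~\ref{eg:Separated3Ord.part3}. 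So after substitution the integrand on each component of $\Gamma$ is a linear combination of terms $\re^{\ri\nu(\la)x}\,(\text{rational-in-}M)\,\nu'(\la)\FormalForTrans{\phi}(\alpha^{j-1}\la)$. The key algebraic identity to establish is that the ratios of minors of the characteristic matrix, summed appropriately against the boundary data, reconstruct exactly the coefficients that would appear if one solved a linear system for the ``$\nu$-transforms at the rotated rays'' — i.e.\ that these are the Cramer's-rule coefficients expressing $\FormalForTrans{\phi}(\la)$ itself in terms of the boundary values. This is the same linear-algebra content as the diagonalization in~\S\ref{sec:Diagonalization}, and I would isolate it as the main computational lemma.

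Next I would perform the contour manipulations. On the cut contour $\Gamma_\cuts = C(0,R)$ the integrand is holomorphic (all zeros of $\Delta$ lie inside, by the choice of $R$), and on the small circles $\Gamma_0^\pm$ the residue contributions at the zeros of $\Delta$ must be shown to cancel against the corresponding pieces of $\Gamma_a^\pm$ that encircle the removed discs $D(\sigma,2\epsilon)$ — this cancellation is by construction of $\Gamma$, since each little circle in $\Gamma_0^\pm$ is concentric with an arc of $\Gamma_a^\pm$ of the same orientation, and the integrand is meromorphic with poles only at zeros of $\Delta$. The effect is to replace $\Gamma$ by a contour that, modulo these cancellations, runs along the asymptotic rays bounding $\{\Re(a\la^n)>0\}$ in $\CC_\nu^\pm$. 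Then, on those rays, $\Re(a\la^n)>0$; combined with the structure of $\Delta$ (an exponential polynomial whose leading behaviour controls the denominator), Jordan's-lemma-type estimates let me push the ``large $\la$'' parts of the integral inward. Crucially, the difference between integrating over $\Gamma_a^\pm$ (where $\Re(a\la^n)>0$) and over $\hat\Gamma_a^\pm$ (where $\Re(a\la^n)<0$), together with the full circle $C(0,R)$, is a closed contour, so by Cauchy's theorem the $\Gamma_a$-integral equals the $\hat\Gamma_a$-integral up to the residue/cut terms already handled — and the $\hat\Gamma_a$-integral is exactly the quantity assumption~\ref{asm:MainAssumption} forces to vanish. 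This is precisely where assumption~\ref{asm:MainAssumption} is used, and it is the one hypothesis that is not automatic.

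Having discarded the $\hat\Gamma_a^\pm$ pieces, what remains should collapse — using the Cramer's-rule identity from the first step — to $\frac{1}{2\pi}\CPV_\gamma \re^{\ri\nu(\la)x}\nu'(\la)\FormalForTrans{\phi}(\la)\D\la$, where $\gamma$ is the contour of~\S\ref{ssec:TransformPair.Formal}; by corollary~\ref{cor:AlternativeFormalTransformValid} this equals $\phi(x)$ for $\phi\in\mathrm{C}^1[0,1]$ and $x\in(0,1)$. The main obstacle I anticipate is not any single step but the bookkeeping in the contour-deformation stage: one must simultaneously track (i) the residues at zeros of $\Delta$ (including possible higher-order zeros, per~\cite{Lan1931a}), (ii) the $\pm$ splitting between left and right characteristic matrices and the corresponding $\CC_\nu^\pm$ regions, (iii) the ``logarithmic strip'' versus ``ray'' behaviour of the zero set when $\arg(a)$ is near $\pm\pi/2$, and (iv) the principal-value pairing of the various infinite ends so that the limits combine correctly. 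Getting the orientations right on $\Gamma_0^\pm$ versus $\Gamma_a^\pm$ so that the circular integrals genuinely cancel, rather than double, is the subtle point; everything else is a careful but routine application of Cauchy's theorem, Jordan's lemma, and the inversion result for the formal transform.
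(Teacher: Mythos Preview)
Your overall strategy --- invoke assumption~\ref{asm:MainAssumption}, manipulate contours to cancel circular pieces, reduce to corollary~\ref{cor:AlternativeFormalTransformValid} --- is the paper's, but the algebraic core is misidentified in a way that keeps the argument from closing. You describe the needed ``Cramer's-rule identity'' as one ``expressing $\FormalForTrans{\phi}(\la)$ itself in terms of the boundary values''. No such identity holds: each of $\FokForTransPM{\phi}(\la)$ is a genuine linear combination of $\FormalForTrans{\phi}(\alpha^{j-1}\la)$ over all $j$, and neither collapses on its own. The correct statement (the paper's lemma~\ref{lem:FokasTransformCancellation}) is that the \emph{difference} does,
\[
\FokForTransP{\phi}(\la) - \FokForTransM{\phi}(\la) = \frac{\nu'(\la)}{2\pi}\FormalForTrans{\phi}(\la),
\]
via the cofactor expansion $\sum_k (-1)^{(n-1)(j+k)} \det\M{\MMinors}{j}{k}\,\M{M}{1}{k} = \delta_{1,j}\Delta$. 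This is the only mechanism by which the $j\neq 1$ terms disappear, and it requires both transforms to appear on the same contour with opposite signs.

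This dictates the contour argument in a way your plan does not anticipate. The paper does \emph{not} show ``the $\Gamma_a$-integral equals the $\hat\Gamma_a$-integral up to residues'' --- the former is nonzero while the latter vanishes. Rather, \emph{adding} the vanishing $\hat\Gamma_a^\pm$ integrals to the existing $\Gamma^\pm$ integrals cancels the shared ray segments, leaving integrals of $\FokForTransP{\phi}$ and $\FokForTransM{\phi}$ over contours that (after shrinking the excised discs from radius $2\epsilon$ to $\epsilon$ and cancelling against $\Gamma_0^\pm$) coincide near $\nu^{-1}(\RR)$ with opposite orientations. Only once the two integrands sit on a common contour $\gamma'$ can the cofactor identity replace $\FokForTransP{\phi}-\FokForTransM{\phi}$ by $\frac{\nu'}{2\pi}\FormalForTrans{\phi}$; that integrand is holomorphic on the domain of $\nu$, and corollary~\ref{cor:AlternativeFormalTransformValid} finishes. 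Your bookkeeping worries (orientations, higher-order zeros, logarithmic strips) are legitimate but secondary; the missing organizing principle is that the entire contour manipulation is aimed at producing the difference $\FokForTransPNoArg-\FokForTransMNoArg$ on a single curve.
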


The restriction to continuously differentiable functions is not essential, but is included to simplify the statement of the theorem.
Indeed, as with the usual Fourier inversion theorem, theorem~\ref{thm:FokasTransformValid} holds for $\phi$ piecewise continuously differentiable but with an average of the left and right limits at any discontinuities.
Similarly, if a smooth cutoff function is used in the principal value, then the inversion holds for $\phi$ piecewise continuous.
Extensions to spaces of integrable or square integrable functions, Sobolev spaces and distribution spaces are also possible with appropriate care in the definitions.
All such generalizations are inherited from the Fourier transform via corollary~\ref{cor:AlternativeFormalTransformValid}.
The restrictiveness of assumption~\ref{asm:MainAssumption} is discussed in~\S\ref{sssec:ValidityBirkhoff}.

\begin{lem} \label{lem:FokasTransformCancellation}
    For all $\phi\in \mathrm{L}^1[0,1]$ and for all $\la\in\CC$ with $\abs\la>\propertiesnuradius$,
    \[
        \FokForTransP{\phi}(\la) - \FokForTransM{\phi}(\la) = \frac{\nu'(\la)}{2\pi}\FormalForTrans{\phi}(\la).
    \]
\end{lem}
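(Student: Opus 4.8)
The plan is a short, direct computation. Subtracting the two defining formulae~\eqref{eqn:FokasForwardTransformDefinition.LeftRight}, every factor except $\Msup{M}{1}{k}{\pm}(\la)$ is common, and the sign discrepancy between the two definitions turns the difference $\Msup{M}{1}{k}{+}-\bigl(-\Msup{M}{1}{k}{-}\bigr)$ into a sum, giving
\[
    \FokForTransP{\phi}(\la) - \FokForTransM{\phi}(\la)
    = \frac{\nu'(\la)}{2\pi\Delta(\la)} \sum_{j=1}^n \sum_{k=1}^n (-1)^{(n-1)(j+k)} \det \M{\MMinors}{j}{k}(\la) \left(\Msup{M}{1}{k}{+}(\la) + \Msup{M}{1}{k}{-}(\la)\right) \int_0^1 \re^{-\ri\nu(\alpha^{j-1}\la)x}\phi(x)\D x .
\]
Since $B_k^\star = \Msups{B}{k}{}{\star}{+} + \Msups{B}{k}{}{\star}{-}$ as functionals on $\Phi$, applying this to $y_1(\argdot;\la)$ and conjugating yields $\Msup{M}{1}{k}{+}(\la) + \Msup{M}{1}{k}{-}(\la) = \M{M}{1}{k}(\la)$, so the parenthesised factor is just the $(1,k)$ entry of the characteristic matrix.

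Next I would recognise the inner sum over $k$ as a generalised Laplace (cofactor) expansion of $\Delta = \det M$. From the definition of $\M{\MMinors}{j}{k}$ as the cyclically ordered $(n-1)\times(n-1)$ submatrix of $M(\la)$ obtained by omitting row $j$ and column $k$, together with the effect on determinants of the cyclic reordering of the remaining $n-1$ rows and columns, one verifies that $(-1)^{(n-1)(j+k)}\det\M{\MMinors}{j}{k}(\la)$ is exactly the $(j,k)$-cofactor of $M(\la)$. Consequently $\sum_{k=1}^n (-1)^{(n-1)(j+k)}\det\M{\MMinors}{j}{k}(\la)\,\M{M}{1}{k}(\la)$ is the determinant of the matrix obtained from $M(\la)$ by replacing its $j$th row with its first row: this vanishes for $j\neq1$ (two equal rows) and equals $\Delta(\la)$ for $j=1$. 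Hence only the $j=1$ term of the outer sum survives, where $\alpha^{j-1}=1$ so that the remaining integral equals $\FormalForTrans{\phi}(\la)$ by~\eqref{eqn:TransformPair.Formal.Defn.Forward}; the factors $\Delta(\la)$ cancel and we are left with $\nu'(\la)\FormalForTrans{\phi}(\la)/(2\pi)$, as claimed.

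The argument is purely algebraic and presents no genuine obstacle; the only care needed is the sign bookkeeping in the cofactor identification (the cyclic reordering of $n-1$ rows and columns contributes $(-1)^{(n-2)(j+k)}$, and $(-1)^{(n-1)(j+k)}(-1)^{(n-2)(j+k)} = (-1)^{j+k}$, the standard cofactor sign), and the observation that the computation is valid wherever the forward transforms are defined, i.e.\ for $\abs{\la}>\propertiesnuradius$ with $\Delta(\la)\neq0$; since both sides are meromorphic on $\abs{\la}>\propertiesnuradius$, the identity then holds as an identity of meromorphic functions, the apparent poles at zeros of $\Delta$ being removable.
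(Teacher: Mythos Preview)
Your proof is correct and follows essentially the same approach as the paper: subtract the two defining formulae, combine $\Msup{M}{1}{k}{+}+\Msup{M}{1}{k}{-}=\M{M}{1}{k}$, and invoke the cyclic cofactor expansion to obtain $\sum_{k=1}^n(-1)^{(n-1)(j+k)}\det\M{\MMinors}{j}{k}(\la)\,\M{M}{1}{k}(\la)=\delta_{1,j}\Delta(\la)$. You supply more detail on the sign bookkeeping than the paper does, and your closing remark on removable singularities at zeros of $\Delta$ is a sensible addition.
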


\begin{proof}
    This follows directly from the definitions of the transforms, and an application of the cyclic cofactor expansion of determinants to observe
    \[
        \sum_{k=1}^n(-1)^{(n-1)(j+k)}\det\M{\MMinors}{j}{k}(\la)\M{M}{1}{k}(\la) = \delta_{1,j}\Delta(\la),
    \]
    in which $\delta_{1,j}$ is the Kronecker delta.
\end{proof}

\begin{proof}[Proof of theorem~\ref{thm:FokasTransformValid}]
    Applying the hypothesis, we have
    \begin{equation*}
        \FokInvTrans{\FokForTrans{\phi}}(x)
        = \CPVwithoutintegral \left(
            \int_{\Gamma^+\cup\Gamma_\cuts\cup\hat\Gamma_a^+} \re^{\ri\nu(\la)x} \FokForTransP{\phi}(\la)\D\la
            + \int_{\Gamma^-\cup\hat\Gamma_a^-} \re^{\ri\nu(\la)x} \FokForTransM{\phi}(\la)\D\la
        \right).
    \end{equation*}
    Much of the contours $\hat\Gamma_a^\pm$ lie along contours $\Gamma_a^\pm$ but in the opposite direction, so their contributions to these integrals almost cancel.
    The remainders are integrals along the contours
    \[
        \check\Gamma^\pm = \partial \left( \{\la\in\CC_\nu^\pm:\abs\la>R\} \setminus \bigcup_{\sigma\in Z^+\cup Z^-} D(\sigma,2\epsilon) \right).
    \]
    Because $\phi\in \mathrm{L}^1[0,1]$ and $\epsilon,R$ have been chosen as described above, the integrands are holomorphic (within the radius of the principal value) on
    \[
        \CC \setminus \left( D(0,R) \cup \bigcup_{\sigma\in Z^+\cup Z^-} D(\sigma,\epsilon) \right),
    \]
    and continuous onto the boundary.
    Therefore, by Cauchy's theorem, we can make any finite contour deformation within this region without affecting the integrals.
    In particular, the contours $\check\Gamma^\pm$ can be replaced with the contours
    \[
        \tilde\Gamma^\pm = \partial \left( \{\la\in\CC_\nu^\pm:\abs\la>R\} \setminus \bigcup_{\sigma\in Z^+\cup Z^-} D(\sigma,\epsilon) \right),
    \]
    in which the radii of the excised discs have been reduced.
    This may appear to be an infinite deformation (as $Z^\pm$ are countably infinite sets) but, working within the principal value, it is a finite deformation.
    Using these new contours,
    \begin{equation*}
        \FokInvTrans{\FokForTrans{\phi}}(x)
        = \CPVwithoutintegral \left(
            \int_{\Gamma_0^+\cup\Gamma_\cuts\cup\tilde\Gamma^+} \re^{\ri\nu(\la)x} \FokForTransP{\phi}(\la)\D\la
            + \int_{\Gamma_0^-\cup\tilde\Gamma^-} \re^{\ri\nu(\la)x} \FokForTransM{\phi}(\la)\D\la
        \right).
    \end{equation*}

    For each $\sigma\in Z^\pm$ with $\abs{\Im(\nu(\sigma))}\geq\epsilon$, the contours $\Gamma_0^\pm\cup\tilde\Gamma^\pm$ include one circle $C(\sigma,\epsilon)$ oriented in each direction, so these parts of the contour integrals cancel out, and the contours may effectively be restricted to the region
    \begin{equation} \label{eqn:ValidityTheorem.RegionOfInterest}
        \{\la\in\CC:\abs\la>R,\,\abs{\Im(\nu(\la))}\leq\epsilon\}\cup\{\la\in\CC:\abs\la\leq R\}.
    \end{equation}
    It is here that we concentrate the remainder of the argument.
    The remaining integration contours are displayed in figure~\ref{fig:ValidityTheorem.1}.
    \begin{figure}
        \centering
        \includegraphics{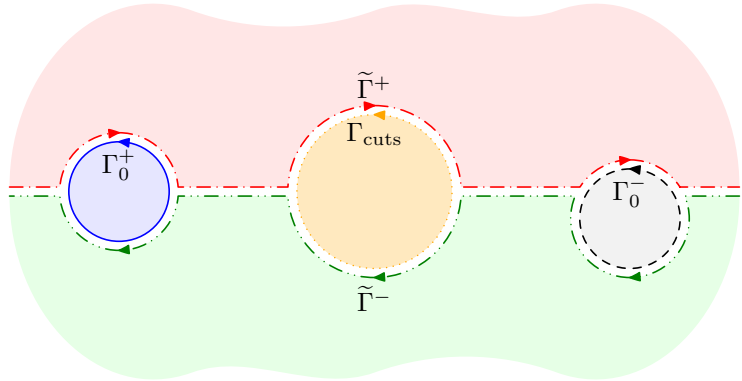}
        \caption{
            Remaining contours in the proof of theorem~\ref{thm:FokasTransformValid} after the first reduction:
                \boxGAMMAc{$\Gamma_\cuts$ dotted orange};
                \boxGAMMAaP{$\tilde\Gamma^+$ dot-dashed red};
                \boxGAMMAaM{$\tilde\Gamma^-$ dot-dot-dashed green};
                \boxGAMMAzP{$\Gamma_0^+$ solid blue};
                \boxGAMMAzM{$\Gamma_0^-$ dashed black}.
            The regions to the left of each contour are shaded.
            For simplicity of presentation, coefficients of $\omega$ are assumed real so that $\omega(\RR)\subseteq\RR$, and only parts of the contours lying within region of interest~\eqref{eqn:ValidityTheorem.RegionOfInterest} are displayed.
        }
        \label{fig:ValidityTheorem.1}
    \end{figure}
    \begin{figure}
        \centering
        \includegraphics{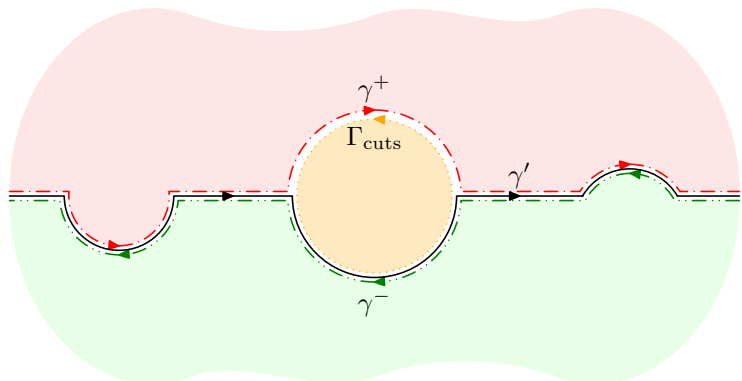}
        \caption{
            Remaining contours in the proof of theorem~\ref{thm:FokasTransformValid} after the second reduction:
                \boxGAMMAc{$\Gamma_\cuts$ dotted orange};
                \boxGAMMAaP{$\gamma^+$ dot-dashed red};
                \boxGAMMAaM{$\gamma^-$ dot-dot-dashed green}.
            The regions to the left of the first three contours are shaded.
            Also shown is the contour remaining after the third reduction: $\gamma'$ solid black.
        }
        \label{fig:ValidityTheorem.2}
    \end{figure}

    For each $\sigma\in Z^-$ with $\Im(\nu(\sigma))>-\epsilon$,
    \begin{align*}
        \CC_\nu^-\cap C(\sigma,\epsilon) &\mbox{ oriented in the positive sense, is part of } \Gamma_0^-, \\
        \CC_\nu^+\cap C(\sigma,\epsilon) &\mbox{ oriented in the positive sense, is also part of } \Gamma_0^-, \\
        \CC_\nu^-\cap C(\sigma,\epsilon) &\mbox{ oriented in the negative sense, is part of }  \tilde\Gamma^-, \\
        \CC_\nu^+\cap C(\sigma,\epsilon) &\mbox{ oriented in the negative sense, is part of }  \tilde\Gamma^+.
    \end{align*}
    The contributions of the first and third contours cancel, but the second and fourth do not because the integrands are different.
    A similar argument for each $\sigma\in Z^+$ with $\Im(\nu(\sigma))<\epsilon$ yields
    \begin{equation*}
        \FokInvTrans{\FokForTrans{\phi}}(x)
        = \CPVwithoutintegral \left(
            \int_{\gamma^+\cup\Gamma_\cuts} \re^{\ri\nu(\la)x} \FokForTransP{\phi}(\la)\D\la
            + \int_{\gamma^-} \re^{\ri\nu(\la)x} \FokForTransM{\phi}(\la)\D\la
        \right),
    \end{equation*}
    in which
    \begin{subequations} \label{eqn:FokasTransformValidProof.1}
    \begin{align}
        \gamma^+ &= \partial \left(\left(\CC_\nu^+ \cup \bigcup_{\substack{\sigma\in Z^+:\\\Im(\nu(\sigma))<\epsilon}}D(\sigma,\epsilon) \right) \setminus \bigcup_{\substack{\sigma\in Z^-:\\\Im(\nu(\sigma))>-\epsilon}}D(\sigma,\epsilon) \right), \\
        \gamma^- &= \partial \left(\left(\CC_\nu^- \cup \bigcup_{\substack{\sigma\in Z^-:\\\Im(\nu(\sigma))>-\epsilon}}D(\sigma,\epsilon) \right) \setminus \bigcup_{\substack{\sigma\in Z^+:\\\Im(\nu(\sigma))<\epsilon}}D(\sigma,\epsilon) \right).
    \end{align}
    \end{subequations}
    These contours are the same as one another except that they trace opposing negatively oriented arcs of $C(0,R)$, and they are oriented in opposite directions to one another.
    These integration contours are displayed in figure~\ref{fig:ValidityTheorem.2}.

    Observe that, on the contour $C(0,R)\cap\CC_\nu^+$, the contributions from integrals along contours $\gamma^+$ and $\Gamma_\cuts$ cancel exactly.
    Elsewhere, the integrals share a contour but have different integrands.
    Therefore, using the common contour
    \[
        \gamma' = \partial \left(\left(\CC_\nu^+ \cup D(0,R) \cup \bigcup_{\substack{\sigma\in Z^+:\\\Im(\nu(\sigma))<\epsilon}}D(\sigma,\epsilon) \right) \setminus \bigcup_{\substack{\sigma\in Z^-:\\\Im(\nu(\sigma))>-\epsilon}}D(\sigma,\epsilon) \right)
    \]
    indicated in figure~\ref{fig:ValidityTheorem.2} to combine the three integrals into one, we have
    \[
        \FokInvTrans{\FokForTrans{\phi}}(x) = \CPV_{\gamma'} \re^{\ri\nu(\la)x} \left( \FokForTransP{\phi}(\la) - \FokForTransM{\phi}(\la) \right) \D\la.
    \]
    Applying lemma~\ref{lem:FokasTransformCancellation},
    \[
        \FokInvTrans{\FokForTrans{\phi}}(x) = \frac{1}{2\pi}\, \CPV_{\gamma'} \re^{\ri\nu(\la)x} \nu'(\la)\FormalForTrans{\phi}(\la) \D\la.
    \]
    The integrand is holomorphic for $\abs\la>\propertiesnuradius$, so we can make the finite (or, rather, finite within the principal value) contour deformation
    \[
        \FokInvTrans{\FokForTrans{\phi}}(x) = \frac{1}{2\pi}\, \CPV_{\partial \left(\CC_\nu^+\setminus D(0,R)\right)} \re^{\ri\nu(\la)x} \nu'(\la)\FormalForTrans{\phi}(\la) \D\la.
    \]
    The result follows by corollary~\ref{cor:AlternativeFormalTransformValid}.
\end{proof}

\subsubsection{Validity criteria \& Birkhoff regularity} \label{sssec:ValidityBirkhoff}

A full interrogation of assumption~\ref{asm:MainAssumption} is beyond the scope of the present work, but we note that this assumption is not new.
An explicit statement of validity of this assumption in a related problem appears in~\cite[lemmata 2.1~\&~2.2]{MS2018a}, but equivalents are proved or assumed in every work on the Fokas transform method for evolution equations on the finite interval.
Nor is assumption~\ref{asm:MainAssumption} unreasonably restrictive.
We sketch below, in terms of Birkhoff's criteria for regularity, how one may derive sufficient but not necessary criteria for assumption~\ref{asm:MainAssumption} to hold.
As is appropriate for the solution of IBVP, we ensure that the assumption holds for all smooth enough $\phi$; assumption~\ref{asm:MainAssumption} should be understood as an assumption on $(L,a)$ valid for all $\phi$, not as an assumption on $(L,a,\phi)$.

For simplicity, we restrict to the case in which $\omega(\la)=\la^n$; $\nu(\la)=\la$.
In this case $\Delta=\Delta^\principalpart$ is an exponential polynomial, and its dominant term, as $\la\to\infty$ within the $J$\textsuperscript{th} sector (counting clockwise from an appropriate angle which depends on $n$) of $\{\la\in\CC:\Re(a\la^n)<0,\abs\la>R\}$, is given by
\[
    \det\begin{bmatrix}
        \Msup{M}{1}{1}{-} & \Msup{M}{1}{2}{-} & \cdots & \Msup{M}{1}{n}{-} \\
        \Msup{M}{2}{1}{-} & \Msup{M}{2}{2}{-} & \cdots & \Msup{M}{2}{n}{-} \\
        \vdots & \vdots &  & \vdots \\
        \Msup{M}{N}{1}{-} & \Msup{M}{N}{2}{-} & \cdots & \Msup{M}{N}{n}{-} \\
        \Msup{M}{N+1}{1}{+} & \Msup{M}{N+1}{2}{+} & \cdots & \Msup{M}{N+1}{n}{+} \\
        \Msup{M}{N+2}{1}{+} & \Msup{M}{N+2}{2}{+} & \cdots & \Msup{M}{N+2}{n}{+} \\
        \vdots & \vdots &  & \vdots \\
        \Msup{M}{n}{1}{+} & \Msup{M}{n}{2}{+} & \cdots & \Msup{M}{n}{n}{+}
    \end{bmatrix}(\alpha^J\la)
    = \mathcal{P}_N(\alpha^J\la)\exp\left(-\ri\la \alpha^J\sum_{j=1}^N\alpha^{j-1}\right),
\]
for
\[
    N = \begin{cases}
        \frac{n}{2} & \mbox{if } n \mbox{ even} \\
        \frac{n+1}{2} & \mbox{if } n \mbox{ odd and } a=\ri \\
        \frac{n-1}{2} & \mbox{if } n \mbox{ odd and } a = -\ri,
    \end{cases}
\]
in which the polynomial $\mathcal{P}_N$ is independent of $J$.
Birkhoff's regularity criteria are defined in terms of these same polynomials~\cite{Bir1908b}.
Indeed, a boundary form $B_k$ is said to have \emph{degree} $j-1$ if $j$ is maximal such that at least one of $\Msup{b}{k}{j}{\star},\Msup{\beta}{k}{j}{\star}$ is nonzero, and an even order operator $L^\star$ is \emph{Birkhoff regular} if $\mathcal{P}_{\frac{n}{2}}$ has degree equal to the sum of the degrees of all boundary forms.
An odd order operator must have both $\mathcal{P}_{\frac{n+1}{2}}$ and $\mathcal{P}_{\frac{n-1}{2}}$ of maximal degree to be called Birkhoff regular.

Integration by parts in the numerators of the expressions on the right of equations~\eqref{eqn:FokasForwardTransformDefinition.LeftRight} yields that the numerators have dominant terms
\[
    \frac{\mathscr{P}(\la)}{\la}\exp\left(-\ri\la \alpha^J\sum_{j=1}^N\alpha^{j-1}\right),
\]
in which the polynomials $\mathscr{P}$ have degree no greater than the sum of the degrees of all the boundary forms.
Therefore, at least away from the zeros of $\Delta$, Birkhoff regularity of $L^\star$ is sufficient to yield $\FokForTransPM{\phi}(\la)=\bigoh{\abs\la^{-1}}$ as $\la\to\infty$ in the sectors $\{\la\in\CC:\Re(a\la^n)<0,\abs\la>R\}$, for all $\phi\in\Phi$.

Note that, for odd order operators, one does not need the full strength of regularity to conclude this decay; only one of the two polynomials $\mathcal{P}_{\frac{n+1}{2}}$ and $\mathcal{P}_{\frac{n-1}{2}}$ need be of maximal degree to ensure that $\Delta$ dominates the numerator in the appropriate sectors, and which one is required depends on $a$.
An example of this phenomenon appears in IBVP~\eqref{eqn:3ordIBVP}, which is well posed but would be illposed if instead $a=\ri$.

With such a decay result, and assuming the zeros of $\Delta$ do not get in the way, one can apply Jordan's lemma to show that assumption~\ref{asm:MainAssumption} holds.
The locus of the zeros of $\Delta$ must be established separately.
Generally, it is acceptable for the zeros to lie within and on the boundaries of sectors $\{\la\in\CC:\Re(a\la^n)<0,\abs\la>R\}$, but there should not be infinitely many zeros interior to those sectors.
As the zeros of $\Delta$ are $n$\textsuperscript{th} roots of the eigenvalues of $L$, it is unsurprising that a criterion on their locus appears in the validity of any proposed spectral method for the solution of IBVP~\eqref{eqn:IBVP}.
For example, the instantaneous blowup of solutions of the heat equation with time running in reverse arises from exactly the issue of infinitely many zeros of $\Delta$ being aligned on the centre rays of sectors $\{\la\in\CC:\Re(a\la^n)<0,\abs\la>R\}$.
The restrictions on $a$ in~\S\ref{ssec:Introduction.IBVP} are designed to avoid some such cases, including the reverse time heat equation, but there are classes of boundary conditions for the zero potential linear Schr\"{o}dinger equation, wherein $a=\pm\ri$, featuring the same instantaneous blowup, so the analysis is a little more delicate than just the restrictions on $a$.

Although the above argument applies only to monomial $\omega$, the large $\la$ limit $\nu(\la)/\la\to1$ from proposition~\ref{prop:propertiesnu} indicates that the same kind of analysis may be valid for arbitrary $\omega$.
A full generalization is postponed for future work, but example~\ref{eg:Separated3Ord.part5} provides such an argument for one problem.

\begin{eg} \label{eg:Separated3Ord.part5}
    We continue the analysis of example~\ref{eg:Separated3Ord.part4}.
    
    Because of the locus of the zeros of $\Delta^\principalpart$ and using $\nu(\RR)\subset\RR$, the contours $\hat\Gamma_{-\ri}^\pm$ have a simpler expression:
    \[
        \hat\Gamma_{-\ri}^\pm = \partial \left\{\la\in\CC^\pm:\Re(-\ri\la^3)<0,\;\abs\la>1\right\}.
    \]
    Therefore, to justify assumption~\ref{asm:MainAssumption}, it is sufficient to show that, for all $\phi\in\mathrm{C}^1[0,1]$,
    \begin{equation} \label{eqn:Separated3Ord.Assumption}
        \CPV_{\hat\Gamma_{-\ri}^\pm} \re^{\ri\nu(\la)x}\FokForTransPM{\phi}(\la)\D\la = 0.
    \end{equation}
    
    In the ``$-$ version'' of equation~\eqref{eqn:Separated3Ord.Assumption}, the integrand is
    \[
        \re^{-\ri\nu(\la)(1-x)} \frac{\ri\nu'(\la)}{2\pi\Delta(\la)} \sum_{j=1}^3 \left( \nu(\alpha^{j}\la) - \nu(\alpha^{j+1}\la) \right) \FormalForTrans{\phi}(\alpha^{j-1}\la).
    \]
    By proposition~\ref{prop:propertiesnu}, $\nu'(\la)=\bigoh1$ as $\abs\la\to\infty$, uniformly in $\arg(\la)$.
    Integrating by parts in the definition of $\FormalForTransNoArg$, we find that the terms in the sum are exactly terms in $\Delta$ divided by $-\nu(\alpha^{j-1}\la)$.
    Therefore, the integrand is
    \[
        \re^{-\ri\nu(\la)(1-x)} \bigoh{\abs{\la}^{-1}}, \qquad \mbox{as }\la\to\infty \mbox{ within }\clos\CC^-,
    \]
    uniformly in $\arg(\la)$.
    Using again that $\nu(\RR)\subset\RR$ and biholomorphicity of $\nu$, we have that $\nu(\clos\CC^-)\subset\clos(\CC^-)$; $\re^{-\ri\nu(\la)(1-x)}$ is a valid kernel for Jordan's lemma.
    Hence, by Jordan's lemma, the ``$-$ version'' of equation~\eqref{eqn:Separated3Ord.Assumption} holds.
    
    The integrand in the ``$+$ version'' of equation~\eqref{eqn:Separated3Ord.Assumption} is
    \[
        \re^{\ri\nu(\la)x}\frac{\nu'(\la)}{2\pi\Delta(\la)} X(\la)\left[ 1+\bigoh{\abs\la^{-1}} \right],
    \]
    in which $X$ has terms of the form
    \begin{align*}
        &\frac{\nu(\alpha^{j+1}\la)-\nu(\la)}{\nu(\alpha^{j-1}\la)} \re^{-\ri\nu(\alpha^j\la)}, &
        &\frac{\nu(\alpha^{j+1}\la)-\nu(\la)}{\nu(\alpha^{j-1}\la)} \re^{-\ri[\nu(\alpha^j\la)+\nu(\alpha^{j-1}\la)]}, \\
        &\frac{\nu(\la)-\nu(\alpha^{j}\la)}{\nu(\alpha^{j-1}\la)} \re^{-\ri\nu(\alpha^{j+1}\la)}, &
        &\frac{\nu(\la)-\nu(\alpha^{j}\la)}{\nu(\alpha^{j-1}\la)} \re^{-\ri[\nu(\alpha^{j+1}\la)+\nu(\alpha^{j-1}\la)]},
    \end{align*}
    for $j\in\{0,1,2\}$.
    The terms on the left are each dominated by terms in $\Delta$.
    Therefore, the above argument applies to the terms on the left, except that we are now working in $\clos(\CC^+)$, so $\Im\nu(\la)\geq0$ and $\re^{\ri\nu(\la)x}$ is the right kernel for Jordan's lemma here.
    Of the terms on the right, the dominant ones are
    \begin{align*}
        &\begin{matrix} \mbox{top right,} \\ j=0, \end{matrix} &
        &\frac{\nu(\alpha\la)-\nu(\la)}{\nu(\alpha^{2}\la)} \re^{-\ri[\nu(\la)+\nu(\alpha^{2}\la)]}, &
        &\mbox{for} &
        \pi/3&\ll\arg(\la)\leq2\pi/3, \\
        &\begin{matrix} \mbox{bottom right,} \\ j=2, \end{matrix} &
        &\frac{\nu(\la)-\nu(\alpha^{2}\la)}{\nu(\alpha\la)} \re^{-\ri[\nu(\la)+\nu(\alpha\la)]}, &
        &\mbox{for} &
        \pi/3&\leq\arg(\la)\ll2\pi/3.
    \end{align*}
    Because $\nu(\clos\CC^-)\subset\clos(\CC^-)$, if $\pi/3\ll\arg(\la)\leq2\pi/3$, then $\Re(-\ri\nu(\alpha^{2}\la))\leq0$, so the top term is dominated by a term in $\Delta$.
    Similarly, if $\pi/3\leq\arg(\la)\ll2\pi/3$, then $\Re(-\ri\nu(\alpha\la))\leq0$, so the bottom term is dominated by a term in $\Delta$.
    Therefore, the ``$+$ version'' of equation~\eqref{eqn:Separated3Ord.Assumption} holds.
    
    We have justified that, for this example, assumption~\ref{asm:MainAssumption} holds for all $\phi\in\mathrm{C}^1[0,1]$.
    The validity theorem~\ref{thm:FokasTransformValid} follows.
    
    This example is continued in example~\ref{eg:Separated3Ord.part6}.
\end{eg}

\section{Diagonalization} \label{sec:Diagonalization}


In this section, we prove that the Fokas transform pair diagonalizes the differential operator $L$ in the sense of equations~\eqref{eqn:Classical.Sketch.WeakDiagonalization}.
Precisely, we prove the following theorem.

\begin{thm} \label{thm:Diag}
    There exists a \emph{remainder transform} $\FokRemTransNoArg$ such that, for all $\phi\in\Phi_{\BVec{B}}$,
    \begin{subequations} \label{eqn:DiagThm}
    \begin{equation} \label{eqn:DiagThm.1}
        \FokForTrans{L\phi}(\la) = \la^n \FokForTrans{\phi}(\la) + \FokRemTrans{\phi}(\la)
    \end{equation}
    and,
    if
    $q:[0,1]\times[0,T]\to\CC$ is such that, for all $t\in[0,T]$, $q(\argdot,t)\in\Phi$ and, for all $j\in\{0,1,\ldots,n-1\}$, uniformly for all $x\in[0,1]$, $\partial_x^jq(x,\argdot)$ is a function of bounded variation,
    then,
    for all $t\in[0,T]$ and all $x\in(0,1)$,
    \begin{equation} \label{eqn:DiagThm.2}
        \FokInvTrans{\int_0^t \re^{a\argdot^n(s-t)} \FokRemTrans{q}(\argdot;s) \D s}(x) = 0.
    \end{equation}
    \end{subequations}
\end{thm}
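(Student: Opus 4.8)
The plan is to construct $\FokRemTransNoArg$ directly from Green's formula, and then to prove~\eqref{eqn:DiagThm.2} by repeating the contour surgery in the proof of theorem~\ref{thm:FokasTransformValid}, the one new feature being that the factor $\re^{a\argdot^n(s-t)}$ with $s\le t$ supplies, inside the sectors $\{\la:\Re(a\la^n)>0\}$, exactly the decay that assumption~\ref{asm:MainAssumption} had to provide for the forward inversion theorem.

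First I would establish~\eqref{eqn:DiagThm.1} and read off $\FokRemTransNoArg$. For each $j$, Green's formula (integration by parts $n$ times, using $\phi\in\AC^{n-1}[0,1]$) gives
\[
    \FormalForTrans{\mathcal{L}\phi}(\alpha^{j-1}\la) = \langle \mathcal{L}\phi,y_j(\argdot;\la)\rangle = \langle\phi,\mathcal{L}^\star y_j(\argdot;\la)\rangle + [\phi,y_j(\argdot;\la)],
\]
where $[\argdot,\argdot]$ denotes the bilinear concomitant evaluated between $x=0$ and $x=1$; since $y_j(\argdot;\la)$ is an eigenfunction of $\mathcal{L}^\star$ with eigenvalue $\overline{\la}^n$, the first term on the right equals $\la^n\FormalForTrans{\phi}(\alpha^{j-1}\la)$. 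Substituting into~\eqref{eqn:FokasForwardTransformDefinition.LeftRight} yields~\eqref{eqn:DiagThm.1}, with $\FokRemTransNoArg^{\pm}[\phi]$ defined as the same $\Delta$-normalised, cofactor-weighted combination of the $[\phi,y_j(\argdot;\la)]$ that multiplies the integrals there. Each concomitant $[\phi,y_j(\argdot;\la)]$ is a polynomial in $\nu(\alpha^{j-1}\la)$ whose coefficients are the boundary values $\phi^{(p)}(0)$ and $\phi^{(p)}(1)\re^{-\ri\nu(\alpha^{j-1}\la)}$; the latter, carrying the extra exponential, are the ones needing care below. I would also record the analogue of lemma~\ref{lem:FokasTransformCancellation}: the same cyclic cofactor expansion gives $\FokRemTransNoArg^{+}[\phi](\la)-\FokRemTransNoArg^{-}[\phi](\la)=\frac{\nu'(\la)}{2\pi}[\phi,y_1(\argdot;\la)]$, now with no $\Delta$ in the denominator.

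For~\eqref{eqn:DiagThm.2}, set $F^\pm(\la)=\int_0^t\re^{a\la^n(s-t)}\FokRemTransNoArg^{\pm}[q(\argdot,s)](\la)\D s$ and apply $\FokInvTransNoArg$. The contour $\Gamma$ is the same one used in theorem~\ref{thm:FokasTransformValid}, so I would run exactly the bookkeeping of that proof that does not invoke assumption~\ref{asm:MainAssumption}: shrink the excised discs, cancel the circular arcs of $\Gamma_0^\pm$ against those of the shrunk contours, and merge the resulting $\gamma^+$, $\gamma^-$ and $\Gamma_\cuts$ into a single contour $\gamma'$ which, up to a deformation finite within the principal value, bounds $\{\la\in\CC_\nu^+:\Re(a\la^n)>0\}$ (together with $D(0,R)$ and finitely many small disc corrections). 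On $\gamma'$ the two integrands differ only through $F^+-F^-$, so the cancellation lemma collapses everything to
\[
    \FokInvTrans{F}(x)=\frac{1}{2\pi}\,\CPV_{\gamma'}\re^{\ri\nu(\la)x}\nu'(\la)\int_0^t\re^{a\la^n(s-t)}[q(\argdot,s),y_1(\argdot;\la)]\D s\,\D\la,
\]
whose integrand is holomorphic for $\abs{\la}>\propertiesnuradius$. The strict inequality $s<t$ means $\re^{a\la^n(s-t)}$ decays like $\re^{-c\abs{\la}^n}$ as $\la\to\infty$ into the interior of $\{\Re(a\la^n)>0\}$, beating any polynomial, while on the boundary rays of that region (where $\Re(a\la^n)$ is small) the argument of $\la$ is bounded away from $0$ and $\pi$, so $\re^{\ri\nu(\la)x}$ supplies the decay; for the part of the concomitant carrying $\re^{-\ri\nu(\la)}$, one has $\re^{\ri\nu(\la)x}\re^{-\ri\nu(\la)}=\re^{-\ri\nu(\la)(1-x)}$, which I would instead carry into $\CC_\nu^-\cap\{\Re(a\la^n)>0\}$. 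Deforming $\gamma'$ (separately for the two halves) into these regions and off to infinity, Jordan's lemma forces the integral to vanish; $\Gamma_\cuts=C(0,R)$ is absorbed automatically since $R>\propertiesnuradius$. The bounded-variation hypothesis on each $\partial_x^jq(x,\argdot)$ enters precisely to legitimise this: integrating by parts in $s$ replaces $\int_0^t\re^{a\la^n(s-t)}g(s)\D s$ by $(a\la^n)^{-1}$ times boundary terms plus a Stieltjes integral bounded uniformly in $\la$ on $\{\Re(a\la^n)\ge0\}$, which is what makes the interchange of $\int_0^t$ with the principal-value limit, and the estimates on the parts of the contour where $\Re(a\la^n)$ does not grow, rigorous.

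I expect the main obstacle to be this last step: making the two deformations precise requires uniform exponential estimates along the complicated, zero-avoiding contour $\gamma'$ and its deformations, and reconciling the sectors in which $\re^{\pm\ri\nu(\la)}$ are controlled with those in which $\Re(a\la^n)>0$ — a reconciliation resting on the asymptotics $\nu(\la)/\la\to1$ and $\nu'(\la)\to1$ of proposition~\ref{prop:propertiesnu} together with the bounded-variation estimate. This is essentially the analytic content of ``removal of the problematic terms'' in stage~2 of the classical Fokas transform method, here made tractable because the extra factor $\re^{a\argdot^n(s-t)}$ takes over the role that, for the forward transform alone, had to be assumed.
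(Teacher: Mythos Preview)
Your proposal has a real gap centred on a structural fact you do not exploit. You define $\FokRemTransNoArg^{\pm}[\phi]$ as the ``$\Delta$-normalised, cofactor-weighted combination'' of the concomitants $[\phi,y_j]$, and you observe that the \emph{difference} $\FokRemTransNoArg^{+}-\FokRemTransNoArg^{-}$ is $\Delta$-free. What you miss is that, for $\phi\in\Phi_{\BVec{B}}$, each of $\FokRemTransNoArg^{\pm}[\phi]$ is \emph{individually} $\Delta$-free: the boundary form formula reduces $[\phi,y_j]$ to $\BVec{B}_\complementary\phi\odot\BVec{B}^\star y_j$, whose $k$\textsuperscript{th} component is $\M{M}{j}{k}$, and the very cofactor identity you quote then collapses the double sum to $\frac{\nu'(\la)}{2\pi}\,\BVec{B}_\complementary\phi\odot\overline{M_1^{\pm}(\la)}$, a polynomial of degree $\le n-1$ in $\nu(\la)$ (times $\re^{-\ri\nu(\la)}$ in the minus case). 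This is the paper's lemma~\ref{lem:DiagMainLemma}, and it is the point on which the whole argument turns.

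Without it, your contour surgery does not run. The bookkeeping in theorem~\ref{thm:FokasTransformValid} \emph{begins} by adding the $\hat\Gamma_a^\pm$ integrals via assumption~\ref{asm:MainAssumption}; only then do $\Gamma_a^\pm\cup\hat\Gamma_a^\pm$ cover all of $\CC_\nu^\pm$, so that every circle in $\Gamma_0^\pm$ acquires a partner and the two half-planes can be merged on a single $\gamma'$ (which in that proof bounds essentially $\CC_\nu^+\cup D(0,R)$, not $\{\Re(a\la^n)>0\}$ as you write). If you skip that step, the circles of $\Gamma_0^\pm$ lying in the sectors $\{\Re(a\la^n)<0\}$ have nothing to cancel against, and with $\Delta$ still in the denominator you cannot dismiss them by Cauchy's theorem; nor can you add $\hat\Gamma_a^\pm$ for free, because on $\{\Re(a\la^n)<0\}$ the factor $\re^{a\la^n(s-t)}$ blows up rather than decays. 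The paper instead uses holomorphicity of $P_\phi^\pm$ directly: each of the five contour pieces is handled separately, $\Gamma_0^\pm$ and $\Gamma_\cuts$ by Cauchy's theorem (after $\mu=\nu(\la)$), and $\Gamma_a^\pm$ by Jordan's lemma once the Stieltjes integration by parts gives $\int_0^t\re^{a\la^n(s-t)}P^\pm_{q(\argdot,s)}\D s = O(|\la|^{-1})$ uniformly on $\clos\{\Re(a\la^n)>0\}$. Note finally that this last bound is not ``exponential decay beating any polynomial'' (at $s=t$ the exponential is $1$); it is boundedness of $\re^{-a\la^n(t-s)}$ on the closed sector, combined with the explicit $1/(a\la^n)$ and the $O(|\la|^{n-1})$ growth of $P_\phi^\pm$.
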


The proof of equation~\eqref{eqn:DiagThm.1} is essentially lemma~\ref{lem:DiagMainLemma}, where we also provide an explicit formula for the Fokas remainder transform $\FokRemTransNoArg$.
But first we need a polynomial growth bound on the entries of the left and right characteristic matries $M^\pm$.

\begin{lem} \label{lem:PolynomialGrowth}
    The function $\Msup{M}{j}{k}{+}(\la)$ is a polynomial in $\nu(\alpha^{j-1}\la)$ (so a holomorphic function of $\la$ outside a finite disc) and has polynomial growth rate $\bigoh{\abs{\la}^{n-1}}$, uniformly in $\arg(\la)$, as $\la\to\infty$.
    The function $\Msup{M}{j}{k}{-}(\la)$ is the product of $\re^{-\ri\nu(\alpha^{j-1}\la)}$ with another function; the latter function is a polynomial in $\nu(\alpha^{j-1}\la)$ (so a holomorphic function of $\la$ outside a finite disc) and has polynomial growth rate $\bigoh{\abs{\la}^{n-1}}$, uniformly in $\arg(\la)$, as $\la\to\infty$.
\end{lem}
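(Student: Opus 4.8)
The plan is to compute $\Msup{M}{j}{k}{\pm}(\la)$ in closed form straight from the definitions and then read off holomorphy and growth. First I would recall that $y_j(x;\la) = \re^{\ri\overline{\nu(\alpha^{j-1}\la)}x}$, so that each derivative simply pulls down a power: $y_j^{(l-1)}(x;\la) = \left(\ri\overline{\nu(\alpha^{j-1}\la)}\right)^{l-1}\re^{\ri\overline{\nu(\alpha^{j-1}\la)}x}$. Since the adjoint boundary forms $\Msups{B}{k}{}{\star}{+}$ and $\Msups{B}{k}{}{\star}{-}$ involve only derivatives of order at most $n-1$ (this is part of the construction of $L^\star$; see~\cite[chapter~11]{CL1955a} and appendix~\S\ref{sec:AdjointOperator}), evaluating at the endpoints $x=0$ and $x=1$ gives
\[
    \Msups{B}{k}{}{\star}{+} y_j(\argdot;\la) = \sum_{l=1}^n \Msup{b}{k}{l}{\star}\left(\ri\overline{\nu(\alpha^{j-1}\la)}\right)^{l-1}, \qquad
    \Msups{B}{k}{}{\star}{-} y_j(\argdot;\la) = \re^{\ri\overline{\nu(\alpha^{j-1}\la)}}\sum_{l=1}^n \Msup{\beta}{k}{l}{\star}\left(\ri\overline{\nu(\alpha^{j-1}\la)}\right)^{l-1}.
\]
Taking complex conjugates, the double conjugate over $\nu(\alpha^{j-1}\la)$ collapses exactly as noted after equation~\eqref{eqn:defn.Delta}, so $\Msup{M}{j}{k}{+}(\la) = P_k\!\left(\nu(\alpha^{j-1}\la)\right)$ and $\Msup{M}{j}{k}{-}(\la) = \re^{-\ri\nu(\alpha^{j-1}\la)}\,Q_k\!\left(\nu(\alpha^{j-1}\la)\right)$, where $P_k$ and $Q_k$ are polynomials of degree at most $n-1$ (independent of $j$) whose coefficients are $\overline{\Msup{b}{k}{l}{\star}}(-\ri)^{l-1}$ and $\overline{\Msup{\beta}{k}{l}{\star}}(-\ri)^{l-1}$ respectively. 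This already gives the structural claims of the lemma.

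For holomorphy, I would invoke proposition~\ref{prop:propertiesnu}: $\nu$ is holomorphic on $\{\abs\la>\propertiesnuradius\}$, and since $\abs{\alpha^{j-1}\la}=\abs\la$, the composition $\la\mapsto\nu(\alpha^{j-1}\la)$ is holomorphic there too; a polynomial in a holomorphic function is holomorphic, and multiplication by the entire function $\re^{-\ri\nu(\alpha^{j-1}\la)}$ preserves this. For the growth rate, proposition~\ref{prop:propertiesnu} gives $\nu(\la)/\la\to1$ as $\la\to\infty$ uniformly in $\arg(\la)$; hence there is $C>0$ with $\abs{\nu(\alpha^{j-1}\la)}\leq C\abs\la$ for all sufficiently large $\abs\la$, uniformly in $\arg(\la)$ and in $j$. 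Substituting into the degree-$(n-1)$ polynomials $P_k$ and $Q_k$ gives $\bigoh{\abs\la^{n-1}}$ uniformly in $\arg(\la)$, which is the asserted bound.

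This lemma is essentially bookkeeping, so I do not expect a genuine obstacle. The only two points that need a little care are: (i) the uniformity of the bound in $\arg(\la)$, which is not automatic but is handed to us by the uniform form of the limit in proposition~\ref{prop:propertiesnu}; and (ii) the claim that the adjoint boundary forms $\Msups{B}{k}{}{\star}{\pm}$ really do have order at most $n-1$, which is what pins the exponent in the growth bound at $n-1$ rather than something larger. Both are standard, but (ii) is worth flagging explicitly, since it is the one place where the specific structure of the adjoint construction (rather than just the definitions introduced in~\S\ref{ssec:TransformPair.BC}) is used.
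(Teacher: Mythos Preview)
Your proposal is correct and follows essentially the same approach as the paper: write out $\Msup{M}{j}{k}{\pm}(\la)$ explicitly from the definitions as a polynomial of degree at most $n-1$ in $\nu(\alpha^{j-1}\la)$ (times $\re^{-\ri\nu(\alpha^{j-1}\la)}$ in the minus case), then invoke proposition~\ref{prop:propertiesnu} for holomorphy and the uniform growth bound. The paper's version is terser---it simply displays the formula for $\Msup{M}{j}{k}{-}$ and appeals to the proposition---whereas you spell out the conjugation, the holomorphy of the composition, and the uniformity more carefully, but the argument is the same.
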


\begin{proof}
    By definition,
    \[
        \Msup{M}{j}{k}{-}(\la) = \re^{-\ri\nu(\alpha^{j-1}\la)} \sum_{\ell=1}^n \overline{\Msup{\beta}{k}{\ell}{\star}}\left(-\ri\nu\left(\alpha^{j-1}\la\right)\right)^{\ell-1}.
    \]
    The result for $\Msup{M}{j}{k}{-}$ follows from proposition~\ref{prop:propertiesnu}.
    The argument is the same for $\Msup{M}{j}{k}{+}$.
\end{proof}

\begin{lem} \label{lem:DiagMainLemma}
    If $\phi\in\Phi_{\BVec{B}}$, then there exist functions $P_\phi^\pm$, analytic on the domain of $\nu$, with
    $P_\phi^\pm(\la) = \bigoh{\abs{\la}^{n-1}}$,
    uniformly in $\arg(\la)$, as $\la\to\infty$
    and
    \begin{subequations} \label{eqn:DiagMainLemma.ApproxEigs}
    \begin{align}
        \FokForTransP{L\phi} (\la) &= \la^n \FokForTransP{\phi}(\la) + P_\phi^+(\la), \\
        \FokForTransM{L\phi} (\la) &= \la^n \FokForTransM{\phi}(\la) + P_\phi^-(\la)\re^{-\ri\nu(\la)}.
    \end{align}
    \end{subequations}
    Moreover, the maps $\phi \mapsto P_\phi^\pm$ are linear.
    Indeed, such functions are given by
    \begin{subequations} \label{eqn:DiagMainLemma.PpmFormulae}
    \begin{align}
        \label{eqn:DiagMainLemma.PpmFormulae.+}
        P_\phi^+(\la) &= \frac{\nu'(\la)}{2\pi} \BVec{B}_\complementary\phi \odot \left( \overline{\Msup{M}{1}{1}{+}(\la)} , \overline{\Msup{M}{1}{2}{+}(\la)} , \ldots , \overline{\Msup{M}{1}{n}{+}(\la)} \right), \\
        \label{eqn:DiagMainLemma.PpmFormulae.-}
        P_\phi^-(\la) \re^{-\ri\nu(\la)} &= \frac{-\nu'(\la)}{2\pi} \BVec{B}_\complementary\phi \odot \left( \overline{\Msup{M}{1}{1}{-}(\la)} , \overline{\Msup{M}{1}{2}{-}(\la)} , \ldots , \overline{\Msup{M}{1}{n}{-}(\la)} \right),
    \end{align}
    \end{subequations}
    in which $\BVec{B}_\complementary$ represents the vector of boundary forms which is complementary (in the sense of~\cite[chapter~11]{CL1955a}) to the vector of boundary forms $\BVec{B}$ compatible with $\BVec{B}^\star$, and where $\odot$ is the usual sesquilinear dot product on $\CC^{n}$.
\end{lem}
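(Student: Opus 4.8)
The plan is to reduce the statement to a single Green's-formula computation for the formal transform $\FormalForTransNoArg$ of~\S\ref{ssec:TransformPair.Formal}, then feed the result into the definitions~\eqref{eqn:FokasForwardTransformDefinition.LeftRight} applied to $L\phi$ and collapse the emerging double sum with a cofactor identity.

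The first step is a ``formal diagonalization with a boundary term''. Since the integral in~\eqref{eqn:FokasForwardTransformDefinition.LeftRight} is $\int_0^1\re^{-\ri\nu(\alpha^{j-1}\la)x}(\mathcal L\phi)(x)\,\D x=\FormalForTrans{\mathcal L\phi}(\alpha^{j-1}\la)$, it suffices to analyse $\FormalForTrans{\mathcal L\phi}(\mu)$ for $\abs\mu>\propertiesnuradius$. Integrating by parts $n$ times (legitimate since $\phi\in\Phi=\AC^{n-1}[0,1]$, so $\mathcal L\phi\in\mathrm L^1[0,1]$) and using $(-\ri)^j(\ri\nu(\mu))^j=\nu(\mu)^j$ together with $\omega(\nu(\mu))=\mu^n$ from~\eqref{eqn:defnnu}, one obtains
\[
  \FormalForTrans{\mathcal L\phi}(\mu)=\mu^n\FormalForTrans{\phi}(\mu)+\bigl[\phi,\re^{\ri\overline{\nu(\mu)}\argdot}\bigr]_0^1,
\]
where $[\argdot,\argdot]_0^1$ is the (sesquilinear) Lagrange concomitant of the pair $\mathcal L$, $\mathcal L^\star$. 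By the construction of the adjoint boundary forms (appendix~\S\ref{sec:AdjointOperator}, after~\cite[chapter~11]{CL1955a}), this concomitant is a sum of two terms, one involving only $\BVec B\phi$ among the boundary data of $\phi$ and one equal to $\BVec B_\complementary\phi\odot\BVec B^\star\psi_\mu$ with $\psi_\mu=\re^{\ri\overline{\nu(\mu)}\argdot}=y_1(\argdot;\mu)$; since $\phi\in\Phi_{\BVec B}$ the first term vanishes, and by~\eqref{eqn:defn.Mjk} (using $\overline{B_k^\star y_1(\argdot;\mu)}=\M M1k(\mu)$ and sesquilinearity of $\odot$) the second term equals $\sum_{k=1}^n(\BVec B_\complementary\phi)_k\M M1k(\mu)$, a linear combination of the first-row entries of the characteristic matrix, in particular analytic for $\abs\mu>\propertiesnuradius$.

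The second step substitutes this into~\eqref{eqn:FokasForwardTransformDefinition.LeftRight}. Because $\alpha^n=1$, each $(\alpha^{j-1}\la)^n=\la^n$, so the $\mu^n\FormalForTrans{\phi}(\mu)$ contributions reassemble exactly into $\la^n\FokForTransPM{\phi}(\la)$, while the concomitant contributes
\[
  \frac{\pm\nu'(\la)}{2\pi\Delta(\la)}\sum_{k=1}^n\sum_{\ell=1}^n\Msup M1k{\pm}(\la)\,(\BVec B_\complementary\phi)_\ell\sum_{j=1}^n(-1)^{(n-1)(j+k)}\det\M{\MMinors}{j}{k}(\la)\,\M M1\ell(\alpha^{j-1}\la).
\]
The key observation is that $y_j(\argdot;\la)=y_1(\argdot;\alpha^{j-1}\la)$, so $\M M1\ell(\alpha^{j-1}\la)=\M Mj\ell(\la)$ is the $(j,\ell)$ entry of $M(\la)$; thus the inner sum over $j$ is a cofactor expansion of $M$ along a column — the same cyclic cofactor identity used in the proof of lemma~\ref{lem:FokasTransformCancellation}, now in the form $\mathrm{adj}(M)\,M=\Delta I$ rather than $M\,\mathrm{adj}(M)=\Delta I$ — and equals $\delta_{k\ell}\Delta(\la)$. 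The double sum collapses, $\Delta(\la)$ cancels, and what remains is $\frac{\pm\nu'(\la)}{2\pi}\sum_k\Msup M1k{\pm}(\la)(\BVec B_\complementary\phi)_k$, which by sesquilinearity of $\odot$ is exactly~\eqref{eqn:DiagMainLemma.PpmFormulae}; in the $-$ case one factors $\re^{-\ri\nu(\la)}$ out of each $\Msup M1k{-}(\la)$ via lemma~\ref{lem:PolynomialGrowth} to read off $P_\phi^-$. Linearity of $\phi\mapsto P_\phi^\pm$ is immediate, the only $\phi$-dependence being through the linear map $\phi\mapsto\BVec B_\complementary\phi$.

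For the remaining claims, lemma~\ref{lem:PolynomialGrowth} says each $\Msup M1k{+}(\la)$, and each $\re^{\ri\nu(\la)}\Msup M1k{-}(\la)$, is a polynomial in $\nu(\la)$ of degree at most $n-1$; since $\nu$ is analytic on its domain with $\nu(\la)/\la\to1$ and $\nu'(\la)\to1$ uniformly in $\arg\la$ (proposition~\ref{prop:propertiesnu}), the products $\nu'(\la)\Msup M1k{\pm}(\la)$ are analytic on the domain of $\nu$ with $\bigoh{\abs{\la}^{n-1}}$ growth uniformly in $\arg\la$, whence $P_\phi^\pm(\la)=\bigoh{\abs{\la}^{n-1}}$. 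The one genuinely delicate point is the first step: pinning down the Lagrange concomitant and identifying it with the adjoint boundary forms so that the surviving summand is precisely $\sum_k(\BVec B_\complementary\phi)_k\M M1k(\mu)$ — once that identity is secured, the rest is the algebraic cofactor collapse together with the cited growth estimates.
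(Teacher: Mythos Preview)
Your argument is correct and uses the same ingredients as the paper's proof --- Green's formula/the boundary form formula from~\cite[chapter~11]{CL1955a}, the vanishing of $\BVec B\phi$, and the cofactor identity $\sum_j(-1)^{(n-1)(j+k)}\det\M{\MMinors}{j}{k}(\la)\M Mj\ell(\la)=\delta_{k\ell}\Delta(\la)$ --- but in a slightly different order. The paper first packages the full kernel of $\FokForTransPMNoArg$ as a single function $\psi^\pm(\argdot;\la)$ (the linear combination of $y_j$'s with the minors as coefficients), observes that $\psi^\pm$ is itself an eigenfunction of $\mathcal L^\star$ with eigenvalue $\bar\la^n$, applies Green's formula once to $\langle L\phi,\psi^\pm\rangle$, and then uses the cofactor identity to evaluate $\overline{B_k^\star\psi^\pm}=\frac{\pm\nu'(\la)}{2\pi}\Msup M1k\pm(\la)$. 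You instead apply Green's formula to each $\FormalForTrans{\mathcal L\phi}(\alpha^{j-1}\la)$ separately and use the cofactor identity at the end to collapse the resulting double sum. The two computations are equivalent (your identification $\M M1\ell(\alpha^{j-1}\la)=\M Mj\ell(\la)$ is exactly what makes them so), and both yield~\eqref{eqn:DiagMainLemma.PpmFormulae}; the paper's ordering has the small conceptual advantage of exhibiting the Fokas transform kernel explicitly as an eigenfunction of $\mathcal L^\star$, which ties back to the discussion in~\S\ref{ssec:Classical.FourierSeriesAsTransform}.
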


\begin{proof}
    For notational convenience, we define functions $\psi^\pm$ via their complex conjugates
    \begin{equation} \label{eqn:DiagMainLemma.Proof.psipm}
        \overline{\psi^\pm(x;\la)} := \frac{\pm\nu'(\la)}{2\pi\Delta(\la)} \sum_{j=1}^n \sum_{\ell=1}^n (-1)^{(n-1)(j+\ell)} \det \M{\MMinors}{j}{\ell}(\la) \Msup{M}{1}{\ell}{\pm}(\la) \re^{-\ri\nu(\alpha^{j-1}\la) x},
    \end{equation}
    so that
    \begin{equation} \label{eqn:DiagMainLemma.Proof.InnerProduct}
        \FokForTransPM{\phi}(\la) = \left\langle \phi , \psi^\pm(\argdot;\la) \right\rangle,
    \end{equation}
    for $\langle\argdot,\argdot\rangle$ the usual sesquilinear inner product on $\mathrm{L}^2[0,1]$.
    Note that, for fixed $\la$, $\psi^\pm(\argdot;\la)$ are linear combinations $y_j(\argdot;\la)$ for $j\in\{1,2,\ldots,n\}$, so $\psi^\pm(\argdot;\la)$ are themselves eigenfunctions of the formal differential operator $\mathcal{L}^\star$ with eigenvalue $\bar\la^n$.
    Note also that
    \[
        \overline{B_k^\star\psi^\pm(\argdot;\la)\vphantom{()}} = \overline{B_k^\star\vphantom{()}} \; \overline{\psi^\pm(\argdot;\la)} = \frac{\pm\nu'(\la)}{2\pi\Delta(\la)} \sum_{j=1}^n \sum_{\ell=1}^n (-1)^{(n-1)(j+\ell)} \det \M{\MMinors}{j}{\ell}(\la) \Msup{M}{1}{\ell}{\pm}(\la) \overline{B_k^\star\vphantom{()}} \re^{-\ri\nu(\alpha^{j-1}\la) \argdot},
    \]
    but
    \[
        \overline{B_k^\star\vphantom{()}} \re^{-\ri\nu(\alpha^{j-1}\la) \argdot} = \overline{B_k^\star\vphantom{()}} \; \overline{y_j(\argdot;\la)} = \M{M}{j}{k}(\la)
    \]
    and
    \[
        \sum_{j=1}^n (-1)^{(n-1)(j+\ell)} \det \M{\MMinors}{j}{\ell}(\la) \M{M}{j}{k} = \delta_{\ell,k}\Delta(\la),
    \]
    for $\delta_{\ell,k}$ the Kronecker delta.
    Hence
    \begin{equation} \label{eqn:DiagMainLemma.Proof.AdjBFpsi}
        \overline{B_k^\star\psi^\pm(\argdot;\la)} = \frac{\pm\nu'(\la)}{2\pi} \Msup{M}{1}{k}{\pm}(\la)
    \end{equation}

    Using equation~\eqref{eqn:DiagMainLemma.Proof.InnerProduct}, and the results, terminology and notation of~\cite[chapter~11]{CL1955a}, we find
    \[
        \FokForTransPM{L\phi}(\la) = \left\langle L\phi , \psi^\pm(\argdot;\la) \right\rangle = \left\langle \phi , \mathcal{L}^\star\psi^\pm(\argdot;\la) \right\rangle + \BVec{B}\phi \odot \BVec{B}_\complementary^\star\psi^\pm(\argdot;\la) + \BVec{B}_\complementary\phi \odot \BVec{B}^\star\psi^\pm(\argdot;\la),
    \]
    for $\BVec{B}_\complementary^\star$ the vector of boundary forms complementary to $\BVec{B}^\star$ compatible with $\BVec{B}$ and $\BVec{B}_\complementary$.
    Because $\phi\in\Phi_{\BVec{B}}$, using the earlier observation the $\psi^\pm$ are eigenfunctions of $\mathcal{L}^\star$, and applying equation~\eqref{eqn:DiagMainLemma.Proof.AdjBFpsi} we find
    \begin{equation} \label{eqn:DiagMainLemma.Proof.PpmExplicitFormulae}
        \FokForTransPM{L\phi}(\la) = \la^n \left\langle \phi , \psi^\pm(\argdot;\la) \right\rangle \pm\frac{\nu'(\la)}{2\pi} \BVec{B}_{\complementary}\phi \odot \left( \overline{\Msup{M}{1}{1}{\pm}(\la)} , \overline{\Msup{M}{1}{2}{\pm}(\la)} , \ldots , \overline{\Msup{M}{1}{n}{\pm}(\la)} \right).
    \end{equation}
    The approximate eigenfunction equations~\eqref{eqn:DiagMainLemma.ApproxEigs} follow, for $P^\pm_\phi$ defined as in equations~\eqref{eqn:DiagMainLemma.PpmFormulae}.
    In these formulae, each sesquilinear dot product $\odot$ evaluates to a linear combination of $\Msup{M}{1}{k}{\pm}(\la)$ for $k\in\{1,2,\ldots,n\}$.
    Recall from proposition~\ref{prop:propertiesnu} that $\nu'(\la)=\bigoh{1}$ as $\la\to\infty$.
    The analyticity and asymptotic results follow by lemma~\ref{lem:PolynomialGrowth}.

    Observe from equations~\eqref{eqn:DiagMainLemma.Proof.PpmExplicitFormulae} that the functions $P_\phi^\pm$ depend on $\phi$ only through $\BVec{B}_\complementary\phi$, in which the vector of complementary boundary forms $\BVec{B}_\complementary:\AC^{n-1}\to\CC^n$ is a linear map.
    Therefore, $P_\phi^\pm$ also depend linearly on $\phi$.
\end{proof}

\begin{proof}[Proof of theorem~\ref{thm:Diag}]
    By lemma~\ref{lem:DiagMainLemma}, defining $\FokRemTransNoArg$ by
    \begin{equation} \label{eqn:Diag.RemTransDefn}
        \FokRemTrans{\phi}(\la) =
        \begin{cases}
            P^+_\phi(\la) & \la \in\Gamma^+\cup\Gamma_\cuts, \\
            P^-_\phi(\la) \re^{-\ri\nu(\la)} & \la\in\Gamma^-,
        \end{cases}
    \end{equation}
    equation~\eqref{eqn:DiagThm.1} follows immediately.

    To establish equation~\eqref{eqn:DiagThm.2}, we will show that each term on the right of
    \begin{multline*}
        \FokInvTrans{\int_0^t \re^{a\argdot^n(s-t)} \FokRemTrans{q}(\argdot;s) \D s}(x) \\
        = \CPV_{\Gamma_a^+} \re^{\ri\nu(\la)x-a\la^nt} \int_0^t \re^{a\la^ns}  P^+_{q(\argdot,s)}(\la) \D s \D\la
        + \CPV_{\Gamma_a^-} \re^{\ri\nu(\la)(x-1)-a\la^nt} \int_0^t \re^{a\la^ns}  P^-_{q(\argdot,s)}(\la) \D s \D\la \\
        + \CPV_{\Gamma_0^+} \re^{\ri\nu(\la)x-a\la^nt} \int_0^t \re^{a\la^ns} P^+_{q(\argdot,s)}(\la) \D s \D\la
        + \CPV_{\Gamma_0^-} \re^{\ri\nu(\la)(x-1)-a\la^nt} \int_0^t \re^{a\la^ns} P^-_{q(\argdot,s)}(\la) \D s \D\la \\
        + \int_{\Gamma_\cuts} \re^{\ri\nu(\la)x-a\la^nt} \int_0^t \re^{a\la^ns} P^+_{q(\argdot,s)}(\la) \D s \D\la
    \end{multline*}
    evaluates to $0$.
    The termwise evaluation will also justify the above distribution of the principal value.
    We treat these terms in the order they appear above, beginning with the integrals along $\Gamma_a^\pm$, which require the full criteria of the theorem.

    By definition, each component of $\BVec{B}_\complementary q(\argdot,t)$ is a linear combination of the $2n$ terms $\partial_x^jq(0,t)$ and $\partial_x^jq(1,t)$, which are all functions of bounded variation.
    Hence each component of $\BVec{B}_\complementary q(\argdot,t)$ is itself a function of bounded variation.
    It follows from the formulae for $P^\pm_{q(\argdot,s)}(\la)$ given in lemma~\ref{lem:DiagMainLemma} that, for each $\la$ in the domain of $\nu$, $P^\pm_{q(\argdot,s)}(\la)$ is a function of bounded variation in $s$.
    Moreover, by the asymptotic properties of $\Msup{M}{j}{k}{\pm}$ given in lemma~\ref{lem:PolynomialGrowth}, the total variation in $s$
    \[
        V_0^T\left( P^\pm_{q(\argdot,s)}(\la) \right) = \bigoh{\abs{\la}^{n-1}},
    \]
    uniformly in $\arg(\la)$, as $\la\to\infty$.
    Therefore
    \begin{align*}
        \abs{\frac{1}{\la^n}\int_0^t \re^{-a\la^n(t-s)} \D P^\pm_{q(\argdot,s)}(\la)}
        &\leq \frac{\sup_{s\in[0,t]}\abs{\re^{-a\la^n(t-s)}}}{\abs{\la^n}} V_0^t\left( P^\pm_{q(\argdot,s)}(\la) \right) \\
        &\leq \frac{V_0^T\left( P^\pm_{q(\argdot,s)}(\la) \right)}{\abs{\la^n}}
        = \bigoh{\abs{\la}^{-1}},
    \end{align*}
    uniformly in $\arg(\la)$, as $\la\to\infty$ from within the closure of the region enclosed by $\Gamma_a^\pm$.
    Therefore, applying lemma~\ref{lem:DiagMainLemma},
    \begin{align}
        \notag
        \re^{-a\la^nt} \int_0^t \re^{a\la^ns}  P^\pm_{q(\argdot,s)}(\la) \D s
        &= \frac{1}{a\la^n}\left( P^\pm_{q(\argdot,t)}(\la) - \re^{-a\la^nt} P^\pm_{q(\argdot,0)}(\la) - \int_0^t \re^{-a\la^n(t-s)} \D
        P^\pm_{q(\argdot,s)}(\la) \right) \\
        \label{eqn:DiagThmProof.2}
        &= \bigoh{\abs{\la}^{-1}},
    \end{align}
    uniformly in $\arg(\la)$, as $\la\to\infty$ within the closure of the region to the left of $\Gamma_a^\pm$.
    By Morera's theorem and lemma~\ref{lem:DiagMainLemma}, the integral on the left of equations~\eqref{eqn:DiagThmProof.2} is holomorphic on the domain of $\nu$.
    Therefore, by Jordan's lemma,
    \begin{align*}
        \CPV_{\Gamma_a^+} \re^{\ri\nu(\la)x-a\la^nt} \int_0^t \re^{a\la^ns}  P^+_{q(\argdot,s)}(\la) \D s \D\la &= 0, \\
        \CPV_{\Gamma_a^-} \re^{\ri\nu(\la)(x-1)-a\la^nt} \int_0^t \re^{a\la^ns}  P^-_{q(\argdot,s)}(\la) \D s \D\la &= 0,
    \end{align*}
    with the first equation holding for all $x>0$ and the second for all $x<1$.

    The fact that the integral on the left of equations~\eqref{eqn:DiagThmProof.2} is holomorphic on the domain of $\nu$ also implies, via Cauchy's theorem, that
    \begin{align*}
        \CPV_{\Gamma_0^+} \re^{\ri\nu(\la)x-a\la^nt} \int_0^t \re^{a\la^ns} P^+_{q(\argdot,s)}(\la) \D s \D\la &= 0, \\
        \CPV_{\Gamma_0^-} \re^{\ri\nu(\la)(x-1)-a\la^nt} \int_0^t \re^{a\la^ns} P^-_{q(\argdot,s)}(\la) \D s \D\la &= 0.
    \end{align*}

    Substituting formula~\eqref{eqn:DiagMainLemma.PpmFormulae.+} for $P^+_{q(\argdot,s)}(\la)$ and making the change of variables $\mu=\nu(\la)$, we obtain
    \begin{align*}
        \int_{\Gamma_\cuts} &\re^{\ri\nu(\la)x-a\la^nt} \int_0^t \re^{a\la^ns} P^+_{q(\argdot,s)}(\la) \D s \D \la \\
        = &\int_{\Gamma_\cuts} \re^{\ri\nu(\la)x-a\la^nt} \int_0^t \re^{a\la^ns} \BVec{B}_\complementary q(\argdot,s) \odot \left( \overline{\Msup{M}{1}{1}{+}(\la)} , \overline{\Msup{M}{1}{2}{+}(\la)} , \ldots , \overline{\Msup{M}{1}{n}{+}(\la)} \right) \D s \frac{\nu'(\la)}{2\pi} \D\la \\
        = &\int_{\gamma_\cuts} \re^{\ri\mu x-a\omega(\mu)t} \int_0^t \re^{a\omega(\mu)s} \\
        &\hspace{6em} \BVec{B}_\complementary q(\argdot,s) \odot \left( \overline{\Msup{M}{1}{1}{+}(\nu^{-1}(\mu))} , \overline{\Msup{M}{1}{2}{+}(\nu^{-1}(\mu))} , \ldots , \overline{\Msup{M}{1}{n}{+}(\nu^{-1}(\mu))} \right) \D s \frac{1}{2\pi} \D\mu,
    \end{align*}
    in which $\nu^{-1}$ is the inverse of $\nu$ and $\gamma_\cuts$ is the simple closed loop enclosing the origin which is the image of $\Gamma_\cuts$ in $\nu$.
    By lemma~\ref{lem:PolynomialGrowth}, for each $k\in\{1,2,\ldots,n\}$, $\Msup{M}{1}{k}{+}(\nu^{-1}(\mu))$ is an entire function of $\mu$, and the apparent complex conjugates are artifacts of presentation via the sesquilinear dot product $\odot$.
    So the integrand is an entire function of $\mu$ and, by Cauchy's theorem,
    \[
        \int_{\Gamma_\cuts} \re^{\ri\nu(\la)x-a\la^nt} \int_0^t \re^{a\la^ns} P^+_{q(\argdot,s)}(\la) \D s \D\la = 0.
        \qedhere
    \]
\end{proof}

\begin{eg} \label{eg:Separated3Ord.part6}
    We continue the analysis of example~\ref{eg:Separated3Ord.part5}.
    
    The crucial objects in the diagonalization theorem are the functions $P_\phi^\pm$ whose polynomial growth bound is asserted in lemma~\ref{lem:DiagMainLemma}.
    In the proof of that lemma, the formulae~\eqref{eqn:DiagMainLemma.PpmFormulae} for $P_\phi^\pm$ were derived using Green's formula (corollary to theorem~3.6.3) and the boundary form formula (theorem~11.2.1), both from~\cite{CL1955a}.
    But for particular examples, those theorems reduce to integration by parts and some algebraic bookkeeping, so we shall proceed directly.
    
    We aim to rewrite $\FokForTransPM{L\phi}(\la)$ as $\la^3\FokForTransPM{\phi}$ plus a remainder term.
    To do so, we express $\FokForTransPM{L\phi}(\la)$ as the inner product of $L\phi$ with a particular $\psi^\pm(\argdot;\la)$ that represents the kernel of $\FokForTransPMNoArg$.
    By the calculation in example~\ref{eg:Separated3Ord.part2},
    \begin{equation} \label{eqn:Separated3Ord.Remainders1}
        \int_0^1L\phi\overline{\psi^\pm} \D x
        = \int_0^1\phi\overline{\mathcal{L}^\star\psi^\pm}\D x
        + \ri\left[ \phi''(1)\overline{\psi^\pm(1)} - \phi''(0)\overline{\psi^\pm(0)} + \phi'(0)\overline{\psi^{\pm\prime}(0)} \right].
    \end{equation}
    The result of example~\ref{eg:Separated3Ord.part3} tells us that, for the left side to represent $\FokForTransPM{L\phi}(\la)$, we have to define
    \begin{align*}
        \overline{\psi^+(x;\la)} &= \frac{-\ri\nu'(\la)}{2\pi\Delta(\la)} \sum_{j=1}^3 \left[ \left( \nu(\alpha^{j+1}\la)\re^{-\ri\nu(\alpha^j\la)} - \nu(\alpha^{j}\la)\re^{-\ri\nu(\alpha^{j+1}\la)} \right) \right. \\ &\hspace{12em} + \left.\left( \re^{-\ri\nu(\alpha^{j+1}\la)} - \re^{-\ri\nu(\alpha^j\la)} \right) \nu(\la) \right] \re^{-\ri\nu(\alpha^{j-1}\la) x}, \\
        \overline{\psi^-(x;\la)} &= \frac{\ri\nu'(\la)}{2\pi\Delta(\la)} \,\re^{-\ri\nu(\la)} \sum_{j=1}^3 \left( \nu(\alpha^{j}\la) - \nu(\alpha^{j+1}\la) \right) \re^{-\ri\nu(\alpha^{j-1}\la) x}.
    \end{align*}
    So $\psi^\pm(x;\la)$ are each linear combinations of functions $\overline{\re^{-\ri\nu(\alpha^{j-1}\la) x}}$, which are each eigenfunctions of $\mathcal{L}^\star$ with common eigenvalue $\bar{\la}^n$, so the inner product on the right of equation~\eqref{eqn:Separated3Ord.Remainders1} simplifies to
    \[
        \la^n\int_0^1\phi\overline{\psi^\pm} \D x = \la^n\FokForTransPM{\phi}(\la).
    \]
    Evaluating $\overline{\psi^\pm(x;\la)}$ and its derivative at $x=0,1$ and reindexing the sums appropriately, we find that
    \begin{align*}
        \overline{\psi^+(1;\la)} &= 0, & \overline{\psi^+(0;\la)} &= \frac{\nu'(\la)}{2\pi}, & \overline{\psi^{+\prime}(0;\la)} &= -\ri\nu(\la)\frac{\nu'(\la)}{2\pi}, \\
        \overline{\psi^-(1;\la)} &= \re^{-\ri\nu(\la)}\frac{\nu'(\la)}{2\pi}, & \overline{\psi^-(0;\la)} &= 0, & \overline{\psi^{-\prime}(0;\la)} &= 0.
    \end{align*}
    Therefore, equation~\eqref{eqn:Separated3Ord.Remainders1} simplifies to
    \begin{subequations} \label{eqn:Separated3Ord.Remainders2}
    \begin{align}
        \FokForTransP{L\phi}(\la) &= \la^n\FokForTransP{\phi}(\la) + \frac{\nu'(\la)}{2\pi} \left[ \M{B}{\complementary}{1}\phi \times 1 + \M{B}{\complementary}{2}\phi \times 0 + \M{B}{\complementary}{3}\phi \times (-\ri\nu(\la)) \right], \\
        \FokForTransM{L\phi}(\la) &= \la^n\FokForTransM{\phi}(\la) - \re^{-\ri\nu(\la)} \frac{\nu'(\la)}{2\pi} \left[ \M{B}{\complementary}{1}\phi \times 0 + \M{B}{\complementary}{2}\phi \times 1 + \M{B}{\complementary}{3}\phi \times 0 \right],
    \end{align}
    \end{subequations}
    for complimentary boundary conditions
    \begin{equation} \label{eqn:Separated3Ord.BFcomplementary}
        \M{B}{\complementary}{1}\phi = -\ri\phi''(0), \qquad\qquad \M{B}{\complementary}{2}\phi = -\ri\phi''(1), \qquad\qquad \M{B}{\complementary}{3}\phi = \ri\phi'(0).
    \end{equation}
    
    Equations~\eqref{eqn:Separated3Ord.Remainders2} are equations~\eqref{eqn:DiagMainLemma.Proof.PpmExplicitFormulae}, and they yield
    \[
        P_\phi^+(\la) = \frac{\nu'(\la)}{2\pi}\left( \M{B}{\complementary}{1}\phi - \ri\nu(\la)\M{B}{\complementary}{3}\phi \right)
        \qquad \mbox{and} \qquad
        P_\phi^-(\la) = - \frac{\nu'(\la)}{2\pi} \M{B}{\complementary}{2}\phi
    \]
    for this particular IBVP.
    The $\bigoh{\abs\la^{3-1}}$ growth bound of lemma~\ref{lem:DiagMainLemma} is immediate from proposition~\ref{prop:propertiesnu}.
    In this example, a stronger $\bigoh{\abs\la}$ growth bound applies, but that is not usually true.
    
    The analysis is continued in example~\ref{eg:Separated3Ord.part7}.
\end{eg}

\begin{rmk}
    The statement of lemma~\ref{lem:DiagMainLemma} requires that the vector of complementary boundary forms $\BVec{B}_\complementary$ be compatible with $\BVec{B}$ and $\BVec{B}^\star$.
    This precise compatibility condition is why (see equation~\eqref{eqn:Separated3Ord.BFcomplementary}) we have coefficients other than one in the complementary boundary forms.
    It might appear more natural to have the alternate complementary boundary forms $\M{\tilde B}{\complementary}{1}\phi = \phi''(0)$, $\M{\tilde B}{\complementary}{2}\phi = \phi''(1)$, $\M{\tilde B}{\complementary}{3}\phi = \phi'(0)$, which are just a linear transformation of the complementary boundary forms we selected.
    But then the adjoint boundary forms would undergo an according linear transformation, changing the definition of $\FokForTransPMNoArg$.
    
    In~\cite{CL1955a}, it is the complementary boundary forms that may be chosen, thereby specifying precisely the adjoint boundary forms and their complements.
    But it is clear from the integration by parts argument that one can equivalently choose the adjoint boundary forms, which imposes the complements of both.
    In this work, we chose the latter approach because, while the adjoint boundary forms are necessary in constructing the Fokas transforms, our approach obviates explicit expressions for the complimentary boundary forms: although the concept was required in lemma~\ref{lem:DiagMainLemma}, the diagonalization theorem~\ref{thm:Diag} does not need the complimentary boundary forms.
    The last claim is true for homogeneous IBVP only; solving an inhomogeneous IBVP does require adjoint complementary boundary forms, as shown in the inhomogeneous diagonalization theorem~\ref{thm:Inhomogeneous.Diag}.
\end{rmk}

\section{Fokas transform method} \label{sec:Method}

\subsection{True transform version} \label{ssec:Method.TrueTransform}

Theorem~\ref{thm:FokasMethodSolRep} gives the solution of IBVP~\eqref{eqn:IBVP} using the Fokas transform pair.
Its proof reads like an ordinary transform method, but it relies on the diagonalization theorem we proved above.

\begin{thm} \label{thm:FokasMethodSolRep}
    Suppose the criteria of theorem~\ref{thm:FokasTransformValid} hold and there exists a solution $q(x,t)$ of problem~\eqref{eqn:IBVP}, which is absolutely continuous in $t$ and satisfying the criteria of theorem~\ref{thm:Diag}.
    Then
    \[
        q(x,t) = \FokInvTrans{\re^{-at\argdot^n}\FokForTrans{Q}}(x).
    \]
\end{thm}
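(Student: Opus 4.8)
The plan is to run the ``recognizable transform method'' sketched in~\S\ref{ssec:Classical.Sketch}, with the weak diagonalization theorem~\ref{thm:Diag} and the inversion theorem~\ref{thm:FokasTransformValid} supplying the rigour. First I would apply the forward Fokas transform $\FokForTransNoArg$, in the spatial variable, to the PDE~\eqref{eqn:IBVP.PDE}. Since, from~\eqref{eqn:IBVP.PDE}, $\partial_t q(\argdot,t) = -a\,Lq(\argdot,t)\in\mathrm{L}^1[0,1]$, and since each of $\FokForTransPNoArg,\FokForTransMNoArg$ is (for fixed $\la$) a linear combination of $\mathrm{L}^1[0,1]$ inner products of its argument, the absolute continuity of $q(x,\argdot)$ lets me write $q(x,t)=q(x,0)+\int_0^t\partial_s q(x,s)\D s$ and, by Fubini, commute $\partial_t$ past the transform, so that $\partial_t\FokForTrans{q}(\la;t) = \FokForTrans{\partial_t q(\argdot,t)}(\la) = -a\,\FokForTrans{Lq(\argdot,t)}(\la)$. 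Applying $\FokForTransNoArg$ to the initial condition~\eqref{eqn:IBVP.IC} gives the initial value $\FokForTrans{q}(\la;0)=\FokForTrans{Q}(\la)$.

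Next, because $q(\argdot,t)\in\Phi_{\BVec{B}}$ for each $t$ (this is exactly the boundary condition~\eqref{eqn:IBVP.BC}), the diagonalization identity~\eqref{eqn:DiagThm.1} applies with $\phi=q(\argdot,t)$, giving $\FokForTrans{Lq(\argdot,t)}(\la)=\la^n\FokForTrans{q}(\la;t)+\FokRemTrans{q}(\la;t)$. Substituting yields, for each $\la$ on the relevant contour, the linear first order ODE
\[
    \frac{\D}{\D t}\FokForTrans{q}(\la;t) + a\la^n\FokForTrans{q}(\la;t) + a\FokRemTrans{q}(\la;t) = 0,
\]
whose solution subject to $\FokForTrans{q}(\la;0)=\FokForTrans{Q}(\la)$ is, via the integrating factor $\re^{a\la^n t}$,
\[
    \FokForTrans{q}(\la;t) = \re^{-a\la^n t}\FokForTrans{Q}(\la) - a\int_0^t \re^{a\la^n(s-t)}\FokRemTrans{q}(\la;s)\D s.
\]
Nothing here is sensitive to the $\pm$ branch: the ODE and its solution hold pointwise in $\la$, with $\FokForTrans{q}$ and $\FokRemTrans{q}$ simply taking their $+$ or $-$ forms according to~\eqref{eqn:Diag.RemTransDefn}.

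I would then apply the inverse transform $\FokInvTransNoArg$ to both sides and use its linearity to split the right-hand side. On the left, theorem~\ref{thm:FokasTransformValid}, applied to $q(\argdot,t)\in\mathrm{C}^1[0,1]$ (the criteria of that theorem being among the hypotheses, and assumption~\ref{asm:MainAssumption} being a condition on $(L,a)$ rather than on a particular function), gives $\FokInvTrans{\FokForTrans{q(\argdot,t)}}(x)=q(x,t)$ for $x\in(0,1)$. On the right, the first term is exactly $\FokInvTrans{\re^{-at\argdot^n}\FokForTrans{Q}}(x)$, while the second term equals $-a\,\FokInvTrans{\int_0^t\re^{a\argdot^n(s-t)}\FokRemTrans{q}(\argdot;s)\D s}(x)$, which vanishes by~\eqref{eqn:DiagThm.2} --- and the uniform bounded-variation-in-$t$ hypothesis on each $\partial_x^j q$ is precisely what theorem~\ref{thm:Diag} requires for that vanishing. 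Equating the two sides gives the claimed representation $q(x,t)=\FokInvTrans{\re^{-at\argdot^n}\FokForTrans{Q}}(x)$.

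The main obstacle I anticipate is the opening step: rigorously justifying that $\partial_t$ commutes with the forward transform, which requires verifying that $\partial_t q(\argdot,t)$ is regular enough to be transformed (membership in $\mathrm{L}^1[0,1]$ with an integrable-in-$t$ dependence) and then invoking Fubini and the fundamental theorem of calculus on the $\la$-by-$\la$ identity --- this is where the absolute continuity in $t$ is genuinely spent, and where care is needed if the conclusion is initially obtained only for almost every $t$. A lesser bookkeeping point is confirming that all hypotheses of theorems~\ref{thm:FokasTransformValid} and~\ref{thm:Diag} transfer to the slices $q(\argdot,t)$ and to $Q$, and that the principal-value contour integrals defining $\FokInvTransNoArg$ may indeed be split linearly in the manner used above.
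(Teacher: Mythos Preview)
Your proposal is correct and follows essentially the same route as the paper: apply $\FokForTransNoArg$ to~\eqref{eqn:IBVP.PDE}, commute $\partial_t$ through the transform, invoke~\eqref{eqn:DiagThm.1} to obtain the first-order ODE in $t$, solve it with the transformed initial condition, then apply $\FokInvTransNoArg$ and use theorem~\ref{thm:FokasTransformValid} on the left and~\eqref{eqn:DiagThm.2} to kill the remainder term on the right. Your discussion of the $\partial_t$--transform commutation via absolute continuity and Fubini is in fact more explicit than the paper's, which simply asserts the step holds ``almost everywhere in $t$''.
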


\begin{proof}
    We apply the forward Fokas transform to the partial differential equation~\eqref{eqn:IBVP.PDE}.
    Then, by linearity of the forward Fokas transform, almost everywhere in $t$,
    \[
        0 = \FokForTrans{\partial_tq}(\la;t) + a \FokForTrans{Lq}(\la;t) = \frac{\D}{\D t} \FokForTrans{q}(\la;t) + a \FokForTrans{Lq}(\la;t).
    \]
    Hence, by theorem~\ref{thm:Diag},
    \begin{equation*}
        0 = \frac{\D}{\D t} \FokForTrans{q}(\la;t) + a \la^n \FokForTrans{q}(\la;t) + a\FokRemTrans{q}(\la;t).
    \end{equation*}
    This class of first order linear ordinary differential equations, parametrized by $\la$, can be equipped with initial conditions by applying the forward Fokas transforms to the initial condition~\eqref{eqn:IBVP.IC}; the initial value problems dervied thus have solutions
    \begin{equation} \label{eqn:FokasMethodSolRep.Proof.SolvedODE}
        \FokForTrans{q}(\la;t) = \re^{-a\la^nt}\FokForTrans{Q}(\la) - a \re^{-a\la^nt} \int_0^t \re^{a\la^ns} \FokRemTrans{q}(\la;s) \D s,
    \end{equation}
    for all $\la$ in the domain of $\nu$.

    We apply the inverse Fokas transform to equations~\eqref{eqn:FokasMethodSolRep.Proof.SolvedODE}.
    By theorem~\ref{thm:FokasTransformValid} and linearity of the inverse transform, it follows that
    \begin{equation*}
        q(x,t) = \FokInvTrans{\re^{-at\argdot^n}\FokForTrans{Q}}(x)
        - a\FokInvTrans{ \int_0^t \re^{a\argdot^n(s-t)} \FokRemTrans{q}(\argdot;s) \D s }(x).
    \end{equation*}
    By theorem~\ref{thm:Diag}, the latter term evaluates to $0$.
\end{proof}

\begin{eg} \label{eg:Separated3Ord.part7}
    We continue the analysis of example~\ref{eg:Separated3Ord.part6}.
    
    Applying theorem~\ref{thm:FokasMethodSolRep}, the problem studied in example~\ref{eg:Separated3Ord.part1} has solution
    \[
        q(x,t) = \FokInvTrans{\re^{-at\argdot^n}\FokForTrans{Q}}(x),
    \]
    in which the forward transform is defined by
    \[
        \FokForTrans{\phi}(\la) = \begin{cases} \FokForTransP{\phi}(\la) & \mbox{if } \la\in\Gamma^+\cup\Gamma_\cuts, \\ \FokForTransM{\phi}(\la) & \mbox{if } \la\in\Gamma^-, \end{cases}
    \]
    where $\FokForTransPMNoArg$ are given by equations~\eqref{eqn:Separated3Ord.FokForTransPM},
    the inverse transform is defined by
    \[
        \FokInvTrans{F}(x) = \CPV_{\Gamma^+\cup\Gamma^-\cup\Gamma_\cuts} \re^{\ri\nu(\la) x}F(\la)\D\la,
    \]
    and the contours are given in equations~\eqref{eqn:Separated3Ord.FokInvTransContours}.
    
    This concludes the analysis of this extended example.
\end{eg}

\subsection{Simplified version} \label{ssec:Method.Simplified}

We have claimed throughout that the method described in this work is ``the Fokas transform method'' in the sense that the transform pair one might have derived through the original or simplified versions of the Fokas transform method is the same as the transform pair defined in~\S\ref{ssec:TransformPair.BC}.
But the method shown in the proof of theorem~\ref{thm:FokasMethodSolRep} is very different from the original or simplified versions of the Fokas transform method, so this claim has yet to be justified.
Restricting to the case $\omega$ monomial, it can be checked that the transforms match, by comparison with the transform pair defined in~\cite{Smi2012a}.
This is a long but not difficult calculation.
For separated boundary conditions with arbitrary $\omega$, one may compare with~\cite{FP2001a,FP2005a}.
But there has not yet been a full general implementation of the simplified or original version of the Fokas transform method, so it is not possible to compare in general with existing work.
Nevertheless, we claim:

\begin{prop} \label{prop:SimplifiedFokasMethod}
    The Fokas transform pair defined in~\S\ref{ssec:TransformPair.BC} and the solution representation of theorem~\ref{thm:FokasMethodSolRep} are the same as the corresponding transform pair and solution representation derived via the simplified version of the Fokas transform method, up to an application of lemma~\ref{lem:FokasTransformCancellation} and finite contour deformations.
\end{prop}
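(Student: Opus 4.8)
The plan is to carry out the simplified version of the Fokas transform method for IBVP~\eqref{eqn:IBVP} explicitly --- there being, as noted, no general implementation in the literature to quote --- and then to convert the resulting solution representation into $\FokInvTrans{\re^{-at\argdot^n}\FokForTrans{Q}}$ using only lemma~\ref{lem:FokasTransformCancellation} and finite contour deformations, which is exactly the content of the proposition. Throughout I would work in the $\nu$ variable: since $\omega(\nu(\la))=\la^n$ and $\nu(\la),\nu(\alpha\la),\dots,\nu(\alpha^{n-1}\la)$ are the companion roots of $\nu(\la)$ under $\omega$ on the domain of $\nu$, the construction becomes formally identical to the monomial case with spectral parameter $\la$ and symmetry group generated by $\la\mapsto\alpha\la$, which is precisely the structure underlying the definitions in~\S\ref{ssec:TransformPair.BC}.

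\emph{Global relation and Ehrenpreis form.} Applying the formal transform $\FormalForTransNoArg$ to~\eqref{eqn:IBVP.PDE} and integrating the spatial part by parts exactly as in the proof of lemma~\ref{lem:DiagMainLemma} (Green's formula of~\cite[chapter~11]{CL1955a}) gives, for each $\la$ in the domain of $\nu$, a first order ODE in $t$ for $\FormalForTrans{q}(\la;\argdot)$ whose inhomogeneity is the boundary-form term --- a fixed linear combination of the $2n$ values $\partial_x^jq(0,s)$ and $\partial_x^jq(1,s)$. Solving it with initial value $\FormalForTrans{Q}(\la)$ yields the global relation, in which the $2n$ time-transformed boundary values $\int_0^t\re^{a\la^ns}\partial_x^jq(0,s)\D s$ and $\int_0^t\re^{a\la^ns}\partial_x^jq(1,s)\D s$ appear linearly. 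Inverting via corollary~\ref{cor:AlternativeFormalTransformValid} and substituting the global relation expresses $q(x,t)$ as $\frac{1}{2\pi}\CPV_\gamma\re^{\ri\nu(\la)x-a\la^nt}\nu'(\la)\FormalForTrans{Q}(\la)\D\la$ plus a $\CPV_\gamma$ of the boundary contribution. Splitting the latter according to whether its kernel is $\re^{\ri\nu(\la)x}$ (terms carrying values at $x=0$) or $\re^{\ri\nu(\la)(x-1)}$ (values at $x=1$), Jordan's lemma lets $\gamma$ be deformed into $\Gamma_a^+$ in the first case and $\Gamma_a^-$ in the second; the required decay is supplied by lemma~\ref{lem:PolynomialGrowth}, by the construction of $\Gamma_a^\pm$ relative to $\{\Re(a\la^n)>0\}$, and --- when $\arg a=\pm\pi/2$ --- by the reality assumption of~\S\ref{ssec:Introduction.IBVP}. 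This is the simplified derivation of the Ehrenpreis form.

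\emph{Stage two and identification of the transforms.} Evaluating the global relation at $\la,\alpha\la,\dots,\alpha^{n-1}\la$ and adjoining the time transforms of the boundary conditions~\eqref{eqn:IBVP.BC} gives a square system for the $2n$ unknown time-transformed boundary values. The crucial point is that, once the complex conjugates dictated by the sesquilinear pairing are taken, the coefficient matrix of this system is the characteristic matrix $M(\la)$ of~\eqref{eqn:defn.Mjk}, with the $x=0$ and $x=1$ blocks being the left and right characteristic matrices $M^\pm$ of~\eqref{eqn:defn.Mpmjk}: this is the Coddington--Levinson duality (theorem~11.2.1 of~\cite{CL1955a}) between the forms $B_k$ and the adjoint forms $B_k^\star$ evaluated on the exponential eigenfunctions $y_j(\argdot;\la)$, the same identity already used to obtain equation~\eqref{eqn:DiagMainLemma.Proof.AdjBFpsi}. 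Solving by Cramer's rule introduces the submatrix determinants $\det\M{\MMinors}{j}{k}(\la)$ and $\Delta(\la)=\det M(\la)$; substituting back into the Ehrenpreis form and collecting the coefficient of $\FormalForTrans{Q}(\alpha^{j-1}\la)$ reproduces exactly the sums in equations~\eqref{eqn:FokasForwardTransformDefinition.LeftRight}, so the $\Gamma_a^+$ integrand becomes $\re^{\ri\nu(\la)x-a\la^nt}\FokForTransP{Q}(\la)$ and the $\Gamma_a^-$ integrand becomes $\re^{\ri\nu(\la)(x-1)-a\la^nt}\FokForTransM{Q}(\la)$, the \emph{problematic} final-time pieces being precisely those whose vanishing is, in the present formulation, the remainder-transform identity~\eqref{eqn:DiagThm.2} of theorem~\ref{thm:Diag}.

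\emph{Reconciliation and the main obstacle.} It then remains to match
\[
    \tfrac{1}{2\pi}\CPV_\gamma\re^{\ri\nu(\la)x-a\la^nt}\nu'(\la)\FormalForTrans{Q}(\la)\D\la + \CPV_{\Gamma_a^+}\re^{\ri\nu(\la)x-a\la^nt}\FokForTransP{Q}(\la)\D\la + \CPV_{\Gamma_a^-}\re^{\ri\nu(\la)(x-1)-a\la^nt}\FokForTransM{Q}(\la)\D\la
\]
(plus the $\Gamma_\cuts$ piece, treated as in the last part of the proof of theorem~\ref{thm:Diag}) with $\FokInvTrans{\re^{-at\argdot^n}\FokForTrans{Q}}(x)$ from theorem~\ref{thm:FokasMethodSolRep}. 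Writing the latter over $\Gamma^+\cup\Gamma_\cuts$ with $\FokForTransPNoArg$ and over $\Gamma^-$ with $\FokForTransMNoArg$, and deforming $\Gamma^\pm$ toward $\gamma$ as far as holomorphy allows --- a finite deformation within the principal value, away from the zeros of $\Delta$, exactly as in the proof of theorem~\ref{thm:FokasTransformValid} --- the pieces along the common near-real contour combine, via lemma~\ref{lem:FokasTransformCancellation} ($\FokForTransPNoArg-\FokForTransMNoArg=\tfrac{\nu'}{2\pi}\FormalForTransNoArg$), into precisely the $\CPV_\gamma$ initial-data term, while the leftover arcs are the deformed $\Gamma_a^\pm$ integrals; this gives the stated equality up to the two permitted moves. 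I expect the main obstacle to be the bookkeeping in the third step: confirming that the conjugations and index conventions line the stage-two coefficient matrix up with $M(\la)$ exactly, and that Cramer's rule then delivers the minor-weighted sums of~\eqref{eqn:FokasForwardTransformDefinition.LeftRight} and not merely some equivalent but differently packaged linear combination. A secondary difficulty is tracking which rotated sub-arcs survive Jordan's lemma, so that the $\Gamma_a^\pm$ produced in the second step are literally those of~\S\ref{ssec:TransformPair.BC} up to finite deformation rather than only homotopic to them within a larger region.
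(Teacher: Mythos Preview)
Your overall plan --- global relation, Ehrenpreis form, rotated system solved by Cramer's rule, then reconciliation via lemma~\ref{lem:FokasTransformCancellation} and contour deformation --- is the paper's plan. But three of your intermediate identifications are wrong and would derail the argument as written.

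\textbf{Ehrenpreis contours.} At the Ehrenpreis stage the integrand still carries the factor $\re^{-a\la^nt}$, which blows up in $\{\Re(a\la^n)<0\}$; Jordan's lemma therefore lets you close only through $\CC_\nu^\pm\cap\{\Re(a\la^n)\geq0\}$, and the resulting contours are $\hat\Gamma_a^\pm$ of equation~\eqref{eqn:defn.Gammaapmhat} (the $\partial D^\pm$ of the Fokas literature), \emph{not} $\Gamma_a^\pm$. The contours $\Gamma_a^\pm$ appear only after the final reconciliation, once lemma~\ref{lem:FokasTransformCancellation} has been applied to the $\gamma$ integral. Your citation of lemma~\ref{lem:PolynomialGrowth} here is also misplaced: that lemma controls $M^\pm$, which enter only after stage two.

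\textbf{The linear system.} You set up a $2n\times2n$ system (rotations plus adjoined boundary conditions) and then assert its matrix is $M(\la)$; but $M$ is $n\times n$, and $M=M^++M^-$ entrywise, not a block concatenation. The paper bypasses the $2n\times2n$ route entirely by first invoking lemma~\ref{lem:SimplifiedFokasMethod.SplitBoundaryTerms}, which rewrites the boundary contributions as $-\beta(q,y_1)(0)=\BVec{B}_\complementary q\odot\Msups{\BVec{B}}{}{}{\star}{+}y_1$ and $\beta(q,y_1)(1)=\BVec{B}_\complementary q\odot\Msups{\BVec{B}}{}{}{\star}{-}y_1$. This collapses the unknowns to the $n$ complementary boundary values $\BVec{B}_\complementary q$, so the $n$ rotated global relations already form a square system whose coefficient vectors are $\BVec{B}^\star y_j=\overline{M_j(\la)}$ and whose determinant is $\Delta(\la)$; Cramer's rule then delivers the minor-weighted sums~\eqref{eqn:FokasForwardTransformDefinition.LeftRight} directly. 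Your $2n\times2n$ approach (closer to~\cite{Smi2012a}) can be made to work, but its matrix is not $M$, and the reduction to~\eqref{eqn:FokasForwardTransformDefinition.LeftRight} would still need to pass through the complementary/adjoint duality that lemma~\ref{lem:SimplifiedFokasMethod.SplitBoundaryTerms} encodes.

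\textbf{Vanishing of the final-time terms.} After solving and substituting, the problematic pieces over $\hat\Gamma_a^\pm$ are $\re^{\ri\nu(\la)x}\FokForTransPM{q(\argdot,t)}(\la)$; their vanishing is exactly assumption~\ref{asm:MainAssumption} with $\phi=q(\argdot,t)$. Equation~\eqref{eqn:DiagThm.2} concerns $\FokRemTransNoArg$ over $\Gamma^\pm\cup\Gamma_\cuts$ and is a different statement; it does not kill these terms.
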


\begin{lem} \label{lem:SimplifiedFokasMethod.SplitBoundaryTerms}
    For $\phi,\psi\in\Phi$ and $x\in[0,1]$, let $\beta(\phi,\psi)(x)$ be the sesquilinear form such that
    \[
        \left\langle\mathcal{L}\phi,\psi\right\rangle = \left\langle\phi,\mathcal{L}^\star\psi\right\rangle + \beta(\phi,\psi)(1) - \beta(\phi,\psi)(0).
    \]
    Then, for all $\phi\in\Phi_{\BVec{B}}$,
    \[
        \beta(\phi,\psi)(1) = \BVec{B}_\complementary \phi \odot \Msups{\BVec{B}}{}{}{\star}{-} \psi,
        \qquad\qquad
        -\beta(\phi,\psi)(0) = \BVec{B}_\complementary \phi \odot \Msups{\BVec{B}}{}{}{\star}{+} \psi,
    \]
    in which $\Msups{\BVec{B}}{}{}{\star}{\pm}$ are the vectors of left and right adjoint boundary forms $\left( \Msups{B}{1}{}{\star}{\pm} , \Msups{B}{2}{}{\star}{\pm} , \ldots , \Msups{B}{n}{}{\star}{\pm} \right)$.
\end{lem}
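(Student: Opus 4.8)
The plan is to deduce the lemma from the general boundary form formula of~\cite[chapter~11]{CL1955a} — the same identity already invoked in the proof of lemma~\ref{lem:DiagMainLemma} — and then to peel off the contributions at the two endpoints by exploiting the fact that $\beta(\phi,\psi)(x)$ depends on $\psi$ only through the Cauchy data $\left(\psi^{(j)}(x)\right)_{j=0}^{n-1}$.

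First I would recall that Green's formula together with the boundary form formula gives, for \emph{all} $\phi,\psi\in\Phi$,
\[
    \langle\mathcal{L}\phi,\psi\rangle = \langle\phi,\mathcal{L}^\star\psi\rangle + \BVec{B}\phi\odot\BVec{B}_\complementary^\star\psi + \BVec{B}_\complementary\phi\odot\BVec{B}^\star\psi,
\]
so that, comparing with the defining property of $\beta$,
\[
    \beta(\phi,\psi)(1) - \beta(\phi,\psi)(0) = \BVec{B}\phi\odot\BVec{B}_\complementary^\star\psi + \BVec{B}_\complementary\phi\odot\BVec{B}^\star\psi .
\]
For $\phi\in\Phi_{\BVec{B}}$ the first term on the right vanishes; writing $\BVec{B}^\star = \Msups{\BVec{B}}{}{}{\star}{+} + \Msups{\BVec{B}}{}{}{\star}{-}$ componentwise and using sesquilinearity of $\odot$ then yields
\[
    \beta(\phi,\psi)(1) - \beta(\phi,\psi)(0) = \BVec{B}_\complementary\phi\odot\Msups{\BVec{B}}{}{}{\star}{-}\psi + \BVec{B}_\complementary\phi\odot\Msups{\BVec{B}}{}{}{\star}{+}\psi .
\]
It remains to separate the two endpoint pieces.

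For this, fix $\phi\in\Phi_{\BVec{B}}$ and rewrite the last display as
\[
    \beta(\phi,\psi)(1) - \BVec{B}_\complementary\phi\odot\Msups{\BVec{B}}{}{}{\star}{-}\psi = \beta(\phi,\psi)(0) + \BVec{B}_\complementary\phi\odot\Msups{\BVec{B}}{}{}{\star}{+}\psi .
\]
Since $\beta(\phi,\psi)(x)$ is the bilinear concomitant from Green's formula, it is a sesquilinear expression in $\left(\phi^{(j)}(x)\right)_{j}$ and $\left(\psi^{(j)}(x)\right)_{j}$; hence, with $\phi$ fixed, the left side is an antilinear functional of $\left(\psi^{(j)}(1)\right)_{j=0}^{n-1}$ alone, say $\ell_1\circ\pi_1$ where $\pi_1\colon\psi\mapsto\left(\psi^{(j)}(1)\right)_{j=0}^{n-1}$, while by the definition of $\Msups{\BVec{B}}{}{}{\star}{+}$ and of the concomitant the right side is $\ell_0\circ\pi_0$ with $\pi_0\colon\psi\mapsto\left(\psi^{(j)}(0)\right)_{j=0}^{n-1}$. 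The joint map $\psi\mapsto(\pi_0\psi,\pi_1\psi)$ is surjective onto $\CC^n\times\CC^n$, since any prescribed endpoint data is matched by some polynomial $\psi$ (Hermite interpolation, degree at most $2n-1$); feeding in $\psi$ with $\pi_1\psi=\BVec{0}$, and respectively $\pi_0\psi=\BVec{0}$, and using that antilinear functionals vanish at $\BVec{0}$, forces $\ell_0=0$ and $\ell_1=0$. Therefore both sides of the rewritten display vanish for every $\psi\in\Phi$, which is precisely the pair of identities asserted.

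I do not anticipate a genuine obstacle. The two points that need care are: (i) invoking the boundary form formula in the correct form, which is already available from lemma~\ref{lem:DiagMainLemma} and whose concrete instance for the worked example appears in example~\ref{eg:Separated3Ord.part2}; and (ii) making explicit that $\beta(\phi,\psi)(x)$ is \emph{local} in $\psi$, depending only on its Cauchy data at $x$ — this is both what renders $\beta$ well defined (two such forms with the same difference $\beta(\argdot,\argdot)(1)-\beta(\argdot,\argdot)(0)$ must coincide, by independence of the endpoint data) and what powers the separation argument; I would regard this as the step to state most carefully, though it is standard. The surjectivity of the joint endpoint-restriction map is the usual Hermite interpolation fact, and the remaining manipulations are routine bookkeeping with $\odot$ and with the splitting $\BVec{B}^\star = \Msups{\BVec{B}}{}{}{\star}{+} + \Msups{\BVec{B}}{}{}{\star}{-}$.
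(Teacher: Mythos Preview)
Your argument is correct and shares the paper's overall architecture: invoke the boundary form formula~\eqref{eqn:SimplifiedFokasMethod.SplitBoundaryTerms.2}, use $\phi\in\Phi_{\BVec{B}}$ to drop the $\BVec{B}\phi\odot\BVec{B}_\complementary^\star\psi$ term, and then separate the two endpoint contributions by exploiting that $\beta(\phi,\psi)(x)$ depends on $\psi$ only through its Cauchy data at $x$.

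The execution of the separation step differs. The paper multiplies $\psi$ by a smooth cutoff $\chi$ supported near $x=1$, so that $\beta(\phi,\psi)(1)=\beta(\phi,\psi\chi)(1)-\beta(\phi,\psi\chi)(0)$ reduces directly to $\BVec{B}_\complementary\phi\odot\BVec{B}^\star[\psi\chi]=\BVec{B}_\complementary\phi\odot\Msups{\BVec{B}}{}{}{\star}{-}\psi$, and symmetrically with $1-\chi$ for the other endpoint. Your route instead writes the difference of the two claimed identities as an equality $\ell_1\circ\pi_1=\ell_0\circ\pi_0$ of antilinear functionals factoring through disjoint endpoint projections, and kills both via Hermite interpolation. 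The two mechanisms are interchangeable realisations of the same independence-of-endpoint-data fact; the cutoff is marginally more concrete (one line, no appeal to interpolation), while your version makes the underlying linear-algebraic reason more explicit. Either way the substance is the same.
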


\begin{proof}
    By the boundary form formula~\cite[theorem~11.2.1]{CL1955a},
    \begin{equation} \label{eqn:SimplifiedFokasMethod.SplitBoundaryTerms.2}
        \beta(\phi,\psi)(1) - \beta(\phi,\psi)(0) = \BVec{B} \phi \odot \BVec{B}_\complementary^\star \psi + \BVec{B}_\complementary \phi \odot \BVec{B}^\star \psi,
    \end{equation}
    and $\phi\in\Phi_{\BVec{B}}$ implies the the first term on the right evaluates to $0$.
    Therefore, for all $\psi\in\Phi$,
    \begin{equation} \label{eqn:SimplifiedFokasMethod.SplitBoundaryTerms.1}
        \beta(\phi,\psi)(1) - \beta(\phi,\psi)(0) = \BVec{B}_\complementary \phi \odot \BVec{B}^\star \psi.
    \end{equation}
    Let $\chi\in \mathrm{C}^\infty[0,1]$ be a smooth cutoff function such that $\chi(x)=0$ for $x<1/3$ and $\chi(x)=1$ for $x>2/3$.
    If $\psi\in\Phi$, then $\psi\chi\in\Phi$ too.
    Therefore, using equation~\eqref{eqn:SimplifiedFokasMethod.SplitBoundaryTerms.1} for the second equality,
    \[
        \beta(\phi,\psi)(1) = \beta(\phi,\psi\chi)(1) - \beta(\phi,\psi\chi)(0) = \BVec{B}_\complementary \phi \odot \BVec{B}^\star [\psi\chi] = \BVec{B}_\complementary \phi \odot \Msups{\BVec{B}}{}{}{\star}{-} \psi.
    \]
    Similarly,
    \[
        -\beta(\phi,\psi)(0) = \beta(\phi,\psi(1-\chi))(1) - \beta(\phi,\psi(1-\chi))(0) = \BVec{B}_\complementary \phi \odot \BVec{B}^\star [\psi(1-\chi)] = \BVec{B}_\complementary \phi \odot \Msups{\BVec{B}}{}{}{\star}{+} \psi.
        \qedhere
    \]
\end{proof}

\begin{proof}[Proof of proposition~\ref{prop:SimplifiedFokasMethod}]
    To show that the results of our true transform method are generally the same as the simplified method, we present the general implementation of the simplified version of the Fokas transform method using the notation developed in~\S\ref{sec:TransformPair}--\ref{sec:Diagonalization}.

    Applying the unit interval Fourier transform to~\eqref{eqn:IBVP.PDE}, one obtains, for all $\la\in\CC$,
    \[
        \FourUnitTrans{\left[\partial_t + a L\right]q(\argdot,t)\vphantom{G^G}}(\la) = 0.
    \]
    Assuming sufficient smoothness in time, the time derivative may be exchanged with the integral, and the rest of the Fourier transform may be reexpressed using the sesquilinear inner product:
    \[
        \frac{\D}{\D t} \FourUnitTrans{q}(\la;t) + a \left\langle Lq,\overline{\re^{-\ri\la\argdot}}\right\rangle = 0.
    \]
    Integrating by parts and adopting the boundary form notation of lemma~\ref{lem:SimplifiedFokasMethod.SplitBoundaryTerms},
    \[
        \frac{\D}{\D t} \FourUnitTrans{q}(\la;t) + a \left\langle q,\mathcal{L}^\star\overline{\re^{-\ri\la\argdot}}\right\rangle + a \left[\beta(q(\argdot,t),\overline{\re^{-\ri\la\argdot}})(x)\right]_{x=0}^{x=1} = 0,
    \]
    and a change of variables $\la\mapsto\nu(\la)$ establishes
    \begin{equation} \label{eqn:Methd.Simplified.1}
        \frac{\D}{\D t} \FormalForTrans{q}(\la;t) + a \left\langle q,\mathcal{L}^\star y_1(\argdot;\la) \right\rangle + a \Big[\beta(q(\argdot,t),y_1(\argdot;\la))(x)\Big]_{x=0}^{x=1} = 0,
    \end{equation}
    for all $\la$ outside a disc of radius $\propertiesnuradius$, where $y_1$ is as defined in equation~\eqref{eqn:defn.yj}.
    By proposition~\ref{prop:propertiesnu}, $y_1(\argdot;\la)$ is an eigenfunction of $\mathcal{L}^\star$:
    \[
        \mathcal{L}^\star y_1(\argdot;\la) = \overline{\omega}(-\ri\partial_x)\overline{\re^{-\ri\nu(\la)\argdot}} = \overline{\la^n}y_1(\argdot;\la).
    \]
    Therefore equation~\eqref{eqn:Methd.Simplified.1} simplifies to
    \[
        \left[\frac{\D}{\D t} + a \la^n\right] \FormalForTrans{q}(\la;t) + a \Big[\beta(q(\argdot,t),y_1(\argdot;\la))(x)\Big]_{x=0}^{x=1} = 0.
    \]
    Solving the ordinary differential equation, we obtain the global relation
    \begin{equation} \label{eqn:Method.Simplified.GR}
        \FormalForTrans{q}(\la;t) = \re^{-a\la^nt} \FormalForTrans{q_0}(\la) - a\re^{-a\la^nt}\int_0^t \re^{a\la^ns} \Big[\beta(q(\argdot,t),y_1(\argdot;\la))(x)\Big]_{x=0}^{x=1} \D s.
    \end{equation}

    Applying corollary~\ref{cor:AlternativeFormalTransformValid},
    \[
        q(x,t) = \frac{1}{2\pi} \;\CPV_{\gamma} \re^{\ri\nu(\la)x-a\la^nt} \nu'(\la) \left[\FormalForTrans{q_0}(\la) - a \int_0^t \re^{a\la^ns} \Big[\beta(q(\argdot,t),y_1(\argdot;\la))(x)\Big]_{x=0}^{x=1} \D s \right] \D \la.
    \]
    Under the principal value, this integral can be split into three parts, two along the existing contour $\gamma$ passing above the disc $D(0,\propertiesnuradius)$ and one along a new contour $\gamma^-$ with the opposite orientation and passing below the disc:
    \begin{multline*}
        q(x,t)
        = \frac{1}{2\pi} \; \CPVwithoutintegral \bigg(
              \int_{\gamma  } \re^{\ri\nu(\la)x-a\la^nt}\nu'(\la) \FormalForTrans{q_0}(\la) \D\la \\
            + \int_{\gamma  } \re^{\ri\nu(\la)x-a\la^nt}\nu'(\la) a \int_0^t \re^{a\la^ns} \beta(q(\argdot,t),y_1(\argdot;\la))(0) \D s \D\la \\
            + \int_{\gamma^-} \re^{\ri\nu(\la)x-a\la^nt}\nu'(\la) a \int_0^t \re^{a\la^ns} \beta(q(\argdot,t),y_1(\argdot;\la))(1) \D s \D\la
        \bigg).
    \end{multline*}
    By Jordan's lemma,
    \begin{multline} \label{eqn:Method.Simplified.EhrenpreisForm}
        q(x,t)
        = \frac{1}{2\pi} \; \CPVwithoutintegral \bigg(
              \int_{\gamma        } \re^{\ri\nu(\la)x-a\la^nt}\nu'(\la) \FormalForTrans{q_0}(\la) \D\la \\
            + \int_{\hat\Gamma_a^+} \re^{\ri\nu(\la)x-a\la^nt}\nu'(\la) a \int_0^t \re^{a\la^ns} \beta(q(\argdot,t),y_1(\argdot;\la))(0) \D s \D\la \\
            + \int_{\hat\Gamma_a^-} \re^{\ri\nu(\la)x-a\la^nt}\nu'(\la) a \int_0^t \re^{a\la^ns} \beta(q(\argdot,t),y_1(\argdot;\la))(1) \D s \D\la
        \bigg),
    \end{multline}
    where the contours $\hat\Gamma_a^\pm$ are defined in equation~\eqref{eqn:defn.Gammaapmhat}.
    Equation~\eqref{eqn:Method.Simplified.EhrenpreisForm} is a contour integral representation of the solution of IBVP~\eqref{eqn:IBVP} which is known in the Fokas transform literature as the Ehrenpreis form.

    By lemma~\ref{lem:SimplifiedFokasMethod.SplitBoundaryTerms},
    \begin{align*}
        -\beta(q(\argdot,t),y_1(\argdot;\la))(0)
        &= \BVec{B}_\complementary q(\argdot,s) \odot \overline{M_1^+(\la)}, \\
        \beta(q(\argdot,t),y_1(\argdot;\la))(1)
        &= \BVec{B}_\complementary q(\argdot,s) \odot \overline{M_1^-(\la)},
    \end{align*}
    in which $M_1^\pm$ are the first rows of left and right characteristic matrices $M^\pm$ defined in equation~\eqref{eqn:defn.Mpmjk}.
    Substituting into the Ehrenpreis form, we obtain
    \begin{multline} \label{eqn:Method.Simplified.EhrenpreisForm2}
        q(x,t)
        = \frac{1}{2\pi} \; \CPVwithoutintegral \bigg(
              \int_{\gamma        } \re^{\ri\nu(\la)x-a\la^nt}\nu'(\la) \FormalForTrans{q_0}(\la) \D\la \\
            - \int_{\hat\Gamma_a^+} \re^{\ri\nu(\la)x-a\la^nt}\nu'(\la) \left[a \int_0^t \re^{a\la^ns} \BVec{B}_\complementary q(\argdot,s) \D s\right] \odot \overline{M_1^+(\la) \D\la} \\
            + \int_{\hat\Gamma_a^-} \re^{\ri\nu(\la)x-a\la^nt}\nu'(\la) \left[a \int_0^t \re^{a\la^ns} \BVec{B}_\complementary q(\argdot,s) \D s\right] \odot \overline{M_1^-(\la)} \D\la
        \bigg),
    \end{multline}
    an equation which depends on the initial datum and the $n$ unknown spectral functions that make up the vector
    \begin{equation} \label{eqn:Method.Simplified.UnknownSpectral}
        a \int_0^t \re^{a\la^ns} \BVec{B}_\complementary q(\argdot,s) \D s.
    \end{equation}
    Therefore, to obtain an effective solution representation, it remains to find suitable expressions for the unknown spectral functions.

    Substituting equation~\eqref{eqn:SimplifiedFokasMethod.SplitBoundaryTerms.2} into global relation~\eqref{eqn:Method.Simplified.GR} and rearranging, we find
    \[
        \left[a \int_0^t \re^{a\la^ns} \BVec{B}_\complementary q(\argdot,s) \D s\right] \odot \BVec{B}^\star y_1(\argdot;\la) = \FormalForTrans{q_0}(\la) - \re^{a\la^nt} \FormalForTrans{q}(\la;t).
    \]
    For all $j\in\{1,2,\ldots,n\}$, $\la^n$ is invariant under the map $\la\mapsto\alpha^j\la$.
    Therefore, for all $j$,
    \begin{equation} \label{eqn:Method.Simplified.System}
        \left[a \int_0^t \re^{a\la^ns} \BVec{B}_\complementary q(\argdot,s) \D s\right] \odot \BVec{B}^\star y_j(\argdot;\la) = \FormalForTrans{q_0}(\alpha^{j-1}\la) - \re^{a\la^nt} \FormalForTrans{q}(\alpha^{j-1}\la;t).
    \end{equation}
    This is a system of $n$ linear equations in the $n$ unknown spectral functions that appear in the vector~\eqref{eqn:Method.Simplified.UnknownSpectral}.
    By equations~\eqref{eqn:defn.Mjk}--\eqref{eqn:defn.Delta}, the determinant of system~\eqref{eqn:Method.Simplified.System} is $\Delta(\la)$.
    Moreover, solving the system and evaluating the specific linear combinations of solutions required for substitution into equation~\eqref{eqn:Method.Simplified.EhrenpreisForm2}, we find
    \begin{align*}
        \frac{\nu'(\la)}{2\pi} \left[a \int_0^t \re^{a\la^ns} \BVec{B}_\complementary q(\argdot,s) \D s\right] \odot \overline{M_1^+(\la)} &= \FokForTransP{q_0}(\la) - \re^{a\la^nt} \FokForTransP{q}(\la;t), \\
        -\frac{\nu'(\la)}{2\pi} \left[a \int_0^t \re^{a\la^ns} \BVec{B}_\complementary q(\argdot,s) \D s\right] \odot \overline{M_1^-(\la)} &= \FokForTransM{q_0}(\la) - \re^{a\la^nt} \FokForTransM{q}(\la;t).
    \end{align*}
    By assumption~\ref{asm:MainAssumption}, once substituted into equation~\eqref{eqn:Method.Simplified.EhrenpreisForm2}, the latter terms on the right of the above equations yield zero contribution.
    Hence
    \begin{multline} \label{eqn:Method.Simplified.SolRep}
        q(x,t)
        = \CPVwithoutintegral \bigg(
              \int_{\gamma        } \re^{\ri\nu(\la)x-a\la^nt}\frac{\nu'(\la)}{2\pi} \FormalForTrans{q_0}(\la) \D\la
            - \int_{\hat\Gamma_a^+} \re^{\ri\nu(\la)x-a\la^nt} \FokForTransP{q_0}(\la) \D\la \\
            - \int_{\hat\Gamma_a^-} \re^{\ri\nu(\la)x-a\la^nt} \FokForTransM{q_0}(\la) \D\la
        \bigg).
    \end{multline}

    Equation~\eqref{eqn:Method.Simplified.SolRep} is an effective solution representation because no unknowns appear on the right.
    It is (up to change of variables $k=\nu(\la)$) the usual integral representation obtained via the original or simplified Fokas transform method; the infinite components of contours $\hat\Gamma_a^+$ are the boundaries the regions
    \[
        \{\la\in\CC^\pm: \Re(a\la^n)<0\},
    \]
    which are usually referred to as $D^\pm$ in the Fokas transform literature.
    But it is not exactly the solution representation obtained in theorem~\ref{thm:FokasMethodSolRep}.
    Indeed, in this work we have followed the approach of~\cite{FS2016a,Smi2012a,KPPS2018a} in rewriting the three integrals of equation~\eqref{eqn:Method.Simplified.SolRep} into two integrals that are, ignoring some small perturbations, about the boundaries of regions
    \[
        \{\la\in\CC^\pm: \Re(a\la^n)>0\}
    \]
    and usually denoted $E^\pm$ in the Fokas transform literature.
    Precisely, using lemma~\ref{lem:FokasTransformCancellation}, the first integral on the right of equation~\eqref{eqn:Method.Simplified.SolRep} can be rewritten as
    \[
        \int_{\gamma  } \re^{\ri\nu(\la)x-a\la^nt} \FokForTransP{q}(\la;t) \D\la
        +
        \int_{\gamma^-} \re^{\ri\nu(\la)x-a\la^nt} \FokForTransM{q}(\la;t) \D\la,
    \]
    with $\gamma^-$ as above.
    The contours have been defined such that, after deformations over finite regions, this yields the solution representation of theorem~\ref{thm:FokasMethodSolRep}.
\end{proof}

\section{Inhomogeneous problems} \label{sec:Inhomogeneous}

For any $\BVec{h}\in\CC^n$, let $\M{\Phi}{\BVec{B}}{\BVec{h}} = \{\phi\in\Phi: \BVec{B}\phi=\BVec{h}\}$, a set of functions which is not itself a linear space, but whose elements differ from one another by elements of the space $\Phi_{\BVec{B}}$.
Consider the inhomogeneous IBVP
\begin{subequations} \label{eqn:Inhomogeneous.IBVP}
\begin{align}
    \label{eqn:Inhomogeneous.IBVP.PDE} \tag{\theparentequation.PDE}
    \partial_t q(x,t) + a \mathcal{L}q(\argdot,t) &= \mathcal{Q}(x,t) & (x,t) &\in (0,1) \times (0,T), \\
    \label{eqn:Inhomogeneous.IBVP.IC} \tag{\theparentequation.IC}
    q(x,0) &= Q(x) & x &\in[0,1], \\
    \label{eqn:Inhomogeneous.IBVP.BC} \tag{\theparentequation.BC}
    \BVec{B}q(\argdot,t) &= \BVec{h}(t) & t &\in[0,T],
\end{align}
\end{subequations}
in which $\mathcal{Q}$ and $\BVec{h}=(h_1,h_2,\ldots,h_n)$ are appropriately smooth functions, $Q\in\M{\Phi}{\BVec{B}}{\BVec{h}(0)}$, and coefficient $a$ obeys the same conditions as those for IBVP~\eqref{eqn:IBVP}.
In this section, we describe the extensions of the above results necessary to study problem~\eqref{eqn:Inhomogeneous.IBVP} in which one or both of $\mathcal{Q}$ and $\BVec{h}$ are nonzero.

The argument in the proof of theorem~\ref{thm:FokasMethodSolRep} relies on theorems~\ref{thm:FokasTransformValid} and~\ref{thm:Diag}.
But the Fokas transform validity theorem~\ref{thm:FokasTransformValid} does not require $\phi\in\Phi_{\BVec{B}}$, only $\phi\in\mathrm{C}^1[0,1]$.
Therefore, no adaptation of the validity theorem is required for inhomogeneous problems.
If it happens that $\BVec{h}=\BVec{0}$, so that the only inhomogeneities are in~\eqref{eqn:Inhomogeneous.IBVP.PDE} and~\eqref{eqn:Inhomogeneous.IBVP.IC}, then~\eqref{eqn:Inhomogeneous.IBVP.PDE} may be rewritten with the operator $L$ in place of $\mathcal{L}$, and diagonalization theorem~\ref{thm:Diag} may also be applied unchanged.
However, if $\BVec{h}\neq\BVec{0}$, then we must adapt both the diagonalization theorem and its main lemma~\ref{lem:DiagMainLemma}.
Their generalizations are presented below, followed by the appropriate restatement and proof of the Fokas transform method theorem~\ref{thm:FokasMethodSolRep} for inhomogeneous IBVP~\eqref{eqn:Inhomogeneous.IBVP}.

\begin{lem} \label{lem:Inhomogeneous.DiagMainLemma}
    Let $\BVec{B}_\complementary^\star$ be the vector of complementary adjoint boundary forms associated with $\BVec{B}$ and $\BVec{B}^\star$, as defined in~\cite[theorem~11.2.1]{CL1955a}, and $\psi^\pm$ be the complex conjugates of eigenfunctions of $\mathcal{L}^\star$ defined in equation~\eqref{eqn:DiagMainLemma.Proof.psipm}.
    If $\phi\in\M{\Phi}{\BVec{B}}{\BVec{h}}$, then there exist functions $P_\phi^\pm$, analytic on the domain of $\nu$, with
    $P_\phi^\pm(\la) = \bigoh{\abs{\la}^{n-1}}$,
    uniformly in $\arg(\la)$, as $\la\to\infty$
    and
    \begin{subequations} \label{eqn:Inhomogeneous.DiagMainLemma.ApproxEigs}
    \begin{align}
        \FokForTransP{\mathcal{L}\phi} (\la) &= \la^n \FokForTransP{\phi}(\la) + P_\phi^+(\la) + \BVec{h} \odot \BVec{B}_\complementary^\star \psi^+(\argdot;\la), \\
        \FokForTransM{\mathcal{L}\phi} (\la) &= \la^n \FokForTransM{\phi}(\la) + P_\phi^-(\la)\re^{-\ri\nu(\la)} + \BVec{h} \odot \BVec{B}_\complementary^\star \psi^-(\argdot;\la).
    \end{align}
    \end{subequations}
    Moreover, the maps $\phi \mapsto P_\phi^\pm$ are linear.
    Indeed, such functions are given by equations~\eqref{eqn:DiagMainLemma.PpmFormulae}.
\end{lem}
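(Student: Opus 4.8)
The plan is to follow the structure of the proof of lemma~\ref{lem:DiagMainLemma} almost verbatim, tracking the one place where the hypothesis $\phi\in\Phi_{\BVec{B}}$ was used and replacing it with $\phi\in\M{\Phi}{\BVec{B}}{\BVec{h}}$. First I would recall, exactly as in lemma~\ref{lem:DiagMainLemma}, that the functions $\psi^\pm(\argdot;\la)$ defined by equation~\eqref{eqn:DiagMainLemma.Proof.psipm} are, for each fixed $\la$, linear combinations of the $y_j(\argdot;\la)$, hence eigenfunctions of $\mathcal{L}^\star$ with eigenvalue $\bar\la^n$, and that $\FokForTransPM{\phi}(\la)=\langle\phi,\psi^\pm(\argdot;\la)\rangle$. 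I would also reproduce equation~\eqref{eqn:DiagMainLemma.Proof.AdjBFpsi}, namely $\overline{B_k^\star\psi^\pm(\argdot;\la)}=\pm\frac{\nu'(\la)}{2\pi}\Msup{M}{1}{k}{\pm}(\la)$, which follows from the cyclic cofactor identity and does not depend on any boundary condition on $\phi$.

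The key step is the Green's-formula / boundary-form-formula identity from~\cite[chapter~11]{CL1955a}:
\[
    \FokForTransPM{\mathcal{L}\phi}(\la) = \left\langle \phi , \mathcal{L}^\star\psi^\pm(\argdot;\la) \right\rangle + \BVec{B}\phi \odot \BVec{B}_\complementary^\star\psi^\pm(\argdot;\la) + \BVec{B}_\complementary\phi \odot \BVec{B}^\star\psi^\pm(\argdot;\la).
\]
In lemma~\ref{lem:DiagMainLemma}, $\phi\in\Phi_{\BVec{B}}$ killed the middle term. Now $\BVec{B}\phi=\BVec{h}$, so that term survives and contributes precisely $\BVec{h}\odot\BVec{B}_\complementary^\star\psi^\pm(\argdot;\la)$, which is the extra summand appearing in equations~\eqref{eqn:Inhomogeneous.DiagMainLemma.ApproxEigs}. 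Using that $\psi^\pm$ are eigenfunctions to rewrite the first term as $\la^n\langle\phi,\psi^\pm(\argdot;\la)\rangle=\la^n\FokForTransPM{\phi}(\la)$, and using equation~\eqref{eqn:DiagMainLemma.Proof.AdjBFpsi} to rewrite the third term, I obtain exactly equation~\eqref{eqn:DiagMainLemma.Proof.PpmExplicitFormulae} plus the new $\BVec{h}$-term; defining $P^\pm_\phi$ by the same formulae~\eqref{eqn:DiagMainLemma.PpmFormulae} then gives equations~\eqref{eqn:Inhomogeneous.DiagMainLemma.ApproxEigs}.

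It remains to note that the analyticity and the $\bigoh{\abs\la^{n-1}}$ growth of $P^\pm_\phi$, uniformly in $\arg(\la)$, are inherited unchanged from lemma~\ref{lem:DiagMainLemma}: the dot products in~\eqref{eqn:DiagMainLemma.PpmFormulae} are linear combinations of the $\Msup{M}{1}{k}{\pm}(\la)$, whose growth is controlled by lemma~\ref{lem:PolynomialGrowth}, and $\nu'(\la)=\bigoh1$ by proposition~\ref{prop:propertiesnu}; linearity of $\phi\mapsto P^\pm_\phi$ follows because $P^\pm_\phi$ depends on $\phi$ only through the linear map $\BVec{B}_\complementary\phi$. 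I do not anticipate a genuine obstacle here — the lemma is an essentially mechanical adaptation — but the one point requiring care is bookkeeping the compatibility convention: $\BVec{B}_\complementary^\star$ must be the complementary adjoint boundary forms \emph{associated with $\BVec{B}$ and $\BVec{B}^\star$} in the precise sense of~\cite[theorem~11.2.1]{CL1955a}, so that the three-term split above is literally the boundary form formula with no residual ambiguity in the pairing; getting that convention consistent with the one already fixed in~\S\ref{ssec:TransformPair.BC} is the only thing that could go wrong.
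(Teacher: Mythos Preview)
Your proposal is correct and matches the paper's own proof essentially verbatim: the paper simply states that the argument of lemma~\ref{lem:DiagMainLemma} goes through unchanged except that $\BVec{B}\phi=\BVec{0}$ is replaced by $\BVec{B}\phi=\BVec{h}$, which produces the extra $\BVec{h}\odot\BVec{B}_\complementary^\star\psi^\pm(\argdot;\la)$ term. You have spelled out the details more fully than the paper does, and your caveat about the compatibility convention for $\BVec{B}_\complementary^\star$ is appropriate but not an obstruction.
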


\begin{proof}
    The proof of lemma~\ref{lem:Inhomogeneous.DiagMainLemma} follows almost exactly the proof of lemma~\ref{lem:DiagMainLemma}.
    The only difference is that the simplification $\BVec{B}\phi=\BVec{0}$ that is used to derive equation~\eqref{eqn:DiagMainLemma.Proof.PpmExplicitFormulae} is replaced with $\BVec{B}\phi=\BVec{h}$.
    This results in the extra terms appearing in equations~\eqref{eqn:Inhomogeneous.DiagMainLemma.ApproxEigs} but not in equations~\eqref{eqn:DiagMainLemma.ApproxEigs}.
\end{proof}

\begin{thm} \label{thm:Inhomogeneous.Diag}
    There exists an \emph{inhomogeneous boundary term} $\mathcal{H}[\BVec{h}]$ defined by
    \[
        \mathcal{H}[\BVec{h}](\la) = \begin{cases}
            \BVec{h} \odot \BVec{B}_\complementary^\star \psi^+(\argdot;\la) & \mbox \la\in\Gamma^+\cup\Gamma_\cuts, \\
            \BVec{h} \odot \BVec{B}_\complementary^\star \psi^-(\argdot;\la) & \mbox \la\in\Gamma^-,
        \end{cases}
    \]
    for $\psi^\pm$ as in the proof of lemmata~\ref{lem:DiagMainLemma} and~\ref{lem:Inhomogeneous.DiagMainLemma},
    and a \emph{remainder transform} $\FokRemTransNoArg$, such that, for all $\phi\in\M{\Phi}{\BVec{B}}{\BVec{h}}$,
    \begin{subequations} \label{eqn:Inhomogeneous.DiagThm}
    \begin{equation} \label{eqn:Inhomogeneous.DiagThm.1}
        \FokForTrans{\mathcal{L}\phi}(\la) = \la^n \FokForTrans{\phi}(\la) + \FokRemTrans{\phi}(\la) + \mathcal{H}[\BVec{h}](\la)
    \end{equation}
    and,
    if
    $q:[0,1]\times[0,T]\to\CC$ is such that, for all $t\in[0,T]$, $q(\argdot,t)\in\Phi$ and, for all $j\in\{0,1,\ldots,n-1\}$, uniformly for all $x\in[0,1]$, $\partial_x^jq(x,\argdot)$ is a function of bounded variation,
    then,
    for all $t\in[0,T]$ and all $x\in(0,1)$,
    \begin{equation} \label{eqn:Inhomogeneous.DiagThm.2}
        \FokInvTrans{\int_0^t \re^{a\argdot^n(s-t)} \FokRemTrans{q}(\argdot;s) \D s}(x) = 0.
    \end{equation}
    \end{subequations}
\end{thm}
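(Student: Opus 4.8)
The plan is to follow the proof of theorem~\ref{thm:Diag} almost verbatim, replacing lemma~\ref{lem:DiagMainLemma} by its inhomogeneous counterpart lemma~\ref{lem:Inhomogeneous.DiagMainLemma}. The sole new ingredient is that the boundary data $\BVec{h}$ contribute an extra term, which we choose to keep separate as $\mathcal{H}[\BVec{h}]$ rather than fold into the remainder transform.

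First I would define $\FokRemTransNoArg$ by exactly the same piecewise formula~\eqref{eqn:Diag.RemTransDefn} as in the homogeneous case, now with $P^\pm_\phi$ the functions supplied by lemma~\ref{lem:Inhomogeneous.DiagMainLemma} (still given by formulae~\eqref{eqn:DiagMainLemma.PpmFormulae}), and $\mathcal{H}[\BVec{h}]$ as in the statement. Equation~\eqref{eqn:Inhomogeneous.DiagThm.1} is then immediate: on $\Gamma^+\cup\Gamma_\cuts$ one has $\FokForTransNoArg=\FokForTransPNoArg$, and the ``$+$'' line of~\eqref{eqn:Inhomogeneous.DiagMainLemma.ApproxEigs} reads precisely $\FokForTrans{\mathcal{L}\phi}(\la)=\la^n\FokForTrans{\phi}(\la)+\FokRemTrans{\phi}(\la)+\mathcal{H}[\BVec{h}](\la)$; on $\Gamma^-$ one has $\FokForTransNoArg=\FokForTransMNoArg$ and uses the ``$-$'' line, noting that the term $P^-_\phi(\la)\re^{-\ri\nu(\la)}$ appearing there is exactly the value of $\FokRemTrans{\phi}(\la)$ on $\Gamma^-$.

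For equation~\eqref{eqn:Inhomogeneous.DiagThm.2}, the key observation is that this assertion is word-for-word equation~\eqref{eqn:DiagThm.2}: it involves only $\FokRemTransNoArg$ (not $\mathcal{H}[\BVec{h}]$), and $\FokRemTransNoArg$ here is the same object, built from $P^\pm$ via the same formula. The proof of~\eqref{eqn:DiagThm.2} given for theorem~\ref{thm:Diag} never invoked the hypothesis $\phi\in\Phi_{\BVec{B}}$; it used only that $q(\argdot,t)\in\Phi$ with each $\partial_x^j q(x,\argdot)$ of bounded variation, so that the components of $\BVec{B}_\complementary q(\argdot,\argdot)$, and hence $P^\pm_{q(\argdot,\argdot)}(\la)$, are of bounded variation in $t$ with total variation $\bigoh{\abs{\la}^{n-1}}$ uniformly in $\arg(\la)$, together with the analyticity and polynomial growth of $P^\pm$. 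Lemma~\ref{lem:Inhomogeneous.DiagMainLemma} furnishes $P^\pm$ with all these same properties, so the Jordan's lemma estimate on $\Gamma_a^\pm$, the Cauchy's theorem argument on $\Gamma_0^\pm$, and the change-of-variables $\mu=\nu(\la)$ argument on $\Gamma_\cuts$ carry over unchanged; equation~\eqref{eqn:Inhomogeneous.DiagThm.2} follows by the identical computation.

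The main point to get right is the conceptual one of \emph{not} absorbing $\mathcal{H}[\BVec{h}]$ into $\FokRemTransNoArg$: the functions $\psi^\pm$ entering $\mathcal{H}[\BVec{h}]$ carry the factor $1/\Delta(\la)$ and combine all $n$ exponential modes $\re^{-\ri\nu(\alpha^{j-1}\la)x}$, so they need not satisfy any kernel identity of the form~\eqref{eqn:Inhomogeneous.DiagThm.2}; keeping $\mathcal{H}[\BVec{h}]$ separate is precisely what lets it produce the boundary-data contribution in the solution representation for IBVP~\eqref{eqn:Inhomogeneous.IBVP}. Otherwise the only care needed is bookkeeping: that $\BVec{B}_\complementary^\star$ and $\psi^\pm$ are the same objects as in lemmata~\ref{lem:DiagMainLemma} and~\ref{lem:Inhomogeneous.DiagMainLemma}, and that the case split of $\mathcal{H}[\BVec{h}]$ over $\Gamma^+\cup\Gamma_\cuts$ and $\Gamma^-$ matches that of $\FokForTransNoArg$.
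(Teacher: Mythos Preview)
Your proposal is correct and follows the same approach as the paper's own proof, which is very brief: define $\FokRemTransNoArg$ via~\eqref{eqn:Diag.RemTransDefn}, read off~\eqref{eqn:Inhomogeneous.DiagThm.1} from lemma~\ref{lem:Inhomogeneous.DiagMainLemma}, and note that the proof of~\eqref{eqn:Inhomogeneous.DiagThm.2} is identical to that of~\eqref{eqn:DiagThm.2}. Your additional observations---that the argument for~\eqref{eqn:DiagThm.2} never used $\phi\in\Phi_{\BVec{B}}$, and the conceptual reason for keeping $\mathcal{H}[\BVec{h}]$ separate from $\FokRemTransNoArg$---are accurate elaborations that the paper leaves implicit.
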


\begin{proof}
    If the remainder transform is defined by equation~\eqref{eqn:Diag.RemTransDefn}, then lemma~\ref{lem:Inhomogeneous.DiagMainLemma} implies equation~\eqref{eqn:Inhomogeneous.DiagThm.1}.
    The proof of equation~\eqref{eqn:Inhomogeneous.DiagThm.2} is identical to the proof of equation~\eqref{eqn:DiagThm.2}.
\end{proof}

\begin{thm} \label{thm:Inhomogeneous.FokasMethodSolRep}
    Suppose the criteria of theorem~\ref{thm:FokasTransformValid} hold, and there exists a solution $q(x,t)$ of problem~\eqref{eqn:Inhomogeneous.IBVP}, absolutely continuous in $t$ and satisfying the criteria of theorem~\ref{thm:Inhomogeneous.Diag}.
    Then, provided the latter two integrals converge,
    \begin{multline*}
        q(x,t)
        = \FokInvTrans{\re^{-at\argdot^n}\FokForTrans{Q}}(x)
        + \FokInvTrans{ \int_0^t \re^{a\argdot^n(s-t)} \FokForTrans{\mathcal{Q}}(\argdot;s) \D s }(x) \\
        - a\FokInvTrans{ \int_0^t \re^{a\argdot^n(s-t)} \mathcal{H}[\BVec{h}](\argdot;s) \D s }(x).
    \end{multline*}
\end{thm}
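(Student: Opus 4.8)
The plan is to run the proof of Theorem~\ref{thm:FokasMethodSolRep} almost verbatim, carrying along the two extra forcing terms — the Fokas transform of the PDE inhomogeneity $\mathcal{Q}$ and the inhomogeneous boundary term $\mathcal{H}[\BVec{h}]$ — and then to dispose of the remainder contribution by appealing to Theorem~\ref{thm:Inhomogeneous.Diag} in place of Theorem~\ref{thm:Diag}. Throughout, $\FokForTransNoArg$ abbreviates $\FokForTransPNoArg$ on $\Gamma^+\cup\Gamma_\cuts$ and $\FokForTransMNoArg$ on $\Gamma^-$, exactly as in~\S\ref{ssec:Method.TrueTransform}, and the bivalued structure causes no difficulty because Theorem~\ref{thm:Inhomogeneous.Diag} covers both cases.

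First I would apply the forward Fokas transform to equation~\eqref{eqn:Inhomogeneous.IBVP.PDE}. Since the transform is a spatial integral and $q$ is absolutely continuous in $t$, differentiation under the integral gives $\FokForTrans{\partial_t q}(\la;t) = \frac{\D}{\D t}\FokForTrans{q}(\la;t)$ for almost every $t$, and linearity of $\FokForTransNoArg$ together with Theorem~\ref{thm:Inhomogeneous.Diag} (applicable because $q(\argdot,t)\in\M{\Phi}{\BVec{B}}{\BVec{h}(t)}$) yields, for almost every $t$,
\[
    \frac{\D}{\D t}\FokForTrans{q}(\la;t) + a\la^n\FokForTrans{q}(\la;t) + a\FokRemTrans{q}(\la;t) + a\mathcal{H}[\BVec{h}](\la;t) = \FokForTrans{\mathcal{Q}}(\la;t).
\]
For each fixed $\la$ in the domain of $\nu$ this is a first order linear ODE in $t$, whose initial value $\FokForTrans{q}(\la;0) = \FokForTrans{Q}(\la)$ comes from transforming~\eqref{eqn:Inhomogeneous.IBVP.IC}; integrating against the factor $\re^{a\la^n t}$ and using absolute continuity to invoke the fundamental theorem of calculus gives
\[
    \FokForTrans{q}(\la;t) = \re^{-a\la^n t}\FokForTrans{Q}(\la) + \int_0^t \re^{a\la^n(s-t)}\left[ \FokForTrans{\mathcal{Q}}(\la;s) - a\FokRemTrans{q}(\la;s) - a\mathcal{H}[\BVec{h}](\la;s) \right]\D s.
\]
Applying the inverse Fokas transform — valid since the hypotheses of Theorem~\ref{thm:FokasTransformValid} are assumed, so that $\FokInvTrans{\FokForTrans{q}(\argdot;t)}(x) = q(x,t)$ — and using linearity of $\FokInvTransNoArg$ to split the transform across the resulting terms, I obtain $q(x,t)$ expressed as the asserted right-hand side together with the extra summand $-a\FokInvTrans{\int_0^t \re^{a\argdot^n(s-t)}\FokRemTrans{q}(\argdot;s)\D s}(x)$. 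Since the criteria of Theorem~\ref{thm:Inhomogeneous.Diag} are assumed, equation~\eqref{eqn:Inhomogeneous.DiagThm.2} shows this last summand vanishes, leaving precisely the claimed representation.

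The main obstacle is the bookkeeping around the inverse transform: one must justify distributing $\FokInvTransNoArg$ across the sum and commuting it with the temporal integral $\int_0^t\D s$. This is a Fubini-type interchange, and it is exactly what the hypothesis ``provided the latter two integrals converge'' is there to underwrite — convergence of $\FokInvTrans{\int_0^t \re^{a\argdot^n(s-t)}\FokForTrans{\mathcal{Q}}(\argdot;s)\D s}$ and of its $\mathcal{H}[\BVec{h}]$ analogue licenses the split, after which the principal-value contour manipulations needed to make sense of each piece are the same as those already performed in the proof of equation~\eqref{eqn:DiagThm.2}. Everything else is a routine transcription of the homogeneous argument with the two forcing terms carried along.
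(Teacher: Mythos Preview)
Your proposal is correct and follows essentially the same approach as the paper: transform~\eqref{eqn:Inhomogeneous.IBVP.PDE}, invoke theorem~\ref{thm:Inhomogeneous.Diag} to obtain the linear ODE in $t$, solve with the initial datum from~\eqref{eqn:Inhomogeneous.IBVP.IC}, apply $\FokInvTransNoArg$ using theorem~\ref{thm:FokasTransformValid}, and kill the remainder term via~\eqref{eqn:Inhomogeneous.DiagThm.2}. Your discussion of the Fubini/linearity bookkeeping is more explicit than the paper's, which simply records the convergence hypothesis and applies the inverse transform termwise without further comment.
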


\begin{proof}
    Applying the forward Fokas transform to~\eqref{eqn:Inhomogeneous.IBVP.PDE}, and exploiting linearity of the forward Fokas transform, almost everywhere in $t$,
    \begin{align*}
        \FokForTrans{\mathcal{Q}}(\la;t) &= \frac{\D}{\D t} \FokForTrans{q}(\la;t) + a \FokForTrans{\mathcal{L}q}(\la;t).
    \intertext{By theorem~\ref{thm:Inhomogeneous.Diag},}
        \FokForTrans{\mathcal{Q}}(\la;t) &= \frac{\D}{\D t} \FokForTrans{q}(\la;t) + a \la^n \FokForTrans{q}(\la;t) + a\FokRemTrans{q}(\la;t) + a\mathcal{H}[\BVec{h}](\la;t).
    \end{align*}
    Using~\eqref{eqn:Inhomogeneous.IBVP.IC} to solve the initial value problem for this ODE, we find
    \begin{multline} \label{eqn:Inhomogeneous.FokasMethodSolRep.Proof.SolvedODE}
        \FokForTrans{q}(\la;t)
        = \re^{-a\la^nt}\FokForTrans{Q}(\la)
        + \re^{-a\la^nt} \int_0^t \re^{a\la^ns} \FokForTrans{\mathcal{Q}}(\la;s) \D s \\
        - a \re^{-a\la^nt} \int_0^t \re^{a\la^ns} \mathcal{H}[\BVec{h}](\la;s) \D s
        - a \re^{-a\la^nt} \int_0^t \re^{a\la^ns} \FokRemTrans{q}(\la;s) \D s.
    \end{multline}
    By theorem~\ref{thm:Inhomogeneous.Diag}, the inverse Fokas transform of the last term of equation~\eqref{eqn:Inhomogeneous.FokasMethodSolRep.Proof.SolvedODE} evaluates to $0$.
    Therefore, applying the inverse Fokas transform to equation~\eqref{eqn:Inhomogeneous.FokasMethodSolRep.Proof.SolvedODE} yields the claimed solution representation.
\end{proof}

\section{Multipoint and interface problems} \label{sec:Interface}

\subsection{Interface differential operators} \label{ssec:Interface.Operator}

Let $m,n\in\NN$ and, for each $r\in\{1,\ldots,m\}$, choose $\omega_r$ a monic polynomial of degree $n$,
\[
    \omega_r(z) = z^{n} + \sum_{j=0}^{n-2} \M{c}{r}{j}z^j.
\]
We study the differential operators formally defined on functions $\phi_r$ by
\[
    \mathcal{L}_r\phi_r := \omega_r(-\ri\partial_x)\phi_r = (-\ri)^{n}\phi_r^{(n)} + \sum_{j=0}^{n-2} (-\ri)^{j}\M{c}{r}{j}\phi_r^{(j)}, \qquad r = 1,\ldots, m.
\]
We also study the vector formal differential operator $\mathcal{L}$ formed from such operators; for functions $\phi=(\phi_1,\phi_2,\ldots,\phi_m)$,
\[
    \mathcal{L}\phi = \left(\mathcal{L}_1\phi_1,\mathcal{L}_2\phi_2,\ldots,\mathcal{L}_m\phi_m\right),
\]
or, equivalently,
\[
    \mathcal{L} := \left(\mathcal{L}_1,\mathcal{L}_2,\ldots,\mathcal{L}_m\right)\circ,
\]
where $\circ$ represents the entrywise action of operators.

We define $\Phi = \prod^{m}_{r=1} \AC^{n-1}[0, 1]$, a product of function spaces $\AC^{n-1}[0, 1]$.
The inner product on this space is defined as the sum of the ordinary unweighted sesquilinear $\mathrm{L}^2$ inner products on the constituent spaces.

Suppose that matrices of complex \emph{boundary coefficients} $b^r,\beta^r\in\CC^{\nall \times n}$, $r \in \{1, \ldots, m\}$ are chosen such that the concatenated matrix $(b^1:\beta^1:\ldots:b^m:\beta^m)\in\CC^{\nall\times2\nall}$ has full rank.
Define \emph{boundary forms} $B_k:\Phi\to\CC$ by
\[
    B_k(\phi) = \sum_{r=1}^m \sum_{j=1}^{n} \left( \Msup{b}{k}{j}{r}\phi_r^{(j-1)}(0) +  \Msup{\beta}{k}{j}{r}\phi_r^{(j-1)}(1) \right), \qquad k\in\left\{1,2,\ldots,\nall\right\},
\]
and let $\BVec{B} = (B_1, B_2, \ldots, B_{\nall})$ be the vector of boundary forms $B_k$.
On the space
\[
    \Phi_{\BVec{B}}=\{\phi\in\Phi: \BVec{B}\phi=\BVec{0}\},
\]
we define $L:\Phi_\BVec{B}\to \prod_{r=1}^m \mathrm{L}^1[0,1]$ by $L\phi=\mathcal{L}\phi$.

\subsection{Initial interface value problems} \label{ssec:Interface.IIVP}

The operator $L$ represents the spatial part of the initial interface value problem (IIVP)
\begin{subequations} \label{eqn:Interface.IBVP}
\begin{align}
    \label{eqn:Interface.IBVP.PDE} \tag{\theparentequation.PDE}
    \partial_t q(x,t) + \BVec{a}\circ L q(x,t) &= \BVec{0} & (x,t) &\in (0,1) \times (0,T), \\
    \label{eqn:Interface.IBVP.IC} \tag{\theparentequation.IC}
    q(x,0) &= Q(x) & x &\in [0,1], \\
    \label{eqn:InterfaceIBVP.BC} \tag{\theparentequation.BC}
    \BVec{B}q(\argdot,t) &= \BVec{0} & t &\in[0,T],
\end{align}
\end{subequations}
in which we assume $Q = (Q_1, \ldots, Q_m)\in\Phi_\BVec{B}$ and $\BVec{a}$ is a list of $m$ nonzero complex numbers so that each entry $a_r$ obeys the same criteria with relation to $\omega_r$ as did $a$ for $\omega$ in~\S\ref{ssec:Introduction.IBVP}, and $\circ$ represents the entrywise product.

IIVP~\eqref{eqn:Interface.IBVP} is a notational shorthand for problems in which $q = (q_1, \ldots, q_m)$ is a list of solutions of partial differential equations
\begin{align*}
    \partial_t q_r(x,t) + a_r \mathcal{L}_r q_r(x,t) &= 0 & (x,t) &\in (0,1) \times (0,T), & r &\in \{1,\ldots,m\}, \\
\intertext{subject to initial conditions}
    \partial_t q_r(x,0) &= Q_r(x) & x &\in [0,1], & r &\in \{1,\ldots,m\},
\end{align*}
and simultaneous boundary conditions~\eqref{eqn:InterfaceIBVP.BC}.

IIVP~\eqref{eqn:Interface.IBVP} can represent a broad range of multipoint and interface problems.
This claim is illustrated with the following few examples.
In the space $\Phi$ defined in~\S\ref{ssec:Interface.Operator}, each of the $m$ spatial intervals is assumed to be of the same length $1$, and the $x$ values all extend from $0$ at the left.
But a simple change of variables can reduce a wide range of similar problems to those we explicitly study.

\subsubsection{Multipoint example with irregularly spaced points}
Consider the initial multipoint value problem (IMVP) for the diffusion equation
\begin{align*}
    \partial_t u(y,t) - \partial_{yy}u(\argdot,t) &= 0 & (y,t) &\in (0,3) \times (0,T), \\
    u(y,0) &= U(y) & y &\in[0,3], \\
    \partial_yu(0,t) &= \partial_yu(1,t) & t &\in[0,T], \\
    u(3,t) &= 0 & t &\in[0,T],
\end{align*}
where $u(\argdot,t)$ is continuously differentiable on $[0,3]$ and its restrictions to $[0,1]$ and $[1,3]$ are both absolutely continuous along with their first derivatives.
Boundary conditions such as these arise, for example, in diffusion problems~\cite{Can1963a,DM1963a} where the concentration of the diffusive substance is specified as an average over an interval $x\in[0,1]$ of finite width, instead of at an infinitesimal boundary $x=0$.

The change of variables
\begin{align*}
    x &= y, & q_1(x,t) &= u(y,t), & &\mbox{for }y\in[0,1], \\
    x &= (y-1)/2, & q_2(x,t) &= u(y,t), & &\mbox{for }y\in[1,3]
\end{align*}
yields PDE
\[
    \partial_t q(x,t) - \BVec{a}\circ \partial_{xx} q(x,t) = \BVec{0} \qquad\qquad (x,t) \in (0,1) \times (0,T),
\]
for $\BVec{a}=(1,1/4)$, initial condition
\[
    q(x,0) = \begin{bmatrix}U(x) \\ U(2x+1)\end{bmatrix}
\]
and boundary conditions~\eqref{eqn:InterfaceIBVP.BC} with boundary coefficient matrices
\[
    \begin{bmatrix} b^1 : \beta^1 : b^2 : \beta^2 \end{bmatrix}
    =
    \begin{bmatrix}
        0 & 1 & 0 & -1 &  0 &  0 & 0 & 0 \\
        0 & 0 & 0 &  0 &  0 &  0 & 1 & 0 \\
        0 & 0 & 1 &  0 & -1 &  0 & 0 & 0 \\
        0 & 0 & 0 &  2 &  0 & -1 & 0 & 0
    \end{bmatrix},
\]
in which the latter two rows correspond to continuous differentiability of $u(\argdot,t)$ over the interface.

In a similar way, any IMVP of the form studied in~\cite{PS2018a} can be reduced to IIVP~\eqref{eqn:Interface.IBVP}.
But the formulation of the above IMVP and the class of IMVP studied in~\cite{PS2018a} share the restriction that $u$ is assumed to have $n-1$ continuous partial derivatives over each interface, a restriction which IIVP formulation~\eqref{eqn:Interface.IBVP} removes.
Considering the results of~\cite{Loc1973a,Neu1966a,Zet1966a} that selfadjoint multipoint operators always fail this condition, we feel it is more natural to study the more general class of IMVP that can be reduced to IIVP~\eqref{eqn:Interface.IBVP}.

\subsubsection{Linear Schr\"{o}dinger equation with piecewise constant potential} \label{sssec:LSPot}
The Dirichlet problem for the time dependent, linear Schr\"{o}dinger equation with piecewise constant potential $c(x) = c_r$ for $\frac{r-1}{m}<x<\frac{r}{m}$ may be expressed as
\begin{align*}
    i \partial_t u (x,t) &= \partial_{xx} u(x,t) + c(x) u(x,t) & (x,t) &\in (0,1) \times (0,T), \\
    u(x,0) &= U(x) & x &\in [0,1], \\
    u(0, t)  &= 0 & t &\in[0,T], \\
    u(1, t)  &= 0 & t &\in[0,T],
\end{align*}
for $u(\argdot,t)$ continuously differentiable on $[0,1]$ such that its restriction to each interval $[\frac{r-1}{m},\frac{r}{m}]$ has absolutely continuous derivative.

The change of variables
\[
    x = my+1-r, \qquad q_r(x,t) = u(y,t), \qquad\qquad \mbox{for } \frac{r-1}{m}<y<\frac{r}{m}
\]
reduces this problem to IIVP~\eqref{eqn:Interface.IBVP}.
Via such piecewise approximation of the potential, the linear Schr\"{o}dinger equation for arbitrary potential, and indeed other variable coefficient differential equations, may be solved to arbitrary accuracy using the methods presented below.

\subsubsection{Interface problems on networks} \label{sssec:InterfaceNetwork}
The above example may be seen as an interface problem in which the domain is a collection of intervals arranged as the edges of a linear graph.
This may be generalized to interface problems in which the domain follows a more complicated graph.
For exampe,~\cite{SS2015a} specifies and solves the perfect thermal contact problem for the heat equation on network domains, including a star graph and a graph with arbitrarily many parallel edges.
Mapping each component of any network of finite intervals to a copy of $[0,1]$, and applying the corresponding mappings to the boundary and interface coefficients, one can express any perfect thermal contact problem for the heat equation on a network domain as IIVP~\eqref{eqn:Interface.IBVP}, up to change of variables.

Perfect thermal contact is a more general type of interface condition than the $\mathrm{C}^{n-1}$ interface conditions considered in~\S\ref{sssec:LSPot}.
But it is itself a special case of more general interface conditions, such as imperfect thermal contact (a class of problems which has also been studied previously using the simplified version of the Fokas transform method, albeit only on linear domains~\cite{She2017a}), or the conditions required to model dispersive phenomena at network interfaces.
IIVP~\eqref{eqn:Interface.IBVP} can be used to encode all of these, and any other IIVP with local interface conditions.

\subsection{Formal transform pair} \label{ssec:Interface.TransformPair.Formal}

For each $r\in\{1,2,\ldots,m\}$, we define $\nu_r$ to be a function such that
\[
    \omega_r(\nu_r(\la))=\la^{n}, \qquad \lim_{n\to\infty}\nu_r(\la)/\la=1, \qquad \mbox{and} \qquad\lim_{\la\to\infty}\nu'_r(\la)\to1, \mbox{ uniformly in }\arg(\la).
\]
By proposition~\ref{prop:propertiesnu}, such functions exist, and there exists some $\propertiesnuradius>0$ such that, outside $D(0,\propertiesnuradius)$, all $\nu_r$ are biholomorphic.

We also define the formal forward transform $\rFormalForTransNoArg{r}$ and inverse transform $\rFormalInvTransNoArg{r}$ by
\begin{subequations} \label{eqn:rTransformPair.Formal.Defn}
\begin{align}
    \label{eqn:Interface.TransformPair.Formal.Defn.Forward}
    \rFormalForTrans{r}{\phi_r}(\la) &:=
    \int_0^1 \phi_r(x) \re^{-\ri\nu_r(\la)x} \D x, & \abs{\la} > \propertiesnuradius, \\
    \label{eqn:Interface.TransformPair.Formal.Defn.Inverse}
    \rFormalInvTrans{r}{F_r}(x) &:= \frac{1}{2\pi} \;\CPV_{\gamma} \re^{\ri\nu_r(\la)x} \nu'_r(\la)F_r(\la) \D\la,
\end{align}
\end{subequations}
in which $\gamma$ is a contour defined as in~\S\ref{ssec:TransformPair.Formal}.
The validity of these transform pairs is corollary~\ref{cor:AlternativeFormalTransformValid}.
Just as in the single interval case, as presented in theorem~\ref{thm:FormalTransformValid}, there is an equivalent inverse transform taking a principal value in the $\nu_r(\la)$ domain rather than the $\la$ plane.

\subsection{Fokas transform pair respecting the boundary conditions} \label{ssec:Interface.TransformPair.BC}

We denote by $L^\star$, resectively $\mathcal{L}^\star$, the adjoint of $L$, respectively $\mathcal{L}$, as constructed in Appendix~\ref{sec:AdjointOperator}; entrywise, $\mathcal{L}^\star_r \psi_r = \overline\omega(-\ri\partial_x)\psi_r$ for $\overline\omega$ the Schwarz conjugate of $\omega$.
If $\BVec{B}^\star = (B^\star_1, \ldots, B^\star_{\nall})$ is a vector of boundary forms adjoint to $\BVec{B}$, with matrices $\Msups{b}{}{}{r}{\star}, \Msups{\beta}{}{}{r}{\star}$ of boundary coefficients
\begin{equation}
    B_k^\star(\psi) = \sum_{r=1}^m\sum_{j=1}^{n} \left( \Msups{b}{k}{j}{r}{\star} \psi^{(j-1)}_r(0) + \Msups{\beta}{k}{j}{r}{\star} \psi^{(j-1)}_r(1) \right), \qquad k\in\{1,2,\ldots,\nall\},
\end{equation}
then $L^\star:\Phi_{\BVec{B}^\star}\to\prod_{r=1}^m\mathrm{L}^1[0,1]$.
For convenience, we separate the effects of the adjoint boundary forms at each boundary as
\begin{align*}
    \Msups{B}{k}{}{\star}{+} \psi &= \sum^{m}_{r=1} \sum_{j=1}^{n} \Msups{b}{k}{j}{r}{\star} \psi^{(j-1)}_r(0), & k &\in \{1,2,\ldots,\nall\}, \\
    \Msups{B}{k}{}{\star}{-} \psi &=  \sum^{m}_{r=1} \sum_{j=1}^{n} \Msups{\beta}{k}{j}{r}{\star} \psi^{(j-1)}_r(1), & k &\in \{1,2,\ldots,\nall\}.
\end{align*}

\subsubsection{The forward transform} \label{sssec:Interface.TransformPair.BC.Forward}

Let $\alpha=\re^{2\pi\ri/n}$, a primitive $n$\textsuperscript{th} root of unity.
For each $j\in\{1,2,\ldots,\nall\}$, there exists a unique $r\in\{1,2,\ldots,m\}$ such that
\begin{equation} \label{eqn:Interface.rAsFunctionOfj}
    0 \leq j -1 - (r-1)n \leq n-1.
\end{equation}
For that $r$, define
\begin{equation}
    y_{j}(x;\la) := \re^{\ri\overline{\nu_r\left(\alpha^{j-1}\la\right)}x}, \qquad x \in [0,1], \qquad \la\in\CC\setminus D(0,\propertiesnuradius).
\end{equation}
Fixing $\la$ and leaving $j$ free, each $y_j(\argdot;\la)$ is an eigenfunction of the formal differential operator corresponding to $\mathcal{L}^\star$, with eigenvalue $\bar{\la}^n$, and together they span the eigenspace for that eigenvalue.
The characteristic matrix of the adjoint operator $M(\la)$ has entries
\begin{equation}
    \M{M}{j}{k}(\la) := \overline{B_k^\star y_j(\argdot;\la)},
\end{equation}
and the characteristic determinant of the adjoint operator is
\begin{equation}
    \Delta(\la) := \det M(\la).
\end{equation}
We define $\M{\MMinors}{j}{k}$ as the $(\nall-1)\times(\nall-1)$ submatrix of $\begin{bmatrix}M&M\\M&M\end{bmatrix}$ whose $(1,1)$ entry is the $(j+1,k+1)$ entry of its parent.
As before, we also define the left characteristic matrix and right characteristic matrix by
\begin{equation}
    \Msup{M}{j}{k}{\pm}(\la) := \overline{\Msups{B}{k}{}{\star}{\pm} y_j(\argdot;\la)}.
\end{equation}

The forward transforms are
\begin{subequations} \label{eqn:Interface.FokasForwardTransformDefinition.LeftRight}
\begin{align}
    \rFokForTransP{r}{\phi}(\la) &= \frac{\nu_r'(\la)}{2\pi\Delta(\la)} \sum_{j=1}^{\nall} \sum_{k=1}^{\nall} (-1)^{(\nall-1)(j+k)} \det \M{\MMinors}{j}{k}(\la) \Msup{M}{1+(r-1)n}{k}{+}(\la) \left\langle \phi , y_j(\argdot;\la) \right\rangle, \\
    \rFokForTransM{r}{\phi}(\la) &= \frac{-\nu_r'(\la)}{2\pi\Delta(\la)} \sum_{j=1}^{\nall} \sum_{k=1}^{\nall} (-1)^{(\nall-1)(j+k)} \det \M{\MMinors}{j}{k}(\la) \Msup{M}{1+(r-1)n}{k}{-}(\la) \left\langle \phi , y_j(\argdot;\la) \right\rangle.
\end{align}
\end{subequations}
The above formulae define $2m$ forward transforms $\phi\mapsto \rFokForTransPM{r}{\phi}$.
We combine these into a single forward transform
\begin{equation} \label{eqn:Interface.FokasForwardTransformDefinition}
    \phi(x) \mapsto \FokForTrans{\phi}(\la) := \begin{cases}
        \left(\rFokForTransP{1}{\phi}(\la),\rFokForTransP{2}{\phi}(\la),\ldots,\rFokForTransP{m}{\phi}(\la)\right) & \mbox{certain } \la, \\
        \left(\rFokForTransM{1}{\phi}(\la),\rFokForTransM{2}{\phi}(\la),\ldots,\rFokForTransM{m}{\phi}(\la)\right) & \mbox{certain other } \la,
    \end{cases}
\end{equation}
in which the particular choice of $\la$ appears in~\S\ref{sssec:Interface.TransformPair.BC.Inverse}.

\subsubsection{The inverse transform} \label{sssec:Interface.TransformPair.BC.Inverse}

Let $\Delta^\principalpart$ be the characteristic determinant of the principal part of the adjoint operator.
Let $R>\propertiesnuradius$ and $\epsilon>0$ be such that all zeros of $\Delta$ lie in the union
\[
    D(0,R) \cup \bigcup_{\substack{\sigma\in\CC\setminus D(0,R):\\\Delta^\principalpart(\sigma)=0}} D(\sigma,\epsilon)
\]
but
\[
    D(0,R) \cap \bigcup_{\substack{\sigma\in\CC\setminus D(0,R):\\\Delta^\principalpart(\sigma)=0}}D(\sigma,3\epsilon) = \emptyset,
\]
and $\epsilon$ is less than $1/5$ the infimal separation of the zeros of $\Delta^\principalpart$.
We define the sets
\begin{subequations}
\begin{align}
    \CC_{\nu_r}^\pm &= \{\la\in\CC:\abs\la>\propertiesnuradius,\pm\Im(\nu_r(\la))>0\}, \\
    Z_r^+ &= \{\sigma\in\clos\CC_{\nu_r}^+:\Delta^\principalpart(\sigma)=0,\abs\sigma>R\}, \\
    Z_r^- &= \{\sigma\in\CC_{\nu_r}^-:\Delta^\principalpart(\sigma)=0,\abs\sigma>R\}, \\
    Z_\cuts &= \{\sigma\in\CC:\Delta^\principalpart(\sigma)=0,\abs\sigma<R\},
\end{align}
and the contours
\begin{align}
    \Gamma_r &= \M{\Gamma}{0}{r} \cup \M{\Gamma}{\BVec{a}}{r} \cup \Gamma_\cuts, \\
    \M{\Gamma}{0}{r} &= \Msup{\Gamma}{0}{r}{+} \cup \Msup{\Gamma}{0}{r}{-}, \\
    \Msup{\Gamma}{0}{r}{\pm} &= \bigcup_{\sigma\in Z_r^\pm} C(\sigma,\epsilon), \\
    \M{\Gamma}{\BVec{a}}{r} &= \Msup{\Gamma}{\BVec{a}}{r}{+} \cup \Msup{\Gamma}{\BVec{a}}{r}{-}, \\
    \Msup{\Gamma}{\BVec{a}}{r}{\pm} &= \partial \left( \{\la\in\CC_{\nu_r}^\pm:\Re({a_r}\la^n)>0,\,\abs\la>R\} \setminus \bigcup_{\sigma\in Z_r^+\cup Z_r^-} D(\sigma,2\epsilon) \right), \\
    \Gamma_\cuts &= C(0,R) \\
    \Gamma_r^\pm &= \Msup{\Gamma}{0}{r}{\pm} \cup \Msup{\Gamma}{\BVec{a}}{r}{\pm}.
\end{align}
\end{subequations}

For $F$, a bivalued vector valued function $F^\pm$ of a single complex variable, the inverse Fokas transform is defined by
\begin{subequations} \label{eqn:Interface.FokasInverseTransformDefinition}
\begin{equation}
    F(\la) \mapsto \FokInvTrans{F}(x) = \left( \rFokInvTrans{1}{F}(x) , \rFokInvTrans{2}{F}(x) , \ldots , \rFokInvTrans{m}{F}(x) \right)
\end{equation}
where
\begin{equation}
    \rFokInvTrans{r}{F}(x) := \CPVwithoutintegral \left( \int_{\Gamma_r^+\cup\Gamma_\cuts} \re^{\ri\nu_r(\la)x}F_r^+(\la) \D\la + \int_{\Gamma_r^-} \re^{\ri\nu_r(\la)x}F_r^-(\la) \D\la \right).
\end{equation}
\end{subequations}

\subsubsection{Validity} \label{sssec:Interface.TransformPair.BC.Validity}

We also define contours
\begin{equation}
    \Msup{\hat\Gamma}{\BVec{a}}{r}{\pm} = \partial \left( \{\la\in\CC_{\nu_r}^\pm:\Re(a_r\la^n)<0,\,\abs\la>R\} \setminus \bigcup_{\sigma\in Z_r^+\cup Z_r^-} D(\sigma,2\epsilon) \right).
\end{equation}

We claim that, under certain conditions, the transform~\eqref{eqn:Interface.FokasInverseTransformDefinition} is truly the inverse of transform~\eqref{eqn:Interface.FokasForwardTransformDefinition}.

\begin{asm} \label{asm:Interface.MainAssumption}
    Suppose $\phi$ is such that, for all $x\in(0,1)$, and all $r\in\{1,2,\ldots,m\}$,
    \begin{equation*}
        \CPVwithoutintegral\left( \int_{\Msup{\hat\Gamma}{\BVec{a}}{r}{+}} \re^{\ri\nu_r(\la)x} \rFokForTransP{r}{\phi}(\la)\D\la + \int_{\Msup{\hat\Gamma}{\BVec{a}}{r}{-}} \re^{\ri\nu_r(\la)x} \rFokForTransM{r}{\phi}(\la)\D\la \right) = 0.
    \end{equation*}
\end{asm}

\begin{thm} \label{thm:Interface.FokasTransformValid}
    Suppose $\phi\in \prod_{r=1}^m \mathrm{C}^1[0,1]$ and assumption~\ref{asm:Interface.MainAssumption} holds.
    Then, for all $x\in(0,1)$,
    \begin{equation*}
        \FokInvTrans{\FokForTrans{\phi}}(x) = \phi(x).
    \end{equation*}
\end{thm}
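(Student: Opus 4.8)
The plan is to mirror, component by component, the proof of the single-interval validity theorem~\ref{thm:FokasTransformValid}, each component being reduced in the end to the formal inversion result. The first step is to establish the interface analogue of lemma~\ref{lem:FokasTransformCancellation}: for every $r\in\{1,2,\ldots,m\}$, every $\phi\in\prod_{r=1}^m\mathrm{L}^1[0,1]$, and every $\la$ with $\abs\la>\propertiesnuradius$,
\[
    \rFokForTransP{r}{\phi}(\la) - \rFokForTransM{r}{\phi}(\la) = \frac{\nu_r'(\la)}{2\pi}\,\rFormalForTrans{r}{\phi_r}(\la).
\]
As in the single-interval case, this follows from definitions~\eqref{eqn:Interface.FokasForwardTransformDefinition.LeftRight}, the identity $\M{M}{j}{k}=\Msup{M}{j}{k}{+}+\Msup{M}{j}{k}{-}$, and the cyclic cofactor expansion
\[
    \sum_{k=1}^{\nall}(-1)^{(\nall-1)(j+k)}\det\M{\MMinors}{j}{k}(\la)\,\M{M}{j'}{k}(\la) = \delta_{j,j'}\Delta(\la),
\]
applied with $j'=1+(r-1)n$. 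This extracts the single summand $j=1+(r-1)n$; for that $j$ one has $\alpha^{j-1}=1$ and $y_j(\argdot;\la)$ is supported on the $r$\textsuperscript{th} interval with frequency $\nu_r(\la)$, so $\left\langle\phi,y_j(\argdot;\la)\right\rangle = \rFormalForTrans{r}{\phi_r}(\la)$, giving the stated identity.

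With this cancellation lemma in hand, fix $r$ and repeat the contour manipulations of the proof of theorem~\ref{thm:FokasTransformValid} with $\nu_r$, $a_r$, $\CC_{\nu_r}^\pm$, $Z_r^\pm$, $\Gamma_r^\pm$, $\Msup{\hat\Gamma}{\BVec{a}}{r}{\pm}$ in place of $\nu$, $a$, $\CC_\nu^\pm$, $Z^\pm$, $\Gamma^\pm$, $\hat\Gamma_a^\pm$. Concretely: use assumption~\ref{asm:Interface.MainAssumption} to append $\Msup{\hat\Gamma}{\BVec{a}}{r}{\pm}$ to $\rFokInvTrans{r}{\FokForTrans{\phi}}(x)$ without changing its value; cancel the portions of $\Msup{\hat\Gamma}{\BVec{a}}{r}{\pm}$ that retrace $\Msup{\Gamma}{\BVec{a}}{r}{\pm}$; shrink each excised disc $D(\sigma,2\epsilon)$ to $D(\sigma,\epsilon)$ (a finite deformation inside the principal value, legitimate because $\phi_r\in\mathrm{L}^1[0,1]$ makes the integrands holomorphic off $D(0,R)\cup\bigcup_\sigma D(\sigma,\epsilon)$ and continuous onto the boundary); cancel the oppositely oriented circles about those $\sigma$ with $\abs{\Im(\nu_r(\sigma))}\geq\epsilon$, leaving only a bounded-width strip about $\nu_r^{-1}(\RR)$ together with $D(0,R)$ (cf.~\eqref{eqn:ValidityTheorem.RegionOfInterest}); combine the three surviving principal-value integrals into one along a common contour $\gamma'_r$; apply the cancellation lemma to replace the integrand by $\tfrac{1}{2\pi}\re^{\ri\nu_r(\la)x}\nu_r'(\la)\rFormalForTrans{r}{\phi_r}(\la)$; deform $\gamma'_r$ to $\partial(\CC_{\nu_r}^+\setminus D(0,R))$; and invoke corollary~\ref{cor:AlternativeFormalTransformValid} with $\nu_r$ in place of $\nu$ (as noted in~\S\ref{ssec:Interface.TransformPair.Formal}) to obtain $\rFokInvTrans{r}{\FokForTrans{\phi}}(x)=\phi_r(x)$. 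Assembling over $r\in\{1,\ldots,m\}$ yields $\FokInvTrans{\FokForTrans{\phi}}(x)=\phi(x)$.

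The main point requiring care — rather than a genuine obstacle — is that the single pair $(R,\epsilon)$ fixed in~\S\ref{sssec:Interface.TransformPair.BC.Inverse} must serve all $m$ families $\Gamma_r^\pm$ simultaneously, and that the disc-cancellations remain exact even though a fixed zero $\sigma$ of $\Delta$ may be enclosed differently by $\Gamma_r^\pm$ for different $r$. This works because the deformations for distinct $r$ act only on the respective component $\rFokInvTrans{r}{}$ and are carried out independently, the single characteristic determinant $\Delta$ provides one common zero set, and $\left\langle\phi,y_j(\argdot;\la)\right\rangle$ depends only on the one component $\phi_{r(j)}$, so cross-component terms never enter the cancellation lemma. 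The remaining subtlety, the behaviour when $\arg(a_r)=\pm\pi/2$ (where $\Msup{\Gamma}{\BVec{a}}{r}{\pm}$ may fragment into infinitely many loops), is handled exactly as in~\S\ref{ssec:TransformPair.BC}: the reality hypothesis on the coefficients of $\omega_r$ forces $\nu_r(\RR)\subseteq\RR$ in precisely those cases.
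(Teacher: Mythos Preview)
Your proposal is correct and follows essentially the same approach as the paper: the paper states and proves the cancellation identity separately as lemma~\ref{lem:Interface.FokasTransformCancellation}, and then the proof of theorem~\ref{thm:Interface.FokasTransformValid} simply observes that the claimed identity is a list of $m$ scalar equations, each proved exactly as in theorem~\ref{thm:FokasTransformValid} with assumption~\ref{asm:Interface.MainAssumption} in place of assumption~\ref{asm:MainAssumption} and lemma~\ref{lem:Interface.FokasTransformCancellation} in place of lemma~\ref{lem:FokasTransformCancellation}. Your more detailed walkthrough of the contour manipulations and the remarks on the shared $(R,\epsilon)$ and the $\arg(a_r)=\pm\pi/2$ case are accurate elaborations of this same argument.
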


\begin{lem} \label{lem:Interface.FokasTransformCancellation}
    For all $\la\in\CC$ and all $r\in\{1,2,\ldots,m\}$,
\[
    \rFokForTransP{r}{\phi}(\la) - \rFokForTransM{r}{\phi}(\la) = \frac{\nu_r'(\la)}{2 \pi} \rFormalForTrans{r}{\phi}(\la).
\]
\end{lem}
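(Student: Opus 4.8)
The plan is to mirror exactly the proof of Lemma~\ref{lem:FokasTransformCancellation} from the single-interval case, the only differences being bookkeeping: the size of the characteristic matrix is now $\nall\times\nall$ rather than $n\times n$, and the relevant row index is $1+(r-1)n$ instead of $1$. First I would subtract the two formulae in equations~\eqref{eqn:Interface.FokasForwardTransformDefinition.LeftRight}. The common prefactor $\tfrac{\nu_r'(\la)}{2\pi\Delta(\la)}$ comes out, and the two sums combine into
\[
    \rFokForTransP{r}{\phi}(\la) - \rFokForTransM{r}{\phi}(\la)
    = \frac{\nu_r'(\la)}{2\pi\Delta(\la)} \sum_{j=1}^{\nall} \sum_{k=1}^{\nall} (-1)^{(\nall-1)(j+k)} \det \M{\MMinors}{j}{k}(\la) \left( \Msup{M}{1+(r-1)n}{k}{+}(\la) + \Msup{M}{1+(r-1)n}{k}{-}(\la) \right) \left\langle \phi , y_j(\argdot;\la) \right\rangle .
\]
Since $B_k^\star = \Msups{B}{k}{}{\star}{+} + \Msups{B}{k}{}{\star}{-}$ by the definition of the split adjoint boundary forms, we have $\Msup{M}{j}{k}{+}(\la) + \Msup{M}{j}{k}{-}(\la) = \M{M}{j}{k}(\la)$ for every $j,k$; in particular this holds with $j$ replaced by $1+(r-1)n$.

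Next I would apply the cyclic cofactor expansion of determinants, exactly as invoked in the proof of Lemma~\ref{lem:FokasTransformCancellation}, in the form
\[
    \sum_{k=1}^{\nall}(-1)^{(\nall-1)(j+k)}\det\M{\MMinors}{j}{k}(\la)\M{M}{i}{k}(\la) = \delta_{j,i}\,\Delta(\la),
\]
valid because the left side is, up to sign conventions, a Laplace expansion of $\det M(\la)$ along a row if $j=i$ and of a matrix with a repeated row (hence zero) if $j\neq i$. Taking $i = 1+(r-1)n$, the double sum collapses: only the term $j = 1+(r-1)n$ survives, contributing $\Delta(\la)\,\langle \phi , y_{1+(r-1)n}(\argdot;\la) \rangle$. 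The $\Delta(\la)$ cancels against the denominator, leaving $\tfrac{\nu_r'(\la)}{2\pi}\langle \phi , y_{1+(r-1)n}(\argdot;\la) \rangle$. Finally, by the choice~\eqref{eqn:Interface.rAsFunctionOfj} of $r$ as a function of $j$, the index $j = 1+(r-1)n$ satisfies $j-1-(r-1)n = 0$, so $y_{1+(r-1)n}(x;\la) = \re^{\ri\overline{\nu_r(\la)}x}$ and $\langle \phi , y_{1+(r-1)n}(\argdot;\la)\rangle = \int_0^1 \phi_r(x)\re^{-\ri\nu_r(\la)x}\D x = \rFormalForTrans{r}{\phi}(\la)$ by definition~\eqref{eqn:Interface.TransformPair.Formal.Defn.Forward}, which is the claimed identity.

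I expect no real obstacle here; this is a computation whose only subtlety is index discipline. The one point that warrants a sentence of care is verifying that $j=1+(r-1)n$ is the correct choice to extract the formal transform $\rFormalForTransNoArg{r}$ of the $r$th component of $\phi$ — i.e.\ that with this $j$ the integer $j-1-(r-1)n$ equals $0$ (the $\alpha^0$ rotation), so that the inner product picks out precisely $\phi_r$ paired with the unrotated eigenfunction $\re^{\ri\overline{\nu_r(\la)}x}$. Everything else is a verbatim transcription of the single-interval argument with $n\rightsquigarrow\nall$ and row index $1\rightsquigarrow 1+(r-1)n$, and the hypothesis $\phi\in\prod_{r=1}^m\mathrm{L}^1[0,1]$ suffices for all the integrals to make sense.
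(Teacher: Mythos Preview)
Your proposal is correct and follows essentially the same approach as the paper's proof: subtract the definitions, combine $\Msup{M}{1+(r-1)n}{k}{+}+\Msup{M}{1+(r-1)n}{k}{-}=\M{M}{1+(r-1)n}{k}$, apply the cyclic cofactor expansion to obtain $\delta_{j,1+(r-1)n}\Delta(\la)$, and then identify $y_{1+(r-1)n}(x;\la)=\re^{\ri\overline{\nu_r(\la)}x}$ so that the surviving inner product is $\rFormalForTrans{r}{\phi_r}(\la)$. Your explicit remark that $y_j$ with this index picks out the $r$\textsuperscript{th} component $\phi_r$ (because $y_j$ is a vector with only its $r$\textsuperscript{th} entry nonzero) is the one point the paper leaves implicit, so your write-up is arguably slightly more careful.
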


\begin{proof}
    By the definition of transforms $\rFokForTransPM{r}{\phi}$ and the characteristic matrix $M$,
    \begin{multline*}
        \rFokForTransP{r}{\phi}(\la) - \rFokForTransM{r}{\phi}(\la) \\
        = \frac{\nu_r'(\la)}{2\pi\Delta(\la)} \sum_{j=1}^{\nall} \sum_{k=1}^{\nall} (-1)^{(\nall-1)(j+k)} \det \M{\MMinors}{j}{k}(\la) \M{M}{1+(r-1)n}{k}(\la) \left\langle \phi , y_j(\argdot;\la) \right\rangle.
	\end{multline*}
	An application of the cyclic cofactor expansion of determinants yields
    \[
        \sum_{k=1}^{\nall}(-1)^{(\nall-1)(j+k)}\det\M{\MMinors}{j}{k}(\la)\M{M}{1+(r-1)n}{k}(\la) = \delta_{1+(r-1)n, j}\Delta(\la),
    \]
    for $\delta_{1+(r-1)n,j}$ the Kronecker delta.
    By definition,
    \[
        y_{1+(r-1)n}(x;\la) = \re^{\ri\overline{\nu_r\left(\la\right)}x}.
    \]
    Thus,
    \[
        \rFokForTransP{r}{\phi}(\la) - \rFokForTransM{r}{\phi}(\la) = \frac{\nu_r'(\la)}{2\pi \Delta(\la)} \Delta(\la) \int_0^1 \re^{-\ri\nu_r(\la)x} \phi_r(x) \D x = \frac{\nu_r'(\la)}{2 \pi} \rFormalForTrans{r}{\phi}(\la).
        \qedhere
    \]
\end{proof}

\begin{proof}[Proof of theorem~\ref{thm:Interface.FokasTransformValid}]
    The claimed identity is a list of $m$ scalar equations.
    The proof of each proceeds exactly as the proof of theorem~\ref{thm:FokasTransformValid}, but with assumption~\ref{asm:Interface.MainAssumption} in place of assumption~\ref{asm:MainAssumption}, and lemma~\ref{lem:Interface.FokasTransformCancellation} playing the role of lemma~\ref{lem:FokasTransformCancellation}.
\end{proof}

\subsection{Diagonalization} \label{ssec:Interface.Diagonalization}

\begin{thm} \label{thm:Interface.Diag}
    There exists a \emph{remainder transform} $\FokRemTransNoArg$, such that, for all $\phi\in\Phi_{\BVec{B}}$,
    \begin{subequations} \label{eqn:Interface.DiagThm}
    \begin{equation} \label{eqn:Interface.DiagThm.1}
        \FokForTrans{L\phi}(\la) = \la^n \FokForTrans{\phi}(\la) + \FokRemTrans{\phi}(\la)
    \end{equation}
    and,
    if
    $q=(q_1,q_2,\ldots,q_m)$, with each $q_r:[0,1]\times[0,T]\to\CC$, is such that, for all $t\in[0,T]$, $q(\argdot,t)\in\Phi$ and, for all $r\in\{1,2,\ldots,m\}$ and $j\in\{0,1,\ldots,n-1\}$, uniformly for all $x\in[0,1]$, $\partial_x^jq_r(x,\argdot)$ is a function of bounded variation,
    then,
    for all $t\in[0,T]$ and all $x\in(0,1)$,
    \begin{equation} \label{eqn:Interface.DiagThm.2}
        \FokInvTrans{\int_0^t \left( \re^{a_1\argdot^{n}(s-t)}, \re^{a_2\argdot^{n}(s-t)}, \ldots, \re^{a_m\argdot^{n}(s-t)} \right) \circ \FokRemTrans{q}(\argdot;s) \D s}(x) = 0.
    \end{equation}
    \end{subequations}
\end{thm}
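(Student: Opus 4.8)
The plan is to follow the single-interval argument of \S\ref{sec:Diagonalization} essentially verbatim, one interface block at a time, replacing $\nu$ by $\nu_r$, $a$ by $a_r$, the scalar adjoint apparatus by its interface counterpart from appendix~\ref{sec:AdjointOperator}, and the index $1$ by the offset index $1+(r-1)n$ that selects the ``$r$\textsuperscript{th} diagonal block'' of the characteristic matrix. First I would record the interface analogue of lemma~\ref{lem:PolynomialGrowth}: for the unique $r$ associated with $j$ by~\eqref{eqn:Interface.rAsFunctionOfj}, $\Msup{M}{j}{k}{+}(\la)$ is a polynomial in $\nu_r(\alpha^{j-1}\la)$ of degree at most $n-1$, and $\Msup{M}{j}{k}{-}(\la)$ equals $\re^{-\ri\nu_r(\alpha^{j-1}\la)}$ times such a polynomial; by proposition~\ref{prop:propertiesnu} applied to each $\nu_r$ both are $\bigoh{\abs\la^{n-1}}$ uniformly in $\arg(\la)$ as $\la\to\infty$, and $\Msup{M}{j}{k}{+}(\nu_r^{-1}(\argdot))$, respectively $\re^{\ri\argdot}\Msup{M}{j}{k}{-}(\nu_r^{-1}(\argdot))$, is entire.

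Next I would prove the interface version of lemma~\ref{lem:DiagMainLemma}. Fixing $\la$, define vector-valued functions $\psi_r^\pm(\argdot;\la)\in\Phi$ whose conjugates are the linear combinations of the kernels $\re^{-\ri\nu_s(\alpha^{j-1}\la)\argdot}$ occurring in~\eqref{eqn:Interface.FokasForwardTransformDefinition.LeftRight}, so that $\rFokForTransPM{r}{\phi}(\la)=\left\langle\phi,\psi_r^\pm(\argdot;\la)\right\rangle$. Because $\alpha^n=1$ and $\omega_s(\nu_s(\mu))=\mu^n$, each component of $\psi_r^\pm(\argdot;\la)$ is a combination of eigenfunctions of $\mathcal{L}_s^\star$ with eigenvalue $\bar\la^n$, so $\psi_r^\pm(\argdot;\la)$ is an eigenfunction of the vector operator $\mathcal{L}^\star$ with eigenvalue $\bar\la^n$. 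Applying the boundary form formula of appendix~\ref{sec:AdjointOperator} and using $\phi\in\Phi_{\BVec{B}}$ to kill the term $\BVec{B}\phi\odot\BVec{B}_\complementary^\star\psi_r^\pm$, then invoking the cyclic cofactor identity $\sum_j(-1)^{(\nall-1)(j+\ell)}\det\M{\MMinors}{j}{\ell}(\la)\M{M}{j}{k}(\la)=\delta_{\ell,k}\Delta(\la)$ (as in lemma~\ref{lem:Interface.FokasTransformCancellation}) to evaluate $\overline{B_k^\star\psi_r^\pm(\argdot;\la)}=\pm\frac{\nu_r'(\la)}{2\pi}\Msup{M}{1+(r-1)n}{k}{\pm}(\la)$, I obtain
\[
\rFokForTransPM{r}{L\phi}(\la) = \la^n\rFokForTransPM{r}{\phi}(\la) \pm \frac{\nu_r'(\la)}{2\pi}\,\BVec{B}_\complementary\phi\odot\left(\overline{\Msup{M}{1+(r-1)n}{1}{\pm}(\la)},\ldots,\overline{\Msup{M}{1+(r-1)n}{\nall}{\pm}(\la)}\right).
\]
Calling the last term $P_{r,\phi}^+(\la)$, respectively $P_{r,\phi}^-(\la)\re^{-\ri\nu_r(\la)}$ after extracting the exponential from the $M^-$ entries, these are linear in $\phi$ (since $\BVec{B}_\complementary$ is) and $\bigoh{\abs\la^{n-1}}$ uniformly in $\arg(\la)$ by the first step together with $\nu_r'=\bigoh1$. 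Defining the $r$\textsuperscript{th} component $\rFokRemTransNoArg{r}$ of $\FokRemTransNoArg$ to equal $P_{r,\phi}^+(\la)$ on $\Gamma_r^+\cup\Gamma_\cuts$ and $P_{r,\phi}^-(\la)\re^{-\ri\nu_r(\la)}$ on $\Gamma_r^-$, exactly as in~\eqref{eqn:Diag.RemTransDefn}, yields~\eqref{eqn:Interface.DiagThm.1} immediately.

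For~\eqref{eqn:Interface.DiagThm.2} I would argue componentwise: the inverse transform~\eqref{eqn:Interface.FokasInverseTransformDefinition} decouples over $r$, so it suffices to show, for each $r$, that the five contour-piece integrals (over $\Msup{\Gamma}{\BVec{a}}{r}{\pm}$, $\Msup{\Gamma}{0}{r}{\pm}$, and $\Gamma_\cuts$) of $\re^{\ri\nu_r(\la)(x-c)}\int_0^t\re^{a_r\la^n(s-t)}P_{r,q(\argdot,s)}^\pm(\la)\,\D s$, with $c=0$ for the ``$+$'' pieces and $c=1$ for the ``$-$'' pieces, vanish. Each component of $\BVec{B}_\complementary q(\argdot,s)$ is a finite linear combination of the boundary traces $\partial_x^jq_s(0,s),\partial_x^jq_s(1,s)$, hence of bounded variation in $s$; multiplying by the $M^\pm$ entries gives total variation $\bigoh{\abs\la^{n-1}}$ in $\la$, and integrating by parts in $s$ as in~\eqref{eqn:DiagThmProof.2} gives $\re^{-a_r\la^nt}\int_0^t\re^{a_r\la^ns}P_{r,q(\argdot,s)}^\pm(\la)\,\D s=\bigoh{\abs\la^{-1}}$ uniformly on the closure of the region to the left of $\Msup{\Gamma}{\BVec{a}}{r}{\pm}$; Morera's theorem makes this integral holomorphic on the domain of $\nu_r$. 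Jordan's lemma (with decaying kernel $\re^{\ri\nu_r(\la)x}$ on the ``$+$'' pieces and $\re^{\ri\nu_r(\la)(x-1)}$ on the ``$-$'' pieces, valid because $\nu_r$ maps $\clos\CC_{\nu_r}^\pm$ into $\clos\CC^\pm$) kills the $\Msup{\Gamma}{\BVec{a}}{r}{\pm}$ integrals; the same holomorphy and Cauchy's theorem kill the $\Msup{\Gamma}{0}{r}{\pm}$ integrals; and substituting~\eqref{eqn:DiagMainLemma.PpmFormulae.+}, changing variables $\mu=\nu_r(\la)$ so that $\Gamma_\cuts$ becomes a closed loop and the integrand becomes entire in $\mu$ (by the polynomial-growth step), applying Cauchy's theorem kills the $\Gamma_\cuts$ integral. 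Summing over $r$ gives~\eqref{eqn:Interface.DiagThm.2}, and the termwise vanishing also justifies the asserted splitting of the principal value.

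The main obstacle is organizational rather than analytic: making the interface boundary form formula of appendix~\ref{sec:AdjointOperator}, the decomposition $\left\langle L\phi,\psi\right\rangle=\left\langle\phi,\mathcal{L}^\star\psi\right\rangle+\BVec{B}\phi\odot\BVec{B}_\complementary^\star\psi+\BVec{B}_\complementary\phi\odot\BVec{B}^\star\psi$ with the correct choice of complementary forms, interlock cleanly with the block-indexed cofactor identity, so that the $(1+(r-1)n)$\textsuperscript{th} row of $M^\pm$ is genuinely picked out and $\psi_r^\pm(\argdot;\la)$ is genuinely an $\mathcal{L}^\star$-eigenfunction in $\Phi$. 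Once that bookkeeping is in place, every subsequent estimate is the corresponding single-interval estimate of \S\ref{sec:Diagonalization} applied one block at a time.
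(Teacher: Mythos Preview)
Your proposal is correct and follows essentially the same approach as the paper: the paper packages the first two steps as lemmata~\ref{lem:Interface.PolynomialGrowth} and~\ref{lem:Interface.DiagMainLemma} (the interface analogues of lemmata~\ref{lem:PolynomialGrowth} and~\ref{lem:DiagMainLemma}), defines $\rFokRemTransNoArg{r}$ exactly as you do, and then for~\eqref{eqn:Interface.DiagThm.2} simply says the argument proceeds componentwise as in the proof of theorem~\ref{thm:Diag} with the single-interval lemmata replaced by their interface counterparts. Your expansion of the five-contour argument, the use of the interface boundary form formula from appendix~\ref{sec:AdjointOperator}, and the cofactor identity picking out row $1+(r-1)n$ all match the paper's (largely implicit) details.
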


\begin{lem} \label{lem:Interface.PolynomialGrowth}
    For each $j\in\{1,2,\ldots,\nall\}$, let the corresponding $r$ be that defined by inequalities~\eqref{eqn:Interface.rAsFunctionOfj}.
    The function $\Msup{M}{j}{k}{+}(\la)$ is a polynomial in $\nu_r(\alpha^{j-1}\la)$ (so a holomorphic function of $\la$ outside a finite disc) and has polynomial growth rate $\bigoh{\abs{\la}^{n-1}}$, uniformly in $\arg(\la)$, as $\la\to\infty$.
    The function $\Msup{M}{j}{k}{-}(\la)$ is the product of $\re^{-\ri\nu_r(\alpha^{j-1}\la)}$ with another function; the latter function is a polynomial in $\nu_r(\alpha^{j-1}\la)$ (so a holomorphic function of $\la$ outside a finite disc) and has polynomial growth rate $\bigoh{\abs{\la}^{n-1}}$, uniformly in $\arg(\la)$, as $\la\to\infty$.
\end{lem}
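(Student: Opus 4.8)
The plan is to reduce this directly to the single-interval Lemma~\ref{lem:PolynomialGrowth} and its proof, which establishes exactly the analogous claim. First I would unwind the definition of $\Msup{M}{j}{k}{-}(\la) = \overline{\Msups{B}{k}{}{\star}{-} y_j(\argdot;\la)}$, substituting the definition of the right adjoint boundary form for the interface setting. Because $\Msups{B}{k}{}{\star}{-}$ acts by $\psi \mapsto \sum_{s=1}^m\sum_{\ell=1}^n \Msups{\beta}{k}{\ell}{s}{\star}\psi_s^{(\ell-1)}(1)$, and $y_j(\argdot;\la)$ is the scalar function $\re^{\ri\overline{\nu_r(\alpha^{j-1}\la)}x}$ placed in the $r$th slot (with $r$ determined by~\eqref{eqn:Interface.rAsFunctionOfj}), only the $s=r$ summand survives. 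Thus
\[
    \Msup{M}{j}{k}{-}(\la) = \re^{-\ri\nu_r(\alpha^{j-1}\la)} \sum_{\ell=1}^n \overline{\Msups{\beta}{k}{\ell}{r}{\star}} \left(-\ri\nu_r(\alpha^{j-1}\la)\right)^{\ell-1},
\]
exactly as in the proof of Lemma~\ref{lem:PolynomialGrowth}, now with the $r$-dependent biholomorphic function $\nu_r$ in place of $\nu$.

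The claimed structural statements then follow immediately. The bracketed sum is a polynomial of degree at most $n-1$ in $\nu_r(\alpha^{j-1}\la)$, hence (since $\nu_r$ is holomorphic outside a finite disc by Proposition~\ref{prop:propertiesnu}) a holomorphic function of $\la$ outside that disc; and since $\nu_r(\la)/\la \to 1$ as $\la \to \infty$ uniformly in $\arg(\la)$ by Proposition~\ref{prop:propertiesnu}, each monomial $\nu_r(\alpha^{j-1}\la)^{\ell-1}$ is $\bigoh{\abs\la^{\ell-1}} \subseteq \bigoh{\abs\la^{n-1}}$, uniformly in $\arg(\la)$. Factoring off the exponential $\re^{-\ri\nu_r(\alpha^{j-1}\la)}$ gives precisely the stated form for $\Msup{M}{j}{k}{-}$. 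For $\Msup{M}{j}{k}{+}$, the argument is identical except that $\Msups{B}{k}{}{\star}{+}$ evaluates at $x=0$ rather than $x=1$, so the exponential factor $\re^{-\ri\nu_r(\alpha^{j-1}\la)\cdot 0} = 1$ disappears and one obtains a bare polynomial in $\nu_r(\alpha^{j-1}\la)$ of degree at most $n-1$, with the same $\bigoh{\abs\la^{n-1}}$ bound uniform in $\arg(\la)$.

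There is no real obstacle here; the lemma is a routine transcription of Lemma~\ref{lem:PolynomialGrowth} into the vectorized notation, the only bookkeeping point being the correct identification, for each index $j$, of the block $r$ via~\eqref{eqn:Interface.rAsFunctionOfj} and the observation that $y_j$ is supported (as an element of the product space) in that single block, so that the double sum over $(s,\ell)$ in the adjoint boundary form collapses to a single sum over $\ell$. I would therefore present the proof tersely, citing Proposition~\ref{prop:propertiesnu} for the analyticity and growth of $\nu_r$ and noting that the computation is otherwise word-for-word that of Lemma~\ref{lem:PolynomialGrowth}.
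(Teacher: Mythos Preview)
Your proposal is correct and takes essentially the same approach as the paper's proof: both observe that $y_j$ is supported only in the $r$\textsuperscript{th} component of the product space, so that the double sum in $\Msups{B}{k}{}{\star}{\pm}y_j$ collapses to a single sum over $\ell$, after which the computation is identical to that of Lemma~\ref{lem:PolynomialGrowth} with $\nu_r$ in place of $\nu$. The paper's proof is simply a two-sentence version of what you have written out in full.
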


\begin{proof}
    Noting that $y_j$ is a list in which only one entry, the $r$\textsuperscript{th} entry, is nonzero, the definition of $\Msup{M}{j}{k}{\pm}(\la)$ reduces to a formula exactly like that appearing in the proof of lemma~\ref{lem:PolynomialGrowth}.
    The proof proceeds as for that lemma.
\end{proof}

\begin{lem} \label{lem:Interface.DiagMainLemma}
    For $\phi\in\Phi_{\BVec{B}}$, for each $r\in\{1,2,\ldots,m\}$, there exist functions, $\Msup{P}{r}{\phi}{\pm}(\la)$, analytic in $\CC\setminus D(0,\propertiesnuradius)$, with $\Msup{P}{r}{\phi}{\pm}(\la) = \bigoh{\abs{\la}^{n-1}}$, uniformly in $\arg(\la)$, as $\la\to\infty$ and
    \begin{subequations} \label{eqn:Interface.FokasMethodSolRepMainLemma.ApproxEigs}
    \begin{align}
    	\rFokForTransP{r}{L\phi}(\la) &= \lambda^{n} \rFokForTransP{r}{\phi}(\la) + \Msup{P}{r}{\phi}{+}(\la), \\
		\rFokForTransM{r}{L\phi}(\la) &= \lambda^{n} \rFokForTransM{r}{\phi}(\la) + \re^{-\ri \nu_r(\la)}\Msup{P}{r}{\phi}{-}(\la).
    \end{align}
    \end{subequations}
    Moreover, the maps $\phi \mapsto \Msup{P}{r}{\phi}{\pm}$ are linear.
    Indeed, such functions are given by
    \begin{subequations} \label{eqn:Interface.FokasMethodSolRepMainLemma.PpmFormulae}
    \begin{align}
        \label{eqn:Interface.FokasMethodSolRepMainLemma.PpmFormulae.+}
        \Msup{P}{r}{\phi}{+}(\la) &= \frac{\nu'_r(\la)}{2\pi} \BVec{B}_\complementary \phi \odot \left( \overline{\Msup{M}{1+(r-1)n}{1}{+}(\la)} , \overline{\Msup{M}{1+(r-1)n}{2}{+}(\la)} , \ldots , \overline{\Msup{M}{1+(r-1)n}{\nall}{+}(\la)} \right), \\
        \label{eqn:Interface.FokasMethodSolRepMainLemma.PpmFormulae.-}
        \re^{-\ri \nu_r(\la)} \Msup{P}{r}{\phi}{-}(\la) &= \frac{-\nu'_r(\la)}{2\pi} \BVec{B}_\complementary \phi \odot \left( \overline{\Msup{M}{1+(r-1)n}{1}{-}(\la)} , \overline{\Msup{M}{1+(r-1)n}{2}{-}(\la)} , \ldots , \overline{\Msup{M}{1+(r-1)n}{\nall}{-}(\la)} \right),
    \end{align}
    \end{subequations}
    in which $\BVec{B}_\complementary$ represents a vector of boundary forms which is complementary (in the sense of definition~\ref{defn:AdjointOperator.ComplimentaryBoundaryForm}) to the vector of boundary forms $\BVec{B}$ compatible with $\BVec{B}^\star$, and where $\odot$ is the usual sesquilinear dot product on $\CC^{\nall}$.
\end{lem}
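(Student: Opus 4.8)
The plan is to follow the proof of lemma~\ref{lem:DiagMainLemma} line by line, replacing the scalar objects by their vectorised interface counterparts and letting $\nall$ play the role that $n$ played in the various cofactor expansions. For each $r\in\{1,2,\ldots,m\}$ I would introduce a vector valued function $\psi_r^\pm(\argdot;\la)\in\Phi$, defined through its complex conjugate by the interface analogue of equation~\eqref{eqn:DiagMainLemma.Proof.psipm},
\begin{equation*}
    \overline{\psi_r^\pm(x;\la)} := \frac{\pm\nu_r'(\la)}{2\pi\Delta(\la)} \sum_{j=1}^{\nall} \sum_{\ell=1}^{\nall} (-1)^{(\nall-1)(j+\ell)} \det \M{\MMinors}{j}{\ell}(\la)\, \Msup{M}{1+(r-1)n}{\ell}{\pm}(\la)\, \overline{y_j(x;\la)},
\end{equation*}
so that, comparing with equations~\eqref{eqn:Interface.FokasForwardTransformDefinition.LeftRight}, one has $\rFokForTransPM{r}{\phi}(\la) = \langle \phi , \psi_r^\pm(\argdot;\la) \rangle$ for the inner product on $\Phi$. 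Since each $y_j(\argdot;\la)$ is, entrywise, an eigenfunction of the formal operator $\mathcal{L}^\star$ with eigenvalue $\bar\la^n$ (using $\omega_r(\nu_r(\la))=\la^n$ and $\alpha^n=1$), so is each linear combination $\psi_r^\pm(\argdot;\la)$.

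The computational heart of the argument is then unchanged. Applying the adjoint boundary form $B_k^\star$ to $\psi_r^\pm$, commuting it past the sums, and invoking the cyclic cofactor expansion
\begin{equation*}
    \sum_{j=1}^{\nall} (-1)^{(\nall-1)(j+\ell)} \det \M{\MMinors}{j}{\ell}(\la)\, \M{M}{j}{k}(\la) = \delta_{\ell,k}\,\Delta(\la)
\end{equation*}
(pure linear algebra on the $\nall\times\nall$ characteristic matrix, hence valid verbatim) yields $\overline{B_k^\star\psi_r^\pm(\argdot;\la)} = \frac{\pm\nu_r'(\la)}{2\pi}\,\Msup{M}{1+(r-1)n}{k}{\pm}(\la)$. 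Next I would apply the boundary form formula for interface operators from appendix~\ref{sec:AdjointOperator} to write
\begin{equation*}
    \rFokForTransPM{r}{L\phi}(\la) = \langle L\phi,\psi_r^\pm(\argdot;\la)\rangle = \langle \phi,\mathcal{L}^\star\psi_r^\pm(\argdot;\la)\rangle + \BVec{B}\phi\odot\BVec{B}_\complementary^\star\psi_r^\pm(\argdot;\la) + \BVec{B}_\complementary\phi\odot\BVec{B}^\star\psi_r^\pm(\argdot;\la),
\end{equation*}
with $\BVec{B}_\complementary$ complementary to $\BVec{B}$ and compatible with $\BVec{B}^\star$ in the sense of definition~\ref{defn:AdjointOperator.ComplimentaryBoundaryForm}. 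Because $\phi\in\Phi_{\BVec{B}}$ the first boundary term vanishes; the eigenfunction property turns $\langle\phi,\mathcal{L}^\star\psi_r^\pm\rangle$ into $\la^n\langle\phi,\psi_r^\pm\rangle = \la^n\rFokForTransPM{r}{\phi}(\la)$; and the last term, using the evaluation of $\overline{B_k^\star\psi_r^\pm}$ just obtained, is exactly $\pm\frac{\nu_r'(\la)}{2\pi}\,\BVec{B}_\complementary\phi\odot\bigl(\overline{\Msup{M}{1+(r-1)n}{1}{\pm}(\la)},\ldots,\overline{\Msup{M}{1+(r-1)n}{\nall}{\pm}(\la)}\bigr)$. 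This is equations~\eqref{eqn:Interface.FokasMethodSolRepMainLemma.ApproxEigs} with $\Msup{P}{r}{\phi}{\pm}$ given by equations~\eqref{eqn:Interface.FokasMethodSolRepMainLemma.PpmFormulae}, where in the ``$-$'' case one divides through by the factor $\re^{-\ri\nu_r(\la)}$ carried by every $\Msup{M}{1+(r-1)n}{k}{-}$ (since $\alpha^{(r-1)n}\la=\la$).

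Finally, analyticity of $\Msup{P}{r}{\phi}{\pm}$ on $\CC\setminus D(0,\propertiesnuradius)$ and the bound $\Msup{P}{r}{\phi}{\pm}(\la)=\bigoh{\abs{\la}^{n-1}}$ uniformly in $\arg(\la)$ follow from lemma~\ref{lem:Interface.PolynomialGrowth} together with $\nu_r'(\la)=\bigoh{1}$ from proposition~\ref{prop:propertiesnu}; and linearity of $\phi\mapsto\Msup{P}{r}{\phi}{\pm}$ is immediate because the right sides of equations~\eqref{eqn:Interface.FokasMethodSolRepMainLemma.PpmFormulae} depend on $\phi$ only through the linear map $\BVec{B}_\complementary$. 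The only genuinely new ingredient over lemma~\ref{lem:DiagMainLemma} is the interface boundary form formula and the attendant notion of complementary boundary forms for networks, supplied by appendix~\ref{sec:AdjointOperator}; I expect the main care (rather than obstacle) to lie in keeping consistent the block index $1+(r-1)n$ as the ``diagonal'' index selecting the $r$\textsuperscript{th} component of the forward transform, and in checking that passing to the complex conjugate leaves the holomorphy in $\la$ of $M^\pm$ intact, both of which are routine.
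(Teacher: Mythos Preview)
Your proposal is correct and follows essentially the same approach as the paper's own proof: define the vector valued kernel $\psi^\pm$ so that the forward transform is an inner product, use the cyclic cofactor expansion on the $\nall\times\nall$ characteristic matrix to evaluate $\overline{B_k^\star\psi^\pm}$, invoke the interface boundary form formula (theorem~\ref{thm:AdjointOperator.BFF}) together with $\phi\in\Phi_{\BVec{B}}$ and the eigenfunction property of $\psi^\pm$, and then read off analyticity, the growth bound, and linearity from lemma~\ref{lem:Interface.PolynomialGrowth}, proposition~\ref{prop:propertiesnu}, and linearity of $\BVec{B}_\complementary$. The only cosmetic difference is that the paper writes $\psi^\pm$ entrywise (indexed by $u$) rather than as a single sum over vector valued $y_j$, but since each $y_j$ has exactly one nonzero component this is the same object.
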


\begin{proof}
    Fix $r \in \{ 1, \ldots, m\}$.
    For notational convenience, for each $\la\in\CC\setminus D(0,\propertiesnuradius)$ and $r\in\{1,2,\ldots,m\}$, we define functions $\psi^\pm(\argdot;\la,r)\in\Phi$ via the complex conjugates of their entries,
	\begin{equation*}
		\overline{\psi^{\pm}_{u}(x; \la, r)} =  \frac{\pm\nu'_r(\la)}{2\pi\Delta(\la)} \sum_{j=1+(u-1)n}^{un} \sum_{\ell=1}^{\nall} (-1)^{(\nall-1)(j+\ell)} \det \M{\MMinors}{j}{\ell}(\la) \Msup{M}{1+(r-1)n}{\ell}{\pm}(\la) \re^{-\ri\nu_u(\alpha^{j-1}\la) x},
	\end{equation*}
	so that
	\begin{equation} \label{eqn:Interface.FokasMethodSolRepMainLemma.Proof.InnerProduct}
		\rFokForTransPM{r}{\phi}(\la)
        = \left\langle \phi, \psi^{\pm}(\argdot;\la,r)\right\rangle
        = \sum^{m}_{u=1} \left\langle \phi_u, \psi_u^{\pm}(\argdot;\la,r)\right\rangle
        = \sum^{m}_{u=1} \int^{1}_{0} \phi_{u}(x) \overline{\psi^{\pm}_{u}(x, \la;r)} \D x.
    \end{equation}
    Moreover, for any $\la\in\CC\setminus D(0,\propertiesnuradius)$ and $r\in\{1,2,\ldots,m\}$, $\psi^{\pm}(\argdot; \la, r)$ is an eigenfunction of $\mathcal{L}^\star$ with eigenvalue $\bar{\la}^n$.

    By definition of $B^{\star}_k$ and $\psi^{\pm}$, we have
	\begin{align*}
	 \overline{B^{\star}_k\psi^{\pm}(x;\la,r)} &= \overline{B^{\star}_k\vphantom{()}} \; \overline{\psi^{\pm}(x;\la,r)} \\
		&\hspace{-5em} {}={} \frac{\pm\nu'_r(\la)}{2\pi\Delta(\la)} \sum^{m}_{u=1} \sum_{j=1+(u-1)n}^{un} \sum_{\ell=1}^{\nall} (-1)^{(\nall-1)(j+\ell)} \det \M{\MMinors}{j}{\ell}(\la) \Msup{M}{1+(r-1)n}{\ell}{\pm}(\la) \overline{B^{\star}_k\vphantom{()}}\left(\re^{-\ri\nu_u(\alpha^{j-1}\la) \argdot}\right).
	\end{align*}
	But, for $j$ so dependent on $u$,
    \[
        \overline{B^{\star}_k\vphantom{()}}\left(\re^{-\ri\nu_u(\alpha^{j-1}\la) \argdot}\right) = \overline{B_k^\star\vphantom{()}} \; \overline{y_{j}(x;\la)} = \M{M}{j}{k}(\la),
    \]
    and
    \[
        \sum^{m}_{u=1} \sum_{j=1+(u-1)n}^{un} (-1)^{(\nall-1)(j+\ell)} \det \M{\MMinors}{j}{\ell}(\la) \M{M}{j}{k}(\la)  = \delta_{\ell,k}\Delta(\la),
    \]
    for $\delta_{\ell,k}$ the Kronecker delta.
    Hence
    \begin{equation} \label{eqn:Interface.FokasMethodSolRepMainLemma.Proof.AdjBFpsi1}
        \overline{B^{\star}_k\psi^{\pm}(x;\la,r)} =  \frac{\pm\nu'_r(\la)}{2\pi} \Msup{M}{1+(r-1)n}{k}{\pm}(\la).
    \end{equation}

    Now, by the first of equations~\eqref{eqn:Interface.FokasMethodSolRepMainLemma.Proof.InnerProduct}, the interface boundary form formula theorem~\ref{thm:AdjointOperator.BFF}, and $\psi^\pm$ being an eigenfunction of $\mathcal{L}^\star$,
	\begin{alignat*}{2}
        \rFokForTransPM{r}{L\phi}(\la)
        &= \left\langle L\phi, \psi^{\pm}(\argdot;\la,r)\right\rangle & & \\
        &= \left\langle \phi, \mathcal{L}^\star\psi^{\pm}(\argdot;\la,r)\right\rangle & &+ \BVec{B} \phi \odot \BVec{B}_\complementary^{\star} \psi^{\pm}(\argdot;\la,r) + \BVec{B}_\complementary \phi \odot \BVec{B}^{\star}\psi^{\pm}(\argdot;\la,r) \\
        &= \la^n \left\langle \phi, \psi^{\pm}(\argdot;\la,r)\right\rangle & &+ \BVec{B} \phi \odot \BVec{B}_\complementary^{\star} \psi^{\pm}(\argdot;\la,r) + \BVec{B}_\complementary \phi \odot \BVec{B}^{\star}\psi^{\pm}(\argdot;\la,r),
    \end{alignat*}
	where $\BVec{B}_\complementary^{\star}$ is the vector of boundary forms complementary to $\BVec{B}^\star$ compatible with $\BVec{B}$ and $\BVec{B}_\complementary$.
    The term $\BVec{B} \phi \odot \BVec{B}_\complementary^{\star} \psi^{\pm}$ evaluates to zero by virtue of $\phi\in\Phi_{\BVec{B}}$.
    Using equations~\eqref{eqn:Interface.FokasMethodSolRepMainLemma.Proof.InnerProduct} and~\eqref{eqn:Interface.FokasMethodSolRepMainLemma.Proof.AdjBFpsi1}, we simplify the other terms to arrive at
    \begin{multline*}
        \rFokForTransPM{r}{L\phi}(\la)
        = \la^n \rFokForTransPM{r}{\phi}(\la) \\
        \pm \frac{\nu'_r(\la)}{2\pi} \BVec{B}_\complementary \phi \odot \left( \overline{\Msup{M}{1+(r-1)n}{1}{+}(\la)} , \overline{\Msup{M}{1+(r-1)n}{2}{+}(\la)} , \ldots , \overline{\Msup{M}{1+(r-1)n}{mn}{+}(\la)} \right).
    \end{multline*}
    Defining $\Msup{P}{r}{\phi}{+}(\la)$ according to equations~\eqref{eqn:Interface.FokasMethodSolRepMainLemma.PpmFormulae}, the approximate eigenfunction equations~\eqref{eqn:Interface.FokasMethodSolRepMainLemma.ApproxEigs} follow.

    The $\lambda$ analyticity and asymptotic results follow from equations~\eqref{eqn:Interface.FokasMethodSolRepMainLemma.PpmFormulae} via lemma~\ref{lem:Interface.PolynomialGrowth} and proposition~\ref{prop:propertiesnu}.
    Linear dependence on $\phi$ is a result of linearity of $\BVec{B}_\complementary$ and linearity of $\odot$ in its first argument.
\end{proof}

\begin{proof}[Proof of theorem~\ref{thm:Interface.Diag}]
    For $r\in\{1,2,\ldots,m\}$, we define $\rFokRemTransNoArg{r}$ by
    \[
        \rFokRemTrans{r}{\phi}(\la) =
        \begin{cases}
            \Msup{P}{r}{\phi}{+}(\la) & \la \in \Gamma_r^+ \cup \Gamma_\cuts, \\
            \Msup{P}{r}{\phi}{-}(\la)\re^{-\ri\nu_r(\la)} & \la \in \Gamma_r^-,
        \end{cases}
    \]
    and $\FokRemTransNoArg$ by
    \[
        \FokRemTrans{\phi} = \left( \rFokRemTrans{1}{\phi} , \rFokRemTrans{2}{\phi} , \ldots , \rFokRemTrans{m}{\phi} \right),
    \]
    so that equation~\eqref{eqn:Interface.DiagThm.1} is a consequence of lemma~\ref{lem:Interface.DiagMainLemma}.

    The object on the left of equation~\eqref{eqn:Interface.DiagThm.2} is a list of $m$ functions of $x$.
    The $r$\textsuperscript{th} entry in that list is the joint principal value contour integral
    \begin{multline*}
        \CPVwithoutintegral \left( \int_{\Gamma_r^+\cup\Gamma_\cuts} \re^{\ri\nu_r(\la)x-a_r\la^nt} \int_0^t \re^{a_r\la^ns}  \Msup{P}{r}{q(\argdot,s)}{+}(\la) \D s \D\la \right. \\
        \left. + \int_{\Gamma_r^-} \re^{\ri\nu_r(\la)(x-1)-a_r\la^nt} \int_0^t \re^{a_r\la^ns}  \Msup{P}{r}{q(\argdot,s)}{-}(\la) \D s \D\la \right).
    \end{multline*}
    The proof, decomposing this joint principal value into five constituent principal value contour integrals and showing that each evaluates to zero, follows the proof of theorem~\ref{thm:Diag}, except that references to lemmata~\ref{lem:PolynomialGrowth} and~\ref{lem:DiagMainLemma} are replaced with appeals to lemmata~\ref{lem:Interface.PolynomialGrowth} and~\ref{lem:Interface.DiagMainLemma}.
\end{proof}

\subsection{Fokas transform method for IIVP} \label{ssec:Interface.Method}
\begin{thm} \label{thm:Interface.FokasMethodSolRep}
    Suppose the criteria of theorem~\ref{thm:Interface.FokasTransformValid} hold, and there exists a solution $q(x,t)$ of problem~\eqref{eqn:Interface.IBVP} satisfying the criteria of theorem~\ref{thm:Interface.Diag}.
    Then
    \[
        q(x,t) = \FokInvTrans{ \left( \re^{-a_1\argdot^{n}t}, \re^{-a_2\argdot^{n}t}, \ldots, \re^{-a_m\argdot^{n}t} \right) \circ \FokForTrans{Q}}(x).
    \]
\end{thm}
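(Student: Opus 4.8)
The plan is to mirror the proof of Theorem~\ref{thm:FokasMethodSolRep}, now working componentwise and invoking the vectorized results of \S\ref{ssec:Interface.Diagonalization}. First I would apply the forward Fokas transform $\FokForTransNoArg$ to the PDE~\eqref{eqn:Interface.IBVP.PDE}. By linearity of $\FokForTransNoArg$ and the exchange of $\partial_t$ with the spatial integrals defining the transform, this gives, almost everywhere in $t$,
\[
    \BVec{0} = \frac{\D}{\D t}\FokForTrans{q}(\la;t) + \FokForTrans{\BVec{a}\circ Lq}(\la;t).
\]
Reading this entrywise and applying the diagonalization identity~\eqref{eqn:Interface.DiagThm.1} of Theorem~\ref{thm:Interface.Diag} converts it into a decoupled family of scalar first-order linear ODEs in $t$, parametrized by $\la$, the one governing the $r$\textsuperscript{th} component being
\[
    0 = \frac{\D}{\D t}\rFokForTransPM{r}{q}(\la;t) + a_r\la^n\rFokForTransPM{r}{q}(\la;t) + a_r\rFokRemTrans{r}{q}(\la;t),
\]
for $r\in\{1,\ldots,m\}$ and $\la$ in the domain of $\nu_r$.

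Next I would equip each ODE with an initial condition by applying $\FokForTransNoArg$ to~\eqref{eqn:Interface.IBVP.IC}, giving $\FokForTrans{q}(\la;0)=\FokForTrans{Q}(\la)$, and solve; the integrating factor $\re^{a_r\la^nt}$ yields, in vector form,
\[
    \FokForTrans{q}(\la;t) = \left(\re^{-a_1\la^nt},\ldots,\re^{-a_m\la^nt}\right)\circ\FokForTrans{Q}(\la) - \BVec{a}\circ\int_0^t\left(\re^{a_1\la^n(s-t)},\ldots,\re^{a_m\la^n(s-t)}\right)\circ\FokRemTrans{q}(\la;s)\D s.
\]
Applying the inverse Fokas transform $\FokInvTransNoArg$ and using its linearity together with the inversion Theorem~\ref{thm:Interface.FokasTransformValid} applied to $q(\argdot,t)\in\prod_{r=1}^m\mathrm{C}^1[0,1]$ (for which assumption~\ref{asm:Interface.MainAssumption} is inherited from the hypotheses of Theorem~\ref{thm:Interface.FokasTransformValid}) then gives
\[
    q(x,t) = \FokInvTrans{\left(\re^{-a_1\argdot^nt},\ldots,\re^{-a_m\argdot^nt}\right)\circ\FokForTrans{Q}}(x) - \FokInvTrans{\BVec{a}\circ\int_0^t\left(\re^{a_1\argdot^n(s-t)},\ldots,\re^{a_m\argdot^n(s-t)}\right)\circ\FokRemTrans{q}(\argdot;s)\D s}(x).
\]

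Finally, since $\FokInvTransNoArg$ acts entrywise and each scalar $a_r$ passes through $\rFokInvTransNoArg{r}$, the second term equals $\BVec{a}\circ\FokInvTrans{\int_0^t(\re^{a_1\argdot^n(s-t)},\ldots,\re^{a_m\argdot^n(s-t)})\circ\FokRemTrans{q}(\argdot;s)\D s}(x)$, which is $\BVec{0}$ by equation~\eqref{eqn:Interface.DiagThm.2} of Theorem~\ref{thm:Interface.Diag}; here I would check that the regularity hypotheses imposed on $q$ in Theorem~\ref{thm:Interface.FokasMethodSolRep} are exactly those required to invoke~\eqref{eqn:Interface.DiagThm.2}. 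This leaves the claimed representation. I expect the main point requiring care, beyond the scalar case of Theorem~\ref{thm:FokasMethodSolRep}, to be the bookkeeping of the entrywise products $\BVec{a}\circ$ and $\circ$: one must track through the transformation of $\BVec{a}\circ Lq$ and the diagonalization lemma~\ref{lem:Interface.DiagMainLemma} that precisely the factor $a_r$ decorates both the $\la^n$ term and the remainder term in the $r$\textsuperscript{th} component's ODE, so that the integrating-factor solution matches the weighting appearing in~\eqref{eqn:Interface.DiagThm.2}. The remaining ingredients — inversion, contour deformation and Jordan's lemma — are already absorbed into Theorems~\ref{thm:Interface.FokasTransformValid} and~\ref{thm:Interface.Diag}, and the exchange of $\partial_t$ with $\FokForTransNoArg$ and the differentiation of the time integral rely, as before, on the $t$-regularity and bounded-variation hypotheses.
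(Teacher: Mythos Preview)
Your approach matches the paper's: its proof is a single sentence asserting that the argument of Theorem~\ref{thm:FokasMethodSolRep} carries over verbatim with Theorems~\ref{thm:Interface.FokasTransformValid} and~\ref{thm:Interface.Diag} substituted for their scalar counterparts, and you have written out precisely that vectorization.

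The step you rightly flag as ``the main point requiring care'' is, however, more delicate than your displayed ODE suggests. Theorem~\ref{thm:Interface.Diag} diagonalizes $L$, not $\BVec{a}\circ L$, and the component transform $\rFokForTransPMNoArg{r}$ is \emph{not} entrywise: by~\eqref{eqn:Interface.FokasForwardTransformDefinition.LeftRight} and the proof of Lemma~\ref{lem:Interface.DiagMainLemma} one has $\rFokForTransPM{r}{\phi}=\langle\phi,\psi^\pm(\argdot;\la,r)\rangle$ with $\psi_u^\pm(\argdot;\la,r)$ generically nonzero for every $u\in\{1,\ldots,m\}$. Consequently $\rFokForTransPM{r}{\BVec{a}\circ Lq}=\sum_u a_u\langle L_uq_u,\psi_u^\pm(\argdot;\la,r)\rangle$, and integrating each summand by parts produces $\la^n\rFokForTransPM{r}{\BVec{a}\circ q}$ together with $a_u$-weighted boundary terms, not the single combination $a_r\la^n\rFokForTransPM{r}{q}+a_r\rFokRemTrans{r}{q}$ that you wrote. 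Thus the scalar ODE for $\rFokForTransPM{r}{q}$ does not close as stated when the $a_r$ are distinct, and invoking~\eqref{eqn:Interface.DiagThm.1} ``entrywise'' is not by itself sufficient. To make the argument go through one must effectively rerun Lemma~\ref{lem:Interface.DiagMainLemma} for the weighted operator $\BVec{a}\circ L$ and check that the resulting remainder still enjoys the analyticity and $\bigoh{\abs{\la}^{n-1}}$ growth on which the proof of~\eqref{eqn:Interface.DiagThm.2} relies; the paper's one-line proof does not spell this out either.
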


\begin{proof}
    The argument proceeds exactly as in the proof of theorem~\ref{thm:FokasMethodSolRep}, except with problem~\eqref{eqn:IBVP} replaced by problem~\eqref{eqn:Interface.IBVP}, and with calls to theorems~\ref{thm:FokasTransformValid} and~\ref{thm:Diag} substituted for invocations of theorems~\ref{thm:Interface.FokasTransformValid} and~\ref{thm:Interface.Diag}.
\end{proof}

\subsection{Fokas transform method for inhomogeneous IIVP} \label{ssec:Interface.Inhomogeneous}

If $\Phi$ is defined as in~\S\ref{ssec:Interface.Operator}, for any $\BVec{h}\in\CC^{\nall}$, let $\M{\Phi}{\BVec{B}}{\BVec{h}} = \{\phi\in\Phi: \BVec{B}\phi=\BVec{h}\}$.
We study the inhomogeneous IIVP
\begin{subequations} \label{eqn:Interface.Inhomogeneous.IBVP}
\begin{align}
    \label{eqn:Interface.Inhomogeneous.IBVP.PDE} \tag{\theparentequation.PDE}
    \partial_t q(x,t) + \BVec{a} \circ \mathcal{L}q(\argdot,t) &= \mathcal{Q}(x,t) & (x,t) &\in (0,1) \times (0,T), \\
    \label{eqn:Interface.Inhomogeneous.IBVP.IC} \tag{\theparentequation.IC}
    q(x,0) &= Q(x) & x &\in[0,1], \\
    \label{eqn:Interface.Inhomogeneous.IBVP.BC} \tag{\theparentequation.BC}
    \BVec{B}q(\argdot,t) &= \BVec{h}(t) & t &\in[0,T],
\end{align}
\end{subequations}
in which $\mathcal{Q}$ and $\BVec{h}=(h_1,h_2,\ldots,h_{\nall})$ are appropriately smooth functions, $Q\in\M{\Phi}{\BVec{B}}{\BVec{h}(0)}$, and $\BVec{a}$ is as in IIVP~\eqref{eqn:Interface.IBVP}.

To study problems in which $\mathcal{Q}$ and $\BVec{h}$ are nonzero, it is necessary to generalize once again our diagonalization and Fokas transform method theorems.
We combine theorems~\ref{thm:Interface.Diag} and~\ref{thm:Inhomogeneous.Diag} to derive from theorem~\ref{thm:Diag} the inhomogeneous interface diagonalization theorem~\ref{thm:Interface.Inhomogeneous.Diag}.
A synthesis of the advances in theorems~\ref{thm:Interface.FokasMethodSolRep} and~\ref{thm:Inhomogeneous.FokasMethodSolRep} over theorem~\ref{thm:FokasMethodSolRep} provides inhomogeneous interface Fokas transform method theorem~\ref{thm:Interface.Inhomogeneous.FokasMethodSolRep}.
The proofs of both theorems use very similar arguments to those of their progenitors.

\begin{thm} \label{thm:Interface.Inhomogeneous.Diag}
    There exists an \emph{inhomogeneous boundary term}
    \[
        \mathcal{H}[\BVec{h}](\la) = \left( \mathcal{H}_1[\BVec{h}](\la) , \mathcal{H}_2[\BVec{h}](\la) , \ldots , \mathcal{H}_m[\BVec{h}](\la) \right)
    \]
    defined by
    \[
        \mathcal{H}_r[\BVec{h}](\la) = \begin{cases}
            \BVec{h} \odot \BVec{B}_\complementary^\star \psi^+(\argdot;\la,r) & \mbox \la\in\Gamma_r^+\cup\Gamma_\cuts, \\
            \BVec{h} \odot \BVec{B}_\complementary^\star \psi^-(\argdot;\la,r) & \mbox \la\in\Gamma_r^-,
        \end{cases}
    \]
    for $\psi^\pm$ as in the proof of lemma~\ref{lem:Interface.DiagMainLemma},
    and a \emph{remainder transform} $\FokRemTransNoArg$, such that, for all $\phi\in\Phi_{\BVec{B}}$,
    \begin{subequations} \label{eqn:Interface.Inhomogeneous.DiagThm}
    \begin{equation} \label{eqn:Interface.Inhomogeneous.DiagThm.1}
        \FokForTrans{\mathcal{L}\phi}(\la) = \la^n \FokForTrans{\phi}(\la) + \FokRemTrans{\phi}(\la) + \mathcal{H}[\BVec{h}](\la)
    \end{equation}
    and,
    if
    $q=(q_1,q_2,\ldots,q_m)$, with each $q_r:[0,1]\times[0,T]\to\CC$, is such that, for all $t\in[0,T]$, $q(\argdot,t)\in\Phi$ and, for all $r\in\{1,2,\ldots,m\}$ and $j\in\{0,1,\ldots,n-1\}$, uniformly for all $x\in[0,1]$, $\partial_x^jq_r(x,\argdot)$ is a function of bounded variation,
    then,
    for all $t\in[0,T]$ and all $x\in(0,1)$,
    \begin{equation} \label{eqn:Interface.Inhomogeneous.DiagThm.2}
        \FokInvTrans{\int_0^t \left( \re^{a_1\argdot^{n}(s-t)}, \re^{a_2\argdot^{n}(s-t)}, \ldots, \re^{a_m\argdot^{n}(s-t)} \right) \circ \FokRemTrans{q}(\argdot;s) \D s}(x) = 0.
    \end{equation}
    \end{subequations}
\end{thm}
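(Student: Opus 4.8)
Theorem~\ref{thm:Interface.Inhomogeneous.Diag} is the interface-plus-inhomogeneous synthesis of the three diagonalization results already established, so the plan is to assemble its proof by following the template of the earlier proofs and inserting the inhomogeneous boundary term exactly where the interface argument of theorem~\ref{thm:Interface.Diag} produced no such term. I would begin by stating the obvious generalization of lemma~\ref{lem:Interface.DiagMainLemma}: for $\phi\in\M{\Phi}{\BVec{B}}{\BVec{h}}$, there are functions $\Msup{P}{r}{\phi}{\pm}$, analytic outside $D(0,\propertiesnuradius)$ with $\Msup{P}{r}{\phi}{\pm}(\la)=\bigoh{\abs{\la}^{n-1}}$ uniformly in $\arg(\la)$, such that
\begin{subequations}
\begin{align}
    \rFokForTransP{r}{\mathcal{L}\phi}(\la) &= \la^n \rFokForTransP{r}{\phi}(\la) + \Msup{P}{r}{\phi}{+}(\la) + \BVec{h} \odot \BVec{B}_\complementary^\star \psi^+(\argdot;\la,r), \\
    \rFokForTransM{r}{\mathcal{L}\phi}(\la) &= \la^n \rFokForTransM{r}{\phi}(\la) + \re^{-\ri\nu_r(\la)}\Msup{P}{r}{\phi}{-}(\la) + \BVec{h} \odot \BVec{B}_\complementary^\star \psi^-(\argdot;\la,r),
\end{align}
\end{subequations}
with $\Msup{P}{r}{\phi}{\pm}$ given by equations~\eqref{eqn:Interface.FokasMethodSolRepMainLemma.PpmFormulae} and $\psi^\pm$ the eigenfunctions of $\mathcal{L}^\star$ from the proof of lemma~\ref{lem:Interface.DiagMainLemma}. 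The proof of this lemma is verbatim the proof of lemma~\ref{lem:Interface.DiagMainLemma}, with the single substitution that in the interface boundary form formula (theorem~\ref{thm:AdjointOperator.BFF}) one uses $\BVec{B}\phi = \BVec{h}$ rather than $\BVec{B}\phi = \BVec{0}$, so the term $\BVec{B}\phi\odot\BVec{B}_\complementary^\star\psi^\pm$ does not vanish but becomes $\BVec{h}\odot\BVec{B}_\complementary^\star\psi^\pm$; this exactly mirrors how lemma~\ref{lem:Inhomogeneous.DiagMainLemma} was obtained from lemma~\ref{lem:DiagMainLemma} in the two-point case.

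With that lemma in hand, I would define $\rFokRemTransNoArg{r}$ and $\FokRemTransNoArg$ precisely as in the proof of theorem~\ref{thm:Interface.Diag},
\[
    \rFokRemTrans{r}{\phi}(\la) =
    \begin{cases}
        \Msup{P}{r}{\phi}{+}(\la) & \la \in \Gamma_r^+ \cup \Gamma_\cuts, \\
        \Msup{P}{r}{\phi}{-}(\la)\re^{-\ri\nu_r(\la)} & \la \in \Gamma_r^-,
    \end{cases}
    \qquad
    \FokRemTrans{\phi} = \left( \rFokRemTrans{1}{\phi} , \ldots , \rFokRemTrans{m}{\phi} \right),
\]
and $\mathcal{H}_r[\BVec{h}]$ by the case split stated in the theorem. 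Equation~\eqref{eqn:Interface.Inhomogeneous.DiagThm.1} is then immediate from the generalized lemma, componentwise in $r$: on $\Gamma_r^+\cup\Gamma_\cuts$ the $r$th component of $\FokForTrans{\mathcal{L}\phi}$ equals $\la^n\rFokForTransP{r}{\phi}+\Msup{P}{r}{\phi}{+}+\BVec{h}\odot\BVec{B}_\complementary^\star\psi^+(\argdot;\la,r)$, which is $\la^n(\FokForTrans{\phi})_r+(\FokRemTrans{\phi})_r+\mathcal{H}_r[\BVec{h}](\la)$, and similarly on $\Gamma_r^-$.

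For equation~\eqref{eqn:Interface.Inhomogeneous.DiagThm.2}, the key observation is that the remainder transform $\FokRemTransNoArg$ appearing here is \emph{identical} to the one in theorem~\ref{thm:Interface.Diag} --- the inhomogeneous boundary term $\mathcal{H}[\BVec{h}]$ is peeled off separately and does not enter $\FokRemTransNoArg$. Therefore the statement to be proved, that each component
\begin{multline*}
    \CPVwithoutintegral \left( \int_{\Gamma_r^+\cup\Gamma_\cuts} \re^{\ri\nu_r(\la)x-a_r\la^nt} \int_0^t \re^{a_r\la^ns}  \Msup{P}{r}{q(\argdot,s)}{+}(\la) \D s \D\la \right. \\
    \left. + \int_{\Gamma_r^-} \re^{\ri\nu_r(\la)(x-1)-a_r\la^nt} \int_0^t \re^{a_r\la^ns}  \Msup{P}{r}{q(\argdot,s)}{-}(\la) \D s \D\la \right) = 0
\end{multline*}
is word-for-word equation~\eqref{eqn:Interface.DiagThm.2}, whose proof (in the proof of theorem~\ref{thm:Interface.Diag}, which itself defers to the proof of theorem~\ref{thm:Diag}) rests only on: the bounded-variation-in-$s$ property of $\BVec{B}_\complementary q(\argdot,s)$, the $\bigoh{\abs{\la}^{n-1}}$ growth of $\Msup{M}{j}{k}{\pm}$ from lemma~\ref{lem:Interface.PolynomialGrowth}, an integration-by-parts-in-$s$ estimate yielding $\bigoh{\abs{\la}^{-1}}$ decay, Morera's theorem for holomorphy on the domain of $\nu_r$, and then Jordan's lemma on $\Gamma_{\BVec{a},r}^\pm$, Cauchy's theorem on $\Gamma_{0,r}^\pm$, and the entire-integrand Cauchy argument on $\Gamma_\cuts$. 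None of these ingredients involves the boundary data, so nothing changes. Hence I would simply write that the proof of equation~\eqref{eqn:Interface.Inhomogeneous.DiagThm.2} is identical to that of equation~\eqref{eqn:Interface.DiagThm.2}. The main (very mild) obstacle is purely bookkeeping: verifying that $\psi^\pm(\argdot;\la,r)$ as defined in the proof of lemma~\ref{lem:Interface.DiagMainLemma} is the correct kernel so that the extra term really is $\BVec{h}\odot\BVec{B}_\complementary^\star\psi^\pm(\argdot;\la,r)$ and not some rescaled variant, which follows from equation~\eqref{eqn:Interface.FokasMethodSolRepMainLemma.Proof.InnerProduct} and the interface boundary form formula exactly as in the homogeneous interface case; there is no genuine analytic difficulty beyond what the cited theorems already supply.
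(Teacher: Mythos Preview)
Your proposal is correct and matches the paper's approach exactly: the paper does not give a detailed proof of theorem~\ref{thm:Interface.Inhomogeneous.Diag} but merely states that it is obtained by combining theorems~\ref{thm:Interface.Diag} and~\ref{thm:Inhomogeneous.Diag}, and your write-up spells out precisely that combination. Your observation that the remainder transform $\FokRemTransNoArg$ here is identical to the one in theorem~\ref{thm:Interface.Diag}, so that equation~\eqref{eqn:Interface.Inhomogeneous.DiagThm.2} requires no new argument, is the essential point.
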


\begin{thm} \label{thm:Interface.Inhomogeneous.FokasMethodSolRep}
    Suppose the criteria of theorem~\ref{thm:Interface.FokasTransformValid} hold, and there exists a solution $q(x,t)$ of problem~\eqref{eqn:Interface.Inhomogeneous.IBVP} satisfying the criteria of theorem~\ref{thm:Interface.Inhomogeneous.Diag}.
    Then
    \begin{multline*}
        q(x,t)
        = \FokInvTrans{ \left( \re^{-a_1\argdot^{n}t}, \re^{-a_2\argdot^{n}t}, \ldots, \re^{-a_m\argdot^{n}t} \right) \circ \FokForTrans{Q}}(x) \\
        + \FokInvTrans{ \int_0^t\left( \re^{a_1\argdot^{n}(s-t)}, \re^{a_2\argdot^{n}(s-t)}, \ldots, \re^{a_m\argdot^{n}(s-t)} \right) \circ \FokForTrans{\mathcal{Q}}(\argdot;s)\D s}(x) \\
        - \BVec{a} \circ \FokInvTrans{ \int_0^t\left( \re^{a_1\argdot^{n}(s-t)}, \re^{a_2\argdot^{n}(s-t)}, \ldots, \re^{a_m\argdot^{n}(s-t)} \right) \circ \mathcal{H}[\BVec{h}](\argdot;s)\D s}(x).
    \end{multline*}
\end{thm}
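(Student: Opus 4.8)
The plan is to mimic the proof of Theorem~\ref{thm:FokasMethodSolRep}, but now carrying along the two extra source terms and invoking the interface versions of all the supporting results. First I would apply the forward Fokas transform $\FokForTransNoArg$ to the inhomogeneous interface PDE~\eqref{eqn:Interface.Inhomogeneous.IBVP.PDE}. Because $\FokForTransNoArg$ is linear and commutes with $\partial_t$ (assuming enough smoothness in $t$ to exchange the time derivative with the spatial integral), and because the temporal coefficient $\BVec{a}$ acts entrywise, this yields, almost everywhere in $t$,
\[
    \FokForTrans{\mathcal{Q}}(\la;t) = \partial_t\FokForTrans{q}(\la;t) + \BVec{a}\circ\FokForTrans{\mathcal{L}q}(\la;t).
\]
Then I would substitute the inhomogeneous interface diagonalization theorem~\ref{thm:Interface.Inhomogeneous.Diag}, equation~\eqref{eqn:Interface.Inhomogeneous.DiagThm.1}, to replace $\FokForTrans{\mathcal{L}q}(\la;t)$ by $\la^n\FokForTrans{q}(\la;t)+\FokRemTrans{q}(\la;t)+\mathcal{H}[\BVec{h}](\la;t)$. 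The result is a decoupled family (indexed by the spectral parameter $\la$ and the interface index $r$) of scalar first-order linear ODEs in $t$, with forcing terms coming from $\FokForTrans{\mathcal{Q}}$, from the remainder transform, and from the inhomogeneous boundary term.

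Next I would solve each of these ODEs by the integrating factor $\re^{a_r\la^n t}$, using the transformed initial condition $\FokForTrans{q}(\la;0)=\FokForTrans{Q}(\la)$ obtained by applying $\FokForTransNoArg$ to~\eqref{eqn:Interface.Inhomogeneous.IBVP.IC}. Collecting the $m$ entries into the entrywise-product notation, this gives the analogue of equation~\eqref{eqn:Inhomogeneous.FokasMethodSolRep.Proof.SolvedODE}:
\begin{multline*}
    \FokForTrans{q}(\la;t)
    = \left( \re^{-a_1\la^n t},\ldots,\re^{-a_m\la^n t} \right)\circ\FokForTrans{Q}(\la)
    + \int_0^t \left( \re^{a_1\la^n(s-t)},\ldots,\re^{a_m\la^n(s-t)} \right)\circ\FokForTrans{\mathcal{Q}}(\la;s)\,\D s \\
    - \BVec{a}\circ\int_0^t \left( \re^{a_1\la^n(s-t)},\ldots,\re^{a_m\la^n(s-t)} \right)\circ\mathcal{H}[\BVec{h}](\la;s)\,\D s
    - \BVec{a}\circ\int_0^t \left( \re^{a_1\la^n(s-t)},\ldots,\re^{a_m\la^n(s-t)} \right)\circ\FokRemTrans{q}(\la;s)\,\D s.
\end{multline*}
I would then apply the inverse Fokas transform $\FokInvTransNoArg$ to both sides. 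On the left, $\FokInvTrans{\FokForTrans{q}}(x)=q(x,t)$ by the interface validity theorem~\ref{thm:Interface.FokasTransformValid}, which applies since $q(\argdot,t)\in\prod_{r=1}^m\mathrm{C}^1[0,1]$ and assumption~\ref{asm:Interface.MainAssumption} is among the hypotheses of theorem~\ref{thm:Interface.FokasTransformValid}. On the right, linearity of $\FokInvTransNoArg$ distributes it across the four terms, producing exactly the first three claimed terms plus the inverse transform of the last (remainder) integral; the latter vanishes by equation~\eqref{eqn:Interface.Inhomogeneous.DiagThm.2} of theorem~\ref{thm:Interface.Inhomogeneous.Diag}, whose bounded-variation hypotheses on $\partial_x^j q_r(x,\argdot)$ are precisely the criteria of theorem~\ref{thm:Interface.Inhomogeneous.Diag} assumed in the statement.

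The main obstacle is justifying the interchange of $\FokInvTransNoArg$ with the $\int_0^t\cdots\D s$ time integrals and, correspondingly, the exchange of $\partial_t$ with the spatial integral defining $\FokForTransNoArg$. This is exactly why the statement carries the proviso ``provided the latter two integrals converge'' (inherited from theorem~\ref{thm:Inhomogeneous.FokasMethodSolRep}): one needs uniform convergence of the contour integrals in the definition of $\FokInvTransNoArg$, uniformly in $s\in[0,t]$, so that Fubini applies and the principal value contour integral of a time integral equals the time integral of the principal value contour integral. Given the decay properties of the integrands established in lemmata~\ref{lem:Interface.PolynomialGrowth} and~\ref{lem:Interface.DiagMainLemma}, together with the smoothness assumed of $\mathcal{Q}$ and $\BVec{h}$, this holds, but I would flag it explicitly. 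Everything else is routine: the ODE solution is elementary, the vectorisation is bookkeeping, and all the genuinely hard analytic content — validity of the transform pair and the vanishing of the remainder contribution — is already packaged into theorems~\ref{thm:Interface.FokasTransformValid} and~\ref{thm:Interface.Inhomogeneous.Diag}. Thus the proof reduces, as the paragraph preceding the theorem promises, to ``very similar arguments to those of their progenitors'' — namely the proofs of theorems~\ref{thm:FokasMethodSolRep},~\ref{thm:Inhomogeneous.FokasMethodSolRep}, and~\ref{thm:Interface.FokasMethodSolRep}.
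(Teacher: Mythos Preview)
Your proposal is correct and follows precisely the approach the paper intends: the paper gives no explicit proof of this theorem, stating only that it is obtained by synthesizing the arguments of theorems~\ref{thm:Inhomogeneous.FokasMethodSolRep} and~\ref{thm:Interface.FokasMethodSolRep}, which is exactly what you do. One minor inaccuracy: the proviso ``provided the latter two integrals converge'' appears in theorem~\ref{thm:Inhomogeneous.FokasMethodSolRep} but is not actually carried over into the statement of theorem~\ref{thm:Interface.Inhomogeneous.FokasMethodSolRep}, so your remark that the statement carries this proviso is not quite right, though your concern about the interchange is legitimate.
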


\section{Conclusion} \label{sec:Conclusion}

We identified in~\S\ref{ssec:Classical.Sketch} a theoretical generalization of the classical spatial spectral transform approach to solution of IBVP using a weaker diagonalization criterion.
In the remainder of the work, we implemented this generalization, and further extended it to inhomogeneous IBVP and to IIVP both homogeneous and inhomogeneous.
For many such problems, this is the first solution method implemented.
For others, it provides an integral solution representation to complement the classical series solution representation, or provides a newly simplified method of deriving the integral form.

Contrasting the solution representation we present here with~\cite{FP2001a,FP2005a}, one immediately notices that the form of the transforms is different, but only a change of variables is required to map from the transform pairs of Fokas and Pelloni to the expressions given here.
Specifically, the spectral parameter $k$ of Fokas and Pelloni satisfies $k=\nu(\alpha^j\la)$ for a different integer $j$ in each connected component of $\Gamma$, and the contour $\Gamma$ is deformed appropriately for this change of variables.
We find the newer presentation preferable, because it simplifies the proof of lemma~\ref{lem:FokasTransformCancellation} by obviating a change of variables, and because it requires the implicit definition of only one biholomorphic function $\nu$ instead of $n^2$ biholomorphic functions.
Moreover, as first demonstrated in~\cite{DS2020a} and advanced in~\S\ref{sec:Interface}, interface problems in which $\omega$ varies across the interface are much more easily approached in the new formulation.
However, we emphasize that, at least in the cases for which the original or simplified version of the Fokas transform method has been implemented, the difference in solution representation amounts to nothing more than choosing the optimal place in the derivation to make a certain change of variables in the spectral parameter; there is no fundamental difference between the transform pair implicitly derived in~\cite{FP2001a,FP2005a} and the transform pair presented in~\ref{sec:TransformPair}.

It is significant that the original and simplified versions of the Fokas transform method have not been detailed for general IBVP~\eqref{eqn:IBVP} or its full generalizations~\eqref{eqn:Inhomogeneous.IBVP},~\eqref{eqn:Interface.IBVP},
or~\eqref{eqn:Interface.Inhomogeneous.IBVP} before this work.
In the original and simplified versions, more complicated boundary conditions represent a great deal of extra work to set up and solve linear systems; with the true transform version that is easily automated in terms of well known matrices associated with two point differential operators.
With the benefit of hindsight, we believe that the true transform version provides a more natural view of the Fokas transform method, couched more explicitly in the spectral theory of the spatial differential operator, instead of in a complex analytic framework inherited from the inverse scattering transform.

\subsection{Open problems}
\subsubsection{Spectral theory}
The main diagonalization theorem~\ref{thm:Diag} is a spectral theorem suitable for a broad class of two point differential operators $L$ including some, such as the spatial part of IBVP~\eqref{eqn:3ordIBVP} that are Locker degenerate irregular.
The delicacy of equation~\eqref{eqn:DiagThm.2} and its intimate connection with the solution of IBVP~\eqref{eqn:IBVP} suggest that a spectral theory derived from theorem~\ref{thm:Diag} will require careful construction.
But polynomials of operators $L$, which are clearly defined without any diagonalization theorem, interact with theorem~\ref{thm:Diag} as one might hope, at least when the polynomial of $L$ is interpreted as the spatial parts of IBVP:
for $\Omega$ a polynomial of degree $K$, $\Omega(L)$ has domain
\[
    \Phi_{\BVec{B}^K} := \left\{ \phi\in\AC^{Kn-1}[0,1] : \forall k\in\{0,1,\ldots,K-1\},\, \BVec{B}\phi^{(nk)} = \BVec{0} \right\},
\]
is defined by $\Omega(L)\phi:=\Omega(\mathcal{L})\phi$,
and satisfies, for a remainder functional $\GenericRemTransNoArg$,
\[
    \FokForTrans{\Omega(L)\phi} = \Omega\left(\la^n\right)\FokForTrans{\phi} + \GenericRemTrans{\phi},
\]
in which, for all appropriate $q$,
\[
    \FokInvTrans{ \int_0^t\re^{a\argdot^{Kn}(s-t)} \GenericRemTrans{q}(\argdot;s) \D s } = 0.
\]
Indeed, $\GenericRemTrans{\phi}$ can be expressed as a sum of polynomials in $\la^n$ multiplying $\FokRemTransNoArg$ applied to powers of $L$ of $\phi$.
It may be possible to extend this concept to a functional calculus of operators $L$ in much the same way as has been achieved for Laplace operators.

\subsubsection{Other classes of I(B/I/N)VP}
In the interest of brevity, we restricted the present study to PDE of the form~\eqref{eqn:IBVP.PDE}, in which there is but a single term with a temporal derivative, and that term has order one.
In contrast, the simplified version of the Fokas transform method has already been formulated for linear systems of PDE, and any PDE with higher order temporal derivatives~\cite{DGSV2018a} may be reduced to such.
Even the original version has been applied to some linear systems~\cite{FP2005b}.
The simplified version has also been adapted to study problems in which the temporally leading order derivative is mixed~\cite{DV2013a}.
But all of these approaches are limited in the types of boundary conditions they study.
It is reasonable to expect that, via explicit formulation of a diagonalization theorem for the appropriate spatial linear system differential operators and rational pseudodifferential operators, a true transform version of the Fokas transform method may be devised for all IBVP in these classes too.
Other nonlocal PDE have not yet been studied via any version of the Fokas transform method, nor diagonalized in a way that permits such general linear boundary conditions.

We have studied PDE on domains with a broad range of one dimensional spatial geometries.
In multiple spatial dimensions without separability, the Fokas transform method enjoys some advantages over classical spectral methods, but still has restricted applicability~\cite{FS2012a}.
Remaining with one spatial dimension, half line domains, and network domains including some semiinfinite components could be analysed using the true transform version of the Fokas transform method.
This was already done for PDE without lower order terms~\cite{PS2016a}, and many interface problems on such domains have been solved using the simplified version.

There remains another class of linear boundary conditions, generalizing even multipoint and interface conditions, not considered here: nonlocal conditions.
Instead of specifying a value at some boundary or interior point of a spatial domain, one may require a weighted spatial integral of the solution take on a particular value at all times.
These initial nonlocal value problems (INVP) are useful in applications where it is difficult to measure a physical quantity at a point, but easier to measure a (possibly weighted) mean of that quantity over an interval.
Such problems for the heat equation have been studied by adapting the simplified version of the Fokas transform method~\cite{MS2018a}, providing the first analytic solution of these problems for nonconstant weights.
Extension of the true transform version to such INVP would require a construction of an appropriate analogue of the Lagrange adjoint of a differential operator with nonlocal conditions.

\subsubsection{Wellposedness and Birkhoff regularity}
The main transform validity results in this paper are predicated on assumption~\ref{asm:MainAssumption} and its interface generalization, assumption~\ref{asm:Interface.MainAssumption}.
Hence so also is the true transform version of the Fokas transform method.
In~\S\ref{sssec:ValidityBirkhoff}, there is some discussion of the first of these assumptions for operators without lower order terms, including the assumption's links with Locker's criteria for Birkhoff regularity, and a sketch justification.
A full investigation of these assumptions is warranted.
It is expected that the weak Birkhoff regularity characterization of the assumptions will extend to Fokas transforms defined for arbitrary two point and interface operators.

\section*{Acknowledgement}

\AckYNCSRP{Aitzhan}{2019}
\AckYNCSRP{Bhandari}{2020}
\AckYNCProj{Smith}
Smith delivered one section of this paper as part of a summer school in July 2018 at ICTS Bengaluru in the program \emph{Integrable systems in mathematics, condensed matter and statistical physics} code ICTS/integrability2018/07, and at the Isaac Newton Institute program \emph{Complex analysis: techniques, applications and computations} supported by EPSRC Grant Number EP/R014604/1.
Smith would like to thank ICTS and the Isaac Newton Institute for their hospitality and the participants in both programs for valuable conversations.

\appendix

\section{Adjoints of multipoint and interface differential operators} \label{sec:AdjointOperator}

The definition of the Fokas transform in~\S\ref{sec:Interface} relies on the classical (or Lagrange) adjoint of the interface differential operator.
In this appendix, we undertake the construction of that adjoint, and its generalization to interface operators in which the component operators $L_r$ are permitted to be of different orders from one another, and are permitted to have variable coefficients.
The construction follows closely that laid out by Coddington and Levinson~\cite[chapter~11]{CL1955a}, and generalizes those results to and beyond what is presented in~\cite{Loc1973a}.
We use notation aligned with the main sections of this paper, but all necessary definitions and arguments are contained within this appendix, so that it may be read independently.

\subsection{Formulation of the Problem} \label{ssec:AdjointOperator.Formulation}

Let $m\in\NN$ and, for each $r\in\{1,\ldots,m\}$, suppose $n_r\in\NN$ and define
\[
    \nallgeneral = \sum_{r=1}^m n_r.
\]
For suitable coefficient functions $\M{c}{r}{j}$, consider the formal differential operators
\[
    \mathcal{L}_r := \left( \frac{\D}{\D x} \right)^{n_r} + \sum_{j=0}^{n_r-2} \M{c}{r}{j}(x) \left( \frac{\D}{\D x} \right)^j, \qquad r \in \{1,\ldots,m\},
\]
in which we have assumed $\M{c}{r}{n_r}=1$ and $\M{c}{r}{n_r-1}=0$, and the vector formal differential operator
\[
    \mathcal{L} := (\mathcal{L}_1,\mathcal{L}_2,\ldots,\mathcal{L}_m) \circ,
\]
where $\circ$ represents entrywise action of operators.

We define $\Phi= \prod^{m}_{r=1} \AC^{n_r-1}[0, 1]$, a product of function spaces $\AC^{n_r-1}[0, 1]$.
The inner product on this space is defined as the sum of the ordinary unweighted sesquilinear $\mathrm{L}^2$ inner products on the constituent spaces,
\[
    \langle \phi,\psi \rangle := \sum^m_{r=1} \int_0^1 \phi_r(x) \overline{\psi_r(x)} \D x.
\]

Let $\bccount \in \ZZ$ such that $0 \leq \bccount \leq 2\nallgeneral$ denote the number of boundary conditions.
Suppose that matrices of complex \emph{boundary coefficients} $b^r,\beta^r\in\CC^{\bccount \times n_r}$, $r\in\{1, \ldots, m\}$ are chosen such that the concatenated matrix $(b^1:\beta^1:\ldots:b^m:\beta^m)\in\CC^{\bccount\times2\nallgeneral}$ has full rank.
Define \emph{boundary forms} $B_k:\Phi\to\CC$ by
\begin{equation} \label{eqn:AdjointOperator.BoundaryCondition}
    B_k(\phi) = \sum_{r=1}^m \sum_{j=1}^{n_r} \left( \Msup{b}{k}{j}{r}\phi_r^{(j-1)}(0) +  \Msup{\beta}{k}{j}{r}\phi_r^{(j-1)}(1) \right), \qquad k\in\left\{1,2,\ldots,\bccount\right\},
\end{equation}
and let $\BVec{B} = (B_1, B_2, \ldots, B_{\bccount})$ be the vector of boundary forms.
Provided that, for each $r\in\{1,\ldots,m\}$ and $j\in\{0,1,\ldots,n_r-2\}$, $\M{c}{r}{j}\in\AC^{n_r-j-1}[0,1]$, on the space
\[
    \Phi_{\BVec{B}}=\{\phi\in\Phi: \BVec{B}\phi=\BVec{0}\},
\]
we define the differential operator $L:\Phi_\BVec{B}\to \prod_{r=1}^m \mathrm{L}^1[0,1]$ by $L\phi=\mathcal{L}\phi$.

\subsubsection*{Adjoint problem}

For the operator $L$ defined above, we aim to construct the classical adjoint $L^\star:\Phi_{\BVec{B}^\star} \to \prod_{r=1}^m \mathrm{L}^1[0,1]$.
That is, we aim to find a formal differential operator $\mathcal{L}^\star$ and adjoint vector boundary form $\BVec{B}^\star$ such that, for all $\phi\in\Phi_{\BVec{B}}$ and all $\psi\in\Phi_{\BVec{B}^\star}$,
\[
    \langle \mathcal{L}\phi , \psi \rangle = \langle \phi , \mathcal{L}^\star\psi \rangle.
\]
By~\cite[theorem~3.6.3]{CL1955a} applied entrywise, the formal adjoint is the operator
$\mathcal{L}^\star = (\mathcal{L}_1^\star,\mathcal{L}_2^\star,\ldots,\mathcal{L}_m^\star) \circ$,
in which
\begin{align} \label{eqn:formaladjoint}
    \mathcal{L}_{r}^{\star}\psi_{r} := (-1)^{n_r} \frac{\D^{n_r}\psi_{r}}{\D x^{n_r}} + \sum^{n_r-2}_{j=0} (-1)^j \frac{\D^j}{\D x^j}\left(\overline{\M{c}{r}{j}(x)} \psi_{r} \right).
\end{align}
It remains to determine an appropriate adjoint vector boundary form $\BVec{B}^\star$.

\begin{rmk}
    The only application of the results of this appendix in the present work is for the case $n_1=n_2=\ldots=n_m=:n$, all $\M{c}{r}{j}$ constant, and $\bccount=\nallgeneral=\nall$, and therefore the reader may restrict themself to this case in what follows, if they desire.
\end{rmk}

\subsection{Green's formula}\label{ssec:AdjointOperator.Greens-formula}
Following~\cite[theorem~3.6.3]{CL1955a}, for $\phi,\psi\in\Phi$, we define $[\phi\psi]_r$ to be the form in $(\phi_r,\phi'_r,\ldots,\phi^{n_r-1}_r)$ and $(\psi_r,\psi'_r,\ldots,\psi^{n_r-1}_r)$ given by
\[
    [\phi\psi]_r = \sum_{\ell=1}^{n_r}\sum_{\substack{j+k=\ell-1\\j,k\geq0}} (-1)^j \phi_{r}^{(k)} \left( \M{c}{r}{n_r-\ell}\overline{\psi_{r}} \right)^{(j)}.
\]
Application of Green's formula~\cite[corollary to theorem~3.6.3]{CL1955a} yields
\begin{align*}
    \langle \mathcal{L}\phi , \psi \rangle - \langle \phi , \mathcal{L}^\star\psi \rangle
    &= \sum_{r=1}^{m} \left( [\phi\psi]_r (1) - [\phi\psi]_r (0) \right) \\
    &= \sum_{r=1}^{m}\sum_{j,k=1}^{n_r} \left(\Msup{F}{j}{k}{r}(1) \phi_r^{(k-1)}(1) \psi_r^{(j-1)}(1) - \Msup{F}{j}{k}{r}(0) \phi_r^{(k-1)}(0) \psi_r^{(j-1)}(0) \right),
\end{align*}
where $F^r(x)$ denotes an $n_r \times n_r$ matrix at the point $x \in [0,1]$.
Following~\cite[\S11.1]{CL1955a}, the entries of $F^r(x)$, for $j,k \in \{1, \ldots, n_r\}$, are given by
\begin{equation} \label{eqn:AdjointOperator.F}
    F_{j\, k}^{r}(x) = \begin{cases}
        \sum^{n_r-k}_{\ell = j-1} (-1)^\ell \binom{\ell}{j-1} \left( \frac{\D}{\D x}\right)^{\ell- j +1} \M{c}{r}{\ell + k}(x) & j + k< n_r+ 1, \\
        (-1)^{j-1} \M{c}{r}{n_r}(x) = (-1)^{j-1} & j+k = n_r + 1, \\
        0 & j+k > n_r + 1.
    \end{cases}
\end{equation}
Observe that since $\mathrm{det}\left(F^r(x)\right) = 1$, the matrix $F^r$ is non-singular for all $r \in \{1, \ldots, m\}$.

Our goal is to rewrite the right side of Green's formula as a sesquilinear form $\mathcal{S}$.
First, let
\[
    \vec{\phi_r} := \left(\phi_r, \phi'_r, \ldots, \phi_r^{(n_r - 1)} \right)^\transpose,
\]
and observe that
\begin{align*}
    [\phi\psi]_r(x) = \sum_{k,j=1}^{n_r}\Msup{F}{j}{k}{r} (x) \phi_r^{(k-1)} (x) \psi_r^{(j-1)} (x) &= \sum_{j=1}^{n}\left[ \left(\sum_{k=1}^{n} \Msup{F}{j}{k}{r} \phi_r^{(k-1)}\right) \psi_r^{(j-1)}\right](x) \\
    &= F^r (x) \vec{\phi_r}(x) \odot \vec{\psi_r}(x),
\end{align*}
where $\odot$ refers to the sesquilinear dot product on $\CC^{n_r}$.
Green's formula can then be rewritten as
\begin{align}
\notag
    \langle \mathcal{L}\phi , \psi \rangle - \langle \phi , \mathcal{L}^\star\psi \rangle
    &= \sum_{r=1}^{m} [\phi\psi]_r(1) - [\phi\psi]_r(0) \\
\notag
    &= \sum_{r=1}^{m} F^r (1) \vec{\phi_r} (1) \odot \vec{\psi_r} (1) - F^r (0) \vec{\phi_r} (0) \odot \vec{\psi_r} (0) \\
\label{eqn:AdjointOperator.GF}
    &= \sum_{r=1}^{m}
        \begin{bmatrix}
            - F^r(0) & 0_{n_r \times n_r} \\
            0_{n_r \times n_r} &  F^r(1) \\
        \end{bmatrix}
        \begin{bmatrix}
            \vec{\phi_r}(0)  \\
            \vec{\phi_r}(1)  \\
        \end{bmatrix}
        \odot
        \begin{bmatrix}
            \vec{\psi_r}(0)  \\
            \vec{\psi_r}(1)  \\
        \end{bmatrix}.
\end{align}
Expansion of the sum yields
\begin{align}
\notag
    &\hspace{-5em} \langle \mathcal{L}\phi , \psi \rangle - \langle \phi , \mathcal{L}^\star\psi \rangle \\
\notag
    &=
    \begin{bmatrix}
        - F^1(0) & 0_{n_1\times n_1} \\
        0_{n_1\times n_1} &  F^1(1) \\
    \end{bmatrix}
    \begin{bmatrix}
        \vec{\phi_1}(0)  \\
        \vec{\phi_1}(1)  \\
    \end{bmatrix}
    \odot
    \begin{bmatrix}
        \vec{\psi_1}(0)  \\
        \vec{\psi_1}(1)  \\
    \end{bmatrix} \\
\notag
    &\hspace{9em} + \ldots +
    \begin{bmatrix}
        - F^m(0) & 0_{n_m\times n_m} \\
        0_{n_m\times n_m} & F^m(1) \\
    \end{bmatrix}
    \begin{bmatrix}
        \vec{\phi_m}(0)  \\
        \vec{\phi_m}(1)  \\
    \end{bmatrix}
    \odot
    \begin{bmatrix}
        \vec{\psi_m}(0)  \\
        \vec{\psi_m}(1)  \\
    \end{bmatrix} \\
\notag
    &= \underbrace{
    \begin{bmatrix}
        - F^1(0) & 0 & \cdots & 0 & 0 \\
        0 & F^1(1) & \cdots & 0 & 0 \\
        \vdots & \vdots & \ddots & \vdots & \vdots \\
        0 & 0 & \cdots & -F^m(0) & 0 \\
        0 & 0 & \cdots & 0 &  F^m(1)
    \end{bmatrix}}_\text{$2\nallgeneral \times 2\nallgeneral$}
    \begin{bmatrix}
        \vec{\phi_1}(0)  \\
        \vec{\phi_1}(1) \\
        \vec{\phi_2}(0)  \\
        \vec{\phi_2}(1) \\
        \vdots \\
        \vec{\phi_m}(0)  \\
        \vec{\phi_m}(1)
    \end{bmatrix}
    \odot
    \begin{bmatrix}
        \vec{\psi_1}(0)  \\
        \vec{\psi_1}(1) \\
        \vec{\psi_2}(0)  \\
        \vec{\psi_2}(1) \\
        \vdots \\
        \vec{\psi_m}(0)  \\
        \vec{\psi_m}(1)
    \end{bmatrix} \\
\label{eqn:AdjointOperator.GR-Sform}
    &=: S
    \begin{bmatrix}
        \vec{\phi_1}(0)  \\
        \vec{\phi_1}(1) \\
        \vdots \\
        \vec{\phi_m}(0)  \\
        \vec{\phi_m}(1)
    \end{bmatrix}
    \odot
    \begin{bmatrix}
        \vec{\psi_1}(0)  \\
        \vec{\psi_1}(1) \\
        \vdots \\
        \vec{\psi_m}(0)  \\
        \vec{\psi_m}(1)
    \end{bmatrix}
    =: \mathcal{S}
    \left( \begin{bmatrix}
        \vec{\phi_1}(0)  \\
        \vec{\phi_1}(1) \\
        \vdots \\
        \vec{\phi_m}(0)  \\
        \vec{\phi_m}(1)
    \end{bmatrix},
    \begin{bmatrix}
        \vec{\psi_1}(0)  \\
        \vec{\psi_1}(1) \\
        \vdots \\
        \vec{\psi_m}(0)  \\
        \vec{\psi_m}(1)
    \end{bmatrix}
    \right),
\end{align}
where the matrix $S$ is associated with the sesquilinear form $\mathcal{S}$, and $S$ is a block diagonal matrix whose diagonal blocks are $n_r \times n_r$.
This may be compared with~\cite[equation~(11.1.3)]{CL1955a}.

\subsection{Interface boundary form formula} \label{ssec:AdjointOperator.BFF}
We turn to characterising adjoint interface boundary conditions, by extending the boundary form formula for two point problems, as given in~\cite[theorem~11.2.1]{CL1955a}.

Equation~\eqref{eqn:AdjointOperator.BoundaryCondition} can be expressed as
\begin{equation}
\label{eqn:AdjointOperator.BoundryForm1}
    \BVec{B}\phi
    = \sum^m_{r=1} \sum^{n_r-1}_{j=0} \begin{bmatrix} \Msup{b}{1}{j}{r} \\ \vdots\\ \Msup{b}{\bccount}{j}{r}\end{bmatrix} \phi_r^{(j)}(0) + \begin{bmatrix} \Msup{\beta}{1}{j}{r}\\ \vdots \\ \Msup{\beta}{\bccount}{j}{r}\end{bmatrix} \phi_r^{(j)}(1) \\
    = \sum^m_{r=1} b^r \vec{\phi_r}(0) + \beta^r \vec{\phi_r}(1).
\end{equation}
Using $(b^r:\beta^r)$ to represent concatenation of matrices, we can write
\begin{equation} \label{eqn:AdjointOperator.BoundryForm2}
    \BVec{B}\phi = \sum^m_{r=1}\begin{bmatrix} b^r : \beta^r \end{bmatrix}
    \begin{bmatrix}
        \vec{\phi_r}(0)  \\
        \vec{\phi_r}(1)
    \end{bmatrix} =
    \underbrace{\begin{bmatrix} b^1 : \beta^1 :{} \ldots {}: b^m : \beta^m \end{bmatrix}}_\text{$\bccount \times 2\nallgeneral$}
    \underbrace{\begin{bmatrix}
        \vec{\phi_1}(0)  \\
        \vec{\phi_1}(1) \\
        \vdots \\
        \vec{\phi_m}(0)  \\
        \vec{\phi_m}(1)
    \end{bmatrix}}_\text{$2\nallgeneral \times 1$}.
\end{equation}
Thus we have two compact ways to write vectors of boundary forms, namely equations~\eqref{eqn:AdjointOperator.BoundryForm1} and~\eqref{eqn:AdjointOperator.BoundryForm2}.
Next, we extend the notion of complementary vectors of boundary forms from that in~\cite[\S11.2]{CL1955a} to the present setting.
\begin{defn} \label{defn:AdjointOperator.ComplimentaryBoundaryForm}
    Let $\bccount \in\ZZ$, with $0 \leq \bccount \leq 2\nallgeneral$.
    If $\BVec{B} = (B_1, \ldots, B_{\bccount})$ is any vector of boundary forms with $\mathrm{rank}(\BVec{B}) = \bccount$, and $\BVec{B}_\complementary = (B_{\bccount+1}, \ldots, B_{2\nallgeneral})$ is a vector of forms with $\mathrm{rank}(\BVec{B}_\complementary) = 2\nallgeneral-\bccount$ such that $\mathrm{rank}(B_{1}, \ldots, B_{2\nallgeneral}) = 2\nallgeneral$, then $\BVec{B}$ and $\BVec{B}_\complementary$ are said to be \emph{complementary vectors of boundary forms}.
\end{defn}
Note that extending $(B_1, \ldots, B_\bccount)$ to $(B_{1}, \ldots, B_{2\nallgeneral})$ is equivalent to embedding the matrices $b^r, \beta^r$ in a $2\nallgeneral \times 2\nallgeneral$ non-singular matrix.
That is
\begin{align}
\notag
    \begin{bmatrix}
        \BVec{B}\phi \\
        \BVec{B}_\complementary \phi
    \end{bmatrix}
    &=
    \sum^m_{r=1}
    \begin{bmatrix}
        b^r & \beta^r \\
        b^r_\complementary & \beta^r_\complementary
    \end{bmatrix}
    \begin{bmatrix}
        \vec{\phi_r}(0)  \\
        \vec{\phi_r}(1)
    \end{bmatrix} \\
\label{eqn:AdjointOperator.BFF-H}
    &=
    \underbrace{
    \begin{bmatrix}
        b^1 & \beta^1 & b^2 & \beta^2 & \cdots & b^m & \beta^m \\
        b^1_\complementary & \beta^1_\complementary & b^2_\complementary & \beta^2_\complementary & \cdots & b^m_\complementary & \beta^m_\complementary
    \end{bmatrix} }_\text{$2\nallgeneral \times 2\nallgeneral$}
    \underbrace{\begin{bmatrix}
        \vec{\phi_1}(0) \\
        \vec{\phi_1}(1) \\
        \vec{\phi_2}(0) \\
        \vec{\phi_2}(1) \\
        \vdots \\
        \vec{\phi_m}(0) \\
        \vec{\phi_m}(1)
    \end{bmatrix}}_\text{$2\nallgeneral\times 1$}
    =:
    H
    \begin{bmatrix}
        \vec{\phi_1}(0) \\
        \vec{\phi_1}(1) \\
        \vec{\phi_2}(0) \\
        \vec{\phi_2}(1) \\
        \vdots \\
        \vec{\phi_m}(0) \\
        \vec{\phi_m}(1)
    \end{bmatrix}.
\end{align}
where $\mathrm{rank}(H) = 2\nallgeneral$ and $b^r_\complementary, \beta^r_\complementary\in\CC^{(2\nallgeneral-\bccount)\times n_r}$.
We will use equation~\eqref{eqn:AdjointOperator.BFF-H} to express Green's formula as a combination of vector boundary forms $\BVec{B}$ and $\BVec{B}_\complementary$ in theorem~\ref{thm:AdjointOperator.BFF}.
Its proof also requires the following lemma, whose proof is immediate from the fact that, in the sesquilinear dot product, the conjugate transpose of a matrix is the adjoint of the matrix.

\begin{lem} \label{lem:FactoringSesquilinearForms}
    Let $\sigma$ be the sesquilinear form associated with a nonsingular matrix $\Sigma$;
    \[
        \sigma(f,g) = \Sigma f \odot g.
    \]
    For each nonsingular matrix $F$, there exists a unique nonsingular matrix $G$ such that $\sigma(f,g) = Ff \odot Gg$ for all $f,g$.
    Moreover, $G=(\Sigma F^{-1})^\conjtrans$, in which $^\conjtrans$ represents the (Hermitian) conjugate transpose.
\end{lem}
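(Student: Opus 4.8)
The plan is to prove Lemma~\ref{lem:FactoringSesquilinearForms} by a direct computation exploiting the defining property of the sesquilinear dot product $\odot$, namely that for any matrix $A$ the adjoint of $f\mapsto Af$ with respect to $\odot$ is $f\mapsto A^\conjtrans f$; that is, $Af\odot g = f\odot A^\conjtrans g$ for all $f,g$. This is the only structural fact needed, and everything else is bookkeeping.

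First I would address existence. Given the nonsingular matrices $\Sigma$ and $F$, I propose to define $G := (\Sigma F^{-1})^\conjtrans$. Then for all $f,g$,
\[
    Ff \odot Gg = Ff \odot (\Sigma F^{-1})^\conjtrans g = \left((\Sigma F^{-1})(Ff)\right) \odot g = (\Sigma f) \odot g = \sigma(f,g),
\]
where the second equality is the adjoint property of $\odot$ applied to the matrix $\Sigma F^{-1}$, and the third is cancellation of $F^{-1}F$. Since $\Sigma$ and $F$ are nonsingular, so is $\Sigma F^{-1}$ and hence so is its conjugate transpose $G$; thus the required $G$ exists and is nonsingular.

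Next I would prove uniqueness. Suppose $G_1$ and $G_2$ both satisfy $Ff\odot G_i g = \sigma(f,g)$ for all $f,g$. Subtracting, $Ff \odot (G_1-G_2)g = 0$ for all $f,g$. Because $F$ is nonsingular, $Ff$ ranges over all of the ambient space as $f$ does, so $h \odot (G_1-G_2)g = 0$ for every $h$ and every $g$; nondegeneracy of $\odot$ then forces $(G_1-G_2)g = 0$ for all $g$, i.e.\ $G_1 = G_2$. Combining with the existence step gives both claims of the lemma, including the explicit formula $G=(\Sigma F^{-1})^\conjtrans$.

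I do not anticipate a genuine obstacle here; the only point requiring a little care is making explicit the adjoint property of $\odot$ and the nondegeneracy used in uniqueness, since $\odot$ is the unweighted sesquilinear dot product on the relevant $\CC^N$ (here $N=2\nallgeneral$) and these are standard facts about Hermitian forms. The statement in the excerpt already signals this by remarking that ``the conjugate transpose of a matrix is the adjoint of the matrix'' in the $\odot$ pairing, so the proof is essentially a one-line verification plus the uniqueness argument.
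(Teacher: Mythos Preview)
Your proof is correct and follows exactly the approach the paper indicates: the paper states that the lemma's ``proof is immediate from the fact that, in the sesquilinear dot product, the conjugate transpose of a matrix is the adjoint of the matrix,'' and you have simply written out that immediate verification together with the standard nondegeneracy argument for uniqueness.
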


\begin{thm}[Interface boundary form formula] \label{thm:AdjointOperator.BFF}
    Given any vector of forms $\BVec{B}$ of rank $\bccount$, and any complementary vector of forms $\BVec{B}_\complementary$, there exist unique vectors of forms $\BVec{B}^{\star}_\complementary$ and $\BVec{B}^{\star}$ of rank $\bccount$ and $2\nallgeneral-\bccount$, respectively, such that
    \begin{equation} \label{eqn:AdjointOperator.BFF}
        \sum_{r=1}^{m} [\phi\psi]_r(1) - [\phi\psi]_r(0) = \BVec{B}\phi \odot \BVec{B}^{\star}_\complementary\psi + \BVec{B}_\complementary\phi \odot \BVec{B}^{\star}\psi.
    \end{equation}
    Moreover, the complementary adjoint boundary coefficient matrices $\Msups{b}{\complementary}{}{r}{\star},\Msups{\beta}{\complementary}{}{r}{\star}\in\CC^{\bccount\times n_r}$ and the adjoint boundary coefficient matrices $\Msups{b}{}{}{r}{\star},\Msups{\beta}{}{}{r}{\star}\in\CC^{(2\nallgeneral-\bccount)\times n_r}$ are given by
    \begin{equation} \label{eqn:AdjointOperator.BFF.BoundaryCoeffMatrices}
        \begin{bmatrix}
            \Msups{b}{\complementary}{}{1}{\star} & \Msups{\beta}{\complementary}{}{1}{\star} & \Msups{b}{\complementary}{}{2}{\star} & \Msups{\beta}{\complementary}{}{2}{\star} & \cdots & \Msups{b}{\complementary}{}{m}{\star} & \Msups{\beta}{\complementary}{}{m}{\star} \\
            \Msups{b}{}{}{1}{\star} & \Msups{\beta}{}{}{1}{\star} & \Msups{b}{}{}{2}{\star} & \Msups{\beta}{}{}{2}{\star} & \cdots & \Msups{b}{}{}{m}{\star} & \Msups{\beta}{}{}{m}{\star}
        \end{bmatrix}
        = \left(SH^{-1}\right)^\conjtrans,
    \end{equation}
    for $H$ as defined in equation~\eqref{eqn:AdjointOperator.BFF-H} and $S$ as defined in equation~\eqref{eqn:AdjointOperator.GR-Sform}.
\end{thm}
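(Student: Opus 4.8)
The plan is to deduce Theorem~\ref{thm:AdjointOperator.BFF} from Green's formula in the packaged form~\eqref{eqn:AdjointOperator.GR-Sform} by a single application of Lemma~\ref{lem:FactoringSesquilinearForms}, in exact parallel with the proof of the two point boundary form formula~\cite[theorem~11.2.1]{CL1955a}. First I would record the two nonsingularity facts that make the argument run: the block diagonal matrix $S$ of~\eqref{eqn:AdjointOperator.GR-Sform} is nonsingular, since each diagonal block is $\pm F^r(0)$ or $\pm F^r(1)$ and $\det F^r(x)=1$ for every $x\in[0,1]$ (as observed after~\eqref{eqn:AdjointOperator.F}), so $\det S=\pm1$; and the matrix $H$ of~\eqref{eqn:AdjointOperator.BFF-H} is nonsingular, since by hypothesis $\mathrm{rank}(B_1,\ldots,B_{2\nallgeneral})=2\nallgeneral$. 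Writing $u_\phi$ for the stacked column of boundary data $(\vec{\phi_1}(0),\vec{\phi_1}(1),\ldots,\vec{\phi_m}(0),\vec{\phi_m}(1))$ that appears in~\eqref{eqn:AdjointOperator.GR-Sform}, and $u_\psi$ for the analogous stack built from $\psi$, Green's formula reads $\langle\mathcal{L}\phi,\psi\rangle-\langle\phi,\mathcal{L}^\star\psi\rangle = S u_\phi\odot u_\psi$, while~\eqref{eqn:AdjointOperator.BFF-H} states that the stacked vector $(\BVec{B}\phi,\BVec{B}_\complementary\phi)$ equals $H u_\phi$.

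Next I would apply Lemma~\ref{lem:FactoringSesquilinearForms} with $\Sigma=S$ and $F=H$. This produces a unique nonsingular $2\nallgeneral\times 2\nallgeneral$ matrix, call it $H^\star$, such that $S u_\phi\odot u_\psi = H u_\phi\odot H^\star u_\psi$ for all $u_\phi,u_\psi$, with $H^\star=(SH^{-1})^\conjtrans$. I would then partition $H^\star$ into its first $\bccount$ rows and its last $2\nallgeneral-\bccount$ rows, and within each of these row blocks partition the columns into the $m$ consecutive pairs of sub-blocks of widths $(n_1,n_1),(n_2,n_2),\ldots,(n_m,n_m)$ dictated by the ordering of $u_\psi$. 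Reading off these sub-blocks gives complementary adjoint boundary coefficient matrices $\Msups{b}{\complementary}{}{r}{\star},\Msups{\beta}{\complementary}{}{r}{\star}\in\CC^{\bccount\times n_r}$ from the top row block and adjoint boundary coefficient matrices $\Msups{b}{}{}{r}{\star},\Msups{\beta}{}{}{r}{\star}\in\CC^{(2\nallgeneral-\bccount)\times n_r}$ from the bottom row block; assembled as in~\eqref{eqn:AdjointOperator.BFF.BoundaryCoeffMatrices} this is exactly the identity $[\text{stacked adjoint matrix}]=(SH^{-1})^\conjtrans$. By construction these coefficient matrices define vectors of boundary forms $\BVec{B}^\star_\complementary$ and $\BVec{B}^\star$ of the permitted interface shape~\eqref{eqn:AdjointOperator.BoundaryCondition}, and since $H^\star$ is nonsingular any subset of its rows is linearly independent, so $\BVec{B}^\star_\complementary$ has rank $\bccount$ and $\BVec{B}^\star$ has rank $2\nallgeneral-\bccount$.

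It then remains to verify~\eqref{eqn:AdjointOperator.BFF} itself and the uniqueness claim. For the identity, I would observe that with the partition above $H^\star u_\psi$ is precisely the stack $(\BVec{B}^\star_\complementary\psi,\BVec{B}^\star\psi)$, and that the sesquilinear dot product on $\CC^{2\nallgeneral}=\CC^{\bccount}\oplus\CC^{2\nallgeneral-\bccount}$ is the sum of the dot products on the two summands, so $H u_\phi\odot H^\star u_\psi = \BVec{B}\phi\odot\BVec{B}^\star_\complementary\psi + \BVec{B}_\complementary\phi\odot\BVec{B}^\star\psi$; combined with $S u_\phi\odot u_\psi = H u_\phi\odot H^\star u_\psi$ and Green's formula this gives~\eqref{eqn:AdjointOperator.BFF}. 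For uniqueness, any two admissible pairs $(\BVec{B}^\star_\complementary,\BVec{B}^\star)$ yield two matrices playing the role of $H^\star$ in the factorization of the sesquilinear form, whence the uniqueness clause of Lemma~\ref{lem:FactoringSesquilinearForms} forces the matrices equal, and invertibility of $H$ forces the coefficient matrices, hence the forms, to coincide.

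I expect no genuine obstacle: the analytic content is entirely absorbed into Lemma~\ref{lem:FactoringSesquilinearForms} (itself a one line consequence of the adjoint identity for matrices in the sesquilinear pairing), and the only step requiring care is the bookkeeping — checking that the row and column partitions of $H$ in~\eqref{eqn:AdjointOperator.BFF-H} and of $H^\star$ match the ordering of the stacked boundary data $u_\phi,u_\psi$, so that the assembled matrices in~\eqref{eqn:AdjointOperator.BFF.BoundaryCoeffMatrices} genuinely encode vectors of boundary forms of the interface type~\eqref{eqn:AdjointOperator.BoundaryCondition} — together with the elementary fact $\det F^r\equiv1$ supplying the invertibility of $S$.
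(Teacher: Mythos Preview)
Your proposal is correct and follows essentially the same approach as the paper's proof: apply Lemma~\ref{lem:FactoringSesquilinearForms} with $\Sigma=S$ and $F=H$ to obtain the unique matrix $(SH^{-1})^\conjtrans$ (which the paper calls $J$ and you call $H^\star$), then partition it to read off the adjoint and complementary adjoint boundary forms. Your write-up is slightly more explicit than the paper's about the nonsingularity of $S$ and the rank claims for $\BVec{B}^\star_\complementary$ and $\BVec{B}^\star$, but the argument is the same.
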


\begin{proof}
    Let $H$ be as defined by equation~\eqref{eqn:AdjointOperator.BFF-H}.
    By lemma~\ref{lem:FactoringSesquilinearForms}, there exists a unique $2\nallgeneral \times 2\nallgeneral $ nonsingular matrix $J$ such that
    \begin{equation} \label{eqn:AdjointOperator.BFF.proof.1}
        \mathcal{S} \left( \begin{bmatrix}
            \vec{\phi_1}(0)  \\
            \vec{\phi_1}(1) \\
            \vdots \\
            \vec{\phi_m}(0)  \\
            \vec{\phi_m}(1)
        \end{bmatrix},
        \begin{bmatrix}
            \vec{\psi_1}(0)  \\
            \vec{\psi_1}(1) \\
            \vdots \\
            \vec{\psi_m}(0)  \\
            \vec{\psi_m}(1)
        \end{bmatrix}
        \right)
        = H
        \begin{bmatrix}
            \vec{\phi_1}(0)  \\
            \vec{\phi_1}(1) \\
            \vdots \\
            \vec{\phi_m}(0)  \\
            \vec{\phi_m}(1)
        \end{bmatrix} \odot
        J\begin{bmatrix}
            \vec{\psi_1}(0)  \\
            \vec{\psi_1}(1) \\
            \vdots \\
            \vec{\psi_m}(0)  \\
            \vec{\psi_m}(1)
        \end{bmatrix}.
    \end{equation}
    Moreover, $J=\left(SH^{-1}\right)^\conjtrans$.
    We define $\BVec{B}^{\star}, \BVec{B}^{\star}_\complementary$ by
    \begin{equation} \label{eqn:AdjointOperator.BFF.proof.2}
        \begin{bmatrix}
            \BVec{B}^{\star}_\complementary \psi \\
            \BVec{B}^{\star} \psi
        \end{bmatrix}
        = J\begin{bmatrix}
            \vec{\psi_1}(0)  \\
            \vec{\psi_1}(1) \\
            \vdots \\
            \vec{\psi_m}(0)  \\
            \vec{\psi_m}(1)
        \end{bmatrix},
    \end{equation}
    from which equation~\eqref{eqn:AdjointOperator.BFF.BoundaryCoeffMatrices} follows by the same argument as that used to derive equation~\eqref{eqn:AdjointOperator.BFF-H}.
    Equations~\eqref{eqn:AdjointOperator.GR-Sform},~\eqref{eqn:AdjointOperator.BFF.proof.1},~\eqref{eqn:AdjointOperator.BFF-H}, and~\eqref{eqn:AdjointOperator.BFF.proof.2} yield
    \begin{equation*}
        \sum_{r=1}^{m} [\phi\psi]_r(1) - [\phi\psi]_r(0)
        = \begin{bmatrix}
            \BVec{B}\phi \\
            \BVec{B}_\complementary \phi
        \end{bmatrix} \odot
        \begin{bmatrix}
            \BVec{B}^{\star}_\complementary \psi \\
            \BVec{B}^{\star} \psi
        \end{bmatrix}
        = \BVec{B}\phi \odot \BVec{B}^{\star}_\complementary\psi + \BVec{B}_\complementary\phi \odot \BVec{B}^{\star}\psi.
    \end{equation*}
    By unicity of $J$, no other definition of $\BVec{B}^{\star}_\complementary$ and $\BVec{B}^{\star}$ satisfies equation~\eqref{eqn:AdjointOperator.BFF}.
\end{proof}

The interface boundary form formula theorem~\ref{thm:AdjointOperator.BFF} allows us to define adjoint boundary conditions, whence we get the adjoint boundary value problem.

\begin{defn} \label{defn:AdjointOperator.AdjointBoundaryForm}
    Suppose $\BVec{B} = (B_1, \ldots, B_\bccount)$ is a vector of forms with $\mathrm{rank}(\BVec{B}) = \bccount$.
    Suppose $\BVec{B}^{\star}$ is any vector of forms with $\mathrm{rank}(\BVec{B}^{\star}) = 2\nallgeneral-\bccount$, determined as in theorem~\ref{thm:AdjointOperator.BFF}.
    Then $\BVec{B}^{\star}$ is an \emph{adjoint boundary form} and the equation $\BVec{B}^{\star}\psi = \BVec{0}$ is an \emph{adjoint boundary condition} to boundary condition $\BVec{B}\phi = \BVec{0}$ in function space $\Phi$.
\end{defn}

\begin{defn} \label{defn:AdjointOperator.BVP-rank-l}
    Suppose $\BVec{B} = (B_1, \ldots, B_\bccount)$ is a vector of forms with $\mathrm{rank}(\BVec{B}) = \bccount$ and $\BVec{B}^\star$ is an adjoint boundary form.
    Then the problem of solving
    \[
        L\phi = 0, \qquad \phi\in\Phi_{\BVec{B}}
    \]
    is called a \emph{boundary value problem} of rank $\bccount$.
    The problem of solving
    \[
        L^\star\psi = 0, \qquad \psi\in\Phi_{\BVec{B}^\star}
    \]
    is the \emph{adjoint boundary value problem}.
\end{defn}

In theorem~\ref{thm:AdjointOperator.BFF}, the adjoint boundary conditions $\BVec{B}^\star$ are determined uniquely not by $\BVec{B}$ but by the pair $(\BVec{B},\BVec{B}_\complementary)$.
However, the function space $\Phi_{\BVec{B}^\star}$ is determined uniquely from $\BVec{B}$.
Indeed, because $\BVec{B}_{\complementary}$ is (or, more precisely, the complementary boundary coefficient matrices $b_\complementary^r, \beta_\complementary^r$ from which it is composed are) determined by $\BVec{B}$ uniquely up to a rank $(2\nallgeneral-\bccount)\times(2\nallgeneral-\bccount)$ perturbation, so also is $\BVec{B}^\star$ (or the boundary coefficient matrices from which it is composed).

Finally, we justify that the term ``adjoint boundary condition'' is faithfully applied in the classical (Lagrange) sense.

\begin{cor}
    If $\phi\in\Phi_{\BVec{B}}$ and $\psi\in\Phi_{\BVec{B}^\star}$, then $\langle L\phi, \psi \rangle = \langle \phi, L^{\star}\psi \rangle$.
\end{cor}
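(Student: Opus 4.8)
The plan is to recognize this corollary as an immediate consequence of the interface boundary form formula, theorem~\ref{thm:AdjointOperator.BFF}, combined with Green's formula as recorded in~\S\ref{ssec:AdjointOperator.Greens-formula}. The point is that everything has already been assembled; the corollary merely reads off the conclusion for the special case where both functions lie in the respective kernel spaces.

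First I would start from Green's formula in the form
\[
    \langle \mathcal{L}\phi , \psi \rangle - \langle \phi , \mathcal{L}^\star\psi \rangle
    = \sum_{r=1}^{m} [\phi\psi]_r(1) - [\phi\psi]_r(0),
\]
valid for all $\phi,\psi\in\Phi$, which is~\eqref{eqn:AdjointOperator.GF} (equivalently the right side of~\eqref{eqn:AdjointOperator.GR-Sform}). Next I would apply theorem~\ref{thm:AdjointOperator.BFF} to rewrite the boundary terms, obtaining
\[
    \langle \mathcal{L}\phi , \psi \rangle - \langle \phi , \mathcal{L}^\star\psi \rangle
    = \BVec{B}\phi \odot \BVec{B}^{\star}_\complementary\psi + \BVec{B}_\complementary\phi \odot \BVec{B}^{\star}\psi.
\]
Now I would specialize: if $\phi\in\Phi_{\BVec{B}}$ then $\BVec{B}\phi=\BVec{0}$, so the first term on the right vanishes; if $\psi\in\Phi_{\BVec{B}^\star}$ then $\BVec{B}^\star\psi=\BVec{0}$, so the second term vanishes as well. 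Hence the right side is $0$, giving $\langle \mathcal{L}\phi,\psi\rangle = \langle\phi,\mathcal{L}^\star\psi\rangle$. Finally I would observe that $L\phi=\mathcal{L}\phi$ on $\Phi_{\BVec{B}}$ and $L^\star\psi=\mathcal{L}^\star\psi$ on $\Phi_{\BVec{B}^\star}$ by definition, so this reads $\langle L\phi,\psi\rangle = \langle\phi,L^\star\psi\rangle$, as claimed.

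There is essentially no obstacle here: the corollary is a one-line deduction once theorem~\ref{thm:AdjointOperator.BFF} is in hand, and its sole purpose is to confirm that the boundary form formula's construction does indeed produce an operator adjoint in the genuine Hilbert-space (Lagrange) sense on the subspaces $\Phi_{\BVec{B}}$ and $\Phi_{\BVec{B}^\star}$. The only point worth a word of care is that $\BVec{B}_\complementary\phi$ need not vanish and $\BVec{B}^\star_\complementary\psi$ need not vanish, but that is immaterial since each such factor is paired in $\odot$ with a factor that does vanish; one does not need any nondegeneracy of the complementary forms for this cancellation.
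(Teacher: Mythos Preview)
Your proposal is correct and follows essentially the same approach as the paper's proof: apply Green's formula together with the interface boundary form formula (theorem~\ref{thm:AdjointOperator.BFF}), then use $\BVec{B}\phi=\BVec{0}$ and $\BVec{B}^\star\psi=\BVec{0}$ to kill both terms. The paper's version is a one-line display; your explicit separation of the Green's formula step and the observation that $L\phi=\mathcal{L}\phi$ on $\Phi_{\BVec{B}}$ are minor elaborations of the same argument.
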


\begin{proof}
    We apply the boundary form formula theorem~\ref{thm:AdjointOperator.BFF} and the definitions of $\Phi_{\BVec{B}}$ and $\Phi_{\BVec{B}^\star}$ to obtain
    \[
        \langle L\phi, \psi \rangle - \langle \phi, L^{\star}\psi \rangle = \BVec{B}\phi \odot \BVec{B}^{\star}_\complementary\psi + \BVec{B}_\complementary\phi \odot \BVec{B}^{\star}\psi = \BVec{0} \odot \BVec{B}^{\star}_\complementary\psi + \BVec{B}_\complementary\phi \odot \BVec{0} = 0.
    \qedhere
    \]
\end{proof}

\subsection{Checking Adjointness} \label{sec:AdjointOperator.CA}

\begin{thm} \label{thm:AdjointOperator.CA}
    Suppose that vector boundary form $\BVec{B}$ has full rank boundary coefficient matrices $b^r,\beta^r\in\CC^{\bccount \times n_r}$, and vector boundary form $\BVec{Z}$ has full rank boundary coefficient matrices $z^r,\zeta^r\in\CC^{(2\nallgeneral-\bccount)\times n_r}$.
    The boundary condition $\BVec{Z}$ is adjoint to $\BVec{B}$ if and only if
    \begin{equation} \label{eqn:AdjointOperator.CA}
        \sum^m_{r=1} b^r (F^{r})^{-1}(0) \Msups{z}{}{}{r}{\conjtrans} = \sum^m_{r=1} \beta^r (F^{r})^{-1}(1) \Msups{\zeta}{}{}{r}{\conjtrans},
    \end{equation}
    where $F^r(x)$ is the $n_r\times n_r$ matrix defined by equation~\eqref{eqn:AdjointOperator.F}.
\end{thm}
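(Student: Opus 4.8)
The plan is to reduce both the notion of adjointness and the identity~\eqref{eqn:AdjointOperator.CA} to linear algebra over $\CC^{2\nallgeneral}$ via Green's formula~\eqref{eqn:AdjointOperator.GR-Sform}. Write $\mathsf{B}=(b^1:\beta^1:\cdots:b^m:\beta^m)\in\CC^{\bccount\times2\nallgeneral}$ and $\mathsf{Z}=(z^1:\zeta^1:\cdots:z^m:\zeta^m)\in\CC^{(2\nallgeneral-\bccount)\times2\nallgeneral}$ for the concatenated coefficient matrices of $\BVec{B}$ and $\BVec{Z}$; both have full row rank by hypothesis. For $\phi\in\Phi$ let $\Xi_\phi\in\CC^{2\nallgeneral}$ be the stacked column vector of boundary values appearing on the right of~\eqref{eqn:AdjointOperator.GR-Sform}, so that formula reads $\langle\mathcal{L}\phi,\psi\rangle-\langle\phi,\mathcal{L}^\star\psi\rangle=S\,\Xi_\phi\odot\Xi_\psi$, with $S$ the block-diagonal matrix whose diagonal blocks are $-F^1(0),F^1(1),\ldots,-F^m(0),F^m(1)$. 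Since the values $\phi_r^{(j-1)}(0),\phi_r^{(j-1)}(1)$ may be prescribed arbitrarily within $\AC^{n_r-1}[0,1]$, membership $\phi\in\Phi_\BVec{B}$ is equivalent to $\Xi_\phi\in\ker\mathsf{B}$, and likewise $\psi\in\Phi_\BVec{Z}\iff\Xi_\psi\in\ker\mathsf{Z}$; here $\dim\ker\mathsf{B}=2\nallgeneral-\bccount$ and $\dim\ker\mathsf{Z}=\bccount$.

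The first step is to establish that $\BVec{Z}$ is adjoint to $\BVec{B}$ if and only if $S\,u\odot v=0$ for all $u\in\ker\mathsf{B}$ and $v\in\ker\mathsf{Z}$. For the forward implication, if $\BVec{Z}$ is an adjoint boundary form then by Definition~\ref{defn:AdjointOperator.AdjointBoundaryForm} it coincides with some $\BVec{B}^\star$ produced by theorem~\ref{thm:AdjointOperator.BFF} from a complementary pair $(\BVec{B},\BVec{B}_\complementary)$, so~\eqref{eqn:AdjointOperator.BFF} gives, for $\phi\in\Phi_\BVec{B}$ and $\psi\in\Phi_\BVec{Z}$, that $\sum_r\bigl([\phi\psi]_r(1)-[\phi\psi]_r(0)\bigr)=\BVec{B}\phi\odot\BVec{B}_\complementary^\star\psi+\BVec{B}_\complementary\phi\odot\BVec{Z}\psi=0$, and by~\eqref{eqn:AdjointOperator.GR-Sform} the left side is $S\,\Xi_\phi\odot\Xi_\psi$. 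For the converse, pick any complementary $\BVec{B}_\complementary$ and let $\BVec{B}^\star$ be the associated adjoint form of theorem~\ref{thm:AdjointOperator.BFF}; the forward argument applied to this $\BVec{B}^\star$ shows that the kernel of its coefficient matrix lies inside $W:=\{w:S\,u\odot w=0\ \forall u\in\ker\mathsf{B}\}$, which, being the $\odot$-orthogonal complement of $S(\ker\mathsf{B})$, has dimension $2\nallgeneral-(2\nallgeneral-\bccount)=\bccount$, whence equality of these $\bccount$-dimensional spaces. If now the pairing vanishes on $\ker\mathsf{B}\times\ker\mathsf{Z}$, then $\ker\mathsf{Z}\subseteq W=\ker(\text{coeff.\ matrix of }\BVec{B}^\star)$, and both kernels being $\bccount$-dimensional they agree, so $\Phi_\BVec{Z}=\Phi_{\BVec{B}^\star}$ and $\mathsf{Z}$ differs from the coefficient matrix of $\BVec{B}^\star$ by an invertible left factor; by the remark following Definition~\ref{defn:AdjointOperator.BVP-rank-l} this factor is absorbed into a revised choice of $\BVec{B}_\complementary$, so $\BVec{Z}$ is itself an adjoint boundary form. (In passing this confirms that~\eqref{eqn:AdjointOperator.CA} depends only on $\operatorname{rowspace}\mathsf{Z}$, as it must.)

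The second step rewrites the vanishing condition as $\mathsf{B}S^{-1}\mathsf{Z}^\conjtrans=0$. Fixing $u\in\ker\mathsf{B}$, the requirement $S\,u\odot v=0$ for all $v\in\ker\mathsf{Z}$ says that $Su$ lies in the $\odot$-orthogonal complement of $\ker\mathsf{Z}$, which—range of the adjoint equals orthocomplement of the kernel—is $\operatorname{colspace}(\mathsf{Z}^\conjtrans)$; so the full condition is $S(\ker\mathsf{B})\subseteq\operatorname{colspace}(\mathsf{Z}^\conjtrans)$. As $\dim S(\ker\mathsf{B})=2\nallgeneral-\bccount=\operatorname{rank}\mathsf{Z}=\dim\operatorname{colspace}(\mathsf{Z}^\conjtrans)$, the inclusion is an equality, equivalently $\operatorname{colspace}(\mathsf{Z}^\conjtrans)\subseteq S(\ker\mathsf{B})=\ker(\mathsf{B}S^{-1})$, i.e.\ $\mathsf{B}S^{-1}\mathsf{Z}^\conjtrans=0$. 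Finally, since $\det F^r(x)\equiv1$ each $F^r(x)$ is invertible and $S^{-1}$ is block-diagonal with blocks $-(F^1)^{-1}(0),(F^1)^{-1}(1),\ldots,-(F^m)^{-1}(0),(F^m)^{-1}(1)$; multiplying $\mathsf{B}=(b^1:\beta^1:\cdots)$ against $S^{-1}$ and the vertically stacked blocks $(z^1)^\conjtrans,(\zeta^1)^\conjtrans,\ldots$ of $\mathsf{Z}^\conjtrans$ block by block yields
\[
    \mathsf{B}S^{-1}\mathsf{Z}^\conjtrans = -\sum_{r=1}^m b^r(F^r)^{-1}(0)\,(z^r)^\conjtrans + \sum_{r=1}^m \beta^r(F^r)^{-1}(1)\,(\zeta^r)^\conjtrans,
\]
which vanishes precisely when~\eqref{eqn:AdjointOperator.CA} holds; chaining the two steps completes the proof.

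I expect the only genuine subtlety to lie in the converse half of step one—recognising that ``adjoint'' is a property of the space $\Phi_\BVec{Z}$ (hence of $\operatorname{rowspace}\mathsf{Z}$) and not of the individual forms, so the invertible left factor relating $\mathsf{Z}$ to a canonically produced $\BVec{B}^\star$ is harmless—and, throughout step two, in keeping the sesquilinear pairing $\odot$ and the attendant conjugate transposes $^\conjtrans$ straight. Everything else is Green's formula, the nonsingularity of $S$, and elementary dimension counts.
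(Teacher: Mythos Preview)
Your proof is correct and takes a genuinely different route from the paper's. The paper proves the ``only if'' direction computationally: it expands the boundary form formula~\eqref{eqn:AdjointOperator.BFF} and Green's formula~\eqref{eqn:AdjointOperator.GF} separately, equates coefficients of the inner products $\vec\phi_r(\cdot)\odot\vec\psi_i(\cdot)$ to obtain the block identities~\eqref{eqn:AdjointOperator.CA.EquateCoeffs}, assembles these into the $2\nallgeneral\times2\nallgeneral$ relation~\eqref{eqn:AdjointOperator.CA.eq3}, inverts to recognise the two factors in~\eqref{eqn:AdjointOperator.CA.Product} as mutual inverses, and then reads~\eqref{eqn:AdjointOperator.CA} off the upper-right block of~\eqref{eqn:AdjointOperator.CA.Product2}. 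For ``if'', the paper observes that both $\mathsf{Z}$ and a fixed adjoint $\BVec{B}^\star$ furnish bases (via the matrices $\mathcal{H},\mathscr{H}$) for the null space of the same linear system, whence they differ by an invertible right factor. By contrast, you bypass all coefficient matching: you first isolate the abstract characterisation ``adjointness $\Leftrightarrow$ $S$ pairs $\ker\mathsf{B}$ and $\ker\mathsf{Z}$ to zero'', and then use standard range--kernel duality to rewrite that pairing condition as the single matrix equation $\mathsf{B}S^{-1}\mathsf{Z}^\conjtrans=0$, which unpacks block by block to~\eqref{eqn:AdjointOperator.CA}. Your argument is shorter and more conceptual; the paper's is more constructive, keeping the complementary forms $\BVec{B}_\complementary,\BVec{B}_\complementary^\star$ and the full change-of-basis matrix $H$ visible throughout, which has independent value elsewhere in the appendix. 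The one point you flag yourself---that an invertible left factor on $\mathsf{Z}$ can be absorbed into the choice of $\BVec{B}_\complementary$---is indeed the only place requiring care, and the paper relies on the same fact at the end of its ``if'' proof; it follows directly from the formula $J=(SH^{-1})^\conjtrans$ in theorem~\ref{thm:AdjointOperator.BFF}.
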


\begin{proof}[Proof of theorem~\ref{thm:AdjointOperator.CA}: ``only if'']
    Suppose that $\BVec{B}$ and $\BVec{Z}$ are adjoint.
    By definition~\ref{defn:AdjointOperator.AdjointBoundaryForm}, $\BVec{Z}$ is determined as in theorem~\ref{thm:AdjointOperator.BFF}.
    Thus, in determining $\BVec{Z}$, there exist vectors of forms $\BVec{B}_\complementary, \BVec{B}_\complementary^{\star}$ of rank $2\nallgeneral-\bccount$ and $\bccount$ respectively, such that theorem~\ref{thm:AdjointOperator.BFF} holds.
    As such, there exist full rank matrices $b_\complementary^r, \beta_\complementary^r \in\CC^{(2\nallgeneral-\bccount)\times n_r}$, and $\Msups{b}{\complementary}{}{r}{\star},\Msups{\beta}{\complementary}{}{r}{\star}\in\CC^{\bccount\times n_r}$ such that
    \begin{align}
    \label{eqn:AdjointOperator.CA.Bc}
        \BVec{B}_\complementary \phi &= \sum^m_{r=1} b_\complementary^r \vec{\phi}_r(0) + \beta_\complementary^r \vec{\phi}_r(1), &
        &\mathrm{rank}\begin{bmatrix} b_\complementary^1 : \beta_\complementary^1 : {}\ldots{} : b_\complementary^m : \beta_\complementary^m \end{bmatrix} =  2\nallgeneral - \bccount, \\
    \label{eqn:AdjointOperator.CA.Bc*}
        \BVec{B}_\complementary^{\star} \psi &= \sum^m_{r=1} \Msups{b}{\complementary}{}{r}{\star} \vec{\psi}_r(0) + \Msups{\beta}{\complementary}{}{r}{\star}  \vec{\psi}_r(1), &
        &\mathrm{rank}\begin{bmatrix} \Msups{b}{\complementary}{}{1}{\star} : \Msups{\beta}{\complementary}{}{1}{\star} : {}\ldots{} : \Msups{b}{\complementary}{}{m}{\star} : \Msups{\beta}{\complementary}{}{m}{\star} \end{bmatrix} = \bccount.
    \end{align}
    Moreover, it is reasonable to denote $\BVec{Z}$ by $\BVec{B}^\star$ and $z^r,\zeta^r$ by $\Msups{b}{}{}{r}{\star},\Msups{\beta}{}{}{r}{\star}$, respectively.

    By equations~\eqref{eqn:AdjointOperator.BFF},~\eqref{eqn:AdjointOperator.BFF-H},~\eqref{eqn:AdjointOperator.CA.Bc*},~\eqref{eqn:AdjointOperator.CA.Bc},
    and~\eqref{eqn:AdjointOperator.BFF.BoundaryCoeffMatrices}, then distributing the inner products over the sums,
    \begin{multline*}
        \sum^m_{r=1} [\phi\psi]_r(1) - [\phi\psi]_r(0)
        = \sum^m_{r=1}  \sum^m_{i=1} \Bigg(\left( b^r \vec{\phi_r}(0) + \beta^r \vec{\phi_r}(1)\right) \odot \Big( \Msups{b}{\complementary}{}{i}{\star} \vec{\psi}_i(0) + \Msups{\beta}{\complementary}{}{i}{\star} \vec{\psi}_i(1) \Big) \\
        + \left( b_\complementary^r \vec{\phi}_r(0) + \beta_\complementary^r \vec{\phi}_r(1) \right) \odot \Big( \Msups{b}{}{}{i}{\star} \vec{\psi}_i(0) + \Msups{\beta}{}{}{i}{\star} \vec{\psi}_i(1) \Big) \Bigg).
    \end{multline*}
    By additivity of inner product and the fact that the conjugate transpose is the adjoint with respect to the sesquilinear dot product, we write the above as
    \begin{multline} \label{eqn:AdjointOperator.CA.eq1}
        \sum^m_{r=1} \sum^m_{i=1}
        \left(\Msups{\beta}{\complementary}{}{i}{\star\Mspacer\conjtrans} \beta^r + \Msups{\beta}{}{}{i}{\star\Mspacer\conjtrans} \beta_\complementary^r\right) \vec{\phi_r}(1) \odot \vec{\psi_i}(1)
        + \left(\Msups{b}{\complementary}{}{i}{\star\Mspacer\conjtrans} \beta^r + \Msups{b}{}{}{i}{\star\Mspacer\conjtrans} \beta_\complementary^r \right) \vec{\phi_r}(1) \odot \vec{\psi_i}(0) \\
        + \left(\Msups{\beta}{\complementary}{}{i}{\star\Mspacer\conjtrans} b^r + \Msups{\beta}{}{}{i}{\star\Mspacer\conjtrans} b_\complementary^r\right) \vec{\phi_r}(0) \odot \vec{\psi_i}(1)
        + \left(\Msups{b}{\complementary}{}{i}{\star\Mspacer\conjtrans} b^r + \Msups{b}{}{}{i}{\star\Mspacer\conjtrans} b_\complementary^r\right) \vec{\phi_r}(0) \odot \vec{\psi_i}(0).
    \end{multline}
    From Green's formula~\eqref{eqn:AdjointOperator.GF}, we have
    \begin{equation} \label{eqn:AdjointOperator.CA.eq2}
        \sum^m_{r=1}  [\phi\psi]_r(1) - [\phi\psi]_r(0) = \sum_{r=1}^{m} F^r(1) \vec{\phi_r}(1) \odot \vec{\psi_r}(1) - F^r(0) \vec{\phi_r}(0) \odot \vec{\psi_r}(0),
    \end{equation}
    Equating coefficients of each of the inner products in equations~\eqref{eqn:AdjointOperator.CA.eq1} and~\eqref{eqn:AdjointOperator.CA.eq2} reveals that
    \begin{equation} \label{eqn:AdjointOperator.CA.EquateCoeffs}
        \begin{aligned}
            \left(\Msups{\beta}{\complementary}{}{i}{\star\Mspacer\conjtrans} \beta^r + \Msups{\beta}{}{}{i}{\star\Mspacer\conjtrans} \beta_\complementary^r\right) &= \bigg\{\hspace{-0.5em} \begin{array}{l} F^r(1), \\ 0, \end{array}
            &
            \left(\Msups{b}{\complementary}{}{i}{\star\Mspacer\conjtrans} b^r + \Msups{b}{}{}{i}{\star\Mspacer\conjtrans} b_\complementary^r\right) &= \bigg\{\hspace{-0.5em} \begin{array}{l} -F^r(0), \\ 0, \end{array}
            &
            &\begin{array}{l} \mbox{if }i=r, \\ \mbox{otherwise,}\end{array}
            \\
            \left(\Msups{b}{\complementary}{}{i}{\star\Mspacer\conjtrans} \beta^r + \Msups{b}{}{}{i}{\star\Mspacer\conjtrans} \beta_\complementary^r \right) &= \hphantom{\bigg\{\hspace{-0.5em}}\begin{array}{l}0,\end{array}
            &
            \left(\Msups{\beta}{\complementary}{}{i}{\star\Mspacer\conjtrans} b^r + \Msups{\beta}{}{}{i}{\star\Mspacer\conjtrans} b_\complementary^r\right) &= \hphantom{\bigg\{\hspace{-0.5em}}\begin{array}{l}0,\end{array}
            &
            &\begin{array}{l} \mbox{for all } i,r.\end{array}
        \end{aligned}
    \end{equation}
    Note that not all of the $0$ matrices above are the same, as each of the above equations relates matrices in $\CC^{n_i\times n_r}$; the nonzero cases are square matrices in $\CC^{n_r\times n_r}$.

    Interlacing the $i=r$ cases of the upper two of identities~\eqref{eqn:AdjointOperator.CA.EquateCoeffs} as blocks on matrix diagonals, and filling out the matrices with appropriate sized zero blocks elsewhere, we obtain
    \begin{multline}\label{eqn:AdjointOperator.CA.eq3}
        \begin{bmatrix}
            - F^1(0) & 0 & 0 & \cdots & 0 & 0 & 0 \\
            0 & F^1(1) & 0 & \cdots & 0 & 0 & 0 \\
            0 &  0 & -F^2(0) & \cdots & 0 & 0 & 0 \\
            \vdots & \vdots & \vdots & \ddots & \vdots & \vdots & \vdots  \\
            0 &  0 & 0 & \cdots & F^{m-1}(1) & 0 & 0 \\
            0 &  0 & 0 & \cdots & 0 & -F^m(0) & 0 \\
            0 &  0 & 0 & \cdots & 0 & 0 & F^m(1)
        \end{bmatrix} \\
        =
        \begin{bmatrix}
            \Msups{b}{\complementary}{}{1}{\star\Mspacer\conjtrans} b^1 + \Msups{b}{}{}{1}{\star\Mspacer\conjtrans} b_\complementary^1 & 0 & \cdots & 0 & 0 \\
            0 & \Msups{\beta}{\complementary}{}{1}{\star\Mspacer\conjtrans} \beta^1 + \Msups{\beta}{}{}{1}{\star\Mspacer\conjtrans} \beta_\complementary^1 & \cdots & 0 & 0 \\
            \vdots & \vdots & \ddots & \vdots & \vdots  \\
            0 &  0 & \cdots & \Msups{b}{\complementary}{}{m}{\star\Mspacer\conjtrans} b^m + \Msups{b}{}{}{m}{\star\Mspacer\conjtrans} b_\complementary^m & 0 \\
            0 &  0 & \cdots & 0 & \Msups{\beta}{\complementary}{}{m}{\star\Mspacer\conjtrans} \beta^m + \Msups{\beta}{}{}{m}{\star\Mspacer\conjtrans} \beta_\complementary^m
        \end{bmatrix}.
    \end{multline}
    Since the boundary matrices $F^r$ are each nonsingular, the block diagonal matrix on the left of equation~\eqref{eqn:AdjointOperator.CA.eq3} must also be invertible.
    Premultiplying on both sides by the inverse yields the following expression for the identity matrix
    \begin{multline} \label{eqn:AdjointOperator.CA.Product}
        \begin{bmatrix}
            - (F^{1})^{-1}(0)\left(\Msups{b}{\complementary}{}{1}{\star\Mspacer\conjtrans} b^1 + \Msups{b}{}{}{1}{\star\Mspacer\conjtrans} b_\complementary^1\right) & \cdots & 0 \\
            \vdots & \ddots & \vdots  \\
            0 & \cdots & (F^{m})^{-1}(1)\left(\Msups{\beta}{\complementary}{}{m}{\star\Mspacer\conjtrans} \beta^m + \Msups{\beta}{}{}{m}{\star\Mspacer\conjtrans} \beta_\complementary^m\right)
        \end{bmatrix} \\
        =
        \begin{bmatrix}
            - (F^{1})^{-1}(0) \Msups{b}{\complementary}{}{1}{\star\Mspacer\conjtrans} & - (F^{1})^{-1}(0) \Msups{b}{}{}{1}{\star\Mspacer\conjtrans} \\
            (F^{1})^{-1}(1) \Msups{\beta}{\complementary}{}{1}{\star\Mspacer\conjtrans} & (F^{1})^{-1}(1) \Msups{\beta}{}{}{1}{\star\Mspacer\conjtrans} \\
            \vdots & \vdots  \\
            -(F^{m})^{-1}(0) \Msups{b}{\complementary}{}{m}{\star\Mspacer\conjtrans} & -(F^{m})^{-1}(0) \Msups{b}{}{}{m}{\star\Mspacer\conjtrans} \\
            (F^{m})^{-1}(1) \Msups{\beta}{\complementary}{}{m}{\star\Mspacer\conjtrans} & (F^{m})^{-1}(1) \Msups{\beta}{}{}{m}{\star\Mspacer\conjtrans}
        \end{bmatrix}
        \begin{bmatrix}
            b^1 & \beta^1 & \cdots & b^m & \beta^m \\
            b_\complementary^1  & \beta_\complementary^1 & \cdots & b_\complementary^m  & \beta_\complementary^m
        \end{bmatrix};
    \end{multline}
    the equation is justified by using the rest of identities~\eqref{eqn:AdjointOperator.CA.EquateCoeffs} to show that all blocks off the diagonal are zero blocks of appropriate dimension.
    Since the two matrices in the product on the right of equation~\eqref{eqn:AdjointOperator.CA.Product} are square and full rank, they are inverse to each other, and so we have, for identity matrices $\mathrm{Id}$,
    \begin{multline} \label{eqn:AdjointOperator.CA.Product2}
        \begin{bmatrix}
            \mathrm{Id}_{\bccount \times \bccount} & 0_{\bccount \times (2\nallgeneral-\bccount)} \\
            0_{(2\nallgeneral-\bccount) \times \bccount} & \mathrm{Id}_{(2\nallgeneral-\bccount) \times (2\nallgeneral-\bccount)}
        \end{bmatrix}
        \\
        =
        \begin{bmatrix}
            b^1 & \beta^1 & \cdots & b^m & \beta^m \\
            b_\complementary^1  & \beta_\complementary^1 & \cdots & b_\complementary^m  & \beta_\complementary^m
        \end{bmatrix}
        \begin{bmatrix}
            - (F^{1})^{-1}(0) \Msups{b}{\complementary}{}{1}{\star\Mspacer\conjtrans} & - (F^{1})^{-1}(0) \Msups{b}{}{}{1}{\star\Mspacer\conjtrans} \\
            (F^{1})^{-1}(1) \Msups{\beta}{\complementary}{}{1}{\star\Mspacer\conjtrans} & (F^{1})^{-1}(1) \Msups{\beta}{}{}{1}{\star\Mspacer\conjtrans} \\
            \vdots & \vdots  \\
            -(F^{m})^{-1}(0) \Msups{b}{\complementary}{}{m}{\star\Mspacer\conjtrans} & -(F^{m})^{-1}(0) \Msups{b}{}{}{m}{\star\Mspacer\conjtrans} \\
            (F^{m})^{-1}(1) \Msups{\beta}{\complementary}{}{m}{\star\Mspacer\conjtrans} & (F^{m})^{-1}(1) \Msups{\beta}{}{}{m}{\star\Mspacer\conjtrans}
        \end{bmatrix}.
    \end{multline}
    The top right block (that is, the block which contains neither complementary boundary coefficient matrices nor adjoint complementary boundary coefficient matrices) yields
    \begin{multline*}
        - b^1 (F^{1})^{-1}(0) \Msups{b}{}{}{1}{\star\Mspacer\conjtrans}
        + \beta^1 (F^{1})^{-1}(1) \Msups{\beta}{}{}{1}{\star\Mspacer\conjtrans}
        + \ldots \\
        - b^m (F^{m})^{-1}(0) \Msups{b}{}{}{m}{\star\Mspacer\conjtrans}
        + \beta^m (F^{m})^{-1}(1) \Msups{\beta}{}{}{m}{\star\Mspacer\conjtrans}
        = 0_{\bccount \times (2\nallgeneral-\bccount)},
    \end{multline*}
    from which it follows
    \begin{equation*}
        \sum^m_{r=1} b^r (F^{r})^{-1}(0) \Msups{b}{}{}{r}{\star\Mspacer\conjtrans} = \sum^m_{r=1} \beta^r (F^{r})^{-1}(1) \Msups{\beta}{}{}{r}{\star\Mspacer\conjtrans}.
        \qedhere
    \end{equation*}
\end{proof}

\begin{proof}[Proof of theorem~\ref{thm:AdjointOperator.CA}: ``if'']
    Suppose that $\BVec{Z}$ is a vector of boundary forms with boundary coefficient matrices $z^r,\zeta^r$
    \[
        \BVec{Z} \psi = \sum^m_{r=1} z^r \vec{\psi}_r(0) + \zeta^r \vec{\psi}_r(1),
    \]
    and
    \[
        \mathrm{rank}\begin{bmatrix} z^1 : \zeta^1 : {}\ldots{} : z^m : \zeta^m \end{bmatrix} = 2\nallgeneral-\bccount,
    \]
    for which equation~\eqref{eqn:AdjointOperator.CA} holds.
    Then
    \[
        \mathrm{rank}
        \begin{bmatrix}
            \Msups{z}{}{}{1}{\conjtrans} \\
            \Msups{\zeta}{}{}{1}{\conjtrans} \\
            \vdots \\
            \Msups{z}{}{}{m}{\conjtrans} \\
            \Msups{\zeta}{}{}{m}{\conjtrans}
        \end{bmatrix}
        = 2\nallgeneral-\bccount
    \]
    and equation~\eqref{eqn:AdjointOperator.CA} implies
    \begin{equation}\label{eqn:AdjointOperator.system1}
        \begin{bmatrix}
            b^1 : \beta^1 : {}\ldots{} : b^m : \beta^m
        \end{bmatrix}
        \begin{bmatrix}
            -F^1(0)^{-1} \Msups{z}{}{}{1}{\conjtrans} \\
            F^1(1)^{-1} \Msups{\zeta}{}{}{1}{\conjtrans} \\
            \vdots \\
            -F^m(0)^{-1} \Msups{z}{}{}{m}{\conjtrans} \\
            F^m(1)^{-1} \Msups{\zeta}{}{}{m}{\conjtrans}
        \end{bmatrix}
        = 0_{\bccount \times(2\nallgeneral-\bccount)}.
    \end{equation}
    Since $F^r(0), F^r(1)$ are non-singular for each $r$, the $2\nallgeneral-\bccount$ columns of the matrix
    \[
        \mathcal{H} :=
        \begin{bmatrix}
            -F^1(0)^{-1} \Msups{z}{}{}{1}{\conjtrans} \\
            F^1(1)^{-1} \Msups{\zeta}{}{}{1}{\conjtrans} \\
            \vdots \\
            -F^m(0)^{-1} \Msups{z}{}{}{m}{\conjtrans} \\
            F^m(1)^{-1} \Msups{\zeta}{}{}{m}{\conjtrans}
        \end{bmatrix}
    \]
    span the solution space of the system \eqref{eqn:AdjointOperator.system1}, and $\mathrm{rank}(\mathcal{H}) = 2\nallgeneral - \bccount$.

    Suppose that $\BVec{B}^\star$ is adjoint to $\BVec{B}$ and has boundary coefficient matrices $\Msups{b}{}{}{r}{\star},\Msups{\beta}{}{}{r}{\star}$.
    Following the argument in the ``only if'' proof, equation~\eqref{eqn:AdjointOperator.CA.Product2} holds, and both matrices on the right are full rank.
    Therefore, the $2\nallgeneral-\bccount$ columns of the rank $2\nallgeneral - \bccount$ matrix
    \[
        \mathscr{H} :=
        \begin{bmatrix}
            -F^1(0)^{-1} \Msups{b}{}{}{1}{\star\Mspacer\conjtrans} \\
            F^1(1)^{-1} \Msups{\beta}{}{}{1}{\star\Mspacer\conjtrans} \\
            \vdots \\
            -F^m(0)^{-1} \Msups{b}{}{}{m}{\conjtrans} \\
            F^m(1)^{-1} \Msups{\beta}{}{}{m}{\star\Mspacer\conjtrans}
        \end{bmatrix}
    \]
    also span the solution space of equation~\eqref{eqn:AdjointOperator.system1}.
    Therefore, there is a nonsingular matrix $A\in\CC^{(2\nallgeneral - \bccount)\times(2\nallgeneral - \bccount)}$ for which $\mathcal{H}=\mathscr{H}A$.
    But then
    \[
        F^r(0)^{-1} \Msups{z}{}{}{r}{\conjtrans} = F^r(0)^{-1} \Msups{b}{}{}{r}{\star\Mspacer\conjtrans} A,
        \qquad\qquad
        F^1(1)^{-1} \Msups{\zeta}{}{}{r}{\conjtrans} = F^r(1)^{-1} \Msups{\beta}{}{}{r}{\star\Mspacer\conjtrans} A.
    \]
    Because the inverse of $F^r$ is full rank,
    \[
        \Msups{z}{}{}{r}{\conjtrans} = \Msups{b}{}{}{r}{\star\Mspacer\conjtrans} A,
        \qquad\qquad
        \Msups{\zeta}{}{}{r}{\conjtrans} = \Msups{\beta}{}{}{r}{\star\Mspacer\conjtrans} A.
    \]
    That is, the boundary coefficients matrices of $\BVec{Z}$ differ from those of $\BVec{B}^\star$ only by a rank $2\nallgeneral - \bccount$ perturbation; $\BVec{Z}$ is also adjoint to $\BVec{B}$ and $\Phi_{\BVec{Z}}=\Phi_{\BVec{B}^\star}$.
\end{proof}

\bibliographystyle{amsplain}
{\small\bibliography{dbrefs}}

\providecommand{\bysame}{\leavevmode\hbox to3em{\hrulefill}\thinspace}
\providecommand{\MR}{\relax\ifhmode\unskip\space\fi MR }
\providecommand{\MRhref}[2]{%
  \href{http://www.ams.org/mathscinet-getitem?mr=#1}{#2}
}
\providecommand{\href}[2]{#2}
\begin{thebibliography}{10}

\bibitem{BT2019a}
G.~Biondini and T.~Trogdon, \emph{Evolution partial differential equations with
  discontinuous data}, Quart. Appl. Math. \textbf{77} (2019), 689--726.

\bibitem{BT2019b}
\bysame, \emph{{G}ibbs phenomenon for dispersive {PDE}s on the line}, SIAM
  Journal on Applied Mathematics \textbf{77} (2019), 813--837.

\bibitem{Bir1908b}
G.~D. Birkhoff, \emph{Boundary value and expansion problems of ordinary linear
  differential equations}, Trans. Amer. Math. Soc. \textbf{9} (1908), 373--395.

\bibitem{Can1963a}
J.~R. Cannon, \emph{The solution of the heat equation subject to the
  specification of energy}, Quart. Appl. Math \textbf{21} (1963), 155--160.

\bibitem{Chi2006a}
D.~Chilton, \emph{An alternative approach to two-point boundary value problems
  for linear evolution {PDE}s and applications}, Phd, University of Reading,
  2006.

\bibitem{CL1955a}
E.~A. Coddington and N.~Levinson, \emph{Theory of ordinary differential
  equations}, International Series in Pure and Applied Mathematics,
  McGraw-Hill, 1955.

\bibitem{Dav2007a}
E.~B. Davies, \emph{Linear operators and their spectra}, Cambridge Studies in
  Advanced Mathematics, vol. 106, Cambridge University Press, 2007.

\bibitem{DM1963a}
K.~L. Deckert and C.~G. Maple, \emph{Solutions for diffusion equations with
  integral type boundary conditions}, Proc. Iowa Acad. Sci. \textbf{70} (1963),
  354--361.

\bibitem{DGSV2018a}
B.~Deconinck, Q.~Guo, E.~Shlizerman, and V.~Vasan, \emph{{F}okas's unified
  transform method for linear systems}, Quart. of Appl. Math. \textbf{76}
  (2018), no.~3, 463--488.

\bibitem{DPS2014a}
B.~Deconinck, B.~Pelloni, and N.~E. Sheils, \emph{Non-steady-state heat
  conduction in composite walls}, Proc. R. Soc. Lond. Ser. A Math. Phys. Eng.
  Sci. \textbf{470} (2014), no.~2165, 20130605.

\bibitem{DS2014b}
B.~Deconinck and N.~Sheils, \emph{Heat conduction on the ring: interface
  problems with periodic boundary conditions}, Appl. Math. Lett. \textbf{37}
  (2014), 107--111.

\bibitem{DS2020a}
B.~Deconinck and N.~Sheils, \emph{The time-dependent {S}chr\"{o}dinger equation
  with piecewise constant potentials}, Europ. J. of Appl. Math. \textbf{31}
  (2020), 57--83.

\bibitem{DSS2016a}
B.~Deconinck, N.~E. Sheils, and D.~A. Smith, \emph{The linear {K}d{V} equation
  with an interface}, Comm. Math. Phys. \textbf{347} (2016), 489--509.

\bibitem{DT2012a}
B.~Deconinck and T.~Trogdon, \emph{The solution of linear constant-coefficient
  evolution {PDE}s with periodic boundary conditions}, Applicable Analysis
  \textbf{91} (2012), 529--544.

\bibitem{DTV2014a}
B.~Deconinck, T.~Trogdon, and V.~Vasan, \emph{The method of {F}okas for solving
  linear partial differential equations}, SIAM Rev. \textbf{56} (2014), no.~1,
  159--186.

\bibitem{DV2013a}
B.~Deconinck and V.~Vasan, \emph{Well-posedness of boundary-value problems for
  the linear {B}enjamin-{B}ona-{M}ahony equation}, Discrete \& Continuous Dyn.
  Sys. A \textbf{33} (2013), no.~7, 3171--3188.

\bibitem{DS1963a}
N.~Dunford and J.~T. Schwartz, \emph{Linear operators, part {II}: {S}pectral
  theory, self adjoint operators in a {H}ilbert space}, Pure and Applied
  Mathematics, Wiley-Interscience, 1963.

\bibitem{Fok1997a}
A.~S. Fokas, \emph{A unified transform method for solving linear and certain
  nonlinear {PDE}s}, Proc. R. Soc. Lond. Ser. A Math. Phys. Eng. Sci.
  \textbf{453} (1997), 1411--1443.

\bibitem{Fok2000a}
\bysame, \emph{On the integrability of linear and nonlinear partial
  differential equations}, J. Math. Phys. \textbf{41} (2000), 4188--4237.

\bibitem{Fok2002a}
\bysame, \emph{A new transform method for evolution {PDE}s}, IMA J. Appl. Math.
  \textbf{67} (2002), 559--590.

\bibitem{Fok2008a}
\bysame, \emph{A unified approach to boundary value problems}, CBMS-SIAM, 2008.

\bibitem{FG1994a}
A.~S. Fokas and I.~M. Gel'fand, \emph{Integrability of linear and nonlinear
  evolution equations and the associated nonlinear {F}ourier transforms}, Lett.
  Math. Phys. \textbf{32} (1994), 189--210.

\bibitem{FP2001a}
A.~S. Fokas and B.~Pelloni, \emph{Two-point boundary value problems for linear
  evolution equations}, Math. Proc. Cambridge Philos. Soc. \textbf{131} (2001),
  521--543.

\bibitem{FP2005b}
\bysame, \emph{Boundary value problems for {B}oussinesq type systems}, Math.
  Phys. Anal. Geom. \textbf{8} (2005), no.~1, 59--96.

\bibitem{FP2005a}
\bysame, \emph{A transform method for linear evolution {PDE}s on a finite
  interval}, IMA J. Appl. Math. \textbf{70} (2005), 564--587.

\bibitem{FS2016a}
A.~S. Fokas and D.~A. Smith, \emph{Evolution {P}{D}{E}s and augmented
  eigenfunctions. {F}inite interval}, Adv. Differential Equations \textbf{21}
  (2016), no.~7/8, 735--766.

\bibitem{FS2012a}
A.~S. Fokas and E.~A. Spence, \emph{Synthesis, as opposed to separation, of
  variables}, SIAM Rev. \textbf{54} (2012), no.~2, 291--324.

\bibitem{Fou1822a}
J.~B.~J. Fourier, \emph{Th\'{e}orie analytique de la chaleur}, Didot, Paris,
  1822.

\bibitem{Fre2012a}
G.~Freiling, \emph{Irregular boundary value problems}, Results Math.
  \textbf{62} (2012), 265--294.

\bibitem{GS1967a}
I.~M. Gel'fand and G.~E. Shilov, \emph{Generalized functions volume 3: theory
  of differential equations}, Academic Press, 1967, Trans. M. E. Mayer from
  Russian (1958).

\bibitem{GV1964a}
I.~M. Gel'fand and N.~Ya. Vilenkin, \emph{Generalized functions volume 4:
  applications of harmonic analysis}, Academic Press, 1964, Trans. A. Feinstein
  from Russian (1961).

\bibitem{GPV2019a}
R.~Govindarajan, S.~G. Prasath, and V.~Vasan, \emph{Accurate solution method
  for the {M}axey-{R}iley equation, and the effects of {B}asset history}, J.
  Fluid Mechanics \textbf{868} (2019), 428--460.

\bibitem{Hop1919a}
J.~W. Hopkins, \emph{Some convergent developments associated with irregular
  boundary conditions}, Trans. Amer. Math. Soc. \textbf{20} (1919), 245--259.

\bibitem{Jac1915a}
D.~Jackson, \emph{Expansion problems with irregular boundary conditions}, Proc.
  Amer. Acad. Arts Sci. \textbf{51} (1915), no.~7, 383--417.

\bibitem{KPPS2018a}
E.~Kesici, B.~Pelloni, T.~Pryer, and D.~A. Smith, \emph{A numerical
  implementation of the unified {F}okas transform for evolution problems on a
  finite interval}, Euro. J. Appl. Math. \textbf{29} (2018), no.~3, 543--567.

\bibitem{Lan1931a}
R.~E. Langer, \emph{The zeros of exponential sums and integrals}, Bull. Amer.
  Math. Soc. \textbf{37} (1931), 213--239.

\bibitem{SL1837a}
J.~Liouville and C.~Sturm, \emph{Extrait d’un m{\'e}moire sur le
  d{\'e}veloppement des fonctions en s{\'e}ries dont les diff{\'e}rents termes
  sont assujettisa satisfairea une m{\^e}me {\'e}quation diff{\'e}rentielle
  lin{\'e}aire, contenant un parametre variable}, J. Math. Pures Appl
  \textbf{2} (1837), 220--233.

\bibitem{Loc1973a}
J.~Locker, \emph{Self-adjointness for multi-point differential operators},
  Pacific J. Math. \textbf{45} (1973), no.~2, 561--570.

\bibitem{Loc2000a}
\bysame, \emph{Spectral theory of non-self-adjoint two-point differential
  operators}, Mathematical Surveys and Monographs, vol.~73, American
  Mathematical Society, Providence, Rhode Island, 2000.

\bibitem{Loc2008a}
\bysame, \emph{Eigenvalues and completeness for regular and simply irregular
  two-point differential operators}, vol. 195, Memoirs of the American
  Mathematical Society, no. 911, American Mathematical Society, Providence,
  Rhode Island, 2008.

\bibitem{MS2018a}
P.~D. Miller and D.~A. Smith, \emph{The diffusion equation with nonlocal data},
  J. Math. Anal. Appl. \textbf{466} (2018), no.~2, 1119--1143.

\bibitem{Neu1966a}
J.~W. Neuberger, \emph{The lack of self-adjointness in three-point boundary
  value problems}, Pacific J. Math. \textbf{18} (1966), no.~1, 165--168.

\bibitem{Pap2011a}
G.~Papanicolaou, \emph{An example where separation of variable fails}, J. Math.
  Anal. Appl. \textbf{373} (2011), no.~2, 739--744.

\bibitem{Pel2002a}
B.~Pelloni, \emph{Well-posed boundary value problems for integrable evolution
  equations on a finite interval}, Theoret. and Math. Phys. \textbf{133}
  (2002), no.~2, 1598--1606.

\bibitem{Pel2004a}
\bysame, \emph{Well-posed boundary value problems for linear evolution
  equations on a finite interval}, Math. Proc. Cambridge Philos. Soc.
  \textbf{136} (2004), 361--382.

\bibitem{Pel2005a}
\bysame, \emph{The spectral representation of two-point boundary-value problems
  for third-order linear evolution partial differential equations}, Proc. R.
  Soc. Lond. Ser. A Math. Phys. Eng. Sci. \textbf{461} (2005), 2965--2984.

\bibitem{PS2013a}
B.~Pelloni and D.~A. Smith, \emph{Spectral theory of some non-selfadjoint
  linear differential operators}, Proc. R. Soc. Lond. Ser. A Math. Phys. Eng.
  Sci. \textbf{469} (2013), no.~2154, 20130019.

\bibitem{PS2016a}
\bysame, \emph{Evolution {P}{D}{E}s and augmented eigenfunctions. {H}alf line},
  J. Spectr. Theory \textbf{6} (2016), 185--213.

\bibitem{PS2018a}
\bysame, \emph{Nonlocal and multipoint boundary value problems for linear
  evolution equations}, Stud. Appl. Math. \textbf{141} (2018), no.~1, 46--88.

\bibitem{Pin2011b}
M.~A. Pinsky, \emph{Partial differential equations and boundary-value
  problems}, Pure and Applied Undergraduate Texts, American Mathematical
  Society, Providence, Rhode Island, USA, 2011.

\bibitem{RS1975a}
M.~Reed and B.~Simon, \emph{Methods of modern mathematical physics volume 2:
  {F}ourier analysis, self-adjointness}, Academic Press, Cambridge, MA, 1975.

\bibitem{She2017a}
N.~Sheils, \emph{Multilayer diffusion in a composite medium with imperfect
  contact}, Applied Mathematical Modelling \textbf{46} (2017), 450--464.

\bibitem{SS2015a}
N.~E. Sheils and D.~A. Smith, \emph{Heat equation on a network using the
  {F}okas method}, Journal of Physics A: Mathematical and Theoretical
  \textbf{48} (2015), no.~33, 21 pp.

\bibitem{Smi2011a}
D.~A. Smith, \emph{Spectral theory of ordinary and partial linear differential
  operators on finite intervals}, Phd, University of Reading, 2011.

\bibitem{Smi2012a}
\bysame, \emph{Well-posed two-point initial-boundary value problems with
  arbitrary boundary conditions}, Math. Proc. Cambridge Philos. Soc.
  \textbf{152} (2012), 473--496.

\bibitem{Smi2012b}
\bysame, \emph{Well-posedness and conditioning of 3rd and higher order
  two-point initial-boundary value problems}, arXiv:1212.5466 [math.AP], 2012.

\bibitem{Smi2015a}
\bysame, \emph{The unified transform method for linear initial-boundary value
  problems: a spectral interpretation}, Unified transform method for boundary
  value problems: applications and advances, SIAM, Philadelphia, PA, 2015.

\bibitem{ST2021a}
D.~A. Smith and W.-Y. Toh, \emph{Linear evolution equations on the half line
  with dynamic boundary conditions}, Eur. J. Appl. Math. (2021), (to appear)
  arXiv:1910.08764 [math.AP].

\bibitem{STV2019a}
D.~A. Smith, T.~Trogdon, and V.~Vasan, \emph{Linear dispersive shocks},
  (submitted) arXiv:1908.08716 [math.AP], 2019.

\bibitem{Xia2019a}
L.~Xiao, \emph{Algorithmic solution of high order partial differential
  equations in julia via the {F}okas transform method}, Yale-NUS College
  Capstone Project, 2019, \url{https://gitlab.com/linfanxiaolinda/capstone}.

\bibitem{Zet1966a}
A.~Zettl, \emph{The lack of self-adjointness in three point boundary value
  problems}, Proc. Amer. Math. Soc. \textbf{17} (1966), no.~2, 368--371.

\end{thebibliography}

\end{document}